\numberwithin{equation}{section}
\newtheorem{thm}{Theorem}[section]
\newtheorem{prop}[thm]{Proposition}
\newtheorem{lem}[thm]{Lemma}
\newtheorem{cor}[thm]{Corollary}
\newtheorem{claim}{Claim}{\bf}{\it}
\newtheorem{fthm}{Theorem}{\bf}{\it}
{\bf}{\it}
\newtheorem{fcor}[fthm]{Corollary}{\bf}{\it}
{\bf}{\it}
\newtheorem{fques}[fthm]{Open Question}{\bf}{\it}
\theoremstyle{definition}
\newtheorem{defn}[thm]{Definition}
\theoremstyle{remark}
\newtheorem{rem}[thm]{Remark}
{\bf}{\it}
\newtheorem{definition and corollary}[thm]{Definition and Corollary}
\newcommand{\A}{{\mathbb A}}
\newcommand{\al}{\alpha}
\newcommand{\af}{\mathrm{af}}
\newcommand{\op}{\mathrm{op}}
\newcommand{\C}{{\mathbb C}}
\newcommand{\cO}{{\mathcal O}}
\newcommand{\GW}{\mathtt{GW}}
\newcommand{\Hom}{\mbox{\rm Hom}}
\newcommand{\End}{\mbox{\rm End}}
\newcommand{\bI}{{\mathbf I}}
\newcommand{\bh}{{\mathbf h}}
\newcommand{\ch}{\mathrm{ch}}
\newcommand{\gch}{\mathrm{gch}}
\newcommand{\gdim}{\mathrm{gdim}}
\newcommand{\la}{\lambda}
\newcommand{\lo}{\mathrm{loc}}
\newcommand{\ra}{\mathrm{rat}}
\newcommand{\Fl}{\mathrm{Fl}}
\newcommand{\Gr}{\mathrm{Gr}}
\newcommand{\Spec}{\mbox{\rm Spec}}
\newcommand{\MID}{\! \! \mid}
\newcommand{\g}{\mathfrak{g}}
\newcommand{\gb}{\mathfrak{b}}
\newcommand{\sB}{\mathscr B}
\newcommand{\sGB}{\mathscr{GB}}
\newcommand{\tI}{\mathtt{I}}
\newcommand{\bO}{\mathbb{O}}
\renewcommand{\P}{\mathbb{P}}
\newcommand{\bQ}{\mathbf{Q}}
\newcommand{\sA}{\mathscr{A}}
\newcommand{\sC}{\mathscr{C}}
\newcommand{\sS}{\mathscr{S}}
\newcommand{\sQ}{\mathscr{Q}}
\newcommand{\sH}{\mathscr{H}}
\newcommand{\bW}{\mathbb{W}}
\newcommand{\sX}{\mathscr{X}}
\newcommand{\sZ}{\mathscr{Z}}
\newcommand{\Q}{\mathbb{Q}}
\newcommand{\R}{\mathbb{R}}
\newcommand{\Z}{\mathbb{Z}}
\newcommand{\si}{\frac{\infty}{2}}
\newcommand{\Gm}{\mathbb G_m}
\newcommand\ringring[1]{%
  {% make an Ord atom
   \mathop{\kern0pt #1}\limits^{% set a box over the variable
     \vbox to-1.85ex{
       \kern-2ex % lower the ring accents
       \hbox to 0pt{\hss\normalfont\kern.1em \r{}\kern-.45em \r{}\hss}%
       \vss % fill
     }% end of \vbox
   }% end of the superscript
  }% end of \mathop
}
\title{Loop structure on equivariant $K$-theory of semi-infinite flag manifolds\footnote{MSC2010: 14N15,20G44}}
\author{Syu \textsc{Kato}\footnote{Department of Mathematics, Kyoto University, Oiwake Kita-Shirakawa Sakyo Kyoto 606-8502 JAPAN \tt{E-mail:syuchan@math.kyoto-u.ac.jp}}}
\begin{document}
\maketitle

\begin{abstract}
We explain that the Pontryagin product structure on the equivariant $K$-group of an affine Grassmannian considered in [Lam-Schilling-Shimozono, Compos. Math. {\bf 146} (2010)] coincides with the tensor structure on the equivariant $K$-group of a semi-infinite flag manifold considered in [K-Naito-Sagaki, Duke Math. {\bf 169} (2020)]. Then, we construct an explicit isomorphism between the equivariant $K$-group of a semi-infinite flag manifold and a suitably localized equivariant quantum $K$-group of the corresponding flag manifold. These exhibit a new framework to understand the ring structure of equivariant quantum $K$-groups and the Peterson isomorphism.
\end{abstract}

\section*{Introduction}

Let $G$ be a simply connected simple algebraic group over $\C$ with a maximal torus $H$. Let $\Gr$ denote its affine Grassmannian and let $\sB$ be its flag variety.

Following the seminal work of Peterson \cite{Pet97} (on the quantum cohomology, see also Lam-Shimozono \cite{LS10}), there were many efforts to understand the (small) quantum $K$-group $qK ( \sB )$ of $\sB$ in terms of the $K$-group $K ( \Gr )$ of affine Grassmannians (see \cite{LSS10,LLMS17} and the references therein). One of its forms, borrowed from Lam-Li-Mihalcea-Shimozono \cite{LLMS17}, is a (conjectural) ring isomorphism:
\begin{equation}
K _H ( \Gr ) _\lo \cong qK_H ( \sB )_\lo,\label{KPeterson}
\end{equation}
where subscript $H$ indicate the $H$-equivariant version and the subscript $\lo$ denotes certain localizations. Here the multiplication in $K _H ( \Gr ) _\lo$ is the {\it Pontryagin product}, that differs from the usual action of the $K$-group of the thick affine Grassmannian (that one may internalize using the perfect pairing \cite{Kum17} or the identification with the topological $K$-group \cite{KK90}), while the multiplication of $qK_H ( \sB )_\lo$ is standard in quantum $K$-theory \cite{Giv00,Lee04}.

On the other hand, we have another version $\bQ_G^{\ra}$ of affine flag variety of $G$, called the semi-infinite flag variety (\cite{FF,FM99,FFKM}). Almost from the beginning \cite{Giv94}, it was expected that $\bQ_G^{\ra}$ have some relation with the quantum cohomology of $\sB$. In fact, we can calculate the equivariant $K$-theoretic $J$-function of $\sB$ using $\bQ_G^{\ra}$ (\cite{GL03,BF14a}), and the reconstruction theorem \cite{LP04,IMT15} tells us that this essentially recovers the ring structure of the (big) quantum $K$-group of $\sB$.

In \cite{KNS17}, we have defined and calculated the equivariant $K$-group of $\bQ_G^{\ra}$, that is also expected to have some relation to $qK_H ( \sB )$, and hence also to $K_H ( \Gr )$. The goal of this paper is to tell the exact relations as follows:

\begin{fthm}[$\doteq$ Theorem \ref{main}]\label{fmain}
Each of $K _H ( \Gr  )_{\lo}$ and $K_{H} ( \bQ_G^{\ra} )$ admits an action of a variant $\sH$ of the double affine Hecke algebra and the coroot lattice $Q^{\vee}$ of $G$. It gives rise to a dense embedding
$$\Phi : K _H ( \Gr )_{\lo} \hookrightarrow K_{H} ( \bQ_G^{\ra} )$$
of $(\sH, Q^{\vee})$-bimodules that sends the Pontryagin product on the LHS to the tensor product on the RHS.
\end{fthm}

Here we note that the topology of $K_{H} ( \bQ_G^{\ra} )$ arises from the Schubert stratification of $\bQ_G^\ra$, and its role in Theorem \ref{fmain} is minor. By transplanting the path model of $K_{H} ( \bQ_G^{\ra} )$, Theorem \ref{fmain} yields multiplication formulae of the classes in $K _H ( \Gr )_{\lo}$ (\cite{KNS17,NOS18}).

Our strategy to prove Theorem \ref{fmain} is as follows: the $\sH \otimes \C Q^{\vee}$-module $K _H ( \mathrm{Gr} _G )_{\mathrm{loc}}$ is cyclic. Hence, its $\sH \otimes \C Q^{\vee}$-endomorphism is determined by the image of a cyclic vector. Moreover, the tensor product action of an equivariant line bundle on $K_{H} ( \bQ_G^{\mathrm{rat}} )$ yields a $\sH \otimes \C Q^{\vee}$-endomorphism. These make it possible to identify important parts of the Pontryagin action on the LHS that gives a $\sH \otimes \C Q^{\vee}$-endomorphism with the tensor product action on the RHS.

The other part of the exact relation we exhibit is:

\begin{fthm}[$\doteq$ Corollary \ref{Jcomp} and Theorem \ref{b-trans}]\label{fJcomp}
We have a $K_H ( \mathrm{pt})$-module isomorphism
$$\Psi : qK _H ( \sB )_\lo \stackrel{\cong}{\longrightarrow} K_{H} ( \bQ_G^{\ra} )$$
that sends the quantum product of a primitive anti-nef line bundle to the tensor product of the corresponding line bundle.\\
Moreover, $\Psi$ sends a Schubert class of the LHS to a Schubert class in the RHS, and intertwines the Novikov variable twist in the LHS to the right translation of the Schubert classes in the RHS. In particular, the topology of $qK _H ( \sB )_\lo$ with respect to the Novikov variables is compatible with the topology of $K_{H} ( \bQ_G^{\ra} )$ through $\Psi$.
\end{fthm}

Here we remark that a priori $K_{H} ( \bQ_G^{\ra} )$ is not a ring (see Remark \ref{Kra}).

Unlike Theorem \ref{fmain}, Theorem \ref{fJcomp} is best understood by its completed topological form as the inverse quantum multiplication of an anti-nef line bundle corresponds to the tensor product of a nef line bundle through $\Psi$. The latter tensor product action on $K_H ( \bQ_G^\ra )$ is quite natural, and its structure constants with respect to the Schubert classes are positive (\cite[Theorem 5.11]{KNS17}). However, it lives genuinely in the completions in general (as the sum is infinite; see \S \ref{subsec: sl(2)}).

Such tensor products (with infinitely many nonzero structure constants) play a central role in our proof of Theorem \ref{fJcomp}. To analyze them, we need to include an extra $q$-variable that is responsible for the $\Gm$-action on a curve $\P^1$ in both sides. In particular, our proof of Theorem \ref{fJcomp} is in fact the $q = 1$ specialization of an isomorphism
$$\Psi_q : \C [q^{\pm 1}] \otimes qK _H ( \sB )_\lo \cong K_{\Gm \times H} ( \bQ_G^{\mathrm{rat}} ),$$
that intertwines shift operators (of line bundles on $\sB$) and line bundle twists (on $\bQ_G^{\ra}$). Combining Theorems \ref{fmain} and \ref{fJcomp}, we conclude:

\begin{fcor}[$\doteq$ Corollary \ref{triangle}]\label{fLLMS}
We have a commutative diagram, whose bottom arrow is a natural embedding of rings:
$$
\xymatrix{
& K_{H} ( \bQ_G^{\ra} ) & \\
K _H ( \Gr ) _\lo \ar@{^{(}->}[rr]^{(\ref{KPeterson})} \ar@{^{(}->}[ru]^{\Phi} & & qK_H ( \sB )_\lo \ar[lu]_{\Psi}^{\cong}
}.$$
Here the uncompleted version of $qK_H ( \sB )_\lo$ is isomorphic to $K _H ( \Gr ) _\lo$, the map $\Phi$ is an injective $K_H ( \mathrm{pt} ) \otimes \C Q^{\vee}$-module homomorphism, and $K_{H} ( \bQ_G^{\ra} )$ acquires the structure of a ring from $K_H ( \Gr )$ or $qK_H ( \sB )$.
\end{fcor}

The explicit nature of Corollary \ref{fLLMS} verifies conjectures in \cite{LLMS17} (Corollary \ref{LLMSc}). In the same vein, we find that the structure constants of quantum multiplications, as well as the shift operator actions, on $qK_H ( \sB )$ are finite (Corollary \ref{LLMSfin} and Corollary \ref{qfin}; see also Anderson-Chen-Tseng \cite{ACT18}). Therefore, this paper also provides an indispensable step in the proof \cite{ACT18} of the finiteness of the multiplication of quantum $K$-groups of partial flag manifolds, that stood as a fundamental problem in the quantum $K$-theoretic Schubert calculus from the beginning.

Theorem \ref{fmain}, and hence Corollary \ref{fLLMS}, also have $\Gm$-equivariant versions by supplementing cosmetic arguments to the results presented in this paper that we exhibit in \cite{Kat20} together with its representation-theoretic consequences.

The idea of the construction of $\Psi$ in Theorem \ref{fJcomp} is to compare the structure of the both sides via the asymptotic behavior of the cohomology of quasi-map spaces with respect to the degree of curves. Adapting the technicality on the topology and the $q$-variables discussed above, it is rather natural to consider such a thing if we know the ``cohomological invariance" between two models of semi-infinite flag manifolds proved in \cite{BF14b,KNS17}, the reconstruction theorem in the form of \cite{IMT15}, and the $J$-function calculations in \cite{GL03, BF14a}. In order to show that $\Psi$ respects products (Theorem \ref{b-trans}), we need to analyze the geometry of graph spaces and quasi-map spaces. Such an analysis is based on the identification of natural subvarieties of quasi-map spaces with the scheme-theoretic intersection of orbit closures in $\bQ_G^\ra$ with respect to two mutually opposite (Iwahori) subgroups $\bI$ and $\bI^-$ of $G(\!(z)\!)$, that we call Richardson varieties of $\bQ_G^\ra$ (\cite{Kat18d}). With this in hands, the core of the proof of our assertion reduces to a generalization of the following fact from the case of $\sB$ to the case of $\bQ_G^\ra$: the singularity types of Richardson varieties come from the singularity types between two strata in the Bruhat stratification. However, there are some pit-falls to carry this out (see Remark \ref{oppSchubert} and \S \ref{subsec:fact} for related accounts), and our proof employs the factorizaton property to identify the transversal slices arising from Richardson varieties of $\bQ_G^\ra$ and the Bruhat stratification of $\bQ_G^\ra$. The outcome of our analysis includes:

\begin{fthm}[$\doteq$ Theorem \ref{Q-rat-sing}]
Richardson varieties of $\bQ_G^\ra$ have rational singularities and are Cohen-Macaulay\footnote{Previous versions of this paper contained proofs of Theorem \ref{Q-rat-sing} with gaps. To clarify the whole point, the author decided to separate out the proof of the normality and other related technical results into \cite{Kat18d} (see Theorem \ref{Qnorm}).}.
\end{fthm}

Note that $\bQ_G^{\mathrm{rat}}$ is the universal indscheme associated to the formal loop space of $\sB$ (\cite{Kat18d} see also \cite{BF14a,KNS17}). Hence, it is tempting to spell out the following, that unifies the proposals by Givental \cite[\S 4]{Giv94} (cf. Iritani \cite{Iri06}), Peterson \cite{Pet97} (cf. \cite{LLMS17}), and Arkhipov-Kapranov \cite[\S 6.2]{AK06}:

\begin{fques}\label{lconj}
Let $X$ be a smooth projective Fano variety with an action of an algebraic group $H$. Let $\mathcal L X$ be the formal loop space of $X$ $($see $\cite{AK06})$. Then, we have an inclusion that intertwines the quantum product and tensor product of primitive anti-nef line bundles:
$$\Psi_X : qK _H ( X ) \hookrightarrow K_H ( \mathcal L X ),$$
where we define $K_H ( \mathcal L X )$ as the $q=1$ specialization of a subspace of $K_{\Gm \times H} ( \mathcal L X )$ $($cf. $\S \ref{subsec:eK}$ and $\cite{KNS17})$.
\end{fques}

%Here we point out that taking reduced part played an essential role in the explicit calculations when $X = \sB$ (cf. \cite{Mus01,FM17}).

The organization of this paper is as follows: In section one, we recall some basic results from previous works (needed to formulate Theorems \ref{fmain} and \ref{fJcomp}), and prove some complementary results including the definition of $K_H (\bQ_G^\ra)$. In section two, we formulate and prove the precise version of Theorem \ref{fmain} and exhibit its $\mathop{SL} ( 2 )$-example. In section three, we make recollections on quasi-map spaces and $J$-functions, and construct the map $\Psi$ following ideas of \cite{GL03,BF14a,BF14b,IMT15} using results from \cite{KNS17}. At the same time, we prove Theorem \ref{fJcomp} modulo the behavior of Schubert basis (Corollary \ref{Jcomp}). In section four, we first prove that Richardson varieties of $\bQ_G^\ra$ have only rational singularities (Theorem \ref{Q-rat-sing}), using a detailed analysis of the transversal slices. Then, we prove Theorem \ref{b-trans} about the behavior of bases under $\Psi$. In conjunction with Corollary \ref{Jcomp}, this completes the proof of Theorem \ref{fJcomp} in its precise from. After that, we exhibit the consequences of our results in \S \ref{subsec:conseq}, including the proof of the Lam-Li-Mihalcea-Shimozono conjecture, the finiteness of the quantum multiplications for $\sB$, and the finiteness of the shift operators.

The results of this paper is supported by our previous works on semi-infinite flag manifolds and representation theory of current algebras, including \cite{KL17,Kat18,KNS17,Kat18d}. An overview of the whole project, as well as brief accounts on this paper and \cite{Kat19a,Kat20}, can be found in the survey \cite{Kat21}.

Finally, a word of caution is in order. The equivariant $K$-groups dealt in this paper are {\it not} identical to these dealt in \cite{LSS10} and \cite{KNS17} in the sense that both groups are just dense subset (or intersects with a dense subset) in the original $K$-groups (the both groups are suitably topologized). The author does not try to complete this point as he believes it not essential.

\section{Preliminaries}\label{sec:prelim}

A vector space is always a $\C$-vector space, and a graded vector space refers to a $\Z$-graded vector space whose graded pieces are finite-dimensional and its grading is bounded from the above or bounded from the below. Tensor products are taken over $\C$ unless stated otherwise. We define the graded dimension of a graded vector space as the formal sum
$$\mathrm{gdim} \, M := \sum_{i\in \Z} q^i \dim _{\C} M_i.$$
For a (possibly operator-valued) rational function $f ( q )$ on $q$, we set $\overline{f ( q )} := f ( q^{-1} )$.
In this paper, a variety is a separated integral scheme of finite type over $\C$. Let $\C[\![z]\!]$ be the formal power series ring with its variable $z$, and let $\C(\!(z)\!)$ be the fraction field of $\C[\![z]\!]$, the field of formal Laurent series.

\subsection{Groups, root systems, and Weyl groups}\label{subsec:prelim}
Basically, material presented in this subsection can be found in \cite{CG97, Kum02}.

Let $G$ be a connected, simply connected simple algebraic group of rank $r$ over $\C$, and let $B$ and $H$ be a Borel subgroup and a maximal torus of $G$ such that $H \subset B$. We set $N$ $(= [B,B])$ to be the unipotent radical of $B$ and let $N^-$ be the opposite unipotent subgroup of $N$ with respect to $H$. We set $B^- := H N^-$. We denote the Lie algebra of an algebraic group by the corresponding German small letter. We have a (finite) Weyl group $W := N_G ( H ) / H$. For an algebraic group $E$, we denote its set of $\C [z]$-valued points by $E [z]$, its set of $\C [\![z]\!]$-valued points by $E [\![z]\!]$, and its set of $\C (\!(z)\!)$-valued points by $E (\!(z)\!)$. Let $\mathbf I \subset G [\![z]\!]$ be the preimage of $B \subset G$ via the evaluation of $G[\![z]\!]$ at $z = 0$ (the Iwahori subgroup of $G [\![z]\!]$). We also define a subgroup $\bI^{-} \subset G [z^{-1}]$ as the preimage of $B^-$ via the evaluation of $G [z^{-1}]$ at $z = \infty$. Here we warn that while $E [\![z]\!]$ can be understood as a(n infinite type) group scheme, the group $E[z]$ is a group ind-scheme in general (we refer \cite[Chap. I\!V]{Kum02} for basics on ind-schemes).

Let $P := \mathrm{Hom} _{gr} ( H, \Gm )$ be the weight lattice of $H$, let $\Delta \subset P$ be the set of roots, let $\Delta_+ \subset \Delta$ be the set of roots that yield root subspaces in $\gb$, and let $\Pi \subset \Delta _+$ be the set of simple roots. We set $\Delta_- := - \Delta_+$ and $Q_+ := \sum_{\al \in \Pi}\Z_{\ge 0} \al \subset P$. Let $Q^{\vee}$ be the dual lattice of $P$ with a natural pairing $\left< \bullet, \bullet \right> : Q^{\vee} \times P \rightarrow \Z$. We define $\Pi^{\vee} \subset Q ^{\vee}$ to be the set of positive simple coroots, and let $Q_+^{\vee} \subset Q ^{\vee}$ be the set of non-negative integer span of $\Pi^{\vee}$. For $\beta, \gamma \in Q^{\vee}$, we define $\beta \ge \gamma$ if and only if $\beta - \gamma \in Q^{\vee}_+$. For $\lambda, \mu \in P$, we define $\la \ge \mu$ if and only if $\lambda - \mu \in Q_+$. We set $P_+ := \{ \lambda \in P \mid \left< \alpha^{\vee}, \lambda \right> \ge 0, \hskip 2mm \forall \alpha^{\vee} \in \Pi^{\vee} \}$ and $P_{++} := \{ \lambda \in P \mid \left< \alpha^{\vee}, \lambda \right> > 0, \hskip 2mm \forall \alpha^{\vee} \in \Pi^{\vee} \}$. Let $\tI := \{1,2,\ldots,r\}$. We fix bijections $\tI \cong \Pi \cong \Pi^{\vee}$ such that $i \in \tI$ corresponds to $\alpha_i \in \Pi$, its coroot $\alpha_i^{\vee} \in \Pi ^{\vee}$, and a simple reflection $s_i \in W$ corresponding to $\alpha_i$. We also have a reflection $s_{\alpha} \in W$ corresponding to $\alpha \in \Delta_+$. Let $\{\varpi_i\}_{i \in \tI} \subset P_+$ be the set of fundamental weights (i.e. $\left< \al_i^{\vee}, \varpi_j \right> = \delta_{i,j}$) and we set $\rho := \sum_{i \in \tI} \varpi_i = \frac{1}{2}\sum_{\al \in \Delta^+} \al \in P_+$. 

Let $\Delta_{\af} := \Delta \times \Z \delta \cup \{m \delta\}_{m \neq 0}$ be the untwisted affine root system of $\Delta$ with its positive part $\Delta_+ \subset \Delta_{\af, +}$. We set $\alpha_0 := - \vartheta + \delta$, $\Pi_{\af} := \Pi \cup \{ \alpha_0 \}$, and $\tI_{\af} := \tI \cup \{ 0 \}$, where $\vartheta$ is the highest root of $\Delta_+$. We set $W _{\af} := W \ltimes Q^{\vee}$ and call it the affine Weyl group. It is a reflection group generated by $\{s_i \mid i \in \mathtt I_{\af} \}$, where $s_0$ is the reflection with respect to $\alpha_0$. Let $\ell : W_\af \rightarrow \Z_{\ge 0}$ be the length function and let $w_0 \in W$ be the longest element in $W \subset W_\af$. Together with the normalization $t_{- \vartheta^{\vee}} := s_{\vartheta} s_0$ (for the coroot $\vartheta^{\vee}$ of $\vartheta$), we introduce the translation element $t_{\beta} \in W _{\af}$ for each $\beta \in Q^{\vee}$.

For each $i \in \tI_{\af}$, we have a subgroup $\mathop{SL} ( 2, i ) \subset G (\!(z)\!)$ that is isomorphic to $\mathop{SL} ( 2, \C )$ corresponding to $\al_i \in \tI_{\af}$. We set $B_i := \mathop{SL} ( 2, i ) \cap \mathbf I$, that is a Borel subgroup of $\mathop{SL} ( 2, i )$. For each $i \in \tI$, we denote the minimal parabolic subgroup $\mathop{SL} ( 2, i ) B$ of $G$ corresponding to $i \in \tI$ by $P_i$. For each $w \in W$ or $w \in W_\af$, we find a representative $\dot{w}$ in $N_G ( H )$ or $N_{G(\!(z)\!)} ( H (\!(z)\!) )$, respectively.

Let $W_\af^-$ denote the set of minimal length representatives of $W_\af / W$ in $W_\af$. We set
$$Q^{\vee}_< := \{\beta \in Q^{\vee} \mid \left< \beta, \al_i \right> < 0, \forall i \in \tI \}.$$

Let $\le$ be the Bruhat order of $W_\af$. In other words, $w \le v$ holds if and only if a subexpression of a reduced decomposition of $v$ yields a reduced decomposition of $w$ (see \cite[Theorem 2.2.2]{BB05}). We define the generic (semi-infinite) Bruhat order $\le_\si$ as:
\begin{equation}
w \le_\si v \Leftrightarrow w t_{\beta} \le v t_{\beta} \hskip 5mm \text{for every } \beta \in Q^{\vee} \text{ such that } \left< \beta, \al_i \right> \ll 0 \text{ for } i \in \tI. \label{si-ord}
\end{equation}
By \cite[\S 1.5]{Lus80} (see also \cite[\S 2.2]{KNS17}), this defines a preorder on $W_{\af}$. Here we remark that $w \le v$ if and only if $w \ge_\si v$ for $w,v \in W$.

For each $\la \in P_+$, we denote a finite-dimensional simple $G$-module with a $B$-eigenvector with its $H$-weight $\la$ by $L ( \la )$. We understand that $L ( \la ) = \{0\}$ for $\la \not\in P_+$. Let $R ( G )$ be the (complexified) representation ring of $G$. We have an identification $R ( G ) = ( \C P )^W \subset \C P$ by taking characters. For a semi-simple $H$-module $V$, we set
$$\ch \, V := \sum_{\la \in P} e^\la \cdot \dim _{\C} \mathrm{Hom}_H ( \C_\la, V ).$$
If $V$ is a graded $H$-module in addition, then we set
$$\gch \, V := \sum_{\la \in P, n \in \Z} q^n e^\la \cdot \dim _{\C} \mathrm{Hom}_H ( \C_\la, V_n ).$$

Let $\sB := G / B$ and call it the flag manifold of $G$. It is equipped with the Bruhat decomposition
$$\sB = \bigsqcup_{w \in W} \bO_{\sB} ( w )$$
into $B$-orbits such that $\dim \bO_{\sB} ( w ) = \ell ( w_0 ) - \ell ( w )$ for each $w \in W \subset W_\af$. Namely, we have $\bO_{\sB} ( w ) = B \dot{w} \dot{w}_0 B / B$. We set $\sB ( w ) := \overline{\mathbb O_{\sB} ( w )} \subset \sB$. We also define $\bO^\op_{\sB} ( w ) := B^- \dot{w} \dot{w}_0 B / B$ and $\sB^\op ( w ) := \overline{\bO^\op_{\sB} ( w )}$.

For each $\la \in P$, we have a line bundle $\cO _{\sB} ( \la )$ on $\sB$ such that
$$\ch \, H ^0 ( \sB, \cO_{\sB} ( \la ) ) = \ch \, L ( \la ), \hskip 3mm \cO_{\sB} ( \la ) \otimes_{\cO_\sB} \cO _\sB ( - \mu ) \cong \cO_\sB ( \la - \mu ) \hskip 5mm \la, \mu \in P_+.$$
The line bundle $\cO_{\sB} ( \la )$ is defined as $G \times ^B \C_{w_0 \la}$, where $\C_{w_0\la}$ is the one-dimensional representation of $H$ corresponding to $w_0 \la$ (lifted to $B$).

We have a notion of $H$-equivariant $K$-group $K_H ( \sB )$ of $\sB$ with coefficients in $\C$ (see e.g. \cite[\S 4]{KK90}). Explicitly, we have
\begin{equation}
K_H ( \sB ) = \bigoplus_{w \in W} \C P \, [\cO _{\sB (w)}]  = \C P \otimes_{R ( G )} \bigoplus_{\la \in P} \C [\cO _{\sB} ( \la )].\label{KB}
\end{equation}
The map $\ch$ extends to a $\C P$-linear map
$$\chi : K _H ( \sB ) \rightarrow \C P,$$
that we call the $H$-equivariant Euler-Poincar\'e characteristic. The group $K_H ( \sB )$ is equipped with the product structure $\cdot$ induced by the tensor product of line bundles. For each $i \in \tI$, we have
\begin{equation}
[\cO _{\sB (s_i)}] = [\cO_{\sB}] - e^{\varpi_i} [\cO _{\sB} ( -\varpi_i )] \in K_H ( \sB ) \label{LSrem}
\end{equation}
coming from the $B$-equivariant short exact sequence
\begin{equation}
0 \rightarrow \C_{\varpi_i} \otimes \cO_{\sB} ( -\varpi_i ) \rightarrow \cO _{\sB} \rightarrow \cO_{\sB ( s_i )} \rightarrow 0.\label{eLSrem}
\end{equation}
Here the $B$-equivariant map $\C_{\varpi_i} \otimes \cO_{\sB} ( -\varpi_i ) \rightarrow \cO _{\sB}$ is unique up to scalar.

\subsection{Level zero nil-DAHA}\label{subsec:nDAHA}

\begin{defn}\label{def-DAHA}
The level zero nil-DAHA $\sH$ of type $G$ is a $\C$-algebra generated by $\{ e^{\la} \}_{\la \in P} \cup \{ D_i \} _{i \in \tI_\af}$ subject to the following relations:
\begin{enumerate}
\item $e^{\la + \mu} = e ^{\la} \cdot e ^{\mu}$ for $\la, \mu \in P$;
\item $D_i ^2 = D_i$ for each $i \in \tI_\af$;
\item For each distinct $i, j \in \tI_{\af}$, we set $m_{i,j} \in \Z_{> 0}$ as the minimum number such that $(s_is_j)^{m_{i,j}} = 1$. Then, we have
$$\overbrace{D_i D_j \cdots}^{m_{i,j}\text{-terms}} = \overbrace{D_j D_i \cdots}^{m_{i,j}\text{-terms}};$$
\item For each $\la \in P$ and $i \in \tI$, we have
$$D_i e^{\la} - e^{s_i \la} D_i = \frac{e^{\la} - e^{s_i \la}}{1 - e ^{\al_i}};$$
\item For each $\la \in P$, we have
$$D_0 e^{\la} - e^{s_\vartheta \la} D_0 = \frac{e^{\la} - e^{s_\vartheta \la}}{1 - e ^{- \vartheta}}.$$
\end{enumerate}
\end{defn}

Let $\sS := \C P \otimes \C W_\af$ be the smash product algebra, whose multiplication reads as:
$$( e^\la \otimes w ) (e^{\mu} \otimes v) = e^{\la + w \mu} \otimes w v \hskip 5mm \la,\mu \in P, w,v \in W_\af,$$
where $s_0$ acts on $P$ as $s_{\vartheta}$. Let $\C ( P )$ denote the fraction field of (the Laurent polynomial algebra) $\C P$. We have a scalar extension
$$\sA := \C ( P ) \otimes_{\C P} \sS = \C ( P ) \otimes \C W_\af.$$

\begin{thm}[\cite{LSS10} \S 2.2]\label{embtoA}
We have an embedding of algebras $\imath^* : \sH \hookrightarrow \sA$:
\begin{align*}
e^{\la} \mapsto e ^{\la} \otimes 1, \,\, & D_i \mapsto  \frac{1}{1 - e ^{\al_i}} \otimes 1 - \frac{e^{\al_i}}{1 - e ^{\al_i}} \otimes s_i, \hskip 5mm \la \in P, i \in \tI\\
& D_0 \mapsto \frac{1}{1 - e ^{- \vartheta}} \otimes 1 - \frac{e^{- \vartheta}}{1 - e ^{- \vartheta}} \otimes s_0. 
\end{align*}

\end{thm}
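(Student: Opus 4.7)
The plan is to extend $\imath^*$ from generators to an algebra homomorphism by checking the defining relations of $\sH$ in $\sA$, and then to establish injectivity by a Bruhat-triangularity argument in $\sA$.

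Writing $\imath^*(D_i) = f_i \otimes 1 + g_i \otimes s_i$ with $f_i := (1-e^{\al_i})^{-1}$ and $g_i := -e^{\al_i}(1-e^{\al_i})^{-1}$ for $i \in \tI$ (and similarly with $\al_i$ replaced by $-\vartheta$ for $i = 0$), relation (1) is immediate. For (2), one checks $s_i(f_i) = g_i$ and $s_i(g_i) = f_i$, so multiplication in $\sA$ yields
$$\imath^*(D_i)^2 = (f_i + g_i) \cdot \imath^*(D_i) = \imath^*(D_i),$$
since $f_i + g_i = 1$. For (4) and (5), using the smash product rule $(e^\mu \otimes w)(e^\la \otimes 1) = e^{\mu + w\la} \otimes w$, one finds that the $\otimes s_i$-components of $\imath^*(D_i) \cdot e^\la$ and $e^{s_i \la} \cdot \imath^*(D_i)$ both equal $e^{s_i \la} g_i \otimes s_i$ (using commutativity of $\C(P)$) and therefore cancel in the difference, leaving $f_i(e^\la - e^{s_i\la}) \otimes 1$, which matches the right-hand side of (4) (resp.\ (5)).

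The main obstacle is the braid relations (3): these are the classical braid relations for the Demazure-Lusztig operators attached to the affine Weyl group $W_\af$ acting on the lattice $P$ (with $s_0$ acting as $s_\vartheta$). Since each relation only involves the rank-two subsystem $\langle s_i, s_j \rangle$ of $W_\af$ for $i \neq j$, it suffices to verify each braid relation in the corresponding rank-two system: the commuting case $m_{ij} = 2$ is immediate, and the cases $A_2$, $B_2$, $G_2$ reduce to standard (though tedious) computations in $\sA$ that are classical in the theory of nil-affine Hecke algebras; alternatively, one may invoke the corresponding identity for Demazure-Lusztig operators in the affine Hecke algebra specialized at its nil-limit.

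For injectivity, relations (1), (4), (5) show that $\sH$ is spanned as a $\C P$-module by products $D_{i_1} \cdots D_{i_k}$ with $i_j \in \tI_\af$. Using (2) together with the braid relations, such a product reduces to an element $D_w \in \sH$ depending only on $w := s_{i_1} \cdots s_{i_k} \in W_\af$ (via Matsumoto's theorem for reduced expressions, and using $D_i^2 = D_i$ to handle non-reduced ones). Hence $\sH$ is spanned by $\{e^\la D_w\}_{\la \in P, w \in W_\af}$. In $\sA$, an induction on $\ell(w)$ shows that for any reduced expression $w = s_{i_1} \cdots s_{i_\ell}$, the coefficient of $\otimes w$ in $\imath^*(D_w)$ equals the nonzero element $g_{i_1} \cdot s_{i_1}(g_{i_2}) \cdots s_{i_1} \cdots s_{i_{\ell-1}}(g_{i_\ell}) \in \C(P)$, while the remaining terms lie in $\C(P) \otimes \C\{ v \in W_\af : v < w \text{ in Bruhat order}\}$. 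This Bruhat-triangularity proves linear independence of $\{\imath^*(e^\la D_w)\}_{\la \in P, w \in W_\af}$ in $\sA$, establishing injectivity of $\imath^*$ and simultaneously that $\{e^\la D_w\}$ is a $\C$-basis of $\sH$.
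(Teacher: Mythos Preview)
The paper does not supply its own proof of this statement: it is quoted directly from \cite{LSS10} \S 2.2 as a known result, with no argument given. Your proposal is therefore not competing with a proof in the paper, but is supplying one where the paper simply cites the literature.

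Your argument is essentially correct and follows the standard route for such embedding statements. A few minor remarks:
\begin{itemize}
\item In your spanning argument, the claim that a non-reduced product $D_{i_1}\cdots D_{i_k}$ ``reduces to an element $D_w$ depending only on $w := s_{i_1}\cdots s_{i_k}$'' is imprecise: for non-reduced words the relation $D_i^2 = D_i$ together with the braid relations yields $D_v$ where $v$ is the Demazure (0-Hecke) product of the sequence, which need not equal the Coxeter product $s_{i_1}\cdots s_{i_k}$. This does not affect the conclusion, since all you need is that $\{D_w\}_{w\in W_\af}$ spans $\sH$ over $\C P$, and that follows once every product of $D_i$'s equals some $D_v$.
\item Your treatment of the braid relations is a genuine deferral rather than a proof; this is acceptable since the identity for Demazure operators in rank-two Coxeter systems is indeed classical, but you should be aware you are invoking a nontrivial fact.
\item The Bruhat-triangularity argument for injectivity is correct: the leading term of $\imath^*(D_w)$ in the $\C(P)\otimes \C W_\af$ basis is supported on $w$ with nonzero coefficient, and lower terms lie in $\C(P)\otimes \C\{v : v < w\}$ by the subword property of Bruhat order.
\end{itemize}
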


Since we have a natural action of $\sA$ on $\C ( P )$, we obtain an action of $\sH$ on $\C ( P )$, that we call the polynomial representation.

For $w \in W_\af$, we find a reduced expression $w = s_{i_1} \cdots s_{i_{\ell}}$ ($i_1,\ldots,i_{\ell} \in \tI_\af$) and set
$$D_w := D_{s_{i_1}} D_{s_{i_2}} \cdots D_{s_{i_{\ell}}} \in \sH.$$
By Definition \ref{def-DAHA} 3), the element $D_w$ is independent of the choice of a reduced expression. By Definition \ref{def-DAHA} 2), we have
$D_i D_{w_0} = D_{w_0}$ for each $i \in \tI$, and hence $D_{w_0}^2 = D_{w_0}$. We have an explicit form
\begin{equation}
D_{w_0} = \Bigl( 1 \otimes \sum_{w \in W} w \Bigr) \cdot \Bigl( \frac{e^{-\rho}}{\prod_{\al \in \Delta^+} ( e^{- \al / 2} - e^{\al/2})} \otimes 1 \Bigr) \in \sA \label{WCF}
\end{equation}
obtained from the (left $W$-invariance of the) Weyl character formula.

\subsection{Affine Grassmannians}\label{subsec:Gr}
We define the (thin) affine Grassmannian and (thin) affine flag variety by
$$\Gr := G (\!(z)\!) / G [\![z]\!] \hskip 3mm \text{and} \hskip 3mm \Fl := G (\!(z)\!) / \bI,$$
respectively. We have a natural fibration map $\pi : \Fl \rightarrow \Gr$ whose fiber is isomorphic to $\sB$. For each $w \in W_\af$, we set $\mathbb O^{\Fl}_w = \bI \dot{w} \bI / \bI$. For each $\beta \in Q^{\vee}$, we find $w \in t_{\beta}W$ and set $\mathbb O^\Gr_\beta := \pi ( \mathbb O^{\Fl}_w )$. The sets $\mathbb O^{\Fl}_w$ and $\mathbb O^\Gr_\beta$ do not depend on the choices involved.

\begin{thm}[Bruhat decomposition, \cite{Kum02} Corollary 6.1.20] We have $\bI$-orbit decompositions
$$\Gr = \bigsqcup_{\beta \in Q^{\vee}} \mathbb O^\Gr_\beta \hskip 3mm \text{and} \hskip 3mm \Fl = \bigsqcup_{w \in W_\af} \mathbb O _w^{\Fl}$$
such that we have $\mathbb O _v^{\Fl} \subset \overline{\mathbb O _w^{\Fl}}$ if and only if $v \le w$.
\end{thm}

Let us set $\Gr_{\beta} := \overline{\mathbb O^\Gr_\beta}$ and $\Fl_w := \overline{\mathbb O^{\Fl}_w}$ for $\beta \in Q^{\vee}$ and $w \in W_\af$. For $w \in W_\af^-$, we also set $\Gr_{w} := \Gr_{\beta}$ for a unique $\beta \in Q^{\vee}$ such that $w \in t_{\beta} W$.

We set
$$K_H ( \Gr ) := \bigoplus_{\beta \in Q^{\vee}} \C P \, [\cO_{\Gr_{\beta}}] \hskip 3mm \text{and} \hskip 3mm K_H ( \Fl ) := \bigoplus_{w \in W_\af} \C P \, [\cO_{\Fl_w}].$$

The following two results of Kostant-Kumar \cite{KK90} are about the ring $Y$ in \cite[(2.9)]{KK90}, and is dual to their equivariant $K$-group (see also Remark \ref{rem:prod}). We can see that $Y$ actually corresponds to our equivariant $K$-group through \cite[(3.39)]{KK90}, or by later reference like \cite[\S 3]{Kum17}. The original treatment comes with extra $\Gm$-actions that are responsible for the variable $q$. We can forget the $\Gm$-actions (or specialize to $q = 1$) since the construction involves only Laurent polynomials with respect to the additional variables, together with the denominators that admit the specializations:

\begin{thm}[Kostant-Kumar]\label{sH-reg}
The vector space $K_H ( \Fl )$ affords a regular representation of $\sH$ such that:
\begin{enumerate}
\item the subalgebra $\C P \subset \sH$ acts by the multiplication as $\C P$-modules;
\item we have $D_i [\cO_{\Fl_w}] = [\cO_{\Fl_{s_i w}}]$ $(s_i w > w)$ or $[\cO_{\Fl_{w}}]$ $(s_i w < w)$.\hfill $\Box$
\end{enumerate}
\end{thm}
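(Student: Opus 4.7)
My plan is to construct the $\sH$-action on $K_H(X)$ geometrically and verify it matches the prescribed formulas. The natural candidates for the generators come from the partial flag structure: for each $i \in \tI_\af$ let $\bI_i \subset G(\!(z)\!)$ be the parahoric subgroup generated by $\bI$ and $\mathop{SL}(2,i)$, let $X_i := G(\!(z)\!)/\bI_i$, and let $\pi_i : X \to X_i$ denote the natural projection, which is a $\bP^1$-bundle. I define $D_i := \pi_i^* \pi_{i*}$ on $K_H(X)$, and let $\C P$ act by multiplication via the $R(H)$-algebra structure on equivariant $K$-theory coming from $H$-equivariance of line bundles. The core of the proof is to check that these operators satisfy the five defining relations of $\sH$ from Definition~\ref{def-DAHA}, and then to identify the representation as the regular one.

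For the relations I would argue as follows. The identity $D_i^2 = D_i$ follows from the projection formula together with $\pi_{i*}\cO_X = \cO_{X_i}$ and $R^1\pi_{i*}\cO_X = 0$ (fibers are $\bP^1$), so that $\pi_{i*}\pi_i^* = \id$. For the braid relation, I would consider the parahoric $\bI_{i,j}$ generated by $\bI_i$ and $\bI_j$; the projection $\pi_{i,j} : X \to X_{i,j} := G(\!(z)\!)/\bI_{i,j}$ factors through both $\pi_i\pi_j\cdots$ and $\pi_j\pi_i\cdots$ (each factorization of length $m_{ij}$), and the braid relation on operators then reduces to base change and projection formula on the Bott-Samelson resolution associated to a reduced word of the longest element of the rank-two Weyl subgroup. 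The commutation relations (4) and (5) with $e^\la$ are the most delicate, and this is the main obstacle. I would reduce to a fibrewise computation: pulling back to the $\bP^1$-fiber $\pi_i^{-1}(x)$ of a $T$-fixed point $x \in X_i$ and applying equivariant localization expresses both sides as rational functions in $e^{\al_i}$; the identity then follows from the Euler-characteristic formula on an equivariant $\bP^1$, which is exactly the Demazure-Lusztig divided-difference formula.

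Next I would verify the stated formula $D_i[\cO_{X_w}]$ on Schubert classes. If $s_i w > w$, then $X_w$ is not $\bI_i$-stable but $X_{s_iw}\supset X_w$ is, and $\pi_i|_{X_w} : X_w \to \pi_i(X_w) = \pi_i(X_{s_iw})$ is birational onto its (normal) image; using that Schubert varieties have rational singularities I get $\pi_{i*}\cO_{X_w} = \cO_{\pi_i(X_w)}$ and hence $D_i[\cO_{X_w}] = [\cO_{\pi_i^{-1}(\pi_i(X_w))}] = [\cO_{X_{s_iw}}]$. If $s_iw < w$, then $X_w$ is itself a $\pi_i$-bundle over its image (being $\bI_i$-stable), so $\pi_{i*}\cO_{X_w} = \cO_{\pi_i(X_w)}$ and pulling back recovers $[\cO_{X_w}]$.

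Finally, identifying the representation with the regular one amounts to the following. The element $[\cO_{X_e}] = [\cO_{\mathrm{pt}}]$ is a cyclic vector, since inductively applying the formula from the previous step along any reduced expression $w = s_{i_1}\cdots s_{i_\ell}$ produces $D_w[\cO_{X_e}] = [\cO_{X_w}]$. Combining this with the $\C P$-module decomposition $\sH = \bigoplus_{w \in W_\af} \C P\cdot D_w$ (which is the standard PBW-type basis of the nil-DAHA obtained from the embedding $\imath^*$) and the hypothesis that the $[\cO_{X_w}]$ form a $\C P$-basis of $K_H(X)$, the $\sH$-linear map $\sH \to K_H(X)$, $h \mapsto h[\cO_{X_e}]$ is an isomorphism. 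As anticipated, the truly substantive ingredient is step two — the commutation relation with $e^\la$ — where one must identify the geometric operator $\pi_i^*\pi_{i*}$ with the formal divided difference; every other step is either a formal bookkeeping argument or a standard Schubert-variety calculation.
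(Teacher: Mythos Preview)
The paper does not prove this statement; it is quoted from Kostant--Kumar and closed with a $\Box$. So the only question is whether your proposal stands on its own.

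There is a genuine gap in your verification of relations (4)--(5). Both $\pi_{i*}$ and $\pi_i^*$ are homomorphisms of $R(H)$-modules (pushforward and pullback along an $H$-equivariant morphism are always $R(H)$-linear), so your $D_i := \pi_i^*\pi_{i*}$ commutes with multiplication by every $e^\lambda \in \C P = R(H)$. But relation (4) of Definition~\ref{def-DAHA} says precisely that $D_i e^\lambda \neq e^\lambda D_i$ whenever $s_i\lambda \neq \lambda$; with your operators the left-hand side of (4) becomes $(e^\lambda - e^{s_i\lambda})D_i$, not $\frac{e^\lambda - e^{s_i\lambda}}{1-e^{\alpha_i}}$. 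Carrying out your fibrewise localisation on the $\bP^1$-fibre will only confirm this $R(H)$-linearity: each fixed-point contribution is visibly $R(H)$-linear in the input class. The divided-difference identity you are invoking is the commutation of $\pi_i^*\pi_{i*}$ with tensoring by the \emph{line bundle} $\cO_X(\lambda)$, which is a genuinely different copy of $\C P$ acting on $K_H(X)$ and is not the equivariant scalar action singled out in part (1) of the theorem.

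The simplest repair is to observe that, once one knows $\{[\cO_{X_w}]\}_{w\in W_\af}$ is a $\C P$-basis of $K_H(X)$ (this is the substantive input from Kostant--Kumar) and $\{D_w\}_{w\in W_\af}$ is a left $\C P$-basis of $\sH$, the $\C P$-linear bijection $[\cO_{X_w}]\leftrightarrow D_w$ transports the left regular $\sH$-action; properties (1) and (2) are then immediate from $D_iD_w \in \{D_{s_iw}, D_w\}$. Alternatively, the operators that do satisfy (4)--(5) against the equivariant $\C P$-action are Kostant--Kumar's ``left'' Demazure operators, built from the $W_\af$-action on equivariant parameters rather than from push--pull. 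Your arguments for $D_i^2=D_i$, the braid relations, the Schubert-class formula, and cyclicity are all correct for push--pull and remain correct for the transported operator, so those portions survive once the definition of $D_i$ is repaired.
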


Being a regular representation, we sometimes identify $K_H ( \Fl )$ with $\sH$ (through $e^{\la} [\cO_{\Fl_w}] \leftrightarrow e^{\la}  D_w$ for $\la \in P, w \in W_\af$) and consider the product of two elements in $\sH \cup K _H ( \Fl )$, that results in an element of $K_H ( \Fl ) \cong \sH \subset \sA$.

\begin{thm}[Kostant-Kumar]\label{sH-sph}
The pullback defines a map $\pi^* : K_H ( \Gr ) \hookrightarrow K _H ( \Fl )$ such that
$$\pi^* [\cO_{\Gr_{\beta}}] = [\cO_{\Fl_{t_{\beta}}}] D_{w_0} = D_{t_{\beta}} D_{w_0}\hskip 5mm \beta \in Q^{\vee}.$$
In particular, $\mathrm{Im} \, \pi^* = \sH D_{w_0}$ is a $\sH$-submodule, that can be regarded as a left ideal of $\sH$.\hfill $\Box$
\end{thm}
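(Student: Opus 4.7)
The plan is to work in two stages: first identify $\pi^*[\cO_{\Gr_\beta}]$ with the structure sheaf of a specific Schubert variety $X_v \subset X$, and second translate that identification into the algebraic expression $D_{t_\beta} D_{w_0}$ inside $\sH$. Pinpoint $v$ as the Bruhat-maximum of the coset $t_\beta W$. By the Bruhat decomposition theorem quoted just before the statement, the set-theoretic preimage $\pi^{-1}(\bO^G_\beta)$ equals $\bigsqcup_{w \in t_\beta W} \bO_w$, which is a $\sB$-bundle over $\bO^G_\beta$; every $w \in t_\beta W$ satisfies $w \le v$ in the Bruhat order, so $\overline{\bigsqcup_{w \in t_\beta W} \bO_w} = X_v$. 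Because $\pi$ is a Zariski-locally trivial $\sB$-bundle (in particular flat) and $\Gr_\beta$ is reduced, the scheme-theoretic preimage $\pi^{-1}(\Gr_\beta)$ is reduced and equals $X_v$. This gives $\pi^*[\cO_{\Gr_\beta}] = [\cO_{X_v}]$ in $K_H(X)$.

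Next I translate $[\cO_{X_v}]$ into the form $[\cO_{X_{t_\beta}}] D_{w_0}$. Under the identification $K_H(X) \cong \sH$ given by $[\cO_{X_w}] \leftrightarrow D_w$, this amounts to the identity $D_{t_\beta} D_{w_0} = D_v$ in $\sH$. The key observation is that $D_i D_{w_0} = D_{w_0}$ for every $i \in \tI$: using $D_w D_i = D_{ws_i}$ when $\ell(ws_i) > \ell(w)$ and $D_w D_i = D_w$ otherwise (the nilHecke relations coming from $D_i^2 = D_i$), applied to a reduced expression of $w_0$ starting from a finite simple reflection. By repeated reduction, $D_w D_{w_0}$ depends only on the right coset $wW$. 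Since $t_\beta$ and $v$ lie in the same coset $t_\beta W$, we get $D_{t_\beta} D_{w_0} = D_v D_{w_0}$. Finally, writing $v = w_\beta w_0$ with $w_\beta \in W_\af^-$ and $\ell(v) = \ell(w_\beta) + \ell(w_0)$ gives $D_v = D_{w_\beta} D_{w_0}$, hence $D_v D_{w_0} = D_{w_\beta} D_{w_0}^2 = D_v$, completing the first assertion.

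For the description $\IM \pi^* = \sH D_{w_0}$: the inclusion $\subseteq$ follows from $\pi^*[\cO_{\Gr_\beta}] = D_{t_\beta} D_{w_0} \in \sH D_{w_0}$ combined with $\C P$-linearity of $\pi^*$. For $\supseteq$, note that $\sH D_{w_0}$ is $\C P$-spanned by $\{D_w D_{w_0}\}_{w \in W_\af}$, which by the coset-reduction above collapses to $\{D_{t_\beta} D_{w_0}\}_{\beta \in Q^\vee}$; each such element is $\pi^*[\cO_{\Gr_\beta}]$. The main obstacle I anticipate is the scheme-theoretic (not merely set-theoretic) identification $\pi^{-1}(\Gr_\beta) = X_v$: while the underlying reduced structure is immediate from the Bruhat decomposition, controlling the pullback ideal sheaf in this ind-scheme setting requires care, and one really has to invoke the flatness of $\pi$ together with the reducedness of $\Gr_\beta$ to rule out embedded or nilpotent components—a point that would occupy the bulk of a careful write-up.
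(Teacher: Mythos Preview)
The paper does not give a proof of this theorem; it is attributed to Kostant--Kumar \cite{KK90} and closed with a $\Box$. There is therefore nothing in the paper to compare your argument against.

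Your proof is correct. A couple of remarks. First, your flagged ``main obstacle'' is not really one: since $\pi$ is a Zariski-locally trivial $\sB$-bundle it is smooth, and smooth pullback of a reduced closed subscheme is reduced; combined with the set-theoretic identification (which is the standard parabolic Bruhat-order statement for $W \subset W_\af$), this immediately gives the scheme-theoretic equality $\pi^{-1}(\Gr_\beta) = X_v$. Second, on the algebraic side your reduction can be streamlined: using the canonical factorization $w = w^- u$ with $w^- \in W_\af^-$, $u \in W$, $\ell(w) = \ell(w^-) + \ell(u)$, one gets $D_w D_{w_0} = D_{w^-} D_u D_{w_0} = D_{w^-} D_{w_0}$ for every $w$, and for $v = w^- w_0$ (the coset maximum) this is already $D_v$. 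Applying this with $w = t_\beta$ gives $D_{t_\beta} D_{w_0} = D_v$ in one stroke, so your last paragraph (going through $D_v D_{w_0}$) is a slight detour. The identification $\mathrm{Im}\,\pi^* = \sH D_{w_0}$ then follows exactly as you say.
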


Let $\sC := \C ( P ) \otimes \C Q^{\vee} \subset \sA$ be the subalgebra generated by elements of the form $f \otimes t_{\beta}$ ($f \in \C ( P )$, $\beta \in Q^{\vee}$). By our convention on the $W_\af$-action on $P$, we have
$$( 1 \otimes t_{-\vartheta^{\vee}} ) ( f \otimes 1 ) \equiv ( 1 \otimes s_{\vartheta} s_0 ) ( f \otimes 1 ) = ( f \otimes 1 ) ( 1 \otimes s_{\vartheta} s_0 ) \equiv ( f \otimes 1 ) ( 1 \otimes  t_{-\vartheta^{\vee}} )$$
for each $f \in \C P$. From this, we deduce that $\sC$ is commutative. We have a projection map
$$\mathsf{pr} : \sA = \C ( P ) \otimes \C W_\af \longrightarrow \C ( P ) \otimes \C Q^{\vee} = \sC$$
defined as $\mathsf{pr} ( f \otimes t_{\beta} w ) = f \otimes t_{\beta}$ for each $f \in \C ( P ), w \in W$, and $\beta \in Q^{\vee}$.

\begin{thm}[Lam-Schilling-Shimozono]\label{LSS10}
The composition map $\mathsf{pr} \circ \imath^* \circ \pi^*$ defines an embedding
$$K_H ( \Gr ) \hookrightarrow K_H ( \Fl ) \rightarrow \sC \hskip 3mm ( \subset \sA )$$
whose image is equal to $K _H ( \Fl ) \cap \sC$. It descends to a $\C P$-module isomorphism
$$r^* : K_H ( \Gr ) \hookrightarrow K_H ( \Fl ) \cap \sC \hskip 3mm ( \subset \sA ).$$
This equips $K_H ( \Gr )$ with a subalgebra structure of a commutative algebra $\sC$.
\end{thm}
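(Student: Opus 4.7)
The plan is to interpret $\imath^*$ as the Kostant--Kumar restriction-to-fixed-points map. Under the identification $K_H(X) \cong \sH$ of Theorem~\ref{sH-reg}, the coefficient of $v \in W_\af$ in $\imath^*(\alpha)$ records the localization of $\alpha$ at the $H$-fixed point $v \cdot \bI \in X$, so $\imath^*$ is injective (this is the equivariant localization theorem after inverting the appropriate weight factors). The target $\sA = \C(P) \otimes \C W_\af$ therefore plays the role of the total localized $K$-theory.

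Next, I would use the fibration $\pi: X \to \Gr_G$. The $H$-fixed locus of $\Gr_G$ consists of $\{t_\beta \cdot G[\![z]\!]\}_{\beta \in Q^\vee}$, and the fiber of $\pi$ over each $t_\beta \cdot G[\![z]\!]$ is a copy of $\sB$ whose $H$-fixed points are exactly $\{t_\beta w \cdot \bI\}_{w \in W}$. Hence for $\alpha \in K_H(\Gr_G)$, the localization $\iota_{t_\beta w}^*\, \pi^* \alpha = \iota_{t_\beta}^*\alpha$ depends only on $\beta$, not on $w$. Viewed in $\sA$, this means $\imath^*\pi^*\alpha$ is constant along each coset $t_\beta W$ and so is determined by its value on a single representative per coset. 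This is precisely the information retained by $\mathsf{pr}$, and combined with the explicit form \eqref{WCF} of $D_{w_0}$ (which, after the right multiplications appearing in $\imath^*(\sH D_{w_0})$, ensures no cancellation occurs under the projection), we conclude that $r^* = \mathsf{pr} \circ \imath^* \circ \pi^*$ is injective.

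For the image, the inclusion $\mathrm{Im}(r^*) \subset K_H(X) \cap \sC$ is immediate from Theorem~\ref{sH-sph} (which places $\pi^*K_H(\Gr_G)$ inside $\sH D_{w_0}$) together with the fact that $\mathsf{pr}$ already lands in $\sC$. For the reverse inclusion, I would exploit the basis $\{D_w D_{w_0}\}_{w \in W_\af^-}$ of $\sH D_{w_0}$ and a triangularity argument: given $\xi \in K_H(X) \cap \sC$, expanded in the Kostant--Kumar basis $\{D_w\}_{w \in W_\af}$, the requirement that $\imath^*(\xi)$ have no $W$-non-translation component forces, by length-induction on $W_\af$, the vanishing of all coefficients indexed by $w \notin W_\af^-$ after absorbing right $W$-symmetrization via $D_{w_0}$. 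This reduces $\xi$ to an element of $\sH D_{w_0} = \mathrm{Im}(\pi^*)$. The commutativity of $\sC$ (which follows from the fact that the $W_\af$-action on $P$ factors through $W$, so translations act trivially on weights) transports through $r^*$ to equip $K_H(\Gr_G)$ with the claimed subalgebra structure.

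The main obstacle is the surjectivity onto $K_H(X) \cap \sC$: the containment in one direction is a direct consequence of \eqref{WCF}, but the reverse requires carefully matching the $\mathsf{pr}$-compatibility of an element of $\sA$ with the right-$D_{w_0}$-absorption inside $\sH$, keeping track of the rational denominators that appear when translating between the $\{D_w\}$-basis of $\sH$ and the $\{1 \otimes w\}$-basis of $\sA$. This is the technical core of the argument and proceeds via the standard triangularity of $\imath^*$ in the Bruhat order.
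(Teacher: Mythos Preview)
Your geometric intuition about the fibration $\pi: X \to \Gr_G$ is correct --- the genuine fixed-point localizations of $\pi^*\alpha$ are indeed constant on each coset $t_\beta W$ --- but the algebraic translation fails. You assert that ``the coefficient of $v$ in $\imath^*(\alpha)$ records the localization of $\alpha$ at the $H$-fixed point $v$,'' and hence that $\imath^*(\pi^*\alpha) \in \sA$ has coefficients constant on each coset. This is contradicted by the explicit formula \eqref{WCF}: writing $f = \prod_{\alpha \in \Delta^+}(1-e^\alpha)^{-1}$, one computes $\imath^*(D_{w_0}) = \sum_{w \in W} w(f) \otimes w$, yet $D_{w_0} = \pi^*[\cO_{\Gr_0}]$ and the values $w(f)$ are visibly not all equal. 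The underlying reason is that $\imath^*$ is an algebra homomorphism for the \emph{non-commutative} smash-product multiplication on $\sA$, whereas fixed-point localization is multiplicative for the \emph{pointwise} product on functions $W_\af \to \C(P)$; the same map cannot intertwine both ring structures. So the step ``Viewed in $\sA$, this means $\imath^*\pi^*\alpha$ is constant along each coset $t_\beta W$'' is false, and both your injectivity argument and the triangularity scheme for surjectivity lose their starting point.

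The paper's route is different and purely algebraic: it cites \cite{LLMS17} for the fact that $\mathsf{pr}(\imath^*(D_v)) \in \sC$ is the same for every $v$ in a fixed coset $t_\beta W$ --- a statement about the \emph{projected} element, not about the individual coefficients of $\imath^*(D_v)$ --- and then invokes the structural description of $K_H(X) \cap \sC$ from \cite[\S 5.2]{LSS10}. If you wish to repair your approach, the correct algebraic replacement for your constancy claim is precisely this coset-invariance of $\mathsf{pr}\circ\imath^*$ on $\{D_v\}_v$, which does not follow from localization but from a direct combinatorial analysis of Demazure operators.
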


\begin{proof}
By \cite[Proposition 2]{LLMS17}, we deduce that the image of $D_{v}$ under the map $\mathsf{pr}$ is the same for each $v \in t_{\beta}W$. Therefore, the assertion follows from the description of \cite[\S 5.2]{LSS10}.
\end{proof}

Thanks to Theorem \ref{LSS10}, we obtain a commutative product structure of $K_H ( \Gr )$ inherited from $\sC$, that we denote by $\odot$. We call it the {\it Pontryagin product}. This is the same product as in \cite[\S 5.2]{LSS10}, and its relation with the Pontryagin product (\cite{Pon39}) in the topological $K$-group of the based loop space of the maximal compact subgroup of $G$ is read-out from \cite[\S 5.1]{LSS10} and the following:

\begin{rem}\label{rem:prod}
The Pontrjyagin product on $K_H ( \Gr )$ is the product structure coming from the homotopy equivalence (\cite[Proposition 8.6.6]{PS86}) of $\Gr$ and
$$\Omega K := \{f : S^1 \longrightarrow K \mid f \text{ is a $C^{0}$-map and } f ( 1 ) = e \},$$
where $K$ is the maximal compact subgroup of $G$ (viewed as a Lie group). Here $\Omega K$ is acted by ${T_\R} := ( H \cap K )$, and it acquires the group structure by the pointwise multiplication whose identity is the constant map to the identity, and whose inverse is the pointwise inverse. This induces a coproduct $\triangle$ on the ${T_\R}$-equivariant topological $K$-group $K^{T_\R} ( \Omega K )$. Note that the space of (the adjoint) $T_\R$-fixed points on $\Omega K$ is precisely the loops landing on $T_\R$ (homotopic to a discrete set isomorphic to $Q^{\vee}$), and hence this coproduct must coincide with the coproduct induced from the concatenation of loops by the injectivity of the restriction map to the $T_\R$-fixed points (see \cite[(3.19)]{KK90}).

Here $K^{T_\R} ( \Omega K )$ and $K_H ( \Gr )$ are naturally dual as discussed in \cite[(2.19) and (3.28)]{KK90}, in the sense that $K^{T_\R} ( \Omega K )$ is the equivariant $K$-group dealt there, and our $K_H ( \Gr )$ here is its dual dealt as (a subring of) the ring $Y$ in \cite[\S 2]{KK90}. In particular, one can transport $\triangle$ on $K^{T_\R} ( \Omega K )$ into the product on $K_H ( \Gr )$ coming from the multiplication of $\Omega K$, that is the Pontryagin product transported via the above interpretation. This is what carried out in \cite[Lemma 5.1]{LSS10} (where the coincidence of the two multiplications is implicitly explained as the homotopy commutativity of the product structure of $\Omega K$).

The product of $K_H ( \Gr )$ coming from Theorem \ref{sH-sph} differs from the Pontryagin product. In fact, we have ${T_\R} = ( B \cap K )$, and hence $\Omega K$ does not carry natural subvarieties corresponding to non-$G$-stable Schubert varieties in $\Gr$ (similar to the case of $\sB = K / {T_\R}$ without complex structure). This poses an effect on the Pontryagin product structure, and we only have a non-degenerate pairing
\begin{equation}
K^K ( \Omega K ) \times K_G ( \Gr ) \longrightarrow ( \C P )^W\label{KG-pair}
\end{equation}
based on larger group actions that intertwines the Pontryagin product structure and the product in Theorem \ref{sH-sph}. Here $K^{T_\R} ( \Omega K )$ is the scalar extension of $K^K ( \Omega K )$, and hence the scalar extension of (\ref{KG-pair}) describes the Pontryagin product. In this picture,
$$\sA \cong \bigoplus_{p \in ( \Fl )^{(\Gm \times H)}} \C ( P ) \otimes_{R ( H )} K_H ( p ) \supset  \bigoplus_{p \in ( \Gr )^{(\Gm \times H)}} \C ( P ) \otimes_{R ( H )} K_H ( p ) \cong \sC$$
contains $K_H ( \Fl )$ and $K_H ( \Gr )$ through the comparisons with the dualities with the thick versions (\cite[\S 3]{Kum17}) and their $H$-equivariant localizations to the $(\Gm \times H)$-fixed points (whose numerical computations trace back to \cite[\S 2]{KK90}, and presented here as Theorem \ref{embtoA}).

The inverse of $G(\!(z)\!)$ induces the following isomorphisms:
\begin{align*}
K_H ( \Gr ) & \equiv K ( \bI \backslash G(\!(z)\!) / G [\![z]\!] ) \cong  K ( G [\![z]\!] \backslash G(\!(z)\!) / \bI )\equiv K_G ( \Fl )\\
K_H ( \Fl ) & \equiv K ( \bI \backslash G(\!(z)\!) / \bI ) \cong  K ( \bI \backslash G(\!(z)\!) / \bI )\equiv K_H ( \Fl ).
\end{align*}
We denote these two maps by $\mathrm{inv}$. We have the following commutative diagram of $\C$-vector spaces:
\begin{equation}
\xymatrix{
K_G (\Gr) \ar[rd] \ar@{-->}[rrd] &  &  K_G ( \Fl ) \ar[r]& K_H ( \Fl ) \ar[ld]_{\mathrm{inv}}\\
& K_H ( \Gr) \ar[r]^{\pi^*} \ar[ru]^{\mathrm{inv}} \ar@{~>}[rrd] & K_H ( \Fl ) \ar[r]^{\hskip 5mm \imath^*} & \sA \ar[d]^{\mathsf{pr}}\\
& & & \sC
},\label{eqn:comm}
\end{equation}
where $K_G (\Gr) \to K_H (\Gr)$ and $K_G (\Fl) \to K_H (\Fl)$ are the scalar extension maps. In (\ref{eqn:comm}), the dashed arrow realizes $K_G (\Gr)$ as a commutative subalgebra of $K_H ( \Fl )$ that spans a subspace isomorphic to $K_H ( \Gr )$ by the (right) $\C P$-action, and the winding arrow respects the Pontryagin product. Theorem \ref{LSS10} asserts that the winding map is injective, and induces the ``wrong-way map" $K_H ( \Fl ) \to K_H ( \Gr )$ introduced by Peterson \cite[Lecture 8]{Pet97} for homology. We note that the above line of discussion yields the $K$-theoretic version of Peterson's famous identification
$$K_H ( \Gr ) \cong Z_{K_H ( \Fl )} ( \C P ) \hskip 5mm \text{(\cite[Theorem 5.3]{LSS10})}.$$
\end{rem}

Below, we might think of an element of $K_H ( \Gr )$ as an element of $K_H ( \Fl )$ through $\pi^*$, as an element of $\mathscr A$ through $\imath^* \circ \pi^*$, and as an element of $\mathscr C$ through $r^*$ interchangeably. The next two results are natural extensions of the results from \cite[\S 9]{LS10} (originally due to Peterson):

\begin{thm}\label{LSS-formula}
Let $w \in W_\af^-$ and let $\beta \in Q^{\vee}_<$. We have
$$[\cO_{\Gr_{w}}] \odot [\cO_{\Gr_{\beta}}] = [\cO_{\Gr_{w t_\beta}}].$$
\end{thm}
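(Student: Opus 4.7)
The plan is to reduce the product identity to a length-additive factorization in the nil-DAHA $\sH$, and then to transport it through the map $r^* = \mathsf{pr}\circ\imath^*\circ\pi^*$ using the $\sC$-bimodule structure on $\sA$.

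First, I would show that for $w \in W_\af^-$ and $\beta \in Q^{\vee}_<$ one has $wt_\beta \in W_\af^-$ together with the length-additivity $\ell(wt_\beta) = \ell(w) + \ell(t_\beta)$. For each $i \in \tI$, the element $(wt_\beta)(\al_i) = w\al_i - \langle\al_i,\beta\rangle\delta$ has strictly positive $\delta$-coefficient (since $\langle\al_i,\beta\rangle < 0$ by definition of $Q^{\vee}_<$), so it lies in $\Delta_{\af,+}$ regardless of whether $w\al_i$ is a finite or a non-finite affine positive root. This gives $wt_\beta \in W_\af^-$; the length equality then follows from inspecting the inversion sets, since $\mathrm{Inv}(t_\beta)$ consists of affine roots whose finite parts pair positively with $-\beta$, and these do not overlap with $w^{-1}\,\mathrm{Inv}(w)$. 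Consequently, any reduced word for $w$ concatenated with one for $t_\beta$ is a reduced word for $wt_\beta$, so $D_{wt_\beta} = D_w D_{t_\beta}$ in $\sH$.

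Next, combining this with Theorem \ref{sH-sph}, I obtain the key identity
$$\pi^*[\cO_{\Gr_{wt_\beta}}] = D_{wt_\beta} D_{w_0} = D_w \cdot (D_{t_\beta} D_{w_0}) = D_w \cdot \pi^*[\cO_{\Gr_\beta}] \quad \text{in } \sH.$$
Applying $\mathsf{pr}\circ\imath^*$ to both sides gives $r^*[\cO_{\Gr_{wt_\beta}}] = \mathsf{pr}(\imath^*(D_w D_{t_\beta} D_{w_0}))$, while Theorem \ref{LSS10} identifies the Pontryagin product with the product in $\sC$:
$$[\cO_{\Gr_w}] \odot [\cO_{\Gr_\beta}] \longleftrightarrow \mathsf{pr}(\imath^*(D_w D_{w_0})) \cdot \mathsf{pr}(\imath^*(D_{t_\beta} D_{w_0})).$$
To equate these, I would exploit the $\sC$-bimodule decomposition $\sA = \sC \oplus \sC^\perp$, where $\sC^\perp = \bigoplus_{v \in W_\af \setminus Q^{\vee}} \C(P)\otimes v$. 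One checks directly from the multiplication rule $(e^\la \otimes w)(e^\mu\otimes v) = e^{\la+w\mu}\otimes wv$ that $\sC^\perp$ is a $\sC$-subbimodule, whence $\mathsf{pr}$ is $\sC$-bilinear and $\mathsf{pr}(X)\cdot\mathsf{pr}(Y) = \mathsf{pr}(X\cdot\mathsf{pr}(Y))$ for every $X,Y \in \sA$. Thus the desired identity is equivalent to
$$\imath^*(D_w)\cdot\bigl(\imath^*(D_{w_0})\cdot\mathsf{pr}(\imath^*(D_{t_\beta}D_{w_0})) - \imath^*(D_{t_\beta}D_{w_0})\bigr) \;\in\; \sC^\perp.$$

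The hard part will be this final congruence modulo $\sC^\perp$. It should follow from the $W$-averaging property of $D_{w_0}$, namely $D_u D_{w_0} = D_{w_0}$ for all $u \in W$ (Definition \ref{def-DAHA}(2) combined with the reduction $D_i D_{w_0} = D_{w_0}$ for $i \in \tI$), which forces the expansion of $\imath^*(D_{t_\beta}D_{w_0})$ in $\sA$ to be organized so that the ``non-translation'' components in $\sC^\perp$ are precisely those that vanish when one tries to recover the full $\imath^*(D_{t_\beta}D_{w_0})$ from its $\sC$-shadow via left multiplication by $\imath^*(D_{w_0})$. The genericity condition $\beta \in Q^\vee_<$ is used here to ensure that $D_{t_\beta}$ is built from a reduced word living entirely in the ``affine/translation'' part of the generators, so that its interaction with the $W$-part of $D_{w_0}$ is governed only by translations.
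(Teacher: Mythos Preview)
Your reduction to the length-additive factorization $D_{wt_\beta}=D_wD_{t_\beta}$ is correct and matches the paper. Your $\sC$-bimodule manipulation $\mathsf{pr}(X)\mathsf{pr}(Y)=\mathsf{pr}(X\cdot\mathsf{pr}(Y))$ is also fine. The gap is in the final step: the explanation you give for why $\imath^*(D_w)\bigl(\imath^*(D_{w_0})\mathsf{pr}(Y)-Y\bigr)\in\sC^\perp$ (with $Y=\imath^*(D_{t_\beta}D_{w_0})$) is not a proof. The identity $D_uD_{w_0}=D_{w_0}$ by itself does not control $D_{t_\beta}D_{w_0}$ from the left, and your remark that ``$D_{t_\beta}$ is built from a reduced word living entirely in the affine/translation part'' is not meaningful: any reduced expression of $t_\beta$ mixes all the $s_i$.

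What is actually needed (and what the paper uses) is a \emph{second} length additivity coming from $\beta\in Q^\vee_<$, namely $\ell(t_\beta)=\ell(w_0)+\ell(w_0t_\beta)$, so that $D_{t_\beta}=D_{w_0}D_{w_0t_\beta}$. Combined with the Weyl character formula expression $\imath^*(D_{w_0})=(\sum_{v\in W}v)\cdot\eta$, this shows that $Y=\imath^*(D_{t_\beta}D_{w_0})$ is \emph{left $W$-invariant} in $\sA$. Once you have this, the argument becomes much cleaner than your route: for $X=\imath^*(D_wD_{w_0})$ one has $X-\mathsf{pr}(X)=\sum(f\otimes t_\gamma)(u-e)$ with $u\in W$, and each term annihilates $Y$ on the right; thus $XY=\mathsf{pr}(X)\,Y$ as an \emph{exact} equality in $\sA$ (not merely modulo $\sC^\perp$). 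Applying $\mathsf{pr}$ and left $\sC$-linearity then gives $\mathsf{pr}(XY)=\mathsf{pr}(X)\mathsf{pr}(Y)$ directly. Your congruence does follow from this same left $W$-invariance, but via a detour; without that ingredient your step 7 does not go through.
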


\begin{proof}
By our assumption on $\beta$, we have $\ell ( t_{\beta} ) = \ell ( w_0 ) + \ell ( w_0 t_{\beta} )$ (see \cite[(2.4.1)]{Mac03}). In particular, the element $[\cO_{\Gr_{\beta}}]$, viewed as an element of $\sA$ through $\imath^* \circ \pi^*$, is of the form $( \sum_{v \in W} v ) \xi$ for some $\xi \in \sA$ by (\ref{WCF}). Hence, it is invariant by the left action of $W$. Since the effect of the map $\mathsf{pr}$ is to twist by elements of $W$ from the right in a term by term fashion, we deduce the equality
$$[\cO_{\Gr_{w}}] [\cO_{\Gr_{\beta}}] = \mathsf{pr} ( [\cO_{\Gr_{w}}] ) [\cO_{\Gr_{\beta}}]$$
of multiplications in $\sA$ (multiplication in a non-commutative algebra). By examining the definition of $\mathsf{pr}$, we further deduce
\begin{equation}
\mathsf{pr} (  [\cO_{\Gr_{w}}] [\cO_{\Gr_{\beta}}] ) = \mathsf{pr} (  \mathsf{pr} ( [\cO_{\Gr_{w}}] ) [\cO_{\Gr_{\beta}}] ) = \mathsf{pr} ( [\cO_{\Gr_{w}}] \odot [\cO_{\Gr_{\beta}}] ).\label{lp-anti}
\end{equation}
Since $w \in W_\af^-$, we have $\ell ( w ) + \ell ( t_{\beta} ) = \ell ( w t_{\beta} )$ (see \cite[Lecture 8, page12]{Pet97}). Consequently, we have $D_{wt_{\beta}} = D_w D_{t_{\beta}}$. Therefore, (\ref{lp-anti}) and Theorem \ref{LSS10} implies that
$$[\cO_{\Gr_{w t_\beta}}] = [\cO_{\Gr_{w}}] [\cO_{\Gr_{\beta}}] = [\cO_{\Gr_{w}}] \odot [\cO_{\Gr_{\beta}}] \in K_H ( \Gr )$$
as required.
\end{proof}

Since $t_{\beta} \in W^-_\af$ for each $\beta \in Q^{\vee}_<$, Theorem \ref{LSS-formula} implies that the set
$$\{ [\cO_{\Gr_{\beta}}] \mid \beta \in Q^{\vee}_< \} \subset ( K_H ( \Gr ), \odot )$$
forms a multiplicative system. We denote by $K_H ( \Gr )_{\lo}$ its localization. The action of an element $[\cO_{\Gr_{\beta}}]$ on $K_H ( \Gr )$ in Theorem \ref{LSS-formula} is torsion-free, and hence we have an embedding $K_H ( \Gr ) \hookrightarrow K_H ( \Gr )_\lo$.

\begin{cor}\label{h-op}
Let $i \in \tI$. For $\beta \in Q^{\vee}_<$, we set
$$\bh _i := [\cO_{\Gr_{s_i t_{\beta}}}] \odot [\cO_{\Gr_{\beta}}]^{-1}.$$
Then, the element $\bh_i$ is independent of the choice of $\beta$.
\end{cor}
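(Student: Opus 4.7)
The plan is to bridge any two choices $\beta, \beta' \in Q^{\vee}_<$ through their sum: since $\beta + \beta' \in Q^{\vee}_<$ is symmetric in $(\beta, \beta')$, it suffices to express $[\cO_{\Gr_{s_i t_\beta}}] \odot [\cO_{\Gr_{t_\beta}}]^{-1}$ as a quantity depending only on $\beta + \beta'$, via two applications of Theorem \ref{LSS-formula}.

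The only non-formal ingredient is to verify $t_\beta, s_i t_\beta \in W_{\af}^-$ for each $\beta \in Q^{\vee}_<$, since Theorem \ref{LSS-formula} is phrased for elements of $W_{\af}^-$. I would use the criterion that $w \in W_{\af}^-$ iff $w \alpha \in \Delta_{\af,+}$ for every $\alpha \in \Delta^+$. For $w = t_\beta$, $t_\beta \alpha = \alpha - \langle \beta, \alpha \rangle \delta$ is a positive affine root because $-\langle \beta, \alpha \rangle \ge 1$ (any positive root pairs with $\beta \in Q^{\vee}_<$ to a strictly negative integer). For $w = s_i t_\beta$, $w \alpha = s_i \alpha + c \delta$ with $c := -\langle \beta, \alpha \rangle \ge 1$: when $\alpha \neq \alpha_i$ one has $s_i \alpha \in \Delta^+$ and the result is positive affine; when $\alpha = \alpha_i$ one has $s_i \alpha_i = -\alpha_i$, but $c \ge 1$ forces $c\delta - \alpha_i \in \Delta_{\af,+}$. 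The same computation with $\beta + \beta'$ in place of $\beta$ establishes $s_i t_{\beta+\beta'} \in W_{\af}^-$.

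Fix $\beta, \beta' \in Q^{\vee}_<$. Using the identities $(s_i t_\beta) t_{\beta'} = s_i t_{\beta+\beta'}$ and $t_\beta t_{\beta'} = t_{\beta+\beta'}$ in $W_{\af}$, two applications of Theorem \ref{LSS-formula} yield
$$[\cO_{\Gr_{s_i t_\beta}}] \odot [\cO_{\Gr_{\beta'}}] = [\cO_{\Gr_{s_i t_{\beta+\beta'}}}], \qquad [\cO_{\Gr_{t_\beta}}] \odot [\cO_{\Gr_{\beta'}}] = [\cO_{\Gr_{t_{\beta+\beta'}}}].$$
Dividing in the commutative localized ring $(K_H(\Gr)_\lo, \odot)$ gives
$$[\cO_{\Gr_{s_i t_\beta}}] \odot [\cO_{\Gr_{t_\beta}}]^{-1} = [\cO_{\Gr_{s_i t_{\beta+\beta'}}}] \odot [\cO_{\Gr_{t_{\beta+\beta'}}}]^{-1}.$$
The right-hand side is manifestly symmetric in $(\beta, \beta')$, so swapping $\beta \leftrightarrow \beta'$ yields the analogous identity for $\beta'$; hence $\bh_i$ computed from $\beta$ coincides with that computed from $\beta'$. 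The main (essentially only) obstacle is the root-theoretic verification in the preceding paragraph; once that is in hand, the rest is pure formal manipulation in the localized ring.
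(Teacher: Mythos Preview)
Your proof is correct and follows essentially the same route as the paper: both reduce independence to the identity $[\cO_{\Gr_{s_i t_\beta}}] \odot [\cO_{\Gr_{t_\beta}}]^{-1} = [\cO_{\Gr_{s_i t_{\beta+\beta'}}}] \odot [\cO_{\Gr_{t_{\beta+\beta'}}}]^{-1}$ obtained from two applications of Theorem~\ref{LSS-formula}, and then conclude by symmetry in $(\beta,\beta')$. Your explicit verification that $t_\beta, s_i t_\beta \in W_{\af}^-$ (needed to invoke Theorem~\ref{LSS-formula}) is a detail the paper leaves implicit.
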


\begin{proof}
By Theorem \ref{LSS-formula}, we have
\begin{align*}
[\cO_{\Gr_{s_i t_{\gamma + \beta}}}] \odot [\cO_{\Gr_{\gamma + \beta}}]^{-1} & = [\cO_{\Gr_{s_i t_{\beta}}}] \odot [\cO_{\Gr_{\gamma}}] \odot  [\cO_{\Gr_{\gamma}}]^{-1} \odot [\cO_{\Gr_{\beta}}]^{-1}\\
& = [\cO_{\Gr_{s_i t_{\beta}}}] \odot [\cO_{\Gr_{\beta}}]^{-1}
\end{align*}
for $\gamma \in Q^{\vee}_<$. Hence, we conclude the assertion.
\end{proof}

For each $\gamma \in Q^{\vee}$, we can write $\gamma = \beta_1 - \beta_2$, where $\beta_1,\beta_2 \in Q^{\vee}_<$. Using this, we define an element
$$\mathtt t _{\gamma} := [\cO_{\Gr_{\beta_1}}] \odot [\cO_{\Gr_{\beta_2}}]^{-1}.$$

\begin{lem}\label{mt-op}
For each $\gamma \in Q^{\vee}$, the element $\mathtt t _{\gamma} \in K_H ( \Gr )_{\lo}$ is independent of the choices involved.
\end{lem}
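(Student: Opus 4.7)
The plan is to show that two decompositions $\gamma = \beta_1 - \beta_2 = \beta_1' - \beta_2'$ (all four summands in $Q^{\vee}_<$) produce the same element by a direct calculation inside $K_H ( \Gr )_{\lo}$. First, since the Pontryagin product $\odot$ is commutative (it is inherited from the commutative algebra $\sC$ via Theorem \ref{LSS10}), and since each $[\cO_{\Gr_{t_{\beta}}}]$ with $\beta \in Q^{\vee}_<$ is a unit in $K_H ( \Gr )_{\lo}$ by the preceding discussion, the desired equality
$$[\cO_{\Gr_{t_{\beta_1}}}] \odot [\cO_{\Gr_{t_{\beta_2}}}]^{-1} = [\cO_{\Gr_{t_{\beta_1'}}}] \odot [\cO_{\Gr_{t_{\beta_2'}}}]^{-1}$$
is equivalent, after clearing denominators, to
$$[\cO_{\Gr_{t_{\beta_1}}}] \odot [\cO_{\Gr_{t_{\beta_2'}}}] = [\cO_{\Gr_{t_{\beta_1'}}}] \odot [\cO_{\Gr_{t_{\beta_2}}}].$$

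Next, I would apply Theorem \ref{LSS-formula} to each side. To do so I need to verify the hypothesis $w \in W_\af^-$ for $w = t_{\beta_1}$ and $w = t_{\beta_1'}$. This is standard: for $\beta \in Q^{\vee}_<$ strict antidominance gives $\ell ( t_{\beta} u ) = \ell ( t_{\beta} ) + \ell ( u )$ for every $u \in W$, so $t_{\beta}$ is the minimal length representative of its coset $t_{\beta} W$, i.e. $t_{\beta} \in W_\af^-$. Then Theorem \ref{LSS-formula} yields
$$[\cO_{\Gr_{t_{\beta_1}}}] \odot [\cO_{\Gr_{t_{\beta_2'}}}] = [\cO_{\Gr_{t_{\beta_1} t_{\beta_2'}}}] = [\cO_{\Gr_{t_{\beta_1 + \beta_2'}}}],$$
and similarly $[\cO_{\Gr_{t_{\beta_1'}}}] \odot [\cO_{\Gr_{t_{\beta_2}}}] = [\cO_{\Gr_{t_{\beta_1' + \beta_2}}}]$.

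Finally, the identity $\beta_1 - \beta_2 = \beta_1' - \beta_2'$ is exactly $\beta_1 + \beta_2' = \beta_1' + \beta_2$, so the two right-hand sides coincide and the lemma follows. There is no substantive obstacle in this argument; the only subtle point is ensuring both factors satisfy the hypothesis of Theorem \ref{LSS-formula}, which is why one works with cross-multiplication rather than attempting to apply the formula directly to an expression involving inverses.
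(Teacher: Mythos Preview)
Your proof is correct and follows essentially the same approach the paper indicates (``similar to the proof of Corollary \ref{h-op}''): both arguments reduce to Theorem \ref{LSS-formula} applied to translations by elements of $Q^{\vee}_<$. Your cross-multiplication is in fact slightly cleaner than a direct imitation of Corollary \ref{h-op}, since it avoids having to pass through an intermediate decomposition.
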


\begin{proof}
Similar to the proof of Corollary \ref{h-op}. The details are left to the reader.
\end{proof}

\subsection{Semi-infinite flag manifolds}\label{subsec:sif}

The main reference of this subsection is \cite{Kat18d}. We define the semi-infinite flag manifold as the reduced scheme associated to:
$$\bQ_G^{\ra} := G (\!(z)\!) / ( H \cdot N (\!(z)\!) ).$$
This is a pure ind-scheme of ind-infinite type. Note that the group $Q^{\vee} \subset H (\!(z)\!) / H$ acts on $\bQ_G^{\ra}$ from the right. We have an embedding
\begin{equation}
\Upsilon : \bQ_G^\ra \hookrightarrow \prod_{i \in \mathtt I} \P ( L ( \varpi _i )^* \otimes \C (\!(z)\!) ),\label{formal-proj-emb}
\end{equation}
which is $\Gm \ltimes G(\!(z)\!)$-equivariant by enhancing the $\Gm$-action dilating $z$ prolonged trivially along the component $L ( \varpi_i )^*$ for each $i \in \tI$, and the $G$-action on $L ( \varpi_i )^*$ prolonged trivially along the component $\C (\!(z)\!)$ (\cite[Theorem 4.18]{Kat18d}). Note that the RHS of (\ref{formal-proj-emb}) is not a scheme by itself, but it acquires the structure of a scheme if we additionally impose the $z$-degree bound from the below on each factor. For $w \in W_\af$, we set $\mathbb O ( w ) := \bI \dot{w}\dot{w}_0 H N (\!(z)\!) / H N (\!(z)\!)$ and $\bQ _G ( w ) := \overline{\mathbb O ( w )}$. Note that we can take the closure either in $\bQ_G^\ra$ or the RHS of (\ref{formal-proj-emb}) since $\Upsilon$ restricts to a closed embedding of schemes
\begin{equation}
\Upsilon_m : \bQ_G ( w ) \hookrightarrow \prod_{i \in \tI} \P ( L ( \varpi _i )^* \otimes \C [\![z]\!] z^{-m} ) \hskip 4mm \left( \subset \prod_{i \in \mathtt I} \P ( L ( \varpi _i )^* \otimes \C (\!(z)\!) ) \right) \label{UN}
\end{equation}
for each $w \in W_\af$ under a suitable choice of $m \in \Z$. We refer $\bQ _G ( w )$ as a Schubert variety of $\bQ^\ra_G$. The set-theoretic part of the following result is deduced from the Iwasawa decomposition applied to \cite[\S 4]{FM99} (or \cite[\S 11]{LusICM}) as in \cite[Proof of Corollary 4.6]{KNS17}, and their closure relations are presented in \cite[\S 4.2 P2438]{KNS17} from the corresponding claims in \cite[\S 8]{FM99} and \cite[\S 5.1]{FFKM} (stated in the language of quasi-map spaces, see \S \ref{subsec:QM}):

\begin{thm}\label{si-Bruhat}
We have an $\bI$-orbit decomposition
$$\bQ_G^{\ra} = \bigsqcup_{w \in W_\af} \mathbb O ( w )$$
with the following properties:
\begin{enumerate}
\item each $\bO ( w )$ has infinite dimension and infinite codimension in $\bQ_G^{\ra}$;
\item each $\bO( w )$ contains a unique $( \Gm \times H )$-fixed point $p_w$;
\item the right action of $\gamma \in Q^{\vee}$ on $\bQ_G^{\ra}$ yields the translation $\mathbb O ( w ) \mapsto \mathbb O ( w t_{\gamma})$;
\item we have $\mathbb O ( w ) \subset \overline{\mathbb O ( v )}$ if and only if $w \le_{\si} v$.\hfill $\Box$
\end{enumerate}
\end{thm}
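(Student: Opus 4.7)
The plan is to establish the decomposition by lifting representatives from the Iwahori-Bruhat decomposition of $G(\!(z)\!)$, and then to deduce the closure order from the ordinary affine Bruhat order by exploiting the right $Q^\vee$-action. For the orbit parametrization I start from $G(\!(z)\!) = \bigsqcup_{w \in W_\af} \bI \dot{w} \bI$ and, using the root-subgroup decomposition of $\bI$ indexed by positive affine roots, slide a right $\bI$-factor through $\dot{w}$ and split it as a product of pieces lying in a left $\bI$-translate and in $H \cdot N(\!(z)\!)$; disjointness of the resulting orbits $\mathbb O(w) = \bI \cdot [\dot{w}]$ then reduces to the assertion that if $\dot{v} \in \bI \dot{w} (H \cdot N(\!(z)\!))$ then $v = w$, which follows from how $N(\!(z)\!)$ meets the affine Bruhat stratification of $G(\!(z)\!)$.

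Claim (2) is essentially formal: a lift $z^\gamma \in H(\!(z)\!)$ of $\gamma \in Q^\vee$ centralizes $H$ and normalizes $N(\!(z)\!)$, so right translation by $z^\gamma$ descends to an automorphism of $\bQ_G^{\mathrm{rat}}$, and $\dot{w} \cdot z^\gamma$ represents $\dot{w t_\gamma}$ modulo the stabilizer. Claim (1) comes from computing the stabilizer $\bI \cap \dot{w}(H \cdot N(\!(z)\!))\dot{w}^{-1}$: decomposing the pro-Lie algebra of $\bI$ by positive affine roots, those root spaces sent by $w^{-1}$ into the Lie algebra of $H \cdot N(\!(z)\!)$ lie in the stabilizer, while the rest contribute to the orbit. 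Both sets are infinite, giving infinite dimension and infinite codimension.

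The heart of the theorem is (3). My plan is to use (2) to reduce the semi-infinite closure relation to the ordinary affine Bruhat order. Given $v, w \in W_\af$, choose $\beta \in Q^\vee$ with $\langle \beta, \al_i \rangle \ll 0$ for all $i \in \tI$, deep enough that both $v t_\beta$ and $w t_\beta$ lie in $W_\af^-$. By definition (\ref{si-ord}), $v \le_\si w$ is equivalent to $v t_\beta \le w t_\beta$ in the ordinary Bruhat order on $W_\af$. By (2), $\mathbb O(v) \subset \overline{\mathbb O(w)}$ if and only if $\mathbb O(v t_\beta) \subset \overline{\mathbb O(w t_\beta)}$. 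Finally, for $W_\af^-$-indices, I would identify the closures $\overline{\mathbb O(w t_\beta)} \subset \bQ_G^{\mathrm{rat}}$ (in the reduced indscheme model from \cite{KNS17}, via graph/quasi-map approximations) with the corresponding semi-infinite Schubert varieties, whose combinatorics is controlled by affine Schubert varieties $X_{wt_\beta} \subset X$; closures there follow the ordinary affine Bruhat order by Kumar, Corollary 6.1.20, and transporting back through $t_\beta$ recovers exactly (\ref{si-ord}).

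The main obstacle will be making precise the meaning of closure in the pro-ind scheme $\bQ_G^{\mathrm{rat}}$ and verifying compatibility of $\overline{\mathbb O(w t_\beta)}$ with a finite-type model on which the Bruhat order can be tested. Because each stratum has both infinite dimension and infinite codimension, ordinary Zariski closure inside $\bQ_G^{\mathrm{rat}}$ is not automatic, and one genuinely needs the reduced indscheme structure of \cite{KNS17} together with the normality statements of \cite{Kat18d} (cited as Theorem \ref{Qnorm}) to transport closure relations from finite-dimensional Zastava-type approximations back to $\bQ_G^{\mathrm{rat}}$ without losing control of strata.
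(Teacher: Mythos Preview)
The paper does not actually prove this theorem: it is stated with citations to \cite{FM99,FFKM,KNS17} and closed with a $\Box$, so there is no argument in the paper to compare against. The result is imported from the literature, and the proofs there (notably \cite{FFKM} and \cite[\S 4]{KNS17}) compute the closure relations directly from the Drinfeld--Pl\"ucker embedding of $\bQ_G^{\mathrm{rat}}$ into an ind-product of projective spaces, rather than by transporting along the right $Q^\vee$-action to the affine flag variety $X$.

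Your treatment of the orbit parametrization and of claims (1) and (2) is standard and fine. The gap is in (3). You propose to identify $\overline{\mathbb O(wt_\beta)}$ inside $\bQ_G^{\mathrm{rat}}$ with the affine Schubert variety $X_{wt_\beta} \subset X$ and then quote the ordinary Bruhat closure order from \cite{Kum02}. But $\bQ_G^{\mathrm{rat}} = G(\!(z)\!)/(H\cdot N(\!(z)\!))$ and $X = G(\!(z)\!)/\bI$ are quotients by entirely different subgroups, and there is no map between them respecting the $\bI$-orbit stratifications in the way you need; closure in one does not formally imply closure in the other. The equivalence $w \le_\si v \Leftrightarrow wt_\beta \le vt_\beta$ for $\beta$ deep antidominant is the \emph{definition} of $\le_\si$, not a geometric statement, so invoking it only shifts the burden: you still must show that closures in $\bQ_G^{\mathrm{rat}}$ are governed by $\le_\si$, and for that one needs a direct computation in a model of $\bQ_G^{\mathrm{rat}}$ (the Pl\"ucker embedding, or the approximation by $\sQ(\beta)$). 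Your appeal to Theorem~\ref{Qnorm} is also misplaced: that result concerns normality and cohomology vanishing for $\sQ(\beta,w)$, not the identification of closure strata, and in any case the logical order in \cite{Kat18d} presupposes the stratification of $\bQ_G^{\mathrm{rat}}$ rather than establishing it.
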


Theorem \ref{si-Bruhat} and (\ref{UN}) make the embedding (\ref{formal-proj-emb}) ind-closed.

We may write $\bQ_G$ instead of $\bQ_G ( e )$ for the sake of notational simplicity.
 
The indscheme $\bQ_G^{\ra}$ is equipped with a $G (\!(z)\!)$-equivariant line bundle $\cO _{\bQ_G^{\ra}} ( \la )$ for each $\la \in P$. This line bundle is realized as 
$$\bigotimes_{i \in \tI} \Upsilon^* \left( \mathcal O _{\P ( L ( \varpi _i )^* \otimes \C (\!(z)\!) )} ( 1 )\right) ^{\otimes m_i} \hskip 3mm \text{ when } \hskip 3mm \lambda = \sum_{i \in \tI} m_i \varpi_i.$$
In particular, the restriction  $\cO _{\bQ_G(w)} ( \la )$  of $\cO _{\bQ_G^{\ra}} ( \la )$ to each $\bQ_G ( w )$ defines a line bundle. We warn that the normalization of line bundles is twisted by $-w_0$ from that of \cite{KNS17}.

\begin{rem}[opposite Schubert varieties]\label{oppSchubert}
Here we discuss about opposite Schubert varieties of $\bQ_G^\ra$. Note that (\ref{formal-proj-emb}) is apparently non-stable with respect to the involution $z \mapsto z^{-1}$. In particular, the group $G [\![z^{-1}]\!]$ does not act on $\bQ_G^\ra$. Thus, our opposite Schubert subvariety of $\bQ_G^\ra$ should be the closure of an $\bI^-$-orbit, defined as an ind-scheme. However, such opposite Schubert subvarieties are continuously many, and hence they cannot be labelled by $W_\af$. Thus, we usually refer only the $\mathbf I^-$-orbit closures 
 $$\bQ_G^- ( w ) := \overline{\mathbf I^- p_w} \subset \bQ_G^\ra \hskip 5mm w \in W_\af$$
 as the opposite Schubert varieties of $\bQ_G^\ra$.
 
 If we set $\bI^{\flat}$ to be the Zariski closure of $\bI^-$ in $G[\![z^{-1}]\!]$, then we have another version of an opposite Schubert cell, namely an $\bI^{\flat}$-orbit in $\prod_i \P ( L ( \varpi _i )^* \otimes \C (\!( z^{-1} )\!) )$ that intersects with $\bQ_G^\ra$ in the ambient space
$$\prod_{i \in \tI} \P ( L ( \varpi _i )^* \otimes \C (\!( z^{-1} )\!) ) \subset \prod_{i \in \tI} \P ( L ( \varpi _i )^* \otimes \C [\![z, z^{-1} ]\!] ) \supset \bQ_G^\ra.$$
The equivalence class of points of
$$\prod_{i \in \tI} \P ( L ( \varpi _i )^* \otimes \C (\!( z^{-1} )\!) ) \cap \bQ_G^\ra \subset \prod_{i \in \tI} \P ( L ( \varpi _i )^* \otimes \C (\!( z^{-1} )\!) )$$
that are transferred to each other by the action of $\bI^\flat$ is in bijection with $W_\af$. The Zariski closure of each equivalence class defines another version of an opposite Schubert variety in
\begin{equation}
\prod_{i \in \tI} \P ( L ( \varpi _i )^* \otimes \C (\!( z^{-1} )\!) ).\label{prod:Linv}
\end{equation}
As every $G[\![z^{-1}]\!]$-orbit in (\ref{prod:Linv}) contains a point that does not belong to $\bQ^\ra_G$, an opposite Schubert variety in (\ref{prod:Linv}) cannot be a subscheme of $\bQ_G^\ra$. Nevertheless, each $\bQ_G^- ( w )$ defines a Zariski dense subset of an $\bI^{\flat}$-orbit consisting of points whose coordinates have only finitely many $z$-degree components (without any uniform bounds on their degrees).

In view of \cite{Kat18d}, the intersection of a Schubert variety and an opposite Schubert variety (labelled by $W_\af$) does not depend on a choice of these two versions of opposite Schubert varieties, and referred to as a Richardson variety of $\bQ_G^\ra$ (see \S \ref{subsec:GQL2} for more detailed account, including its dimension formula). Unlike the case of $\sB$, our open Richardson varieties of $\bQ_G^\ra$ are not necessarily smooth.
\end{rem}

We set $\g[z] := \g \otimes \C [z]$ and $\bI' := \bI \cap G[z]$, where the latter is an ind-group whose ind-structure is induced by $G[z]$.

\begin{thm}[Chari-Ion \cite{CI15}, see also \cite{Kat18} \S 1.2]\label{CI-proj}
For each $\lambda = \sum_{i \in \tI} m_i \varpi_i \in P_+$, we have a $(\Gm \times H)$-semisimple $G[z]$-module $\bW ( \la )$ with the following properties:
\begin{enumerate}
\item It is $G$-integrable, i.e. it is a direct sum of finite-dimensional $(\Gm \times G)$-modules by restriction;
\item It is generated by the action of $\bI'$ from a unique $($cyclic$)$ vector $($up to scalar$)$ with its $(\Gm \times H)$-weight $(w_0 \la)$;
\item It is projective in the category of $(\Gm \times H)$-semisimple $G$-integrable $G[z]$-modules whose graded characters belong to $\Z [\![q]\!] \{ \ch \, L ( \mu ) \mid \mu \le \la \}$;
\item We have
\begin{equation}
\gch \, \bW ( \la ) = ( \displaystyle \prod_{i \in \tI} \displaystyle \prod_{j=1}^{m_j}\frac{1}{1-q^j} ) P _{\lambda},\label{char-fact}
\end{equation}
where $P_\lambda \in ( \Z [q] P )^W$ is the symmetric Macdonald polynomial specialized to $t = 0$. In particular, we have:
\begin{equation}
P _\la \equiv \ch \, L ( \lambda ) \mod \Z [q] \{ \ch \, L ( \mu ) \mid \mu < \la \}. \label{P-modulo}
\end{equation}
\end{enumerate}
\end{thm}

\begin{thm}[\cite{KNS17} Corollary 4.31 and Proposition D.1]\label{BWB-KNS}
For each $\la \in P_+$, we have
\begin{equation}
\Gamma ( \bQ_G, \cO_{\bQ _G} ( \lambda ))^{\vee} = \bW (  - w_0 \lambda ).\label{H0-BWB}
\end{equation}
For each $\la \in P${, $w \in W$ and $\beta \in Q^{\vee}$,} we have
\begin{align*}
q^{\left< \la, \beta \right>} \gch \, \Gamma ( \bQ_G (wt_{\beta}), \cO_{\bQ _G (wt_{\beta})} ( \lambda )) & = \gch \, \Gamma ( \bQ_G (w), \cO_{\bQ _G (w)} ( \lambda )) \in (\C[\![q^{-1}]\!])P\\ \text{and} \hskip 5mm &  H^{>0} ( \bQ_G (w), \cO_{\bQ _G (w)} ( \lambda )) = \{0\}.
\end{align*}
\end{thm}

\begin{cor}\label{1-dimW}
The $(\Gm \times H)$-weight $( u \varpi_i + m \delta )$-part of
$$\Gamma ( \bQ_G, \cO_{\bQ _G} (  \varpi_i ))$$
is one-dimensional for each $i \in \tI$, $u \in W$, and $m \in \Z_{\le 0}$.
\end{cor}
\begin{proof}
By (\ref{P-modulo}), the monomial $q^m e^{u \varpi_i}$ appears in $P_{\lambda}$ only if $m=0$ and its coefficient is $1$. Since the $q$-series appearing as the dual of the RHS of (\ref{H0-BWB}) is
$$1 + q^{-1} + q^{-2} + \cdots$$
by (\ref{char-fact}), we conclude the assertion.
\end{proof}

For each $u \in W_\af$ and $i \in \tI$, we have a $(\Gm \times H)$-eigenvector $\phi_{u,i} \in ( L ( \varpi _i )^* \otimes \C [\![z]\!] z^{-m})^{\vee}$ dual to $p_u$ in the middle term of (\ref{UN}) for $m \gg 0$. It uniquely gives a $( \Gm \times H )$-eigensection in $\Gamma ( \bQ_G(t_{\beta}),  \cO_{\bQ_G(t_{\beta})} ( \varpi_i ) )$ of weight $u \varpi_i$ for each $t_{\beta} \ge_\si u$ since this $( \Gm \times H )$-weight space is one-dimensional by Corollary \ref{1-dimW} (and Theorem \ref{si-Bruhat} 3)).

\begin{lem}\label{phi-def}
For each $u \in W_\af$, the scheme $\bQ_G ( u )$ is set-theoretically defined as
$$\{ x \in \bQ_G^\ra \mid \phi_{v,i} ( x ) = 0, \hskip 3mm \forall (v,i) \in S ( u ) \}$$
through $(\ref{formal-proj-emb})$, where
$$S ( u ) := \{ (v,i) \in W_\af \times \tI \mid \phi_{v,i} ( x ) = 0, \hskip 3mm \forall x \in \bO ( u ) \}.$$
Similarly, the ind-closed subset $\bQ_G^- (u) \subset \bQ_G^\ra$ borrowed from Remark \ref{oppSchubert} is set-theoretically defined as
$$S^- ( u ) := \{ (v,i) \in W_\af \times \tI \mid \phi_{v,i} ( x ) = 0, \hskip 3mm \forall x \in \bI^- p_u \}.$$
\end{lem}

\begin{proof}
We first fix $\beta \in Q^{\vee}$ such that $t_{\beta} \ge_\si u$ and find sections $\{ \phi_{u,i} \}_{u, i}$. We have $\phi_{u,i} ( x' ) \neq 0$ for every point $x' \in \bO( u )$, and $\phi_{u,i} (\bullet) = 0$ for some $i \in \tI$ (set-theoretically) defines $\bQ _G (u) \setminus \bO ( u )$ in $\bQ_G(u)$. By the uniqueness of these sections, this set-theoretic defining property of $\phi_{u,i}$ holds by considering it as a section of a line bundle on any of the spaces in (\ref{UN}). Therefore, we conclude the first assertion. The second assertion follows by additionally taking Remark \ref{oppSchubert} (cf. \S \ref{subsec:GQL2}) into account.
\end{proof}

Let $\mathcal E$ be a $(\Gm \ltimes \bI)$-equivariant quasi-coherent sheaf on $\bQ_G$ that satisfies the condition $(\bigstar)$ consisting of the following two:
\begin{itemize}
\item[$(\bigstar)_1$] There exists $i_0 \in \Z$ (that may depend on $\mathcal E$) such that
$$H^{i} ( \bQ_G, \mathcal E \otimes_{\cO_{\bQ_G}} \cO_{\bQ_G} ( \la )) = \{0\}\hskip 5mm \text{for each} \hskip 2mm i > i_0, \hskip 2mm \text{and} \hskip 2mm \la \in P_{+};$$
\item[$(\bigstar)_2$] We have
$$\gch \, H^i ( \bQ_G, \mathcal E\otimes_{\cO_{\bQ_G}} \cO_{\bQ_G} ( \la )) \in ( \C(\!(q^{-1})\!) ) P\hskip 5mm \text{for each} \hskip 2mm i \in \Z, \hskip 2mm \text{and} \hskip 2mm \la \in P_{+}.$$
\end{itemize}

\begin{rem}
{\bf 1)} In the condition $(\bigstar)$, we set
$$\mathcal E \otimes_{\cO_{\bQ_G^\ra}} \cO_{\bQ_G^\ra} ( \la ) := \mathcal E \otimes_{\cO_{\bQ_G ( w )}} \cO_{\bQ_G ( w )} ( \la ) = \mathcal E \otimes_{\cO_{\bQ_G}} \cO_{\bQ_G} ( \la )$$
for each $w \ge_\si e$. This makes the inclusion $K_{H} ( \bQ_G ) \subset K_H ( \bQ^\ra_G )$ (described below) compatible with the tensor products of line bundles; {\bf 2)} In view of Theorem \ref{BWB-KNS}, the sheaf $\cO_{\bQ_G(w)}$ $(w \le_\si e)$ satisfies the condition $(\bigstar)$.
\end{rem}
For the above $\mathcal E$ and $\la \in P_{+}$, we set 
$$\chi_{q} ( \bQ_G, \mathcal E ( \la )) := \sum_{i \ge 0} (-1)^i \gch \, H^i ( \bQ_G, \mathcal E \otimes_{\cO_{\bQ_G}} \cO_{\bQ_G} ( \la ) ) \in (\C(\!(q^{-1})\!)) P.$$

\begin{lem}\label{KSESpre}
Suppose that we have a short exact sequence
$$0 \rightarrow \mathcal E_1 \rightarrow \mathcal E_2 \rightarrow \mathcal E_3 \rightarrow 0$$
of $(\Gm \ltimes \bI)$-equivariant quasi-coherent sheaves on $\bQ_G$ that satisfy $(\bigstar)$. We have
$$\chi_{q} ( \bQ_G, \mathcal E_2 ( \la )) = \chi_{q} ( \bQ_G, \mathcal E_1 ( \la )) + \chi_{q} ( \bQ_G, \mathcal E_3 ( \la )) \hskip 5mm \la \in P_+.$$
\end{lem}

\begin{proof}
This follows from the long exact sequence of cohomologies since each $\chi_{q} ( \bQ_G, \mathcal E_i ( \la ))$ ($i=1,2,3$) is well-defined.
\end{proof}

\begin{lem}\label{div}
For each $i \in \tI$, we have a short exact sequence
\begin{equation}
0 \rightarrow \C_{\varpi_i} \otimes \cO_{\bQ_G} ( -\varpi_i ) \rightarrow \cO _{\bQ_G} \rightarrow \cO_{\bQ_G ( s_i )} \rightarrow 0,\label{sLSrem}
\end{equation}
that is $(\Gm \ltimes \bI)$-equivariant.
\end{lem}

\begin{proof}
Let $\mathbb O$ be the dense open $G [\![z]\!]$-orbit in $\bQ_G$. We have $\mathbb O = \bigsqcup_{w \in W} \mathbb O ( w )$ by Theorem \ref{si-Bruhat} and the Bruhat decomposition. In particular, $\mathbb O$ yields an (uncountable dimensional) $G[\![z]\!]$-equivariant affine fibration over $\sB$ by setting $z = 0$. We lift (\ref{eLSrem}) by pulling back to obtain (\ref{sLSrem}) on $\mathbb O$. Twisting by $\cO_{\mathbb O} ( \varpi_i )$, we can interpret the map $\C_{\varpi_i} \otimes \cO_{\bQ_G} \rightarrow \cO _{\bQ_G} ( \varpi_i )$ as a unique (up to scalar) $(\Gm \ltimes \bI)$-equivariant section of $(\Gm \times H)$-weight $\varpi_i$ in $\Gamma ( \bQ_G, \cO_{\bQ_G} ( \varpi_i ) )$ as it restricts to a unique map (up to scalar) in (\ref{eLSrem}) by restriction. As a consequence, the short exact sequence (\ref{eLSrem}) yields the short exact sequence (\ref{sLSrem}) if the natural ($\Gm \ltimes \bI$-equivariant) inclusion
\begin{equation}
\C_{\varpi_i} \otimes \cO_{\bQ_G} ( -\varpi_i ) \hookrightarrow \ker ( \cO _{\bQ_G} \rightarrow \cO_{\bQ_G ( s_i )} )\label{nat-incl-lem}
\end{equation}
is an isomorphism. We set $\mathcal K := \ker ( \cO _{\bQ_G} \rightarrow \cO_{\bQ_G ( s_i )} )$.

Consider the map $\pi_i : P_i \times^{B} \bQ_G(s_i) \rightarrow \bQ_G$, that is surjective. We have a line bundle $\cO (-1)$ on $\mathop{SL} ( 2, i ) \times^{B_i} \bQ_G(s_i)$ obtained as the pullback of $\cO_{\P^1} (-D)$ through
$$P_i \times^{B} \bQ_G(s_i) \rightarrow P_i / B \cong \P^1,$$
where $D$ is the point $B/B \in \P^1$. The sheaf $\cO_{\P^1} (-D)$ (and hence $\cO ( -1 )$) is $B$-equivariant and admits a $P_i$-linearization after twisted by the $H$-character $\C _{\varpi_i}$ by (\ref{eLSrem}) for $\mathop{SL} ( 2, i )$. Let $\mathsf{infl}$ be the functor that inflates a $B$-equivariant sheaf on $\bQ_G(s_i)$ to a $P_i$-equivariant sheaf on $P_i \times^{B} \bQ_G(s_i)$.

By the Demazure character formula (\cite[Theorem A]{Kat18}, transported to this setting in \cite{KNS17}), we find that the definition of $\mathcal K$ is interpreted as
$$\R^{\bullet} ( \pi_i )_* \left( \cO ( - 1 ) \otimes \mathsf{infl} \, ( \cO_{\bQ_G ( s_i )} ) \right).$$
In particular, its twist by $\C _{\varpi_i}$ acquires the $P_i$-equivariant structure. In addition, this procedure commutes with the $G(\!(z)\!)$-equivariant line bundle twist of $\bQ_G^\ra$ (as presented in \cite[\S 6]{KNS17}, cf. Theorem \ref{H-si} and Theorem \ref{HQc}). Therefore, we conclude that
\begin{equation}
\C _{-\varpi_i} \otimes H^m ( \bQ_G, \mathcal K ( \la )) \hskip 5mm \la \in P\label{twisted}
\end{equation}
admits an action of $P_i$. Since
$$P_i \times^{B} \bQ_G(s_i) \cong ( \bI \cdot P_i ) \times^{\bI} \bQ_G(s_i),$$
we deduce that (\ref{twisted}) admits a $( P_i \bI' )$-action that prolongs the $P_i$-action. 

By \cite[Corollary 4.30]{KNS17}, we have an inclusion
\begin{equation}
\Gamma ( \bQ_G ( s_i ), \cO_{\bQ _G ( s_i )} ( \lambda ))^{\vee} \hookrightarrow \Gamma ( \bQ_G, \cO_{\bQ _G} ( \lambda ))^{\vee} \hskip 5mm \lambda \in P_+\label{lemmaKNS}
\end{equation}
as $\bI'$-modules, and the RHS has a cyclic vector with $( \Gm \times H )$-weight $- \lambda$. We set
$$K ( \lambda ) := \C_{\varpi_i} \otimes \Gamma ( \cO_{\bQ _G}, \mathcal K ( \lambda ) )^{\vee} \cong \C_{\varpi_i} \otimes \frac{\Gamma ( \bQ_G, \cO_{\bQ _G} ( \lambda ))^{\vee}}{\Gamma ( \bQ_G ( s_i ), \cO_{\bQ _G ( s_i )} ( \lambda ))^{\vee}}.$$
We have a surjection
$$\theta_\la : K ( \la ) \longrightarrow \!\!\!\!\! \rightarrow \Gamma ( \bQ_G, \cO_{\bQ _G} ( \la - \varpi_i ))^{\vee}$$
and (\ref{nat-incl-lem}) is an isomorphism if this is an isomorphism for every $\la \in P_+$. Since the action of $\mathop{SL} ( 2, j )$ ($j \neq i \in \tI$) commutes with $\C_{\varpi_i} \otimes \bullet$ and preserves $\bQ_G(s_i)$, we find a $P_j$-action on $K ( \la )$ from (\ref{lemmaKNS}), that coincide with the $P_i$-action along the intersection $B = P_i \cap P_j$. This particularly implies that $K ( \la )$ is invariant under the Demazure functor for each $i \in \tI$, and hence it acquires the $G$-action (\cite[(5.6)]{Jos85}). Therefore, $K ( \la )$ admits a $G$-action, that is upgraded into a $G$-integrable $G[z]$-module structure.

Being a quotient of $\C_{\varpi_i} \otimes \Gamma ( \bQ_G, \cO_{\bQ _G} ( \la ))^{\vee}$, the $G[z]$-module $K ( \lambda )$ is generated by a cyclic vector with $(\Gm \times H)$-weight $( \varpi_i - \la )$ by the action of $\bI'$, and if a $H$-weight $- \mu$ appears in $K ( \la )$, then we have $\mu \le \la - \varpi_i$. It follows that
\begin{equation}
\gch \, K ( \la ) \in \sum_{\mu \le \la - \varpi_i} \Z [\![q^{-1}]\!] \cdot \ch \, L ( \mu )^*.\label{weight-est-K}
\end{equation}

By Theorem \ref{CI-proj} 3), the $G[z]$-module $\Gamma ( \bQ_G, \cO_{\bQ _G} ( \la - \varpi_i ))^{\vee}$ is the largest one generated by a cyclic vector with $(\Gm \times H)$-weight $- ( \lambda - \varpi_i )$ that satisfies the same condition as $\gch \, K ( \la )$ in (\ref{weight-est-K}). Therefore, we conclude that $\theta_\la$ must be an isomorphism for every $\la \in P_+$.

This in turn yields that (\ref{nat-incl-lem}) is an isomorphism as desired.
\end{proof}

\subsection{Equivariant $K$-groups of semi-infinite flag manifolds}\label{subsec:eK}

The $(\Gm \ltimes \bI)$-equivariant $K$-group $K_{\Gm \ltimes \bI} ( \bQ_G^\ra )$ of $\bQ_G^\ra$ is defined and studied in \cite[\S 5--\S 6]{KNS17}. However, the definition given there does not allow one to forget the $\Gm$-action (nor equivalently make the $q=1$ specialization). Thus, here we present the construction and basic properties of the $H$-equivariant $K$-group $K_{H} ( \bQ_G^\ra )$ of $\bQ_G^\ra$ obtained as a variant of $K_{\Gm \ltimes \bI} ( \bQ_G^\ra )$. Note that the structure constants of the tensor products of line bundles on $K_{H} ( \bQ_G^\ra )$  (Theorem \ref{HQc}) are genuinely infinite with respect to the Schubert basis (see \S \ref{subsec: sl(2)}), and this action is essentially used in our proof of Theorem \ref{wJcomp}. Therefore, replacing $K_{H} ( \bQ_G^\ra )$ with its (pure) algebraic variant breaks down our proof of the main theorem.

For each $\beta \in Q^{\vee}_+$, we define a free $\C[q^{\pm 1}] P$-module $\widetilde{K} ( \beta )$ as:
$$\widetilde{K} ( \beta ) := \bigoplus_{t_{\beta} \ge_\si w \in W_{\af}} \C [q^{\pm 1}] P \cdot [\cO_{\bQ_G ( w )}].$$
We have $\widetilde{K} ( \beta + \gamma ) \subset \widetilde{K} ( \beta )$ for each $\beta,\gamma \in Q^{\vee}_+$. We set
$$\widetilde{K} ( \bQ_G ) := \varprojlim_{\beta \in Q^{\vee}_+} \left( \widetilde{K} ( 0 ) /\widetilde{K} ( \beta ) \right).$$
Each element of $\widetilde{K} ( \bQ_G )$ is presented as
$$\sum_{e \ge_\si w \in W_{\af}} a_{w} [\cO_{\bQ_G ( w )}] \hskip 5mm a_w \in \C [q^{\pm 1}] P,$$
where the sum is understood to be formal in the sense that infinitely many coefficients can be non-zero. For each $\la \in P$ and $\sum_w a_w [\cO_{\bQ_G ( w )}] \in \widetilde{K} ( \bQ_G )$, we define the formal sum
\begin{equation}
\widetilde{\Theta} ( \la ) \, ( \sum_w a_w [\cO_{\bQ_G ( w )}] ) = \sum_w a_w \gch \, \Gamma ( \bQ_G, \cO_{\bQ_G ( w )} ( \la ) ) \hskip 5mm \text{or} \hskip 5mm \infty,\label{eqn:defTheta}
\end{equation}
where the first case occurs if and only if
$$\lim_{n \to \infty}\sum_{w\ge_\si t_{n \vartheta^{\vee}}} a_w \gch \, \Gamma ( \bQ_G, \cO_{\bQ_G ( w )} ( \la ) ) \in (\C (\!(q^{-1})\!)) P,$$
and the second case occurs otherwise. Here the formula in the RHS of (\ref{eqn:defTheta}) is equal to the corresponding Euler-Poincar\'e characteristic by Theorem \ref{BWB-KNS}. We have $\widetilde{\Theta} ( \la ) (\bullet) = 0$ if $\la \not\in P_+$. We set
$$\widetilde{K}' ( \bQ_G ) := \{ \sum_{w} a_{w} [\cO_{\bQ_G ( w )}] \in \widetilde{K} ( \bQ_G ) \mid \widetilde{\Theta} ( \la ) ( \sum_{w} | a_{w} | [\cO_{\bQ_G ( w )}] ) \in (\R (\!(q^{-1})\!)) P \},$$
where $\la$ runs over $\la \in P_{++}$, and the absolute value $|a_w|$ of $a_w$ is taken coefficientwise. This is a $\C [q^{\pm 1}]P$-submodule of $\widetilde{K} (\bQ_G)$, and is a variant of the $(\Gm \ltimes \bI)$-equivariant $K$-group $K_{\Gm \ltimes \bI} ( \bQ_G )$ of $\bQ_G$ defined in \cite[\S 4]{KNS17}.

We define
$$\mathrm{Fun}_P := \{ (f,S) \mid  \emptyset \neq S \subset P \text{ satisfies } S + P_+ \subset S, \, f : S \rightarrow ( \C(\!(q^{-1})\!) ) P\},$$
where we understand that $f$ is defined on $S$, and
$$\mathrm{Fun}_P^{\mathrm{neg}} := \{ (f,S) \in \mathrm{Fun}_P \mid f ( \la ) = 0 \hskip 3mm \text{if} \hskip 3mm \left< \al_i^{\vee}, \la \right> \gg 0, \forall i \in \tI \}.$$
For any $(f,S_f),(g,S_g) \in \mathrm{Fun}_P$, we define
$$(f,S_f) \pm (g,S_g) := ( f \pm g, S_f \cap S_g ) \in \mathrm{Fun}_P.$$
Together with the multiplication by $\C [q^{\pm 1}] P$, this makes $\mathrm{Fun}_P$ and $\mathrm{Fun}_P^{\mathrm{neg}}$ into $\C [q^{\pm 1}] P$-modules. We may drop $S$ from $(f,S) \in \mathrm{Fun}_P$ or its image in $\mathrm{Fun}_P/\mathrm{Fun}_P^{\mathrm{neg}}$ whenever the meaning is clear from the context. This convention is justified by
$$(f, S_f) - (g, S_g) \in \mathrm{Fun}_P^{\mathrm{neg}} \hskip 5mm \text{if} \hskip 5mm f = g \hskip 3mm \text{on} \hskip 3mm (S_f \cap S_g).$$
In view of \cite[\S 5]{KNS17}, the map
\begin{align*}
\widetilde{\Theta} : & \, \widetilde{K}' ( \bQ_G ) \ni \sum_{w}a_{w} [\cO_{\bQ_G ( w )}] \\
& \mapsto \left[ \la \mapsto \sum_w \sum_{i \ge 0} (-1)^i a_w \gch \, H^i ( \bQ_G, \cO_{\bQ_G ( w )} ( \la ) ) \right] \\
& \equiv \left[ \la \mapsto \sum_w a_w \gch \, \Gamma ( \bQ_G, \cO_{\bQ_G ( w )} ( \la ) ) \right] \in \mathrm{Fun}_P
\end{align*}
induces an inclusion
$$\Theta : \widetilde{K}' ( \bQ_G ) \hookrightarrow \frac{\mathrm{Fun}_P}{\mathrm{Fun}_P^{\mathrm{neg}}}.$$
We say that that a $(\Gm \ltimes \bI)$-equivariant quasi-coherent sheaf $\mathcal E$ that satisfies $(\bigstar)$ defines a class
$$[\mathcal E] = \sum_{w} a_{w} (q) [\cO_{\bQ_G(w)}] \in \widetilde{K}' ( \bQ_G ), \hskip 5mm a_w (q)\in \C [q^{\pm 1}] P$$
if and only if the following assignment
$$\la \mapsto \left( \chi_{q} ( \bQ_G, \mathcal E ( \la )) - \sum_{w \in W_\af} a_{w} \chi_{q} ( \bQ_G, \cO_{\bQ_G ( w )} ( \la ) ) \right)$$
defines a function on $P_{++}$ that belongs to $\mathrm{Fun}_P^{\mathrm{neg}}$.

\begin{lem}\label{KSES}
Suppose that we have a short exact sequence
$$0 \rightarrow \mathcal E_1 \rightarrow \mathcal E_2 \rightarrow \mathcal E_3 \rightarrow 0$$
of $(\Gm \ltimes \bI)$-equivariant quasi-coherent sheaves on $\bQ_G$ that satisfy $(\bigstar)$. If two of the above three sheaves define classes in $\widetilde{K}' ( \bQ_G )$, then the remaining one also define a class in $\widetilde{K}' ( \bQ_G )$. In this case, we have
$$[\mathcal E_2] = [\mathcal E_1] + [\mathcal E_3].$$
\end{lem}

\begin{proof}
The condition $(\bigstar)$ guarantee the existence of the functions $f_i : \la \mapsto \chi_{q} ( \bQ_G, \mathcal E_i ( \la ))$ ($i=1,2,3$) defined on $P_+$ in $\mathrm{Fun}_P$. We have $f_2 = f_1 + f_3 \in \mathrm{Fun}_P$ by Lemma \ref{KSESpre}. Thus, so are their images in $\frac{\mathrm{Fun}_P}{\mathrm{Fun}_P^{\mathrm{neg}}}$. Since $\widetilde{K}' ( \bQ_G )$ is a(n abelian) group, we conclude the results.
\end{proof}

Since each of the coefficient of an element of $\widetilde{K}' ( \bQ_G )$ belongs to $\C [q^{\pm 1}]P$, and the multiplication by $\C [q^{\pm 1}]$ preserves $\widetilde{K}' ( \bQ_G ) \subset \widetilde{K} ( \bQ_G )$, the following $q=1$ specialization make sense:
$$K_{H} ( \bQ_G ) := \C \otimes_{\C [q^{\pm 1}]} \widetilde{K}' ( \bQ_G ).$$
By abuse of notation, we denote the class of a $(\Gm \ltimes \bI)$-equivariant quasi-coherent sheaf $\mathcal E$ in $K_{H} ( \bQ_G )$ obtained from its class $[\mathcal E] \in \widetilde{K}' ( \bQ_G )$ by the same letter.

\begin{rem}
Our definition of $\widetilde{K}' ( \bQ_G )$ here and \cite{KNS17} depends on the $\C[q^{\pm 1}] P$-linear independence of the asymptotic behavior of the functions
$$\la \mapsto \chi_{q} ( \bQ_G, \cO_{\bQ_G(w)} ( \la )) \hskip 5mm e \ge_\si w \in W_\af.$$
By \cite[Appendix A]{Kat18d}, the push-forward of $\cO_{\bQ_G(w)}$ to a parabolic version of $\bQ_G^\ra$ is the structure sheaf of a Schubert variety and have no higher direct images. In particular, the sheaf $\cO_{\bQ_G(w)}$ also satisfies a relative version of the condition $(\bigstar)$. This cohomological affinity makes $K_H ( \bQ_G )$ functorial with respect to the push-forwards to its parabolic analogues \cite{Kat19a,Kat21}.
\end{rem}

\begin{lem}\label{K-basis}
We have
$$K_{H} ( \bQ_G ) = \{ \sum_{e \ge_\si w \in W_{\af}} a_{w} [\cO_{\bQ_G ( w )}] \mid a_w \in \C P \},$$
where the sum in the definition is understood to be formal.
\end{lem}

\begin{proof}
All elements $a_{w}$ in the RHS are in $\C [q^{\pm 1}] P$ by definition. In view of Theorem \ref{BWB-KNS}, only finitely many terms in
$$\{a_w \gch \, \Gamma ( \bQ_G, \cO_{\bQ_G ( w )} ( \la ) )\}_{w \le _\si e}$$
carry a non-zero coefficients in $q^m$ for each $m \in \Z$ and $\la \in P_{++}$. Therefore, we have
$$\widetilde{\Theta} ( \la ) ( \sum_{w} a_{w} [\cO_{\bQ_G ( w )}] ) \in (\C [\![q^{-1}]\!]) P \subset (\C (\!(q^{-1})\!)) P $$
for each $\la \in P_{++}$. Thus, the assertion holds.
\end{proof}

As a natural extension of $K_{H} ( \bQ_G )$, we define
$$K_{H} ( \bQ_G^\ra ) := \{ \sum_{w \in W_{\af}} a_{w} [\cO_{\bQ_G ( w )}] \mid a_w \in \C P, \, \exists \beta_0 \in Q^{\vee} \text{ s.t. } a _{u t_{\beta}} = 0, \, \forall u \in W, \beta \not> \beta_0 \},$$
where the sum is understood to be formal. We have $K_{H} ( \bQ_G ) \subset K_{H} ( \bQ_G^\ra )$.

For each $\beta \in Q^{\vee}$, we have an endomorphism of $K_H ( \bQ_G^{\ra} )$ defined by
\begin{equation}
a [\cO_{\bQ_G ( w )}] \mapsto a [\cO_{\bQ_G ( wt_{\beta} )}] \hskip 5mm \forall a \in \C P, w \in W_\af\label{right-Q}
\end{equation}
induced by the right action of $Q^{\vee} \subset W_\af$ (see Theorem \ref{si-Bruhat} 3)). The translations of $K_H ( \bQ_G )$ with respect to this right $Q^{\vee}$-action equip $K_H ( \bQ_G^{\ra} )$ with a local (open) base of a linear topology (at the point $0 \in K_H ( \bQ_G^{\ra} )$). The subset $\{t_{\beta}\}_{\beta \in Q^{\vee}_+} \subset Q^{\vee}$ acts on $K_H ( \bQ_G )$. We denote by $\mathcal R$ the ring consisting of the formal $\C P$-linear combinations of $\{t_{\beta}\}_{\beta \in Q^{\vee}_+}$, equipped with an induced topology from $K_H ( \bQ_G )$. We have $\C Q^{\vee}_+ \subset \mathcal R$ spanned by the constant coefficient monomials of $Q^{\vee}_+$.

\begin{lem}\label{KQ-free}
$K_H ( \bQ_G )$ and $K_H ( \bQ_G^{\ra} )$ are free modules over $\mathcal R$ and $\C Q^{\vee} \otimes_{\C Q^{\vee}_+} \mathcal R$, respectively. Moreover, their ranks are $|W|$.
\end{lem}

\begin{proof}
We have an explicit basis $\{ [\cO_{\bQ_G ( w )}]\}_{w \in W}$ in both cases.
\end{proof}

Now we transplant the basic properties of $K_{\Gm \ltimes \bI} ( \bQ_G^{\ra} )$ and $K_{\Gm \ltimes \bI} ( \bQ_G )$ considered in \cite{KNS17} to our $K_{H} ( \bQ_G^{\ra} )$ and $K_{H} ( \bQ_G )$. The first one is immediate from the expression:

\begin{thm}[\cite{KNS17} \S 6 and \cite{Kat18} Theorem A]\label{H-si}
The vector space $K_H ( \bQ_G^{\ra} )$ affords a representation of $\sH$ with the following properties:
\begin{enumerate}
\item the subalgebra $\C P \subset \sH$ acts by the multiplication as $\C P$-modules;
\item we have
$$D_i ( [\cO_{\bQ_G (w)}] ) = \begin{cases} [\cO_{\bQ_G (s_i w)}] & (s_i w >_{\si} w) \\ [\cO_{\bQ_G (w)}] & (s_i w <_{\si} w)\end{cases}.$$
\end{enumerate}
\end{thm}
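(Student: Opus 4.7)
The plan is to realize the $\sH$-action geometrically through parabolic projections, following the standard template for Kac--Moody flag varieties adapted to the semi-infinite setting. For each $i \in \tI_\af$ I would introduce the parabolic semi-infinite flag manifold
$$\bQ^{(i)} := G (\!(z)\!) / ( H \cdot N (\!(z)\!) \cdot B_i )$$
using the Borel $B_i \subset \mathop{SL} ( 2, i )$ from the preliminaries, together with the natural $G (\!(z)\!)$-equivariant projection $p_i : \bQ_G^{\mathrm{rat}} \to \bQ^{(i)}$ whose fibers are isomorphic to $\mathop{SL} ( 2, i ) / B_i \cong \P^1$. The goal is then to set $D_i := p_i^* \circ p_{i,*}$ on classes of Schubert closures and verify the relations of $\sH$ from Definition \ref{def-DAHA}.

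The orbit-theoretic input I would establish next is a $\P^1$-fibration analogue of the classical picture for $\sB \to G/P_i$: the $\bI$-orbits of $\bQ^{(i)}$ are indexed by $W_\af / \langle s_i \rangle$ via the generic Bruhat order, and for a minimal length representative $w$ of a coset, $p_i$ restricts to an isomorphism $\mathbb O ( w ) \xrightarrow{\sim} p_i ( \mathbb O ( w ) )$ when $s_i w <_\si w$, while it folds together $\mathbb O ( w )$ and $\mathbb O ( s_i w )$ into a single $\P^1$-family over $p_i ( \mathbb O ( s_i w ) )$ when $s_i w >_\si w$. Granted this and the projection formula along the $\P^1$-fiber, the case distinction in (2) follows: in the first case the composition $p_i^* p_{i,*}$ preserves $[\cO_{\bQ_G ( w )}]$ since $\overline{\mathbb O ( w )} = p_i^{-1} ( p_i ( \overline{\mathbb O ( w )} ) )$, while in the second case $p_{i,*} [\cO_{\bQ_G ( w )}] = [\cO_{p_i ( \overline{\mathbb O ( w )} )}]$ and $p_i^*$ of this gives $[\cO_{\bQ_G ( s_i w )}]$. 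For the $\C P$-action of (1), tensoring by the $G (\!(z)\!)$-equivariant line bundle $\cO _{\bQ_G^{\mathrm{rat}}} ( \la )$ preserves each Schubert closure and multiplies the class by $e^\la$ after reading off the $H$-character at the base point, the normalization being that fixed in the preamble to Theorem \ref{si-Bruhat}. The mixed relation $D_i e^\la - e^{s_i \la} D_i = ( e^\la - e^{s_i \la} ) / ( 1 - e^{\al_i} )$ then reduces to the classical push-pull identity for $\cO ( \la )$ along a $\P^1$-fiber of $p_i$, whose $H$-equivariant Euler characteristic is the truncated geometric series producing exactly this rational expression; the braid relations reduce to a rank-two computation inside $\mathop{SL} ( 2, i ) \cdot \mathop{SL} ( 2, j )$-orbits, and $D_i^2 = D_i$ is the standard consequence of factoring $p_i^* p_{i,*} p_i^* = p_i^*$.

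The main obstacle is giving a rigorous meaning to $p_{i,*}$ on $K_H ( \bQ_G^{\mathrm{rat}} )$: the closure $\overline{\mathbb O ( w )}$ is of both infinite dimension and infinite codimension, so pushforward cannot be taken naively. I would handle this exactly as in \cite{FM99,FFKM,KNS17}: decompose along the right $Q^\vee$-translation action of Theorem \ref{si-Bruhat}(2), observe that the lower-bound condition defining $K_H ( \bQ_G^{\mathrm{rat}} )$ is preserved because $p_i$ is equivariant with respect to this $Q^\vee$-action, and on each pro-finite-dimensional transverse slice use the $\P^1$-fiber (genuinely finite-dimensional) to run the push-pull computation and verify the vanishing of the higher direct images. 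Once this finiteness is in place, what remains is purely a matter of tracking the twist by $-w_0$ separating the present normalization from that of \cite{KNS17}, so the statement is essentially equivalent to \cite[Theorem~6.4]{KNS17} as indicated.
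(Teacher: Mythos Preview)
The paper does not prove this statement; it is quoted from \cite[Theorem~6.4]{KNS17}, with the only additional remark being that the $q=1$ specialization is immediate from the explicit formulas there. So there is no argument in the paper to compare against directly, only the structure of \cite{KNS17}, which builds the $\sH$-action through characters of global sections and Demazure-type recursions rather than a literal push--pull along a projection.

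Your projection $p_i$ does not exist. For $i \in \tI$ the Borel $B_i = \mathop{SL}(2,i) \cap B$ already sits inside $H \cdot N \subset H \cdot N(\!(z)\!)$, so your $\bQ^{(i)}$ coincides with $\bQ_G^{\mathrm{rat}}$ and $p_i$ is the identity. Enlarging $B_i$ to $\mathop{SL}(2,i)$ in order to force a $\P^1$ fiber does not repair this: for $i \in \tI$ the subgroup $\mathop{SL}(2,i)$ fails to normalize $N(\!(z)\!)$ (conjugation by $s_i$ moves the $\al_i$-root subgroup out of $N$), so $H \cdot N(\!(z)\!) \cdot \mathop{SL}(2,i)$ is not a group and the quotient is undefined. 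More structurally, any enlargement of the isotropy $H \cdot N(\!(z)\!)$ on the \emph{right} would identify $\bO(w)$ with $\bO(w s_i)$ and hence implement \emph{right} multiplication on $W_\af$; the theorem is about the \emph{left} action $w \mapsto s_i w$, which is what the left $\bI$-action on $\bQ_G^{\mathrm{rat}}$ sees. The geometric avatar of that left action is not a projection but the inflation (Bott--Samelson) map $P_i \times^B \bQ_G(w) \to \bQ_G(s_i w)$, exactly the construction used for the quasi-map approximations in Theorem~\ref{Qnorm}(4).

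A second confusion compounds this. You identify the $e^\la$-action in (1) with tensoring by the line bundle $\cO_{\bQ_G^{\mathrm{rat}}}(\la)$. In the paper these are \emph{different} operations: $e^\la$ is scalar multiplication on the free $\C P$-module (the $R(H)$ base-ring action), whereas the line-bundle twist is the separate endomorphism $\Xi(\la)$ of Theorem~\ref{HQc}, whose matrix in the Schubert basis is the semi-infinite Pieri--Chevalley rule and is far from diagonal (see Lemma~\ref{div} and the $\mathop{SL}(2)$ example). Since $\Xi(\la)$ \emph{commutes} with every $D_i$ by Theorem~\ref{HQc}, your ``Euler characteristic along the $\P^1$ fiber'' verification of Definition~\ref{def-DAHA}(4)--(5) would at best be checking $[D_i,\Xi(\la)]=0$, not the Hecke relation you need.
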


By Theorem \ref{H-si}, we deduce that the right $Q^{\vee}$-action yield $\sH$-module endomorphisms of $K_H ( \bQ_G^{\ra} )$. The following is implicit in \cite[Theorem 5.13]{KNS17}:

\begin{thm}\label{denseK}
For each $\mu \in P$, the line bundle twist by $\cO_{\bQ_G^\ra} ( \mu )$ preserves the space $\widetilde{K}' ( \bQ_G )$. In other words, for each $w \in W_\af$ such that $w \le _\si e$, there exists a collection $\{ a_w^{v} ( \mu )\}_{v \le_\si w}$ of elements in $\Z [q^{- 1}] P$ such that the function on $P_{++}$
$$\la \mapsto \left( \chi_{q} ( \bQ_G, \cO_{\bQ_G(w)} ( \la + \mu )) - \sum_{v} a_w^v (\mu) \cdot \chi_{q} ( \bQ_G, \cO_{\bQ_G(v)} ( \la )) \right)$$
belongs to $\mathrm{Fun}_P^{\mathrm{neg}}$. In particular, we have
$$[\cO_{\bQ_G(w)} ( \mu )] = \sum_{v \le_\si w} a_w^v (\mu) [\cO_{\bQ_G(v)}] \in \widetilde{K}' ( \bQ_G ).$$
\end{thm}

\begin{proof}
Since the tensor product operation ($=$ shift of functions on $P$) commutes with each other, we concentrate into the case $\mu = \pm \varpi_i$ for $i \in \tI$.
	
We consider the case $\mu = \varpi_i$. In the Pieri-Chevalley rule \cite[Theorem 5.13]{KNS17}, the coefficients $\{ a_w^v ( \mu )\}_{v \le_\si w}$ are given by counting the set of semi-infinite LS paths with fixed initial/final directions. By definition (\cite[Definition 2.6]{KNS17}), a semi-infinite LS path $(w_1,w_2,\ldots,w_\ell; a_1, a_2, \ldots, a_\ell)$ is a strictly decreasing collection $( w_1,w_2,\ldots,w_\ell )$ of elements of $W_\af$ with respect to $\le_\si$, together with a collection of strictly increasing sequence $(0 = a_1, a_2, \ldots, a_\ell = 1)$ in $( \Q \cap [0,1] )$ that satisfies the integrality condition in \cite[Definition 2.5]{KNS17}. In particular, we have an explicit bound on the denominators of $(a_1, a_2, \ldots, a_\ell)$ once we fix $\mu$. In addition, the interval of two elements $x t_{\beta}$ and $y t_{\gamma}$ $(x, y \in W, \beta, \gamma \in Q^{\vee} )$ of $W_\af$ with respect to $\le_\si$ consists of only finitely many elements as its member $w t_{\kappa}$ ($w \in W, \kappa \in Q^{\vee}$) must satisfy $\gamma \le \kappa \le \beta$. Thus, the set of semi-infinite LS paths with fixed initial/final direction is finite. As this model describes the character of a Demazure module (\cite[Theorem 2.8]{KNS17}), the relative degree count must be always non-positive. In particular, we find $a_w^{v} ( \varpi_i ) \in \Z [q^{-1}] P$ for each $w \in W_\af$. Thus, the assertion for $\mu = \varpi_i$ is precisely the contents of the proof of \cite[Theorem 5.13]{KNS17}.
	
	Moreover, the set of paths with the same initial/final direction is unique, and hence the transition matrix between $\{ [\cO_{\bQ_G ( w )} ( \varpi_i ) ] \}_{w \in W_\af}$ and $\{ [\cO_{\bQ_G ( w )}] \}_{w \in W_\af}$ is unitriangular (up to diagonal matrix consisting of characters in $P$) with respect to $\le_\si$. Therefore, we can invert this matrix to obtain $[\cO _{\bQ_G ( w )} ( - \varpi_i )] \in \widetilde{K}' ( \bQ_G )$ for $i \in \tI$. Thus, we completed the proof of the assertion for $\mu = \pm \varpi_i$ ($i \in \tI$) as required.
\end{proof}

The following is a variant of \cite[Theorem 6.5]{KNS17} (see also \cite{Kat18}):

\begin{thm}\label{HQc}
For each $\la \in P$, consider the $\C P$-linear extension of the assignment
$$[\cO _{\bQ_G ( w )}] \mapsto [\cO _{\bQ_G ( w )} ( \la )] \in K _H ( \bQ_G^{\mathrm{rat}} ) \hskip 3mm w \in W_\af.$$
It gives rise to a $\sH$-module automorphism, that we call $\Xi ( \la )$, with the following properties:
\begin{itemize}
\item It commutes with the right $Q^{\vee}$-action, i.e. we have
$$\Xi ( \la ) ( [\cO _{\bQ_G ( w )}] ) t_{\beta} = \Xi ( \la ) ( [\cO _{\bQ_G ( w )}] t_{\beta} ) \hskip 5mm \beta \in Q^{\vee}_+;$$
\item We have $\Xi ( \la )\circ \Xi ( \mu )=\Xi ( \la + \mu )$ for $\la, \mu \in P$.
\end{itemize}
\end{thm}

\begin{proof}
The first assertion follows from \cite[Proposition 6.3 and its proof]{KNS17}. The second assertion follows from Theorem \ref{denseK} and Theorem \ref{si-Bruhat} 3). The last assertion follows as the effect of $\Xi ( \la )$ is just to shift functions on $P$.
\end{proof}

\begin{lem}\label{divc}
For each $i \in \tI$, we have an equality
$$[\cO_{\bQ_G ( s_i )}] = [\cO_{\bQ_G ( e )}] - e^{\varpi_i} [\cO_{\bQ_G (e)} ( - \varpi_{i} )]$$
inside $\widetilde{K}' ( \bQ_G )$. In particular, it also holds for $K_H ( \bQ_G )$.
\end{lem}

\begin{proof}
Apply Lemma \ref{KSES} to Lemma \ref{div} by $[\cO_{\bQ_G ( s_i )}], [\cO_{\bQ_G ( e )}] \in \widetilde{K}' ( \bQ_G )$.
\end{proof}

\begin{rem}\label{Kra}
Lemma \ref{divc} implies $[\cO_{\bQ_G} ( - \varpi_i )] \in K_H ( \bQ_G ) \subset K_H ( \bQ_G^\ra )$. However, this does not imply $[\cO_{\bQ_G^\ra} ( - \varpi_i )] \in K_H ( \bQ_G^\ra )$ as $[\cO_{\bQ_G^\ra}] \not\in K_H ( \bQ_G^\ra )$. By the same reason, $\Xi ( \la )$ in Theorem \ref{HQc} ($=$ tensor product with $\cO_{\bQ_G^\ra} ( \la )$) is the multiplication by an element in the ring $K_H ( \bQ_G )$, but not in $K_H ( \bQ_G^\ra )$. In fact, the definition of $K_H ( \bQ_G^\ra )$ does not equip it with a ring structure. This stems from the fact that $\bQ_G^\ra$ is a(n infinite) union of Schubert varieties (whose dimensions are also infinite), but not a Schubert variety by itself.\\
The ring structure of $K_H ( \bQ_G^\ra )$ afforded in Corollary \ref{fLLMS} is different from the above as it employs $[\cO_{\bQ_G}]$ as the identity, that is {\it not} the class of the structure sheaf of $\bQ_G^\ra$.
\end{rem}

Motivated by Lemma \ref{divc}, we consider a $\C P$-module endomorphism $H_i$ ($i \in \tI$) of $K_H ( \bQ_G^{\mathrm{rat}} )$ as:
$$H_i : [\cO_{\bQ_G ( w )}] \mapsto [\cO_{\bQ_G ( w )}] - e ^{\varpi_i} [\cO_{\bQ_G ( w )} ( - \varpi_i )] \hskip 5mm w \in W_\af.$$

\begin{prop}\label{K-gen}
Let $K$ be the $\C P$-linear subspace of $K_H ( \bQ_G^{\mathrm{rat}} )$ generated by $[ \cO_{\bQ_G} ( \la ) ]$ $(\la \in P)$, together with the right $Q^{\vee}$-actions. The inclusion $K \subset K_H ( \bQ_G^{\mathrm{rat}} )$ is dense.
\end{prop}

\begin{proof}
Let $K_H ( \bQ_G )_+$ be the (formal) $\C P$-span of $\{ [\cO _{\bQ_G ( w t_{\beta} )}] \}_{w \in W, 0 \neq \beta \in Q^{\vee}_+}$ in $K_H ( \bQ_G^{\mathrm{rat}} )$. In view of (\ref{KB}), we have $K_H ( \bQ_G ) / K_H ( \bQ_G ( e ) )_+ \cong K_H ( \sB )$ as $\C P$-modules that sends $[\cO _{\bQ_G ( w )}]$ to $[\cO _{\sB ( w )}]$ ($w \in W$). By Theorem \ref{H-si} and Theorem \ref{sH-reg}, this intertwines the action of $D_i$ ($i \in \tI$). By the Pieri-Chevalley formula \cite[Theorem 5.13]{KNS17}, we see that
$$[ \cO_{\bQ ( w_0 )} ( \la ) ] \mod K_H ( \bQ_G )_+ = e^{w_0 \la} [\cO_{\sB ( w_0 )}] = [\cO_{\sB ( w_0 )} ( \la )] \hskip 5mm \la \in P.$$
By the Demazure character formulae (\cite[Theorem A]{Kat18} and \cite[8.1.13 Theorem]{Kum02}), we conclude that
$$[ \cO_{\bQ ( w )} ( \la ) ] \mod K_H ( \bQ_G )_+ = [\cO_{\sB ( w )} ( \la )] \hskip 5mm w \in W, \la \in P.$$
Therefore, the first two actions generate $K_H ( \bQ_G ) / K_H ( \bQ_G )_+ \cong K_H ( \sB )$ from $[\cO_{\bQ _G}]$. Now we use the right $Q^{\vee}$-action to conclude the result.
\end{proof}

\subsection{Graph and map spaces}\label{subsec:GQL}

We refer \cite{KM94,FP95,GL03} for the precise definitions of the notions appearing in this subsection.

We have $W$-equivariant isomorphisms $H^2 ( \sB, \Z ) \cong P$ and $H_2 ( \sB, \Z ) \cong Q ^{\vee}$. This identifies the (integral points of the) nef cone of $\sB$ with $P_+ \subset P$ and the effective cone of $\sB$ with $Q_+^{\vee}$. For each non-negative integer $n$ and $\beta \in Q^{\vee}_+$, we set $\sGB_{n, \beta}$ to be the space of stable maps of genus zero curves with $n$-marked points to $( \P^1 \times \sB )$ of bidegree $( 1, \beta )$, that is also called the graph space of $\sB$. A point of $\sGB_{n, \beta}$ is a(n arithmetic) genus zero curve $C$ with $n$-marked points $\{x_1,\ldots,x_n\}$, together with a map to $\P^1$ of degree one. Hence, we have a unique $\P^1$-component of $C$ that maps isomorphically onto $\P^1$. We call this component the main component of $C$ and denote it by $C_0$. By discarding the map to $\P^1$, we obtain the space of stable maps $\sB_{n,\beta}$ of genus zero curves with $n$-marked points to $\sB$ of degree $\beta$, together with the natural projection map $\mathtt f : \sGB_{n,\beta} \longrightarrow \sB_{n,\beta}$. The spaces $\sGB_{n, \beta}$ and $\sB_{n,\beta}$ are normal projective varieties by \cite[Theorem 2]{FP95} that have at worst quotient singularities arising from the automorphism of stable maps. The natural $( \Gm \times H)$-action on $( \P^1 \times \sB )$ induces a natural $( \Gm \times H)$-action on $\sGB_{n, \beta}$.

Let $\widetilde{\mathtt{ev}}_j : \sGB_{n, \beta} \to \P^1 \times \sB$ ($1 \le j \le n$) be the evaluation at the $j$-th marked point, and let $\mathtt{ev}_j : \sGB_{n, \beta} \to \mathscr B$ be its composition with the second projection. For a $( \Gm \times H )$-equivariant coherent sheaf $\mathcal F$ on a projective $( \Gm \times H )$-variety $\mathcal X$, let $\chi_q ( \mathcal X, \mathcal F ) \in \C  [q^{\pm 1}] P$ denote its $( \Gm \times H )$-equivariant Euler-Poincar\'e characteristic.

Consider the formal power series ring $\C [\![Q^{\vee}_+]\!]$ with its variables $Q_i = Q^{\al_i^{\vee}}$ ($i \in \tI$). We set $Q^{\beta} := \prod_{i \in \tI} Q_i ^{\left< \beta, \varpi_i \right>}$ for each $\beta \in Q^{\vee}$.

With this notation, we define the $q$-deformed $n$-point equivariant $K$-theoretic Gromov-Witten correlation function for $\xi_1,\ldots,\xi_n \in K_{H} ( \sB )$ as:
\begin{equation}
\langle \xi_1, \ldots, \xi_n \rangle_{\GW}^q := \sum_{\beta \in Q^{\vee}_+} Q^{\beta} \chi_q ( \sGB_{n,\beta}, \bigotimes_{j = 1}^n \mathtt{ev}_j^* \xi_j  )\in ( \C [q^{\pm 1}] P ) [\![Q^{\vee}_+]\!],\label{GWP}
\end{equation}
where we regard $\xi_1,\ldots,\xi_n$ as $\Gm$-equivariant objects with trivial $\Gm$-action.

The following result is well-known:
\begin{thm}\label{graph-to-map}
The $q = 1$ specialization of the $q$-deformed $n$-point equivariant $K$-theoretic Gromov-Witten correlation function is the usual $n$-point equivariant $K$-theoretic Gromov-Witten correlation function calculated by replacing $\sGB_{n,\beta}$ $(0 \neq \beta \in Q^{\vee}_+)$ in $(\ref{GWP})$ with $\sB_{n,\beta}$.
\end{thm}

\begin{proof}[Sketch of proof]
By adjunction, this comparison follows if we have $\mathbb R^{\bullet} \mathtt f_* \cO_{\sGB_{n,\beta}} \cong \cO_{\sB_{n,\beta}}$. The latter fact can be deduced from \cite[Theorem 7.1]{Kol86} since both spaces are normal with at worst rational singularities (cf. Remark \ref{Rem:Sing}), $\mathtt f$ is projective with connected fibers, and the general fiber of $\mathtt f$ is $\P^3 = \overline{PGL ( 2, \C )}$.
\end{proof}

Thanks to Theorem \ref{graph-to-map}, the $n$-point equivariant $K$-theoretic Gromov-Witten correlation function is given as:
$$\langle \xi_1, \ldots, \xi_n \rangle_{\GW} := \left. \langle \xi_1, \ldots, \xi_n \rangle_{\GW}^q \right|_{q=1} \hskip 5mm \xi_1,\ldots,\xi_n \in K_{H} ( \sB ).$$

\subsection{Equivariant quantum $K$-group of $\sB$}\label{subsec:eqK}

We define the $H$-equivariant (small) quantum $K$-group of $\sB$ as:
\begin{equation}
qK_H ( \sB ) := K_H ( \sB ) [\![Q^{\vee}_+]\!],\label{qKdef}
\end{equation}
that contains $\C [\![Q^{\vee}_+]\!][\cO_{\sB}] \cong \C [\![Q^{\vee}_+]\!]$. Thanks to (the $H$-equivariant versions of) \cite{Giv00,Lee04}, it is equipped with the commutative and associative product $\star$ (called the quantum multiplication) characterized as:
$$\langle \xi_1 \star \xi_2, \xi_3 \rangle_{\GW} = \langle \xi_1, \xi_2, \xi_3 \rangle_{\GW}\hskip 5mm \xi_1,\xi_2,\xi_3 \in qK_H ( \sB ),$$
where the forms on the both sides are understood to be linear with respect to $\C [\![Q^{\vee}_+]\!]$. The product $\star$ satisfies the following properties:
\begin{enumerate}
\item the element $[\cO_\sB] =  [\cO_\sB]Q^0 \in qK_H ( \sB )$ is the identity;
\item the map $Q^{\beta}\star$ $(\beta \in Q^{\vee}_+)$ is the multiplication of $Q^{\beta}$ in the RHS of (\ref{qKdef});
\item we have $\xi \star \eta \equiv \xi \cdot \eta \mod ( Q_i ; i \in \tI )$ for every $\xi,\eta \in K_H ( \sB ) \otimes 1$.
\end{enumerate}

From the above properties, we can localize $qK_H ( \sB )$ with respect to the multiplicative system $\{Q^{\beta}\}_{\beta \in Q^{\vee}_+}$ to obtain a ring $qK_H ( \sB )_\lo$.

We set
$$qK_{\Gm \times H} ( \sB ) := \left( K_H ( \sB ) (\!(q)\!) \right) [\![Q^{\vee}_+]\!].$$

We sometimes identify $K_H ( \sB )$ with the submodule of $qK_H ( \sB )$ or $qK_{\Gm \times H} ( \sB )$ that is constant with respect to $Q_i$ ($i \in \tI$) and $q$. We set $p_i := [\cO_\sB ( \varpi_i )]$ for $i \in \tI$, and we consider it as an endomorphism of $qK_{H} ( \sB )$ or $qK_{\Gm \times H} ( \sB )$ through the scalar extension of the product of $K_H ( \sB )$ (i.e. the classical product; we always understand that $p_i^{\pm 1}$ acts via the classical product). For each $i \in \tI$, let $q^{Q_i \partial_{Q_i}}$ denote the $( \C P ) (\!(q)\!)$-endomorphism of $qK_{\Gm \times H} ( \sB )$ such that
$$q^{Q_i \partial_{Q_i}} ( \xi \otimes Q^{\beta} ) = q ^{\left< \beta, \varpi_i \right>} \xi \otimes Q^{\beta} \hskip 5mm \xi \in K_H ( \sB ), \beta \in Q^{\vee}_+.$$

Following \cite[\S 2.4]{IMT15}, we consider the operator $T \in \mathrm{End}_{( \C P ) (\!(q)\!)} \, qK_{\Gm \times H} ( \sB )$. This operator is obtained from the operator $T(t)$ defined as
\begin{equation}
\chi ( T (t) ( \xi ) \cdot \eta ) := \sum_{n \ge 0} \frac{1}{n!} \langle \xi, \overbrace{t,\ldots,t}^{n\text{-terms}},\frac{\eta}{1 - q \mathbb L} \rangle_{\GW} \hskip 5mm \xi, \eta \in K_H ( \sB )\label{eqn:defT}
\end{equation}
by setting the additional variable $t \in K ( \sB )$ to be $0$, where $\mathbb L$ is the cotangent class of the corresponding stable curve at that marked point (see e.g. \cite{KM94,Giv00,Lee04}), and $q$ is understood to be a formal variable. The operator $T(t)$ is the (adjoint form of the) fundamental solution of the quantum differential equation
\begin{equation}
(1-q)\frac{\partial}{\partial t_{\xi}} T (t) ( \bullet ) = T (t)( \xi \star \bullet ),\label{qdE}
\end{equation}
where $\frac{\partial}{\partial t_{\xi}}$ is the derivation that commutes with $qK_{\Gm \times H} ( \sB )$ and sends the additional variable $t \in K ( \sB )$ into $\xi$. In order to deduce (\ref{qdE}), one uses the $K$-theoretic WDVV equation (\cite[P300]{Giv00}), that also yields the following:

\begin{thm}[Givental and Lee \cite{GL03, Giv00, Lee04}, see also \cite{IMT15} Proposition 2.3]\label{GLK}
For each $\xi, \zeta \in K_H ( \sB )$, we have
\begin{equation}
\chi ( T ( \xi ) \cdot \overline{T} ( \zeta )) = \langle \xi, \zeta \rangle_{\GW} \in ( \C P ) [\![Q^{\vee}_+]\!].\label{GLK-eq}
\end{equation}
Here we warn that the operators $T$ and the $q$-conjugate $\overline{T}$ of $T$ introduces variables $q$ and $Q^{\beta}$ $(\beta \in Q^{\vee}_+)$ in the calculation of the LHS, and they are understood to be scalars.
\end{thm}

\begin{rem}
The operator $T$ intertwines the classical and ``$q$-deformed" quantum inner products if we replace $\overline{T} ( \zeta )$ with $\overline{T ( \zeta )}$ in (\ref{GLK-eq}). However, the operator $T$ usually has a pole at $q=1$, and hence it does not induce an intertwiner between the classical and the quantum inner products naively.
\end{rem}

We have the shift operator (also obtained from an operator $A_i ( q, t )$ in \cite{IMT15} by setting $t = 0$) defined as
\begin{equation}
A_i ( q ) = T^{-1} \circ p_i^{-1} q^{Q_i \partial_{Q_i}} \circ T \in \mathrm{End} \, qK_{\Gm \times H} ( \sB ) \hskip 5mm i \in \tI.\label{sShift}
\end{equation}
Since each term admits an inverse, we find that $A_i ( q )^{-1}$ exists. As the operator $T$ commutes with $q$ and $Q_j$ ($j \in \tI$), we find
\begin{equation}
A_i ( q ) Q^\gamma = q^{\left< \gamma, \varpi_i \right>} Q^\gamma A_i ( q ) \hskip 5mm \gamma \in Q_+.\label{eqn:commrel}
\end{equation}
An element $J ( Q, q ) := T ( [\cO_{\sB}] ) \in qK_{\Gm \times H} ( \sB )$ is called the (equivariant $K$-theoretic) small quantum $J$-function, and is computed in \cite{GL03,BF14a} (cf. Theorem \ref{zas} and its explanation).

\begin{thm}[Reconstruction theorem \cite{IMT15} Proposition 2.12]\label{reconst}
For each
$$f ( q, x_1,\ldots, x_r, Q ) \in (\C P [q^{\pm 1}, x_1,\ldots,x_r])[\![Q^{\vee}_+]\!],$$
we have the following equivalence:
\begin{align*}
f ( q, p_1^{-1} q^{Q_1 \partial_{Q_1}}, & \ldots, p_r^{-1} q^{Q_r \partial_{Q_r}}, Q ) J ( Q, q ) = 0 \in qK_{\Gm \times H} ( \sB )\\
\Leftrightarrow & f ( q, A_1 (q), \ldots, A_r (q), Q ) [\cO_{\sB}]= 0 \in qK_{\Gm \times H} ( \sB ).
\end{align*}
\end{thm}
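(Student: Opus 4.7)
The plan is to derive the equivalence from the intertwining property that defines the shift operators. By $(\ref{sShift})$, we have the identity
$$p_i^{-1} q^{Q_i \partial_{Q_i}} \circ T = T \circ A_i(q) \qquad (i \in \tI),$$
and the basic input I would invoke from \cite{IMT15} is that $T$ is an invertible endomorphism of $qK_{H \times \Gm}(\sB)$ which commutes with multiplication by every element of $\C P[q^{\pm 1}][\![Q^{\vee}_+]\!]$ (so in particular with each $Q^\beta$ and with each scalar from $\C P[q^{\pm 1}]$). These are structural properties of $T$ built into its construction from the genus-zero $K$-theoretic Gromov-Witten axioms, and I would take them as given rather than re-derive them.

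Granted these, the next step is an inductive bootstrap from the generators $\{p_i^{-1} q^{Q_i \partial_{Q_i}}\}$ and $\{A_i(q)\}$ to arbitrary polynomial expressions. Inserting $T \circ T^{-1}$ between consecutive factors of any monomial built from $\{p_i^{-1}q^{Q_i \partial_{Q_i}}\}_{i \in \tI}$ and $\{Q^{\beta}\}_{\beta \in Q^{\vee}_+}$, each $p_i^{-1} q^{Q_i \partial_{Q_i}}$ factor gets replaced by the corresponding $A_i(q)$, while each $Q^{\beta}$ and each scalar from $\C P[q^{\pm 1}]$ passes through $T$ unchanged. Summing over monomials (keeping the same ordering convention on both sides, as the generators do not commute with the $Q^\beta$) yields the operator identity
$$f\bigl(q, p_1^{-1}q^{Q_1\partial_{Q_1}}, \ldots, p_r^{-1}q^{Q_r\partial_{Q_r}}, Q\bigr) \circ T = T \circ f\bigl(q, A_1(q), \ldots, A_r(q), Q\bigr)$$
on $qK_{H \times \Gm}(\sB)$, for every $f \in \C P[q^{\pm 1}, x_1,\ldots,x_r][\![Q^{\vee}_+]\!]$.

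Finally, I would evaluate both sides on $[\cO_{\sB}]$ and use $J(Q,q) = T([\cO_{\sB}])$ to conclude
$$f\bigl(q, p_1^{-1}q^{Q_1\partial_{Q_1}}, \ldots, p_r^{-1}q^{Q_r\partial_{Q_r}}, Q\bigr) J(Q,q) = T\bigl(f(q, A_1(q),\ldots, A_r(q), Q)[\cO_{\sB}]\bigr).$$
Since $T$ is invertible, the left-hand side vanishes if and only if the argument of $T$ on the right-hand side vanishes, which is the desired equivalence. The only genuine obstacle is the verification of the $\C P[q^{\pm 1}][\![Q^{\vee}_+]\!]$-linearity and invertibility of $T$; these lie entirely outside the formal manipulations above and must be imported as quantum-$K$-theoretic inputs from \cite{IMT15}.
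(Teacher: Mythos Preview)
The paper does not give its own proof of this theorem: it is quoted from \cite[Proposition 2.20]{IMT15}, with only a remark explaining the adaptations needed (specialization $t=0$ from the big quantum $K$-group to the small one, the equivariant setting being automatic, and the use of the unmodified $J$-function in place of the modified one). So there is nothing in the paper to compare your argument against.

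That said, your proposal is exactly the natural derivation of the statement from the defining relation $(\ref{sShift})$, and it is correct. The intertwining identity $p_i^{-1} q^{Q_i \partial_{Q_i}} \circ T = T \circ A_i(q)$ is immediate from the definition, and your telescoping argument extends it to arbitrary $f$ once $T$ is known to be an invertible $\C P[q^{\pm 1}][\![Q^{\vee}_+]\!]$-linear endomorphism. You correctly flag these structural properties of $T$ as the only substantive inputs, and they are indeed the content of the construction in \cite{IMT15}; everything else is formal. Your care with the ordering of $Q^\beta$ versus the shift-type operators is also appropriate.
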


\begin{rem}
The original form of Theorem \ref{reconst} is about big quantum $K$-group. We have made the specialization $t = 0$ to deduce our form. It should be noted that {\bf 1)} this equivariant setting is automatic from the construction, and {\bf 2)} we state Theorem \ref{reconst} for unmodified quantum $J$-function (specialized to $t=0$) instead of the modified one employed in \cite[Proposition 2.12]{IMT15}.
\end{rem}

\begin{thm}[Anderson-Chen-Tseng \cite{ACT18} Lemma 6, see also \cite{ACT17}]\label{sline}
For each $i \in \tI$, we have $A_i ( q ) ( [\cO_{\sB}] ) = [\cO_\sB ( - \varpi_i )] \in qK_{H} ( \sB ) \subset qK_{\Gm \times H} ( \sB )$.
\end{thm}

We give an alternative proof of Theorem \ref{sline} in \S \ref{subsec:conseq}.

For each $i \in \tI$, we set $a_i := A (1)$ (thanks to \cite[Remark 2.14]{IMT15}).

\begin{thm}[\cite{IMT15} Corollary 2.9]\label{smult}
For $i \in \tI$, the operator $a_i$ defines the quantum multiplication by $a_i ( [\cO_{\sB}] )$ in $qK_H ( \sB )$.
\end{thm}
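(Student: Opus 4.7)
The plan is to derive this as a formal consequence of the reconstruction theorem (Theorem \ref{reconst}) together with the pairwise commutativity of the shift operators. First, observe that the operators $p_j^{-1} q^{Q_j \partial_{Q_j}}$ for $j \in \tI$ commute with one another on $qK_{H \times \Gm}(\sB)$, since classical multiplications by line bundle classes commute among themselves and with the $q$-difference operators, which also commute pairwise. Conjugation by $T$ preserves this, so the shift operators $A_j(q)$ pairwise commute. Specializing $q = 1$ kills the difference part ($q^{Q_j \partial_{Q_j}} \leadsto \id$), and $a_j = (T^{-1} p_j^{-1} T)|_{q=1}$ are well-defined, pairwise commuting, $(\C P)[\![Q^{\vee}_+]\!]$-linear endomorphisms of $qK_H(\sB)$.

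Next, introduce the operator $M_i$ of quantum multiplication by the element $a_i([\cO_\sB]) \in qK_H(\sB)$. By associativity and commutativity of $\star$, the operators $\{M_j\}_{j \in \tI}$ pairwise commute, and clearly $M_i[\cO_\sB] = a_i([\cO_\sB]) \star [\cO_\sB] = a_i([\cO_\sB]) = a_i[\cO_\sB]$. My goal is to show $a_i = M_i$ as endomorphisms of $qK_H(\sB)$.

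The core of the argument is a cyclicity statement: $[\cO_\sB]$ generates $qK_H(\sB)$ over the subalgebra generated by $\{a_j\}_{j \in \tI}$ and $\C P[\![Q^\vee_+]\!]$. This can be verified $Q$-adically: at $Q = 0$ both $a_j$ and $M_j$ degenerate to classical multiplication by $[\cO_\sB(-\varpi_j)]$, and the classes $\{[\cO_\sB(-\varpi_j)]\}$ together with $\C P$ generate $K_H(\sB)$ as a ring (via the Steinberg-type presentation $K_H(\sB) \cong \C P \otimes_{R(G)} \C P$). Given $\xi = f(a_1,\dots,a_r, Q)[\cO_\sB]$, the reconstruction theorem identifies the relation $(M_i - a_i) f(a_1,\dots,a_r, Q)[\cO_\sB] = 0$ with an analogous relation among the $p_j^{-1} q^{Q_j\partial_{Q_j}}$ acting on $J(Q,q)$, which holds tautologically at $q=1$ once one verifies that the polynomial in question vanishes identically under $q = 1$ specialization, using the commutativity of $a_j$'s to freely permute $a_i$ past $f(a_1,\dots,a_r,Q)$.

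The main obstacle is the delicate behavior of $T$ and $J(Q,q)$ at $q = 1$: the $q \to 1$ specialization of operators defined on $qK_{H \times \Gm}(\sB)$ must be justified carefully, since the $J$-function in general has nontrivial pole structure at roots of unity. The control comes from restricting to the submodule $qK_H(\sB) \subset qK_{H \times \Gm}(\sB)$, which is $q$-independent by construction, and from the fact that the composition $T^{-1}p_i^{-1}T$ preserves this submodule at $q = 1$ as a consequence of the shift operator matching the classical product modulo $Q$-corrections.
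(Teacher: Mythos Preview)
Your approach diverges from the paper's and contains a genuine gap. The paper's argument is a two-liner: it cites \cite[Corollary 2.9]{IMT15} for the fact that each $a_i$ commutes with the $\star$-multiplication on $qK_H(\sB)$, and then observes that for any ring $R$, an $R$-module endomorphism of $R$ is multiplication by its value on the identity, i.e.\ $\End_R R \cong R$. Since $[\cO_\sB]$ is the identity for $\star$, this gives $a_i = a_i([\cO_\sB]) \star (-)$ immediately.

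Your route via cyclicity and the reconstruction theorem does not close without that same commutation input. You establish only that the $a_j$ commute with one another, not that $a_i$ commutes with the quantum multiplications $M_j = a_j([\cO_\sB]) \star (-)$. To conclude $a_i = M_i$ from agreement on the cyclic vector $[\cO_\sB]$, you need one family of operators to commute with the algebra generated by the other; commutativity among the $a_j$ alone is insufficient. Your appeal to Theorem \ref{reconst} to handle $(M_i - a_i)\, f(a_1,\ldots,a_r,Q)[\cO_\sB] = 0$ does not apply as stated: the reconstruction theorem is an equivalence for operators expressed as polynomials in the shift operators $A_j(q)$, whereas $M_i$ is a $\star$-multiplication operator --- identifying it with such a polynomial is exactly the statement you are trying to prove, so the argument is circular. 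The $q\to 1$ discussion in your final paragraph does not remedy this, because the issue is the form of the operator, not the specialization.
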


\begin{proof}
By \cite[Corollary 2.9]{IMT15}, the set $\{a_i\}_{i \in \tI}$ defines mutually commutative endomorphisms of $qK_H ( \sB )$ that commutes with the $\star$-multiplication. Since $\mathrm{End} _R R \cong R$ for every ring $R$, we conclude the assertion.
\end{proof}

\begin{thm}[\cite{IMT15} Proposition 2.10 and its proof]\label{i-mod}
For $i \in \tI$, we have
\begin{align*}
A_i ( q ) & = p_i^{-1} + \sum_{0 \neq \beta \in Q^{\vee}_+} \sum_{k=0}^{\left< \beta, \varpi_i \right>-1} a_{i,\beta,k} (1-q)^{k} Q^{\beta} \hskip 5mm a_{i,\beta,k} \in \mathrm{End} _{\C P} K_H ( \sB )\\
A_i ( q )^{-1} & = p_i + \sum_{0 \neq \beta \in Q^{\vee}_+} b_{i,\beta} (q) Q^{\beta} \hskip 5mm b_{i,\beta} \in \C [q^{-1}] \otimes \mathrm{End} _{\C P} K_H ( \sB )
\end{align*}
as operators acting on $\C[q^{\pm 1}] \otimes K_{H} (\sB)$, where the sum is understood to be formal. In particular, we have
$$a_i ( \xi ) \equiv [\cO_{\sB} ( - \varpi_i)] \cdot \xi \equiv [\cO_{\sB} ( - \varpi_i)] \star \xi \mod (Q_i ; i \in \tI)$$
for each $\xi \in K_H ( \sB )$.
\end{thm}

\begin{proof}
The first equalities for $A_i ( q )$ and $a_i$ are the $t=0$ specializations of \cite[Proposition 2.10]{IMT15}.
By taking the formal inverse of $A_i ( q )$ inductively on the coefficients of $Q^{\beta}$ ($\beta \in Q^{\vee}_+$) and using the fact that $A_i (q)$ is invertible modulo $\{Q_i\}_{i}$, we find an expression of $A_i (q)^{-1}$ with
$$b_{i,\beta} ( q ) \in \C [q^{\pm 1}] \otimes \mathrm{End} _{\C P} K_H ( \sB ).$$
We have $b_{i,\beta} ( q ) = 0$ for $0 \neq \beta \in Q^{\vee}_+$ such that $\left< \beta, \varpi_i \right> = 0$ by the shape of $A_i (q)$. As in \cite[proof of Proposition 2.10]{IMT15}, we use the fact that $T$ and $T^{-1}$ ($=S$ in their convention) are regular at $q = 0,\infty$ to deduce that the singularities of $\{b_{i,\beta} ( q )\}_{\beta}$ as functions on $q$ arises from the effect of
$$q^{-Q_i \partial_{Q_i}} p_i T,$$
that introduces poles at $q=0$, and its specialization $q=\infty$ is zero in the coefficient of $Q^{\beta}$ with $\left< \beta, \varpi_i \right> > 0$. Thus, we find that
$$b_{i,\beta} ( q ) \in \C [q^{-1}] \otimes \mathrm{End} _{\C P} K_H ( \sB ),$$
that yields the equality on $A_i (q)^{-1}$. By the third property of the product $\star$, we conclude the last equality.
\end{proof}

\section{Relation with affine Grassmannians}\label{sec:Gr}

We work in the same settings as in the previous section. Here we establish an isomorphism (Theorem \ref{main}) between the localized equivariant $K$-group of $\Gr$ (\S \ref{subsec:Gr}) and the equivariant $K$-group of $\bQ_G^\ra$ (\S \ref{subsec:eK}), and exhibits an example (\S \ref{subsec: sl(2)}). The main tool here is the action of level zero nil-DAHA (\S \ref{subsec:nDAHA}).

\subsection{Transporting the $\sH$-action to $\sC$}\label{subsec:trans}

\begin{prop}\label{H0}
The $\sH$-action of $K _H ( \Gr )$ induces a $\sH$-action on $\sC$ as:
\begin{align*}
D_0 ( f \otimes t_{\beta} ) & =  \frac{f}{1 - e^{- \vartheta}} \otimes t_{\beta} - \frac{e^{-\vartheta}s_{\vartheta} ( f )}{1 - e ^{- \vartheta}} \otimes t_{s_\vartheta ( \beta - \vartheta^{\vee} )}  & f \in \C ( P )\\
D_i ( f \otimes t_{\beta} ) & = \frac{f}{1 - e^{\al_i}} \otimes t_{\beta} - \frac{e^{\al_i}s_i ( f )}{1 - e ^{\al_i}} \otimes t_{s_i \beta} & i \in \tI, \beta \in Q^{\vee}\\
e^{\mu} ( f \otimes t_{\beta} ) & = e^{\mu} f \otimes t_{\beta} & \mu \in P.
\end{align*}
\end{prop}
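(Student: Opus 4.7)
My approach is to transport the action through the chain
$K_H(\Gr) \stackrel{\pi^*}{\hookrightarrow} K_H(X) = \sH \stackrel{\imath^*}{\hookrightarrow} \sA \stackrel{\mathsf{pr}}{\twoheadrightarrow} \sC$.
The action on $K_H(X)$ is the left regular representation of $\sH$ (Theorem \ref{sH-reg}), so via $\imath^*$ it becomes left multiplication in $\sA$, and the projection $\mathsf{pr}$ should produce the desired action on $\sC$ together with the stated formulas.

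For the explicit calculation, I would compute $\imath^*(D_i)(f \otimes t_\beta)$ in $\sA$ using the smash-product rule $(g \otimes w)(h \otimes v) = g\, w(h) \otimes wv$. For $i \in \tI$, the identity $s_i t_\beta = t_{s_i \beta} s_i$ in $W_\af$ yields a $\sA$-element of the form $\tfrac{f}{1-e^{\al_i}} \otimes t_\beta - \tfrac{e^{\al_i} s_i(f)}{1-e^{\al_i}} \otimes t_{s_i\beta}\, s_i$, and $\mathsf{pr}$ discards the trailing $s_i \in W$ to produce the stated formula. For $D_0$, the normalization $t_{-\vartheta^\vee} = s_\vartheta s_0$ gives $s_0 = s_\vartheta t_{-\vartheta^\vee}$, while the convention that $s_0$ acts on $P$ as $s_\vartheta$ yields $s_0 t_\beta = t_{s_\vartheta(\beta - \vartheta^\vee)} s_\vartheta$; projecting again removes the trailing $s_\vartheta$ and delivers the $D_0$-formula. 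The $e^\mu$-case is immediate from the algebra structure of $\sA$.

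The compatibility with the original action on $r^*(K_H(\Gr)_\lo) \subset \sC$ rests on the left $W$-invariance of $\tilde\xi := \imath^*(\pi^*(\xi))$ in $\sA$, which was observed in the proof of Theorem \ref{LSS-formula} from the explicit form (\ref{WCF}) of $\imath^*(D_{w_0})$. This invariance makes $\mathsf{pr}(\imath^*(D) \cdot \tilde\xi)$ depend only on $\mathsf{pr}(\tilde\xi) = r^*(\xi)$, so the formula-defined operators genuinely extend the inherited action.

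To finish, one must verify that the formulas satisfy the $\sH$-relations of Definition \ref{def-DAHA} on all of $\sC$. The idempotency $D_i^2 = D_i$ and the commutation rule with $e^\la$ reduce to short computations using $\tfrac{1}{1-e^{-\al_i}} = -\tfrac{e^{\al_i}}{1-e^{\al_i}}$, exactly as in the polynomial representation. The braid relations, after extracting $t_\gamma$-coefficients, reduce to the braid relations of $\{s_i\}_{i \in \tI_\af}$ on the $W_\af$-set $Q^\vee \leftrightarrow W_\af/W$, which are automatic. I expect the main point of vigilance to be the mixed braid relation between $D_0$ and a $D_i$ with $i \in \tI$, where the $-\vartheta^\vee$-shift in the $D_0$-formula could a priori conflict with $W$-braid moves; rewriting $s_0 = s_\vartheta t_{-\vartheta^\vee}$ everywhere reduces this to the ordinary $W_\af$-braid relation (which holds because $W_\af$ is Coxeter) together with rational identities on $\C(P)$ already implicit in the polynomial representation of $\sH$.
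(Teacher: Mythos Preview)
Your computation---multiply $f\otimes t_\beta u$ on the left by $\imath^*(D_i)$ in $\sA$, then apply $\mathsf{pr}$---is exactly what the paper does, and the resulting formulas agree. The paper then shows $\sC=\C(P)\otimes_{\C P}K_H(\Gr)$ and argues that the action on $\sC$ is the scalar extension of the known $\sH$-action on $K_H(\Gr)$; your plan to verify the $\sH$-relations directly on $\sC$ is the alternative the paper mentions parenthetically at the very end of its proof.

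One point to correct: in your compatibility paragraph you assert that $\tilde\xi=\imath^*(\pi^*(\xi))$ is left $W$-invariant for every $\xi\in K_H(\Gr)$, citing the proof of Theorem~\ref{LSS-formula}. That proof only establishes this for $\xi=[\cO_{\Gr_\beta}]$ with $\beta\in Q^\vee_<$ (where the length additivity $\ell(t_\beta)=\ell(w_0)+\ell(w_0 t_\beta)$ gives the factorization $D_{t_\beta}=D_{w_0}D_{w_0 t_\beta}$), and the claim does not hold for general $\beta$. Fortunately you do not need it. Your own computation already shows that $\mathsf{pr}(\imath^*(D_i)\cdot\eta)$ depends only on $\mathsf{pr}(\eta)$ for \emph{arbitrary} $\eta\in\sA$, since the trailing $W$-factor $u$ (or $s_i u$, or $s_\vartheta u$) simply disappears under $\mathsf{pr}$. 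This is precisely the statement that $\ker\mathsf{pr}$ is a left $\sH$-submodule of $\sA$, so the $\sH$-action descends to $\sC\cong\sA/\ker\mathsf{pr}$. Compatibility with $r^*$ is then automatic, and the $\sH$-relations on $\sC$ are inherited from $\sA$ without any separate verification of the braid relations.
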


\begin{proof}
For $i \in \tI_{\af}$, the action of $D_i$ on $\sA$ is the left multiplication of $\frac{1}{1 - e^{\al_i}} \otimes 1 - \frac{e^{\al_i}}{1 - e^{\al_i}} \otimes s_i$ (if we understand $\al_0 = - \vartheta$). Applying to an element $f \otimes t_{\beta} u \in \sA$ ($f \in \C ( P ), \beta \in Q^{\vee}, u \in W$), we deduce
\begin{align*}
D_i ( f \otimes t_{\beta} u ) & = \frac{f}{1 - e^{\al_i}} \otimes t_{\beta} u  - \frac{e^{\al_i} s_i ( f )}{1 - e^{\al_i}} \otimes t_{s_i\beta} s_i u\hskip 5mm i \neq 0\\
D_0 ( f \otimes t_{\beta} u ) & = \frac{f}{1 - e^{- \vartheta} } \otimes t_{\beta} u - \frac{e^{- \vartheta} s_\vartheta ( f )}{1 - e^{- \vartheta}} \otimes s_0 t_{\beta} u \\
& = \frac{f}{1 - e^{- \vartheta}} \otimes t_{\beta} u - \frac{e^{- \vartheta} s_\vartheta ( f )}{1 - e^{- \vartheta}} \otimes s_\vartheta t_{- \vartheta^{\vee}} t_{\beta} u\\
& = \frac{f}{1 - e^{- \vartheta}} \otimes t_{\beta} u - \frac{e^{- \vartheta} s_\vartheta ( f )}{1 - e^{- \vartheta}} \otimes t_{s_{\vartheta} ( \beta - \vartheta^{\vee} )} s_\vartheta u.
\end{align*}
Hence, applying $\mathsf{pr}$ yields the desired formula on $D_i$ for $i \in \tI_\af$. Together with the left multiplication of $e^{\la} \otimes 1$, these formulae transplant the $\sH$-action from $K_H ( \Gr )$ to $K_H ( \Gr ) \cap \sC$.

Since $K_H ( \Gr ) = \sC \cap K_H ( \Fl )$, we have $\C ( P ) \otimes _{\C P} K _H ( \Gr ) \subset \sC$. By comparing the leading terms of $\{ [\Gr_\beta] \}_{\beta \in Q^{\vee}} \subset \sC$ with respect to the Bruhat order (on the second component of $\sC \subset \sA = \C ( P ) \otimes \C W_\af$), we derive $\sC \subset \C ( P ) \otimes _{\C P} K _H ( \Gr )$. It follows that $\sC = \C ( P ) \otimes _{\C P} K _H ( \Gr )$. Hence, the above formulae define the $\sH$-action on $\sC$ as the scalar extension of that on $K _H ( \Gr ) \subset \sC$ as required (one can also directly check the relations of $\sH$).
\end{proof}

Below, we may write the action of $D_i$ on $\sC$ by $D_i ^{\#}$ to distinguish from the action on $K _H ( \Fl )$ or $\sA$.

\begin{cor}\label{invt}
Let $i \in \tI$. Let $\xi \in \sC$ be an element such that $D_i ^{\#} ( \xi ) = \xi$. Then, $\xi$ is a $\C$-linear combination of
$$f \otimes t_{\beta} + s_i ( f ) \otimes t_{s_i \beta} \hskip 5mm f \in \C ( P ), \beta \in Q^{\vee}.$$
\end{cor}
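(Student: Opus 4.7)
The plan is to write an arbitrary element $\xi\in\sC=\C(P)\otimes\C Q^{\vee}$ as a finite sum $\xi=\sum_{\beta\in Q^{\vee}}f_{\beta}\otimes t_{\beta}$ with $f_{\beta}\in\C(P)$, apply the explicit formula for $D_i^{\#}$ from Proposition \ref{H0}, and read off the relations imposed by the equality $D_i^{\#}(\xi)=\xi$. Concretely, the formula yields
$$
D_i^{\#}(\xi)=\sum_{\beta}\frac{f_{\beta}}{1-e^{\al_i}}\otimes t_{\beta}-\sum_{\beta}\frac{e^{\al_i}s_i(f_{\beta})}{1-e^{\al_i}}\otimes t_{s_i\beta},
$$
and after reindexing the second sum by $\beta\mapsto s_i\beta$, the coefficient of $t_{\gamma}$ in $D_i^{\#}(\xi)-\xi$ becomes
$(f_{\gamma}-e^{\al_i}s_i(f_{s_i\gamma}))/(1-e^{\al_i})-f_{\gamma}$, which simplifies to $e^{\al_i}(f_{\gamma}-s_i(f_{s_i\gamma}))/(1-e^{\al_i})$. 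Setting this to zero gives the key relation $f_{s_i\gamma}=s_i(f_{\gamma})$ for every $\gamma\in Q^{\vee}$.

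Next, I would partition the support of $\xi$ according to the $\langle s_i\rangle$-orbits on $Q^{\vee}$. Recall $s_i\beta=\beta-\langle\beta,\al_i\rangle\al_i^{\vee}$, so each orbit is either a two-element set $\{\beta,s_i\beta\}$ (when $\langle\beta,\al_i\rangle\neq 0$) or a singleton $\{\beta\}$ (when $\langle\beta,\al_i\rangle=0$). In the two-element case, the relation $f_{s_i\beta}=s_i(f_{\beta})$ shows that the contribution from this orbit is exactly $f_{\beta}\otimes t_{\beta}+s_i(f_{\beta})\otimes t_{s_i\beta}$, one of the claimed generators. In the fixed-point case the relation becomes $f_{\beta}=s_i(f_{\beta})$, and we may rewrite $f_{\beta}\otimes t_{\beta}=\tfrac{1}{2}\bigl(f_{\beta}\otimes t_{\beta}+s_i(f_{\beta})\otimes t_{s_i\beta}\bigr)$, which is again of the claimed form.

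Summing over orbits then expresses $\xi$ as a $\C$-linear combination of the stated generators. As a consistency check, one should verify directly that each generator $f\otimes t_{\beta}+s_i(f)\otimes t_{s_i\beta}$ is indeed fixed by $D_i^{\#}$, which is a one-line calculation using the formula of Proposition \ref{H0} and the fact that $s_i(e^{\al_i})=e^{-\al_i}$ combines with $1/(1-e^{\al_i})$ correctly. There is no real obstacle here; the argument is a direct matching of coefficients, and the only point that requires care is handling the fixed-point orbits $\beta=s_i\beta$ so as to stay within the asserted spanning set.
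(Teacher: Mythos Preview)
Your proof is correct and follows essentially the same approach as the paper: apply the explicit formula for $D_i^{\#}$ from Proposition \ref{H0}, compare coefficients of $t_{\gamma}$, and deduce the relation $f_{s_i\gamma}=s_i(f_{\gamma})$. The paper first reduces to a single $\langle s_i\rangle$-orbit by observing that $D_i^{\#}$ preserves $\C(P)\otimes t_{\beta}+\C(P)\otimes t_{s_i\beta}$ and then solves $D_i^{\#}(a\otimes t_{\beta}+b\otimes t_{s_i\beta})=a\otimes t_{\beta}+b\otimes t_{s_i\beta}$ to obtain $b=s_i(a)$ (or $s_i(a+b)=a+b$ when $s_i\beta=\beta$), which is exactly your coefficient relation; your treatment of the fixed-point orbits via the $\tfrac{1}{2}$-trick is equivalent to the paper's parenthetical remark.
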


\begin{proof}
By Proposition \ref{H0}, the action of $D_i ^{\#}$ preserves $\C ( P ) \otimes t_{\beta} + \C ( P ) \otimes t_{s_i \beta}$ for each $i \in \tI$ and $\beta \in Q^{\vee}$. Hence, it suffices to find a condition that $a \otimes t_{\beta} + b \otimes t_{s_i \beta}$ ($a, b \in \C ( P )$) is stable by the action of $D_i ^{\#}$. It reads as:
\begin{align*}
D_i ^{\#} ( a \otimes t_{\beta} + b \otimes t_{s_i \beta} ) & = \frac{a - e^{\al_i} s_i ( b )}{1 - e^{\al_i}} \otimes t_{\beta} + \frac{b - e^{\al_i}s_i ( a )}{1 - e ^{\al_i}} \otimes t_{s_i \beta}\\
& = a \otimes t_{\beta} + b \otimes t_{s_i \beta}.
\end{align*}
This is equivalent to $b = s_i ( a )$ (or $s_i ( a + b ) = a + b$ in the case of $s_i \beta = \beta$) as required.
\end{proof}

\begin{cor}\label{inv-fac}
Let $i \in \tI$. Let $\xi, \xi' \in \sC$ be elements such that $D_i ^{\#} ( \xi ) = \xi$. We have
$$D_i ^{\#} ( \xi \xi' ) = \xi D_i ^{\#} ( \xi' ).$$
\end{cor}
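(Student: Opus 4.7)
The plan is to recognize $D_i^{\#}$ as a symmetrization with respect to a $\C$-algebra automorphism of $\sC$, after which the claim becomes a one-line algebraic manipulation. Define a $\C$-linear map $\sigma_i : \sC \to \sC$ by $\sigma_i(f \otimes t_\beta) := s_i(f) \otimes t_{s_i \beta}$ for $f \in \C(P)$ and $\beta \in Q^{\vee}$. Since $\sC = \C(P) \otimes \C Q^{\vee}$ is commutative and $s_i$ acts by $\C$-algebra automorphisms on both $\C(P)$ and $\C Q^{\vee}$, the map $\sigma_i$ is a $\C$-algebra automorphism; in particular, $s_i(\beta+\gamma)=s_i\beta+s_i\gamma$ ensures compatibility with translations. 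Using the identity $s_i\bigl((1-e^{\al_i})^{-1}\bigr) = -e^{\al_i}(1-e^{\al_i})^{-1}$, the formula in Proposition \ref{H0} can be rewritten as
$$
D_i^{\#}(\eta) \;=\; \frac{\eta}{1-e^{\al_i}} \;+\; \sigma_i\!\left(\frac{\eta}{1-e^{\al_i}}\right) \hskip 5mm \forall \eta \in \sC .
$$

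Next, I would observe that Corollary \ref{invt} exactly characterizes $D_i^{\#}$-invariants as $\sigma_i$-invariants: each elementary summand $f\otimes t_\beta + s_i(f)\otimes t_{s_i\beta}$ is manifestly fixed by $\sigma_i$, so the hypothesis $D_i^{\#}(\xi)=\xi$ gives $\sigma_i(\xi)=\xi$. Granting this, the desired equality is
$$
D_i^{\#}(\xi\xi') = \frac{\xi\xi'}{1-e^{\al_i}} + \sigma_i\!\left(\frac{\xi\xi'}{1-e^{\al_i}}\right) = \xi\cdot\frac{\xi'}{1-e^{\al_i}} + \xi\cdot\sigma_i\!\left(\frac{\xi'}{1-e^{\al_i}}\right) = \xi\, D_i^{\#}(\xi'),
$$
where the middle equality uses the multiplicativity of $\sigma_i$ together with $\sigma_i(\xi)=\xi$, and the last equality re-applies the reformulation of $D_i^{\#}$ to $\xi'$.

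I do not anticipate a real obstacle: the statement is the standard Leibniz-type rule for a twisted divided-difference operator in nil-Hecke calculus, and the only technical point is verifying that $\sigma_i$ really is an algebra automorphism of $\sC$ (which is immediate from commutativity of $\sC$). For a reader who prefers to avoid introducing $\sigma_i$, the same result can be obtained by reducing via $\C$-linearity to $\xi = f \otimes t_\beta + s_i(f)\otimes t_{s_i\beta}$ and $\xi' = g \otimes t_\gamma$, expanding both $D_i^{\#}(\xi\xi')$ and $\xi\,D_i^{\#}(\xi')$ into four terms via Proposition \ref{H0}, and matching the indices on each side using $s_i^2=1$.
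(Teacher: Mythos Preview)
Your proof is correct. The paper proceeds exactly by the direct expansion you describe in your final paragraph: it reduces via Corollary \ref{invt} to $\xi = f \otimes t_{\beta} + s_i(f) \otimes t_{s_i\beta}$ and $\xi' = g \otimes t_{\gamma}$, expands $D_i^{\#}(\xi\xi')$ into four terms using Proposition \ref{H0}, and regroups them as $\xi\,D_i^{\#}(\xi')$. Your main argument instead packages this computation by introducing the algebra automorphism $\sigma_i$ and recognizing $D_i^{\#}(\eta) = (1-e^{\al_i})^{-1}\eta + \sigma_i\bigl((1-e^{\al_i})^{-1}\eta\bigr)$, so that the Leibniz property follows formally from $\sigma_i(\xi)=\xi$. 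This is a genuine streamlining: once $\sigma_i$ is in hand, the same one-line manipulation also yields Lemma \ref{inv-fac0} (taking $\sigma_0 = s_\vartheta \otimes t_{-\vartheta^{\vee}} \cdot s_\vartheta$ on the translation part), whereas the paper repeats the four-term expansion there. The cost is the small extra verification that $\sigma_i$ is multiplicative and that $D_i^{\#}$-invariants are $\sigma_i$-invariants, both of which you handle correctly.
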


\begin{proof}
By Corollary \ref{invt}, it suffices to prove
$$D_i ^{\#} ( ( f \otimes t_{\beta} + s_i ( f ) \otimes t_{s_i \beta} ) g \otimes t _{\gamma} ) = ( f \otimes t_{\beta} + s_i ( f ) \otimes t_{s_i \beta} ) D_i^{\#} (  g \otimes t _{\gamma} )$$
for every $f,g \in \C ( P )$ and $\beta,\gamma \in Q^{\vee}$. We derive as:
\begin{align*}
D_i ^{\#} ( ( f \otimes t_{\beta} + s_i ( f ) \otimes t_{s_i \beta} ) &  g \otimes t _{\gamma} ) = D_i ^{\#} ( f g \otimes t_{\beta + \gamma} + s_i ( f ) g \otimes t_{s_i \beta + \gamma} )\\
& = \frac{fg}{1 - e^{\al_i}} \otimes t_{\beta + \gamma} - \frac{e^{\al_i}s_i ( f g )}{1 - e ^{\al_i}} \otimes t_{s_i \beta + s_i \gamma}\\
& + \frac{s_i ( f ) g}{1 - e^{\al_i}} \otimes t_{s_i \beta + \gamma} - \frac{e^{\al_i}f s_i ( g )}{1 - e ^{\al_i}} \otimes t_{\beta + s_i \gamma}\\
& = ( f \otimes t_{\beta} + s_i ( f ) \otimes t_{s_i \beta} ) ( \frac{g}{1 - e^{\al_i}} \otimes t_{\gamma} - \frac{e^{\al_i}s_i ( g )}{1 - e ^{\al_i}} \otimes t_{s_i \gamma} )\\
& = ( f \otimes t_{\beta} + s_i ( f ) \otimes t_{s_i \beta} ) D_i ^{\#} (  g \otimes t _{\gamma} ).
\end{align*}
This completes the proof.
\end{proof}

\begin{lem}\label{inv-fac0}
Let $\xi, \xi' \in \sC$ be elements such that $D_i ^{\#} ( \xi ) = \xi$ for every $i \in \tI$. We have
$$D_0 ^{\#} ( \xi \xi' ) = \xi D_0 ^{\#} ( \xi' ).$$
\end{lem}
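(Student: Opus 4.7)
The plan is to first identify the locus $\{\xi \in \sC \mid D_i^{\#}(\xi) = \xi,\ \forall i \in \tI\}$ with the $W$-invariant part $\sC^{W}$ of $\sC = \C(P) \otimes \C Q^{\vee}$, where $W$ acts diagonally by $w(f \otimes t_\beta) = w(f) \otimes t_{w\beta}$, and then run a direct computation against the explicit formula for $D_0^{\#}$ furnished by Proposition \ref{H0}. For the first step, note that the spanning elements $f \otimes t_\beta + s_i(f) \otimes t_{s_i\beta}$ appearing in Corollary \ref{invt} are precisely the $s_i$-symmetrizations, so their span coincides with the $s_i$-invariant subspace; thus $D_i^{\#}(\xi) = \xi$ is equivalent to $s_i$-invariance, and simultaneous invariance for all $i \in \tI$ is equivalent to $\xi \in \sC^{W}$. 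Writing $\xi = \sum_{\beta \in Q^{\vee}} f_\beta \otimes t_\beta$, the condition becomes $f_{w\beta} = w(f_\beta)$ for all $w \in W$.

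For the second step, by bilinearity of the product and linearity of $D_0^{\#}$ it suffices to treat $\xi' = g \otimes t_\gamma$. Using Proposition \ref{H0}, I would expand
\begin{align*}
D_0^{\#}(\xi\xi') &= \sum_{\beta} \frac{f_\beta\, g}{1 - e^{-\vartheta}} \otimes t_{\beta + \gamma}\ -\ \sum_{\beta} \frac{e^{-\vartheta}\, s_\vartheta(f_\beta)\, s_\vartheta(g)}{1 - e^{-\vartheta}} \otimes t_{s_\vartheta(\beta + \gamma - \vartheta^{\vee})},\\
\xi\, D_0^{\#}(\xi') &= \sum_{\beta} \frac{f_\beta\, g}{1 - e^{-\vartheta}} \otimes t_{\beta + \gamma}\ -\ \sum_{\beta} \frac{f_\beta\, e^{-\vartheta}\, s_\vartheta(g)}{1 - e^{-\vartheta}} \otimes t_{\beta + s_\vartheta(\gamma - \vartheta^{\vee})}.
\end{align*}
The ``identity'' sums agree immediately. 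For the ``twisted'' sums, I would perform the reindexing $\beta \mapsto s_\vartheta \beta$ on the $D_0^{\#}(\xi\xi')$-side (a bijection on $Q^{\vee}$): the translation exponent transforms as $s_\vartheta(s_\vartheta\beta + \gamma - \vartheta^{\vee}) = \beta + s_\vartheta(\gamma - \vartheta^{\vee})$, matching the $\xi D_0^{\#}(\xi')$-side, and the $W$-invariance $f_{s_\vartheta\beta} = s_\vartheta(f_\beta)$ gives $s_\vartheta(f_{s_\vartheta\beta}) = f_\beta$, so that the $\C(P)$-coefficients coincide as well.

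The main obstacle, by comparison with Corollary \ref{inv-fac}, is the extra translation $-\vartheta^{\vee}$ inside $s_\vartheta(\,\cdot\,)$ in the $D_0^{\#}$-formula, which prevents a term-by-term match; the reindexing $\beta \mapsto s_\vartheta\beta$ absorbs this shift, but only because $\xi$ is \emph{fully} $W$-invariant rather than merely $s_\vartheta$-invariant, which is where the hypothesis $D_i^{\#}(\xi) = \xi$ for all $i \in \tI$ enters in an essential way.
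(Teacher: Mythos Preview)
Your proof is correct and follows essentially the same route as the paper: both arguments deduce from Corollary~\ref{invt} that $\xi$ is $W$-invariant for the diagonal action on $\sC$, hence in particular $s_\vartheta$-invariant, and then match the two sides of the identity using this $s_\vartheta$-symmetry (the paper by decomposing $\xi$ into $s_\vartheta$-symmetric pairs $f\otimes t_\beta + s_\vartheta(f)\otimes t_{s_\vartheta\beta}$, you by the reindexing $\beta\mapsto s_\vartheta\beta$, which amounts to the same thing).

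One small clarification on your closing paragraph: the reindexing step itself uses only $s_\vartheta$-invariance of $\xi$, not full $W$-invariance; the extra translation $-\vartheta^{\vee}$ is harmless once $s_\vartheta$-invariance is known, since it rides along untouched through the substitution. The reason the hypothesis $D_i^{\#}(\xi)=\xi$ for \emph{all} $i\in\tI$ is essential is rather that $s_\vartheta$ is in general not a simple reflection, so $s_\vartheta$-invariance is not one of the hypotheses directly but must be obtained from invariance under the full set of generators of $W$.
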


\begin{proof}
By Corollary \ref{invt}, we deduce $w \xi w^{-1} = \xi \in \sA$ for every $w \in W$. In particular, we have $s_\vartheta \xi s _\vartheta = \xi$.

Therefore, it suffices to prove
$$D_0 ^{\#} ( ( f \otimes t_{\beta} + s_\vartheta ( f ) \otimes t_{s_\vartheta \beta} ) g \otimes t _{\gamma} ) = ( f \otimes t_{\beta} + s_\vartheta ( f ) \otimes t_{s _\vartheta \beta} ) D_0^{\#} (  g \otimes t _{\gamma} )$$
for every $f,g \in \C ( P )$ and $\beta,\gamma \in Q^{\vee}$. We derive as:
\begin{align*}
D_0 ^{\#} ( ( f \otimes t_{\beta} + s_\vartheta ( f ) \otimes t_{s_\vartheta \beta} ) &  g \otimes t _{\gamma} ) = D_0 ^{\#} ( f g \otimes t_{\beta + \gamma} + s_\vartheta ( f ) g \otimes t_{s_\vartheta \beta + \gamma} )\\
& = \frac{fg}{1 - e^{- \vartheta}} \otimes t_{\beta + \gamma} - \frac{e^{- \vartheta}s_\vartheta ( f g )}{1 - e ^{- \vartheta}} \otimes t_{s_\vartheta \beta + s_\vartheta ( \gamma - \vartheta^{\vee} )}\\
& + \frac{s_\vartheta ( f ) g}{1 - e^{- \vartheta}} \otimes t_{s_\vartheta \beta + \gamma} - \frac{e^{- \vartheta}f s_\vartheta ( g )}{1 - e ^{- \vartheta}} \otimes t_{\beta + s_\vartheta ( \gamma - \vartheta^{\vee})}\\& = ( f \otimes t_{\beta} + s_\vartheta ( f ) \otimes t_{s_\vartheta \beta} ) D_0 ^{\#} (  g \otimes t _{\gamma} ).
\end{align*}
This completes the proof.
\end{proof}

\begin{thm}\label{comm}
For each $\beta \in Q^{\vee}_{<}$ and $i \in \tI_\af$, we have
$$D_i ( [\cO_{\Gr_\beta}] \odot \xi ) = [\cO_{\Gr_\beta}] \odot D_i ( \xi ) \hskip 5mm \xi \in K_H ( \Gr ).$$
\end{thm}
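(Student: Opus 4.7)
\medskip

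\noindent\textbf{Proof proposal for Theorem \ref{comm}.} The strategy is to reduce the statement, via the identification $r^*: K_H(\Gr) \hookrightarrow \sC$ of Theorem \ref{LSS10}, to the factorization lemmas already established (Corollary \ref{inv-fac} and Lemma \ref{inv-fac0}). Under this identification, the Pontryagin product $\odot$ becomes ordinary multiplication in $\sC$, and the $\sH$-action on $K_H(\Gr)$ is realized by the operators $D_i^{\#}$ of Proposition \ref{H0}. Consequently, the theorem amounts to the commutation
\[
D_i^{\#}\bigl([\cO_{\Gr_\beta}]\cdot\xi\bigr) \;=\; [\cO_{\Gr_\beta}]\cdot D_i^{\#}(\xi) \qquad (\xi \in \sC, \; \beta \in Q^\vee_<, \; i \in \tI_\af),
\]
so the whole task is to check that the hypothesis of Corollary \ref{inv-fac} (respectively Lemma \ref{inv-fac0}) is met by $[\cO_{\Gr_\beta}]$, namely that $D_j^{\#}([\cO_{\Gr_\beta}]) = [\cO_{\Gr_\beta}]$ for every $j \in \tI$.

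To verify this finite $W$-invariance, I will work in $K_H(X) \supset K_H(\Gr)$ using $\pi^*[\cO_{\Gr_\beta}] = D_{t_\beta} D_{w_0}$ from Theorem \ref{sH-sph}. The key length identity, already invoked in the proof of Theorem \ref{LSS-formula}, is
\[
\ell(t_\beta) = \ell(w_0) + \ell(w_0 t_\beta) \qquad \text{for } \beta \in Q^\vee_<,
\]
so that $D_{t_\beta} = D_{w_0} D_{w_0 t_\beta}$ in $\sH$. Combining this with $D_j D_{w_0} = D_{w_0}$ for $j \in \tI$ (Definition \ref{def-DAHA}(2) together with a reduced decomposition of $w_0$ starting with $s_j$), I obtain
\[
D_j \cdot D_{t_\beta} D_{w_0} = D_j D_{w_0} D_{w_0 t_\beta} D_{w_0} = D_{w_0} D_{w_0 t_\beta} D_{w_0} = D_{t_\beta} D_{w_0},
\]
which is the desired invariance $D_j [\cO_{\Gr_\beta}] = [\cO_{\Gr_\beta}]$ inside $K_H(X)$, and hence inside $\sC$ as well via $r^*$.

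With this in hand, the case $i \in \tI$ of Theorem \ref{comm} is immediate from Corollary \ref{inv-fac}, and the case $i = 0$ follows from Lemma \ref{inv-fac0}, since we have just verified $D_j^{\#}([\cO_{\Gr_\beta}]) = [\cO_{\Gr_\beta}]$ for \emph{all} $j \in \tI$ (which is the hypothesis required there). No essential obstacle is expected; the only delicate point is ensuring that the $\sH$-action on $K_H(\Gr)$ transported by $r^*$ truly coincides with $D_i^{\#}$ acting on $\sC$, which is precisely the content of Proposition \ref{H0} together with the inclusion $K_H(\Gr) = \sC \cap K_H(X)$. Once this identification is made explicit, the proof reduces to the two-line manipulation above.
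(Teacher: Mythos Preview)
Your proposal is correct and follows essentially the same route as the paper: reduce via $r^*$ to $\sC$, verify $D_j^\#([\cO_{\Gr_\beta}])=[\cO_{\Gr_\beta}]$ for all $j\in\tI$, and then invoke Corollary \ref{inv-fac} and Lemma \ref{inv-fac0}. The only cosmetic difference is in the invariance step: the paper argues directly from $\ell(t_\beta)=\ell(s_j t_\beta)+1$ (so $D_j D_{t_\beta}D_{w_0}=D_{t_\beta}D_{w_0}$), whereas you factor $D_{t_\beta}=D_{w_0}D_{w_0 t_\beta}$ and use $D_j D_{w_0}=D_{w_0}$; both are valid and equivalent.
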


\begin{proof}
By construction, we have
$$\pi^* ( [\cO_{\Gr_\beta}] ) = D_{t_{\beta}} D_{w_0} = D_i D_{t_{\beta}} D_{w_0} \hskip 5mm i \in \tI,$$
where the second identity follows from $\ell ( t_{\beta} ) = \ell ( s_i t_{\beta} ) + 1$. By Proposition \ref{H0}, we deduce that $r^* ( [\cO_{\Gr_\beta}] )$ satisfies the $D_i ^\#$-invariance for each $i \in \tI$. Therefore, Corollaries \ref{inv-fac} and \ref{inv-fac0} imply the result.
\end{proof}

\begin{cor}\label{indH}
For $\beta \in Q^{\vee}_<$ and $i \in \tI_\af$, we have $D _i = \mathtt t_{- \beta} \circ D_i \circ \mathtt t_{\beta}$. In particular, we have a natural extension of the $\sH$-action from $K_H ( \Gr )$ to $K_H ( \Gr )_\lo$.
\end{cor}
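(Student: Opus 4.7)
My plan is to read off both assertions directly from Theorem \ref{comm}. For the first equality, for $\beta \in Q^{\vee}_<$ I identify $\mathtt t_\beta$ with the Pontryagin operator of multiplication by $[\cO_{\Gr_\beta}]$. Theorem \ref{comm} then specializes to $D_i \circ \mathtt t_\beta = \mathtt t_\beta \circ D_i$ as endomorphisms of $K_H(\Gr)$, for every $i \in \tI_\af$. Since $\mathtt t_\beta$ is by construction a unit of $(K_H(\Gr)_\lo, \odot)$ with inverse $\mathtt t_{-\beta}$, composing on the left with $\mathtt t_{-\beta}$ yields $D_i = \mathtt t_{-\beta} \circ D_i \circ \mathtt t_\beta$ as operators on $K_H(\Gr)_\lo$.

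For the extension statement, I first recall that the Pontryagin action of each $\mathtt t_\beta$ ($\beta \in Q^{\vee}_<$) on $K_H(\Gr)$ is torsion-free (noted right after Theorem \ref{LSS-formula}), so every element of $K_H(\Gr)_\lo$ can be written as $\mathtt t_\beta^{-1} \odot \xi$ with $\xi \in K_H(\Gr)$ and $\beta \in Q^{\vee}_<$. I then propose the definition
$$D_i(\mathtt t_\beta^{-1} \odot \xi) := \mathtt t_\beta^{-1} \odot D_i(\xi), \qquad e^\la \cdot (\mathtt t_\beta^{-1} \odot \xi) := \mathtt t_\beta^{-1} \odot (e^\la \cdot \xi).$$
The crucial well-definedness check is that, if $\mathtt t_\gamma \odot \xi = \mathtt t_\beta \odot \eta$ in $K_H(\Gr)$ (so that $\mathtt t_\beta^{-1} \odot \xi$ and $\mathtt t_\gamma^{-1} \odot \eta$ represent the same class in $K_H(\Gr)_\lo$), then $\mathtt t_\gamma \odot D_i(\xi) = \mathtt t_\beta \odot D_i(\eta)$. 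This is immediate by applying $D_i$ to both sides of the assumed identity and invoking Theorem \ref{comm} on each side.

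Finally, I verify that the defining relations of $\sH$ (Definition \ref{def-DAHA}) continue to hold for the extended operators. Each such relation is already satisfied on the dense submodule $K_H(\Gr) \subset K_H(\Gr)_\lo$; since by the first part every $\sH$-generator commutes (as an operator) with the invertible central element $\mathtt t_\beta$, each relation propagates to the whole localization by clearing denominators. I expect the only potential point of friction to be the asymmetric role of $D_0$ visible in Proposition \ref{H0}, but this asymmetry was already absorbed into the proof of Theorem \ref{comm} (via Lemma \ref{inv-fac0}), so no further argument is required at this stage.
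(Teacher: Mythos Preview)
Your argument is correct and follows the same route as the paper's own proof: both assertions are read off directly from Theorem \ref{comm}, with the extension to $K_H(\Gr)_\lo$ obtained by the standard localization argument you spell out. The paper's proof is terser (two sentences), while you supply the well-definedness and relation checks explicitly; one cosmetic point is that in your first paragraph the phrase ``as operators on $K_H(\Gr)_\lo$'' is premature, since at that stage $D_i$ is only defined on $K_H(\Gr)$ --- the identity should first be stated on $K_H(\Gr)$ and then used to \emph{define} the extension, exactly as you do in the next paragraph.
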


\begin{proof}
The first assertion is a direct consequence of Theorem \ref{comm}. As we have $K_H ( \Gr )_\lo = K_H ( \Gr ) [\mathtt t_{\beta} \mid \beta \in Q^{\vee}_{<}]$, the latter assertion follows.
\end{proof}

\subsection{Inclusion as $\sH$-modules}

\begin{lem}\label{s-reg}
Let $i \in \tI_\af$. For each $w \in W_\af^-$, we have
$$D_i ( [\cO _{\Gr_{w}}] ) = \begin{cases} [\cO_{\Gr_{s_i w}}] & (s_i w >_\si w) \\ [\cO _{\Gr_{w}}] & (s_i w <_\si w)\end{cases}.$$
\end{lem}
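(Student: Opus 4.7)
The plan is to pull back via $\pi^*$ to $K_H(X)\cong\sH$ and use the regular representation of Theorem~\ref{sH-reg} together with Theorem~\ref{sH-sph}. For $w\in W_\af^-$ we have $\ell(ww_0)=\ell(w)+\ell(w_0)$, so $D_wD_{w_0}=D_{ww_0}$ and Theorem~\ref{sH-sph} gives $\pi^*[\cO_{\Gr_w}]=D_wD_{w_0}$. Since $\pi^*$ is $\sH$-equivariant and injective, it suffices to compute $D_i(D_wD_{w_0})=(D_iD_w)D_{w_0}$ and identify the result as a pull-back. By Theorem~\ref{sH-reg}, $D_iD_w$ equals $D_{s_iw}$ when $s_iw>w$ in the Bruhat order and $D_w$ otherwise. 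In the first case I decompose $s_iw=u\cdot v$ with $u\in W_\af^-$ the minimum-length representative of $s_iwW$ and $v\in W$, $\ell(s_iw)=\ell(u)+\ell(v)$; using $D_jD_{w_0}=D_{w_0}$ for each $j\in\tI$ (which follows from $\ell(s_jw_0)<\ell(w_0)$) this yields $D_{s_iw}D_{w_0}=D_uD_{w_0}=\pi^*[\cO_{\Gr_u}]=\pi^*[\cO_{\Gr_{s_iw}}]$, since by definition $\Gr_v:=\Gr_\beta$ for $v\in t_\beta W$ depends only on the coset.

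It remains to match the resulting Bruhat dichotomy
\[
D_i\pi^*[\cO_{\Gr_w}]=\begin{cases}\pi^*[\cO_{\Gr_{s_iw}}]&\text{if }s_iw>w\text{ (Bruhat)},\\\pi^*[\cO_{\Gr_w}]&\text{if }s_iw<w\text{ (Bruhat)},\end{cases}
\]
with the semi-infinite condition in the statement. Writing $w^{-1}\al_i=\mu+m\delta$ with $\mu\in\Delta$ and $m\in\Z$, and computing $(wt_\beta)^{-1}\al_i=\mu+(m+\langle\beta,\mu\rangle)\delta$ for $\beta\in Q^\vee_<$, one observes that as $\beta$ deepens the sign of this affine root is dictated solely by the sign of the finite part $\mu$; hence the semi-infinite condition $s_iw>_\si w$ translates to $\mu<0$. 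I will then show that, for $w=ut_\gamma\in W_\af^-$ (which forces $\gamma$ antidominant), the condition $\mu<0$ is in turn equivalent to the combined condition $s_iw\in W_\af^-$ and $s_iw>w$ in Bruhat. In the remaining configurations the two relevant Grassmannian classes coincide: if $s_iw\notin W_\af^-$ in the Bruhat-up case, then $w^{-1}\al_i\in\Pi$ and $s_iw=ws_j\in wW$, while in the Bruhat-down case with $m=0$ (so $\mu<0$) one has $s_iw=ws_\mu\in wW$; in both situations $[\cO_{\Gr_{s_iw}}]=[\cO_{\Gr_w}]$, so the formula is insensitive to the choice of branch.

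The main obstacle is the forward implication ``$s_iw\in W_\af^-$ and $s_iw>w$ (Bruhat) $\Longrightarrow\mu<0$''. For $i\in\tI$ this is immediate from the identity $m=\langle\gamma,u^{-1}\al_i\rangle$: if $m>0$ then antidominance of $\gamma$ forces $u^{-1}\al_i=\mu<0$, while the boundary case $m=0$ with $\mu>0$ non-simple (the only way $s_iw\in W_\af^-$ could fail this criterion) is excluded because any such $\mu$ would lie in the wall span of $\gamma$, but a non-trivial nonnegative combination of wall simples cannot be mapped by $u$ to a single simple root $\al_i$ as $u$ sends wall simples to positive roots. For $i=0$ the argument is analogous: writing $\mu=-u^{-1}\vartheta$, the condition $\gamma\perp\mu$ expresses $\mu$ as a nonnegative integer combination of simple roots on the walls of $\gamma$, and applying $u$ would exhibit $-\vartheta=u\mu$ as a positive combination of the positive roots $u\al_j$, a contradiction. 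With these sign-matchings in hand the case-analysis aligns and the lemma follows.
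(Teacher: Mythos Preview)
Your reduction via $\pi^*[\cO_{\Gr_w}]=D_wD_{w_0}$ to a Bruhat dichotomy is correct and is implicit in the paper's proof too. The difference lies in how you reconcile the Bruhat and semi-infinite orders. Your direct case analysis on the signs of $m$ and $\mu$ works in substance, but has two loose ends. First, the claimed equivalence ``$\mu<0\Leftrightarrow(s_iw\in W_\af^-\text{ and }s_iw>w)$'' fails in the direction $\Rightarrow$: when $m=0$ and $\mu<0$ (your Bruhat-down boundary case) the left side holds but the right fails. This does not break the argument since you handle that case separately via $s_iw\in wW$, but the stated equivalence is inaccurate. Second, you never treat the Bruhat-down case with $m<0$: here $[\cO_{\Gr_{s_iw}}]\neq[\cO_{\Gr_w}]$ in general (since $m\neq0$ means $s_iw\notin wW$), so one cannot appeal to coincidence of classes; instead you must show $\mu>0$, i.e.\ rule out the combination $m<0$, $\mu<0$. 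This is immediate from $\gamma$ antidominant (if $\mu<0$ then $\langle\gamma,\mu\rangle\ge0$, forcing $m\ge0$ for $i\in\tI$ and $m\ge1$ for $i=0$), but it is missing from your write-up.

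The paper bypasses all of this with a one-line translation argument: by Corollary~\ref{indH} the operator $D_i$ commutes with the shift $\mathtt t_\beta$, and by Theorem~\ref{LSS-formula} one has $\mathtt t_\beta[\cO_{\Gr_w}]=[\cO_{\Gr_{wt_\beta}}]$ for $\beta\in Q^\vee_<$. Thus one may replace $w$ by $wt_\beta$ with $\beta$ arbitrarily deep in $Q^\vee_<$, and then by the very definition~(\ref{si-ord}) of the semi-infinite order, $s_iw>_\si w$ is \emph{equivalent} to $s_iwt_\beta>wt_\beta$ in the ordinary Bruhat order. The lemma is then immediate from Theorems~\ref{sH-reg} and~\ref{sH-sph} with no root-sign bookkeeping. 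Your approach trades this use of the already-established $\sH$-commutation (Corollary~\ref{indH}) for explicit affine-root combinatorics; both are valid, but the paper's route is considerably shorter.
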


\begin{proof}
By Theorem \ref{LSS-formula} and Corollary \ref{indH}, we can replace $[\cO _{\Gr_{w}}]$ with $[\cO _{\Gr_{wt_{\beta}}}]$ for $\beta \in Q^{\vee}$ such that $\left< \beta, \varpi_i \right> \ll 0$ for all $i \in \tI$. Therefore, the assertion paraphrases Theorem \ref{sH-reg} and Theorem \ref{sH-sph} as $s_i w >_\si w$ is equivalent to $s_i w t_{\beta} > w t_{\beta}$ (see (\ref{si-ord})).
\end{proof}

\begin{lem}\label{cyclic}
The vector space $K_H ( \Gr )_{\lo}$ is a cyclic module with respect to the action of $\sH \otimes \C [ \mathtt t_{\gamma} \mid \gamma \in Q^{\vee}]$ with its cyclic vector $[\cO_{\Gr_0}]$.
\end{lem}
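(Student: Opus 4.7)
The plan is to reduce to the cyclicity of $K_{H}(\Gr)$ before localization, which is essentially immediate from Theorem \ref{sH-sph}, and then transport to the localization using the commutativity of the $\mathtt t$-action with the $\sH$-action from Corollary \ref{indH}.

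First, I would observe that Theorem \ref{sH-sph} identifies $\pi^{*}\bigl(K_{H}(\Gr)\bigr)$ with the principal left ideal $\sH \cdot D_{w_{0}}$ inside $K_{H}(X) \cong \sH$, with $\pi^{*}[\cO_{\Gr_{0}}] = D_{w_{0}}$. A principal left ideal is tautologically cyclic as a left module over the ambient ring, so $K_{H}(\Gr)$ is cyclic over $\sH$ with cyclic vector $[\cO_{\Gr_{0}}]$.

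Next, I would extend this to the localization. By Theorem \ref{LSS-formula} we have $\mathtt t_{\gamma} = [\cO_{\Gr_{\gamma}}]$ for $\gamma \in Q^{\vee}_{<}$ and $\mathtt t_{-\gamma} = \mathtt t_{\gamma}^{-1}$ in general, so $K_{H}(\Gr)_{\lo}$ is obtained from $K_{H}(\Gr)$ by adjoining the operators $\{\mathtt t_{\gamma}\}_{\gamma \in Q^{\vee}}$ acting by $\odot$-multiplication. Corollary \ref{indH} asserts that each $\mathtt t_{\gamma}$ commutes with every $D_{i}$, and it also commutes with the $\C P$-action (since $\sC = \C(P) \otimes \C Q^{\vee}$ is a tensor product of commutative subalgebras), so the joint action factors through $\sH \otimes \C[\mathtt t_{\gamma}]$. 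Any $\xi \in K_{H}(\Gr)_{\lo}$ can be written as $\mathtt t_{-\beta} \odot \eta$ for some $\eta \in K_{H}(\Gr)$ and $\beta \in Q^{\vee}_{<}$ (by definition of the localization), and then $\eta = D \cdot [\cO_{\Gr_{0}}]$ for some $D \in \sH$ by the previous step; using the commutativity yields $\xi = (D \otimes \mathtt t_{-\beta}) \cdot [\cO_{\Gr_{0}}]$, completing the proof. No genuine obstacle is expected; the argument combines the structural results recalled earlier in a direct way.
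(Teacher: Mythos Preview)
Your proof is correct and follows essentially the same strategy as the paper. The paper's proof also reduces to showing that every basis class $[\cO_{\Gr_\beta}]$ lies in the span of $\sH \cdot \{\mathtt t_\gamma \odot [\cO_{\Gr_0}]\}_{\gamma \in Q^{\vee}}$, and then obtains this by repeated application of the $D_i$'s together with Theorem~\ref{LSS-formula}; your use of Theorem~\ref{sH-sph} to recognize $\pi^*K_H(\Gr)=\sH D_{w_0}$ as a principal left ideal is just a cleaner packaging of the same step, after which both arguments pass to the localization via the commutation of $\mathtt t_\gamma$ with $\sH$ (Corollary~\ref{indH}).
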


\begin{proof}
By construction, it suffices to find every $\{ [\cO_{\Gr_\beta}] \}_{\beta \in Q^{\vee}}$ in the linear span of $\sH \cdot \{ \mathtt t_{\gamma} \odot [\cO_{\Gr_0}] \}_{\gamma \in Q^{\vee}}$. This follows from a repeated application of the actions of $\{ D_i \}_{i \in \tI_\af}$ and Theorem \ref{LSS-formula} (cf. \cite[Theorem 4.6]{Kat18}).
\end{proof}

\begin{cor}\label{uniq}
An endomorphism of $K_H ( \Gr )_{\lo}$ as a $\sH \otimes \C [ \mathtt t_{\gamma} \mid \gamma \in Q^{\vee}]$-module is completely determined by the image of $[\cO_{\Gr_0}]$. \hfill $\Box$
\end{cor}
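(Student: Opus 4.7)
The plan is to deduce this immediately from Lemma \ref{cyclic}, which is the substantive result, together with the general fact that a module endomorphism of a cyclic module is determined by the image of a cyclic generator. There is essentially no obstacle here; the statement is a formal consequence of cyclicity.

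Concretely, I would argue as follows. Let $\sR := \sH \otimes \C [ \mathtt t_{\gamma} \mid \gamma \in Q^{\vee}]$, and suppose $\psi, \psi' : K_H(\Gr)_{\lo} \to K_H(\Gr)_{\lo}$ are two $\sR$-module endomorphisms with $\psi([\cO_{\Gr_0}]) = \psi'([\cO_{\Gr_0}])$. By Lemma \ref{cyclic}, every element $\xi \in K_H(\Gr)_{\lo}$ can be written as $\xi = a \cdot [\cO_{\Gr_0}]$ for some $a \in \sR$. Then $\sR$-linearity gives
\[
\psi(\xi) = a \cdot \psi([\cO_{\Gr_0}]) = a \cdot \psi'([\cO_{\Gr_0}]) = \psi'(\xi),
\]
so $\psi = \psi'$, as required.

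The only thing one needs to double-check is that the $\sR$-module structure on $K_H(\Gr)_{\lo}$ is genuinely well-defined; this is exactly what was arranged in Corollary \ref{indH} (extending the $\sH$-action from $K_H(\Gr)$ to its localization, compatibly with the $\C[\mathtt t_\gamma]$-action). Given those prior results, the corollary is immediate, so no step presents any real difficulty.
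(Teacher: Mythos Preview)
Your proposal is correct and matches the paper's approach exactly: the paper marks this corollary with an end-of-proof box, treating it as an immediate consequence of Lemma~\ref{cyclic}, and your write-up simply spells out the standard cyclic-module argument.
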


\begin{prop}\label{emb}
By sending $[\cO_{\Gr_0}] \mapsto [\cO_{\bQ_G ( e )}]$, we have a unique injective $\sH$-module morphism
$$K_H ( \Gr )_{\lo} \hookrightarrow K _H ( \bQ_G ^{\mathrm{rat}} )$$
such that twisting by $\mathtt t_{\beta}$ corresponds to the right action of $\beta \in Q^{\vee}$. This map particularly gives
$$[\cO _{\Gr_{u t_{\beta}}}] \mapsto [\cO_{\bQ_G ( u t_{\beta} )}] \hskip 5mm u \in W, \beta \in Q^{\vee}_<.$$
\end{prop}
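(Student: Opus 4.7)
The approach is to exploit the cyclicity of $K_H(\Gr)_\lo$ as a module over $\sH \otimes \C[\mathtt t_\gamma \mid \gamma \in Q^\vee]$ established in Lemma~\ref{cyclic}. Uniqueness of $\Phi$ is then immediate from Corollary~\ref{uniq}: any $\sH$-module morphism compatible with the translations $\mathtt t_\gamma$ is determined by the image of the cyclic vector $[\cO_{\Gr_0}]$, which has been prescribed to be $[\cO_{\bQ_G(e)}]$.

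For existence, I plan to first construct a $\C P$-linear map $\tilde\Phi : K_H(\Gr) \to K_H(\bQ_G^{\mathrm{rat}})$ on the Schubert basis $\{[\cO_{\Gr_w}]\}_{w \in W_\af^-}$ by setting $\tilde\Phi([\cO_{\Gr_w}]) := [\cO_{\bQ_G(w)}]$. The $\sH$-equivariance of $\tilde\Phi$ is the main content: by Lemma~\ref{s-reg} on the source and Theorem~\ref{H-si} on the target, the operator $D_i$ ($i \in \tI_\af$) acts on both sides via the same branching dictated by $\le_\si$, so compatibility reduces to the combinatorial fact that if $w \in W_\af^-$ and $s_iw >_\si w$, then $s_iw \in W_\af^-$ (so that the target class $[\cO_{\bQ_G(s_iw)}]$ is literally of the form $\tilde\Phi([\cO_{\Gr_{s_iw}}])$, with no implicit reindexing by minimal-length representatives). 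Compatibility with $\mathtt t_\gamma$ for $\gamma \in Q^\vee_<$ is then immediate from Theorem~\ref{LSS-formula}, which gives $\mathtt t_\gamma \odot [\cO_{\Gr_w}] = [\cO_{\Gr_{wt_\gamma}}]$ for $w \in W_\af^-$; this mirrors the right translation $[\cO_{\bQ_G(w)}] \mapsto [\cO_{\bQ_G(wt_\gamma)}]$ from Theorem~\ref{si-Bruhat}(2). Since each $\mathtt t_\gamma$ ($\gamma \in Q^\vee$) acts as a $\C P$-linear automorphism on $K_H(\bQ_G^{\mathrm{rat}})$ (Theorem~\ref{HQc}), the map $\tilde\Phi$ extends uniquely to the desired $\Phi : K_H(\Gr)_\lo \to K_H(\bQ_G^{\mathrm{rat}})$ intertwining the full $\sH \otimes \C Q^\vee$-structure.

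The explicit formula $[\cO_{\Gr_{ut_\beta}}] \mapsto [\cO_{\bQ_G(ut_\beta)}]$ for $u \in W$, $\beta \in Q^\vee_<$ then holds tautologically once one verifies $ut_\beta \in W_\af^-$: the computation $(ut_\beta)(\alpha_j) = u\alpha_j + |\left<\beta,\alpha_j\right>|\delta$ shows the $\delta$-coefficient is strictly positive for every $j \in \tI$, so this is a positive affine root and $\ell(ut_\beta\, s_j) > \ell(ut_\beta)$. Injectivity of $\Phi$ follows because $\tilde\Phi$ sends the $\C P$-basis $\{[\cO_{\Gr_w}]\}_{w \in W_\af^-}$ to the $\C P$-linearly independent set $\{[\cO_{\bQ_G(w)}]\}_{w \in W_\af^-}$ sitting inside the direct-sum decomposition of $K_H(\bQ_G^{\mathrm{rat}})$, and this persists under localization because $\mathtt t_\gamma$ acts invertibly on the target. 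The main obstacle I foresee is the combinatorial check that $s_iw \in W_\af^-$ whenever $w \in W_\af^-$ and $s_iw >_\si w$: this is a standard property of the semi-infinite Bruhat order, but it is essential in order for $\tilde\Phi$ to intertwine the Demazure operators literally and not merely up to passage to minimal coset representatives.
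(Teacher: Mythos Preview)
Your proposal is correct and follows essentially the same strategy as the paper: compare the $D_i$-actions via Lemma~\ref{s-reg} and Theorem~\ref{H-si}, check translation compatibility via Theorem~\ref{LSS-formula} and Theorem~\ref{si-Bruhat}, and invoke Corollary~\ref{uniq} for uniqueness.

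The one implementation difference is that you define $\tilde\Phi$ on the full basis $\{[\cO_{\Gr_w}]\}_{w \in W_\af^-}$, which forces the combinatorial check ``$w \in W_\af^-$ and $s_iw >_\si w$ implies $s_iw \in W_\af^-$'' that you flag. The paper avoids this by only prescribing the map on $[\cO_{\Gr_{ut_\beta}}]$ for $u \in W$ and $\beta \in Q^\vee_<$; for such elements $s_i(ut_\beta)$ is again of the form $u't_{\beta'}$ with $u' \in W$ (and $\beta'$ either equal to $\beta$ or differing by a single coroot), so the comparison with Theorem~\ref{H-si} is immediate, and the extension to all of $K_H(\Gr)_\lo$ then uses the commutation of $D_i$ with $\mathtt t_\gamma$ from Corollary~\ref{indH}. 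This bypasses your obstacle entirely. (A minor point: the invertibility of the right $Q^\vee$-action on $K_H(\bQ_G^{\mathrm{rat}})$ comes from Theorem~\ref{si-Bruhat}(2), not Theorem~\ref{HQc}, which concerns the line-bundle twists $\Xi(\lambda)$.)
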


\begin{proof}
The comparison of the $D_i$-actions on the basis elements in Lemma \ref{s-reg} and Theorem \ref{H-si} implies that we indeed obtain a $\sH$-module inclusion, by enhancing the assignment $[\cO _{\Gr_{u t_{\beta}}}] \mapsto [\cO_{\bQ_G ( u t_{\beta} )}]$ for $u \in W, \beta \in Q^{\vee}_<$ into a $\C P$-module homomorphism. We know the actions of $\mathtt t_{\beta}$ and $\beta$ on the both sides by Theorem \ref{LSS-formula} and (\ref{right-Q}), that coincide on elements that generates $K_H ( \Gr )_\lo$ by the actions of $\C P$ and $\{\mathtt t _{\beta} \}_{\beta \in Q^{\vee}}$. Hence, we deduce a $\sH$-module embedding $K_H ( \Gr )_\lo \hookrightarrow K _H ( \bQ_G ^{\mathrm{rat}} )$ that intertwines the $\mathtt t_{\beta}$-action to the right $\beta$-action. Such an embedding must be unique by Corollary \ref{uniq}.
\end{proof}

\begin{rem}
Lemma \ref{s-reg} is a purely combinatorial statement about the comparison of two orders on $W_\af$. As a consequence, we obtain an embedding
$$K_{\Gm \ltimes \bI} ( \Gr ) \hookrightarrow K_{\Gm \ltimes \bI} ( \bQ_G^{\mathrm{rat}})$$
of nil-DAHA modules (with a parameter $q$) that sends $[\cO _{\Gr_{w}}]$ to $[\cO _{\bQ_G^{\mathrm{rat}}(w)}]$ for each $w \in W_\af^-$ (cf. \cite{KK90,LSS10,KNS17}; see \cite{Kat20} for its further consequences).
\end{rem}

\subsection{The $\sH$-module embedding}\label{pmain}

\begin{thm}\label{main}
We have a $\sH$-module embedding
$$\Phi : K _H ( \Gr )_{\lo} \hookrightarrow K_{H} ( \bQ_G^{\ra} )$$
such that twisting by $\mathtt t_{\beta}$ corresponds to the right action of $\beta \in Q^{\vee}$,
$$[\cO _{\Gr_{u t_{\beta}}}] \mapsto [\cO_{\bQ_G ( u t_{\beta} )}] \hskip 5mm u \in W, \beta \in Q^{\vee}_<,$$
and sends the Pontryagin product on the LHS to the tensor product on the RHS. More precisely, we have: For each $i \in \tI$ and $\xi \in K _H ( \Gr )_{\lo}$, it holds
$$\Phi ( \bh_i \odot \xi ) = H_i ( \Phi ( \xi ) ).$$
In addition, the image of $\Phi$ is precisely the set of finite linear combinations of Schubert classes, that forms a dense subset of $K_{H} ( \bQ_G^{\ra} )$.
\end{thm}

\begin{rem}\label{gen}
{\bf 1)} It is known that $\{ \bh_i \}_{i \in \tI}$, $\C P$, and $\{ \texttt t_{\beta} \}_{\beta \in Q^{\vee}}$ generates the ring $K _H ( \Gr )_{\lo}$. One way to prove it is to apply Proposition \ref{K-gen} to $\mathrm{Im} \, \Phi$; {\bf 2)} We add an extra $\Gm$-action on $K _H ( \Gr )$ and prove an analogue of Theorem \ref{main} in \cite{Kat20} on the basis of the results presented here.
\end{rem}

The rest of this subsection is entirely devoted to the proof of Theorem \ref{main}. The embedding part of Theorem \ref{main} as based $\sH$-modules is already proved in Proposition \ref{emb}. It also implies that the image of this embedding is the set of finite linear combinations of Schubert classes.

Let $i \in \tI$. We have an endomorphism $\Xi ( - \varpi_i )$ of $K _H ( \bQ_G ^{\mathrm{rat}} )$ that commutes with the right $Q^{\vee}$-action and the left $\sH$-action. By Lemma \ref{divc}, the image of $[\cO_{\bQ_G ( e )}]$ under $\Xi ( - \varpi_i )$ belongs to the image of $K_H ( \Gr )_{\lo}$. In particular, $\Xi ( - \varpi_i )$ induces a $\sH$-module endomorphism of $K_H ( \Gr )_\lo$. In particular, $H_i$ also induces an endomorphism of $K_H ( \Gr )_\lo$. We denote the endomorphisms on $K_H ( \Gr )_\lo$ induced by $\Xi ( - \varpi_i )$ and $H_i$ by the same letter.

In order to identify the endomorphisms $\bh_i \odot$ and $H_i$, it suffices to compare some linear combination with the well-understood element, namely $\mathrm{id}$. Therefore, we compare the endomorphisms of $K_H ( \Gr )_\lo$ (as $\C P$-modules) induced by
$$\Theta_i := e ^{- \varpi_i} ( \mathrm{id} - \bh_i \odot )$$
and
$$\Xi ( - \varpi_i ) = e ^{- \varpi_i} ( \mathrm{id} - H_i ).$$
Both the endomorphisms send $[\cO_{\Gr_0}]$ to
$$e ^{- \varpi_i} ( [\cO_{\Gr_\beta}] - [\cO_{\Gr_{s_i t_\beta}}] ) \odot [\cO_{\Gr_{t_\beta}}]^{-1} \hskip 5mm (\beta \in Q^{\vee}_<)$$
by Proposition \ref{emb}, Corollary \ref{h-op}, and Lemma \ref{divc}.

We prove that both of $\Theta_i$ and $\Xi ( - \varpi_i )$ commute with the $\sH \otimes \C [ \mathtt t_{\gamma} \mid \gamma \in Q^{\vee}]$-action. It is Theorem \ref{HQc} for $\Xi ( - \varpi_i )$. Hence, we concentrate on the action of $\Theta_i$.

The action of $\Theta_i$ commutes with $\C P \otimes \C [ \mathtt t_{\gamma} \mid \gamma \in Q^{\vee}]$ as $( K_H ( \Gr )_\lo, \odot )$ is a commutative ring. Thus, Corollary \ref{inv-fac} and Lemma \ref{inv-fac0} (and Theorem \ref{comm}) reduces the problem to 
$$D_j ( e ^{- \varpi_i} ( [\cO_{\Gr_\beta}] - [\cO_{\Gr_{s_i t_\beta}}] ) ) = e ^{- \varpi_i} ( [\cO_{\Gr_\beta}] - [\cO_{\Gr_{s_i t_\beta}}] ) \hskip 5mm j \in \tI, \beta \in Q^{\vee}_<.$$
If $j \neq i$, then we have $s_j s_i t_\beta < s_i t_\beta$ and $s_j t_\beta < t_\beta$. Moreover, we have $D_j ( e ^{- \varpi_i} \bullet ) = e ^{- \varpi_i} D_j ( \bullet )$. It follows that
\begin{align*}
D_j ( e ^{- \varpi_i} ( [\cO_{\Gr_\beta}] - [\cO_{\Gr_{s_i t_\beta}}] ) ) & = e ^{- \varpi_i} D_j ( [\cO_{\Gr_\beta}] - [\cO_{\Gr_{s_i t_\beta}}] )\\
& = e ^{- \varpi_i} ( [\cO_{\Gr_\beta}] - [\cO_{\Gr_{s_i t_\beta}}] ).
\end{align*}
If $j = i$, then we compute as
\begin{align*}
D_i ( e ^{- \varpi_i} ( [\cO_{\Gr_\beta}] - [\cO_{\Gr_{s_i t_\beta}}] ) ) = & \, e ^{- \varpi_i + \al_i} D_i ( [\cO_{\Gr_\beta}] - [\cO_{\Gr_{s_i t_\beta}}] )\\
& + \frac{e ^{- \varpi_i} - e ^{- \varpi_i + \al_i}}{1 - e^{\al_i}} ( [\cO_{\Gr_\beta}] - [\cO_{\Gr_{s_i t_\beta}}] )\\
= & \, e ^{- \varpi_i + \al_i} ( [\cO_{\Gr_\beta}] - [\cO_{\Gr_{t_\beta}}] )\\
& + e ^{- \varpi_i} ( [\cO_{\Gr_\beta}] - [\cO_{\Gr_{s_i t_\beta}}] )\\
= & \, e ^{- \varpi_i} ( [\cO_{\Gr_\beta}] - [\cO_{\Gr_{s_i t_\beta}}] ).
\end{align*}
Hence, $\Theta_i$ defines an endomorphism of $K _H ( \Gr )$ that commutes with the $\sH \otimes \C [ \mathtt t_{\gamma} \mid \gamma \in Q^{\vee}]$-action.

Therefore, Corollary \ref{uniq} guarantees $\Theta_i = \Xi ( - \varpi_i ) \in \End ( K_H ( \Gr )_\lo )$. From this, we also deduce $\bh_i \odot = H_i \in \End ( K_H ( \Gr )_\lo )$ as required.

\subsection{Example: $\mathop{SL} ( 2 )$-case}\label{subsec: sl(2)}

Assume that $G = \mathop{SL} ( 2 )$. We make an identification $P_+ = \Z_{\ge 0} \varpi$, $\al = 2 \varpi$, and $Q^{\vee}_+ = \Z_{\ge 0} \al^{\vee}$. We have $W = \{e, s\}$. Let $\mathtt t$ denote the right $Q^{\vee}$-action on $K_H ( \bQ_G^{\ra} )$ (or $K_{\Gm \ltimes \bI} ( \bQ_G^{\ra} )$) corresponding to $\al^{\vee}$, and let $q$ denote the character of $\Gm$ that acts on the variable $z$ (in $G (\!(z)\!)$) by degree one character (so-called the loop rotation action).

The Pieri-Chevalley rule for $\varpi$ (\cite[Theorem 5.13]{KNS17}) yields the equations:
\begin{align*}
[\cO _{\bQ_G ( e )} ( \varpi )] & = \frac{1}{1 - q^{-1} \mathtt t} ( e ^{\varpi} [\cO_{\bQ_G ( e )} ] +  e ^{- \varpi} [\cO_{\bQ_G ( s )}] )\\
[\cO _{\bQ_G ( s )} ( \varpi )] & = \frac{1}{1 - q^{-1} \mathtt t} ( q^{-1} e ^{\varpi} \mathtt t [\cO_{\bQ_G ( e )} ] +  e ^{- \varpi} [\cO_{\bQ_G ( s )}] ).
\end{align*}

Forgetting the extra $\Gm$-action yield:
\begin{align*}
[\cO _{\bQ_G ( e )} ( \varpi )] & = \frac{1}{1 - \mathtt t} ( e ^{\varpi} [\cO_{\bQ_G ( e )} ] +  e ^{- \varpi} [\cO_{\bQ_G ( s )}] )\\
[\cO _{\bQ_G ( s )} ( \varpi )] & = \frac{1}{1 - \mathtt t} ( e ^{\varpi} \mathtt t [\cO_{\bQ_G ( e )} ] +  e ^{- \varpi} [\cO_{\bQ_G ( s )}] ).
\end{align*}
Here both of the above equations contain
$$\frac{1}{1 - \mathtt t} = \sum_{m \ge 0} \mathtt t^m \in \C [\![\mathtt t]\!],$$
that is a formal sum. In particular, our computations here involve infinite sums.

Inverting this equation yields that
\begin{align*}
[\cO _{\bQ_G ( e )} ( - \varpi )] & = e ^{- \varpi} [\cO_{\bQ_G ( e )} ] -  e ^{- \varpi} [\cO_{\bQ_G ( s )}]\\
[\cO _{\bQ_G ( s )} ( - \varpi )] & = - e ^{\varpi} \mathtt t [\cO_{\bQ_G ( e )} ] +  e ^{\varpi} [\cO_{\bQ_G ( s )}].
\end{align*}
Therefore, we obtain
\begin{align*}
[\cO _{\bQ_G ( e )}] - e^{\varpi} [\cO _{\bQ_G ( e )} ( - \varpi )] & = [\cO_{\bQ_G ( s )}]\\
[\cO _{\bQ_G ( s )}] - e^{\varpi} [\cO _{\bQ_G ( s )} ( - \varpi )] & = e ^{\al} \mathtt t [\cO_{\bQ_G ( e )} ] + ( 1 - e ^{\al} ) [\cO_{\bQ_G ( s )}].
\end{align*}
Applying $\mathtt t ^{-m}$ ($m > 0$) on the both sides, we have
\begin{align*}
[\cO _{\bQ_G ( t_{-m\alpha^{\vee}} )}] - e^{\varpi} [\cO _{\bQ_G ( t_{-m\alpha^{\vee}} )} ( - \varpi )] & = [\cO_{\bQ_G ( s t_{-m\alpha^{\vee}} )}]\\
[\cO _{\bQ_G ( s t_{-m\alpha^{\vee}} )}] - e^{\varpi} [\cO _{\bQ_G ( s t_{-m\alpha^{\vee}} )} ( - \varpi )] & = e ^{\al} [\cO_{\bQ_G ( t_{( 1-m )\alpha^{\vee}} )} ] + ( 1 - e ^{\al} ) [\cO_{\bQ_G ( s t_{-m\alpha^{\vee}} )}].
\end{align*}
In other words, we have
\begin{align*}
H ( [\cO _{\bQ_G ( t_{-m\alpha^{\vee}} )}] ) & = [\cO_{\bQ_G ( s t_{-m\alpha^{\vee}} )}]\\
H ( [\cO _{\bQ_G ( t_{-m\alpha^{\vee}} )}] ) & = e ^{\al} [\cO_{\bQ_G ( t_{( 1-m )\alpha^{\vee}} )} ] + ( 1 - e ^{\al} ) [\cO_{\bQ_G ( s t_{-m\alpha^{\vee}} )}]
\end{align*}
for $H \equiv H_i$ (as we have $|\tI| = 1$). By Theorem \ref{main}, this transplants to
\begin{align*}
\bh \odot [\cO_{\Gr_{t_{-m \al^{\vee}}}}] & = [\cO_{\Gr_{s t_{-m \al^{\vee}}}}]\\
\bh \odot [\cO_{\Gr_{s t_{-m \al^{\vee}}}}] & = e ^{\al} [\cO_{\Gr_{t_{(1-m) \al^{\vee}}}}] + ( 1 - e ^{\al} ) [\cO_{\Gr_{s t_{-m \al^{\vee}}}}]
\end{align*}
for $\bh \equiv \bh_i$. We have $\bh = [\cO_{\Gr_{s t_{-\al^{\vee}}}}] \odot [\cO_{\Gr_{t_{-\al^{\vee}}}}]^{-1}$. By Theorem \ref{LSS-formula}, we conclude
\begin{align*}
[\cO_{\Gr_{s t_{-\al^{\vee}}}}] \odot [\cO_{\Gr_{t_{-m \al^{\vee}}}}] & = [\cO_{\Gr_{s t_{-(m+1) \al^{\vee}}}}]\\
[\cO_{\Gr_{s t_{-\al^{\vee}}}}] \odot [\cO_{\Gr_{s t_{-m \al^{\vee}}}}] & = e ^{\al} [\cO_{\Gr_{t_{-m \al^{\vee}}}}] + ( 1 - e ^{\al} ) [\cO_{\Gr_{s t_{-(m+1) \al^{\vee}}}}].
\end{align*}
for $m > 0$. This coincides with the calculation in \cite[(17)]{LLMS17}.

\section{Relation with quantum $K$-group}

We continue to work in the setting of the previous section. In this section, we establish an isomorphism between the equivariant $K$-group of $\bQ_G^\ra$ (\S \ref{subsec:eK}) and the equivariant quantum $K$-group of $\sB$ (\S \ref{subsec:eqK}). The main tool here is the comparison of \S \ref{subsec:sif}--\ref{subsec:eK} and the calculations on the quasi-map spaces (\S \ref{subsec:QM}) as briefly recalled in \S \ref{subsec:QJ}. The passage from the original definition of the $K$-theoretic Gromov-Witten correlation functions (\S \ref{subsec:GQL}) and the corresponding quantities on quasi-map spaces is also briefly explained in \S \ref{subsec:QJ} after Givental-Lee \cite{GL03} and Braverman-Finkelberg \cite{BF14b}.

\subsection{Quasi-map spaces}\label{subsec:QM}

Here we recall basics of quasi-map spaces from \cite{FM99,FFKM}.

A quasi-map $( f, D )$ is a morphism $f : \P ^1 \rightarrow \sB$ together with a $\Pi^{\vee}$-colored effective divisor
\begin{equation}
D = \sum_{\alpha^{\vee} \in \Pi^{\vee}, x \in \P^1 (\C)} m_x (\alpha^{\vee}) \alpha^{\vee} \otimes [x] \in Q^{\vee} \otimes_\Z \mathrm{Div} \, \P^1 \hskip 3mm \text{with} \hskip 3mm m_x (\alpha^{\vee}) \in \Z_{\ge 0},\label{totdef}
\end{equation}
where the sum is essentially finite, i.e. $m_x(\alpha^{\vee})=0$ except for finitely many $x\in \P^1 (\C)$. We call $D$ the defect of $(f, D)$. We set $[D] := \{ x \in \P^1 ( \C ) \mid \sum_{\alpha^{\vee}\in \Pi^{\vee}} m_x ( \alpha^{\vee} ) > 0 \} \subset \P^1$ and call it the defect locus of $(f, D)$. We remark that $f$ can be also understood as a rational map defined outside of $[D]$ in view of the valuative criterion of properness.

We call $\sum_{\al \in \Pi^{\vee}} m_x (\alpha^{\vee}) \alpha^{\vee}$ the defect of $(f,D)$ at $x \in \P^1 ( \C )$ (that we denote by $| D |_x$). Here we define the total defect of  $( f, D )$ by
$$|D| := \sum_{\alpha^{\vee} \in \Pi^{\vee}, x \in \P^1 (\C)} m_x (\alpha^{\vee}) \alpha^{\vee} = \sum_{x \in \P^1 (\C)} |D|_x \in Q_+^{\vee}.$$

For each $\beta \in Q_+^{\vee}$, we set
$$\sQ ( \sB, \beta ) : = \{ (f, D) \mid \text{a quasi-map s.t. } f _* [ \P^1 ] + | D | = \beta \in H_2 (\sB,\Z) \},$$
where $f_* [\P^1]$ is the class of the image of $\P^1$ multiplied by the degree of $\P^1 \to \mathrm{Im} \, f$. We denote $\sQ ( \sB, \beta )$ by $\sQ ( \beta )$ in case there is no danger of confusion. We understand that $\sQ (\beta) = \emptyset$ for $\beta \in Q^{\vee} \setminus Q^{\vee}_+$.

\begin{defn}[Drinfeld-Pl\"ucker data]\label{Zas}
Consider a collection $\mathcal L = \{( \psi_{\lambda}, \mathcal L^{\lambda} ) \}_{\lambda \in P_+}$ of inclusions $\psi_{\lambda} : \mathcal L ^{\lambda} \hookrightarrow L ( \lambda ) \otimes _{\C} \mathcal O _{\P^1}$ of line bundles $\mathcal L ^{\lambda}$ over $\P^1$ into the trivial vector bundles $L( \la ) \otimes _{\C} \mathcal O _{\P^1}$ equipped with non-trivial $G$-actions afforded in \S \ref{subsec:prelim} (as coherent subsheaves). The data $\mathcal L$ is called a Drinfeld-Pl\"ucker data (DP-data) if the canonical inclusion of $G$-modules
$$\eta_{\lambda, \mu} : L ( \lambda + \mu ) \hookrightarrow L ( \lambda ) \otimes L ( \mu )$$
induces an isomorphism
$$\eta_{\lambda, \mu} \otimes \mathrm{id} : \psi_{\lambda + \mu} ( \mathcal L ^{\lambda + \mu} ) \stackrel{\cong}{\longrightarrow} \psi _{\lambda} ( \mathcal L^{\lambda} ) \otimes_{\cO_{\P^1}} \psi_{\mu} ( \mathcal L^{\mu} )$$
for every $\lambda, \mu \in P_+$.
\end{defn}

\begin{thm}[Drinfeld, see Finkelberg-Mirkovi\'c \cite{FM99}]\label{Dr}
The variety $\sQ ( \beta )$ is isomorphic to the variety formed by isomorphism classes of the DP-data $\mathcal L = \{( \psi_{\lambda}, \mathcal L^{\lambda} ) \}_{\lambda \in P_+}$ such that $\deg \, \mathcal L ^{\lambda} = \left< w_0 \beta, \lambda \right>$. In addition, we have
$$\dim \, \sQ ( \beta ) = \dim \sB + 2 \left< \beta,\rho \right>.$$
\end{thm}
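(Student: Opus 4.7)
The plan is to set up a mutual bijection between quasi-maps of class $\beta$ and DP-data of the prescribed degrees via the Pl\"ucker embeddings $\sB \hookrightarrow \prod_{i \in \tI} \P ( L ( \varpi_i ) )$, and to read off irreducibility by exhibiting the honest-map locus as an open dense irreducible subset.

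For the forward direction, I would start from an honest morphism $f : \P^1 \to \sB$ (the case $D = 0$). The Pl\"ucker embedding provides a tautological inclusion $\cO_\sB ( -\la ) \hookrightarrow L ( \la ) \otimes \cO_\sB$ for each $\la \in P_+$, and pulling back by $f$ yields $\mathcal L^\la := f^* \cO_\sB ( -\la ) \hookrightarrow L ( \la ) \otimes \cO_{\P^1}$ with $\deg \mathcal L^\la = -\langle f_*[\P^1], \la\rangle$. The multiplicativity $\cO_\sB ( -\la-\mu ) \cong \cO_\sB ( -\la ) \otimes \cO_\sB ( -\mu )$ together with the compatibility of the Pl\"ucker map with $\eta_{\la,\mu}$ delivers the Pl\"ucker condition of Definition \ref{Zas} for free. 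For a quasi-map $( f, D )$ with $D = \sum_{\al,x} m_x ( \al^\vee ) \al^\vee \otimes ( x )$, I would twist each $\mathcal L^\la$ by the divisor $-\sum_x \langle \sum_\al m_x ( \al^\vee ) \al^\vee, \la \rangle ( x )$; the resulting DP-data still satisfies Pl\"ucker and has the required degree $-\langle f_*[\P^1] + |D|, \la\rangle = -\langle \beta, \la\rangle$.

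Conversely, given DP-data of the specified degrees, the fibers $(\mathcal L^\la_x \subset L ( \la ))_{\la \in P_+}$ at a point $x \in \P^1$ where no $\psi_\la$ vanishes satisfy the Pl\"ucker relations and hence describe a point of $\sB$ (since $\sB$ is cut out of $\prod_{i \in \tI} \P ( L ( \varpi_i ) )$ by these relations). This defines a rational map $\P^1 \dashrightarrow \sB$ which extends uniquely to a morphism $f$ by properness, and the excess zeros of $\{ \psi_\la \}$ relative to $f^* \cO_\sB ( -\la )$ must be assembled into a $\Pi^\vee$-colored divisor $D$ on $\P^1$. For irreducibility, I would note that the defect-free locus coincides with $\mathrm{Mor}_\beta ( \P^1, \sB ) \subset \sQ ( \beta )$ and is open and dense (positive defect cuts out a proper closed locus); this space of morphisms is itself irreducible because a general map lands in the big cell $\mathbb O_\sB ( e ) \cong \A^{|\Delta_+|}$ away from finitely many points, reducing to the irreducibility of the affine space of bounded-pole-order polynomial maps $\P^1 \to \A^{|\Delta_+|}$.

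The hard part will be the last step of the inverse construction: one needs to check that the local vanishing orders of $\{\psi_\la\}_{\la \in P_+}$ at each point $x \in \P^1$ arise from a single element of $Q^\vee_+$ rather than from an arbitrary $P_+$-indexed family of nonnegative integers. This uses the Pl\"ucker relations nontrivially and reflects the geometric fact that the boundary strata of $\sQ ( \beta )$ parametrize quasi-maps, not arbitrary degenerations of line bundle data. Once the bijection on $\C$-points is established with the appropriate compatibilities, upgrading to an isomorphism of schemes is formal, since the DP-data side embeds naturally as a closed subscheme of $\prod_{i \in \tI} \P ( L ( \varpi_i ) \otimes H^0 ( \P^1, \cO ( \langle \beta, \varpi_i\rangle ) ) )$ cut out by the Pl\"ucker equations.
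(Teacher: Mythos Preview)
The paper does not supply a proof of this theorem: it is stated with attribution to Drinfeld and a reference to Finkelberg--Mirkovi\'c \cite{FM99}, and the text proceeds immediately to the discussion of $\sQ(\beta,w)$. So there is no in-paper argument to compare against; your sketch is essentially the standard construction found in the cited literature.

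A remark on your irreducibility argument: you reduce to the irreducibility of the honest-map locus $\mathrm{Mor}_\beta(\P^1,\sB)$ and then argue via the big cell. This is workable but delicate, since density of the defect-free locus is not entirely free (one must know such maps exist for the given $\beta$, and that the boundary has strictly smaller dimension), and the big-cell reduction requires care because a map of degree $\beta$ need not factor through a single Bruhat cell on any Zariski open. In the references the irreducibility is typically extracted more directly from the DP-data side, e.g.\ by stratifying by defect to see that each boundary piece has strictly smaller dimension than $2\langle\beta,\rho\rangle+\dim\sB$ (cf.\ equation~(\ref{dimQ}) in the paper), or by relating $\sQ(\beta)$ to orbits in the affine Grassmannian. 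Your approach is not wrong, but if you write it up you should either justify the density/irreducibility of the map locus independently or switch to the dimension-count stratification, which is cleaner and closer to what the cited sources actually do.
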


For each $\beta \in Q^{\vee}_+$ and $w \in W$, we consider two varieties:
\begin{align*}
\mathring{\sQ} ( \beta ) & := \{ (f, D) \in  \sQ ( \beta ) \mid 0 \not\in [D] \} \subset \sQ ( \beta ), \\
\mathring{\sQ} ( \beta, w ) & := \{ (f, D) \in  \mathring{\sQ} ( \beta ) \mid f ( 0 ) \in \bO_\sB ( w ) \}.
\end{align*}
In case $\beta , \gamma \in Q^{\vee}_+$, we have a closed embedding
$$\imath_{\gamma} : \sQ ( \beta ) \ni (f, D) \mapsto (f, D + \gamma [0])\in \sQ ( \beta + \gamma ).$$
We set $\sQ ( \beta, w  ) := \overline{\mathring{\sQ} ( \beta, w  )} \subset \sQ ( \beta )$.

For each $\lambda \in P$, $w \in W$, and $\beta \in Q_+^{\vee}$, we have a $G$-equivariant line bundle $\cO _{\sQ ( \beta, w )} ( \lambda )$ obtained by the (tensor product of the) pull-backs $\cO _{\sQ ( \beta, w )}( \varpi_i )$ of the $i$-th $\cO ( 1 )$ via the closed embedding
\begin{equation}
\sQ ( \beta, w ) \hookrightarrow \prod_{i \in \mathtt I} \P ( L ( \varpi_i )^* \otimes_{\C} \C [z] _{\le \left< \beta, \varpi_i \right>} ),\label{Pemb}
\end{equation}
where we make an identification between the DP-data and the coordinates in the RHS of (\ref{Pemb}) by using
\begin{align*}
\Hom_{\cO_{\P^1}} ( \mathcal L^{\varpi_{\bar{\imath}}}, L ( \varpi_i ) \otimes_\C \cO_{\P^1} ) & \cong \Hom_{\cO_{\P^1}} ( \cO_{\P^1} ( \left< w_0 \beta, \varpi_i \right> ), L ( \varpi_i ) \otimes_\C \cO_{\P^1} )\\
& \cong \Hom_{\cO_{\P^1}} ( \cO_{\P^1} ( \left< w_0 \beta, \varpi_i \right> [\infty] ), L ( \varpi_i ) \otimes_\C \cO_{\P^1} )\\
& \cong L ( \varpi_i ) \otimes_{\C} \C [z] _{\le \left< \beta, - w_0 \varpi_i \right>} \hskip 5mm i \in \tI.
\end{align*}
Here we use an identification $L ( \varpi_i )^* \cong L ( - w_0 \varpi_i )$ ($i \in \tI$) in the comparison of the DP-data and (\ref{Pemb}).

By comparing (\ref{Pemb}) and (\ref{formal-proj-emb}), we find an embedding
$$\mathring{\sQ} ( \beta ) \subset G \bO( e ) \subset \bQ_G \hskip 10mm \beta \in Q^{\vee}_+.$$
In particular, every two (closed) points of $\mathring{\sQ} ( \beta )$ are transferred to each other by the $G[\![z]\!]$-action. In the same vein, every two points of $\mathring{\sQ} ( \beta, w )\subset \bO(w)$ ($\beta \in Q^{\vee}_+, w \in W$) are transferred to each other by the $\bI$-action.

We have $\sB = \sQ ( 0 )$ by the Pl\"ucker embedding. By expanding the map
$$\P^1 \to \sB \hookrightarrow \prod_{i \in \tI} \P ( L ( \varpi_i )^* )$$
into a collection of formal power series $L ( \varpi_i )^* \otimes \C [\![z]\!]$ ($i \in \tI$) using the coordinate variable $z$ on $\A^1 \subset \P^1$ that admits a degree one $\Gm$-action, we find an embedding $\sQ ( \beta ) \subset \bQ_G$ by (\ref{UN}). These result in the closed embeddings $\sB \subset \sQ ( \beta ) \subset \bQ_G$ for each $\beta \in Q^{\vee}_+$ such that the line bundles $\cO ( \la )$ ($\la \in P$) corresponds to each other by restrictions (cf. \S \ref{subsec:sif} and \cite[\S 4.4]{Kat18d}).

The $\Gm$-fixed points in the RHS (\ref{Pemb}) is
\begin{equation}
\prod_{i \in \tI} \Bigl( \bigsqcup_{m_i=0}^{\left< \beta, \varpi_i \right>} \P ( L ( \varpi_i )^* \otimes_{\C} \C z^{m_i} )\Bigr) \cong \prod_{i \in \tI} \Bigl( \bigsqcup_{m_i=0}^{\left< \beta, \varpi_i \right>} \P ( L ( \varpi_i )^* )\Bigr)  \label{eqn:torus-fixed}
\end{equation}
by inspection. Since $S^1 \subset \C^{\times}$ is Zariski dense, we find that that (\ref{eqn:torus-fixed}) is also the $S^1$-fixed points of the RHS of (\ref{Pemb}). In view of Theorem \ref{Dr} and (\ref{Pemb}), we find that the set of $S^1$-fixed  points of $\sQ ( \beta )$ is the disjoint union of $\sB$, one for each $0 \le \gamma \le \beta$.

\subsection{Factorization structure and its consequences}\label{subsec:fact}

The contents of this subsection are used only in the next section, and in the explanation of Theorem \ref{zas}. Hence, this subsection can be safely skipped to understand Theorem \ref{wJcomp} and Corollary \ref{Jcomp} if one can admit Theorem \ref{zas} without an additional explanation.

Here we temporarily switch\footnote{Although the whole results are of algebraic nature as recorded in \cite{BFGM, BFG06}, it looks simpler to present them by their analytic counterparts, see Remark \ref{rem:loc}.} to the complex analytic topology in order to state Theorem \ref{fact}. For each $\beta \in Q^{\vee}_+$, we set $\sZ ( \beta ) := \sQ ( \beta ) \cap \bO ( w_0 )$ and call it the zastava space (of degree $\beta$). This is an affine open subset of $\sQ(\beta,w_0)$ that is stable under the action of $( \Gm \times B )$. We set
$$C ^{(\beta)} : = \prod_{i \in \tI} \left( C^{m_i} / \mathfrak S_{m_i} \right),  \hskip 2mm \text{where} \hskip 2mm \beta = \sum_{i \in \tI} m_i \alpha_i^{\vee}$$
for a Riemann surface $C$ (or a finite set of points), where $\mathfrak S_m$ is the symmetric group of order $m$. We note that the space $C ^{(\beta)}$ is the same as the space of $\Pi^{\vee}$-colored divisors of degree $\beta$ on $C$. We set $\A^1_x := \mathbb P^1 \setminus \{ x \}$ for a point $x \in \mathbb P^1$ for the sake of notational simplicity, that we may regard it as an open Riemann surface and an algebraic curve interchangeably.

For each $i \in \tI$, a point $(f, D) \in \sZ ( \beta )$ defines an element of $u_i ( f, D ) \in L ( \varpi_i )^* \otimes_{\C} \C [z]$ through (\ref{Pemb}) for each $i \in \tI$ (up to a scalar multiple), that also yields a polynomial $\phi_i ( f, D ; z ) \in \C [z]$ by pairing with the lowest weight vector of $L ( \varpi_i )$. By examining the roots of $\phi_i ( f, D ; z )$ (the multiplicity at $\infty$ is understood as $\left< \beta, \varpi_i \right> - \deg \, \phi_i ( f, D ; z)$), we obtain the factorization morphism
$$\mathfrak f^{\beta} : \sZ ( \beta ) \longrightarrow ( \A^1_0 )^{( \beta )}$$
since $0 \in \P^1$ is never a root of such polynomials (see e.g. \cite[\S 5.2.2]{FM99}). By construction, the point $\mathfrak f^{\beta} ( f, D ) \in ( \P^1 )^{(\beta)}$ contains at least $\left< |D|_x, \varpi_i \right>$-copies of the point $x \in \P^1$ in the $i$-th configuration for $(f, D) \in \sZ ( \beta )$. The constant map $c: \P^1 \rightarrow B/B \subset \sB$ yields a point $( c, \beta [\infty]) \in \sZ ( \beta )$, that we refer as the origin $\mathbf 0$ of $\sZ ( \beta )$. The $\Gm$-action attracts every point of $\sZ (\beta)$ to the origin.

\begin{thm}[Finkelberg-Mirkovi\'c \cite{FM99} \S 6.3.2]\label{fact}
Let $\beta, \beta_1, \beta_2$ be elements in $Q^{\vee}_+$ such that $\beta = \beta_1 + \beta_2$, and let $\mathcal U_1, \mathcal U_2 \subset \A^1_0$ be a pair of disjoint complex analytic open subsets. We have an isomorphism of complex analytic sets
$$( \mathfrak f ^{\beta} )^{-1} ( \mathcal U_1 ^{(\beta_1)} \times \mathcal U_2 ^{(\beta_2)} ) \cong ( \mathfrak f^{\beta_1} )^{-1} ( \mathcal U_1 ^{(\beta_1)} ) \times ( \mathfrak f^{\beta_2} )^{-1} ( \mathcal U_2 ^{(\beta_2)} ),$$
where $\mathcal U_1 ^{(\beta_1)} \times  \mathcal U_2 ^{(\beta_2)} \subset ( \P^1 )^{( \beta )}$ is a natural inclusion.
\end{thm}

\begin{rem}\label{rem:fact}
{\bf 1)} Theorem \ref{fact} implies that $\sZ ( \beta )$, and hence also $\sQ ( \beta )$, must be irreducible as an algebraic variety over $\C$ (through the projectivity of $\sQ ( \beta )$ and the GAGA \cite{Ser55}); {\bf 2)} In \cite{BF14a,BF14c}, it is established that $\sZ ( \beta )$ is a normal variety and the morphism $\mathfrak f^{\beta}$ is flat (in the course of their proof of Theorem \ref{rat-res}).
\end{rem}

\begin{rem}\label{rem:loc}
In Theorem \ref{fact}, we can fix a point $x = (f, D) \in ( \mathfrak f^{\beta_1} )^{-1} ( \mathcal U_1 ^{(\beta_1)} )$ and take the completion $\mathcal O^{\wedge}_x$ of the germ of the analytic structure sheaf of $( \mathfrak f ^{\beta_1} )^{-1} ( \mathcal U_1 ^{(\beta_1)} )$ at a point $x$. Theorem \ref{fact} asserts that $\Spec \, \mathcal O^{\wedge}_x$ is the formal completion of the normal direction of
$$\{ x \} \times ( \mathfrak f^{\beta_2} )^{-1} ( \mathcal U_2 ^{(\beta_2)} ) \subset \sZ ( \beta ).$$
In this description, $\mathfrak f^{\beta_1} ( x )$ corresponds to a specific configuration of points in $\A^1_0$, that defines a finite set $[D] \subset S \subset \mathbb A^1_{0}$. Therefore, we can set $\mathcal U _2 := ( \mathbb A^1_{0} \setminus S )^{(\beta_2)}$ as the limiting case. In view of Theorem \ref{fact}, we obtain the locally ringed spaces and the maps
\begin{equation}
\{ x \} \times ( \mathfrak f^{\beta_2} )^{-1} ( ( \mathbb A^1_{0} \setminus S )^{(\beta_2)} ) \subset \Spec \, \mathcal O^{\wedge}_x \times ( \mathfrak f^{\beta_2} )^{-1} ( ( \mathbb A^1_{0} \setminus S )^{(\beta_2)} ) \stackrel{\wp_x}{\longrightarrow} \mathcal Z ( \beta ).\label{loc-sch}
\end{equation}
The algebraic version (\cite{BFGM}) of the factorization map $\mathfrak f^{(\beta_1,\beta_2)}$ is over
\begin{equation}
\Bigl( ( \mathbb A^1_0 )^{(\beta_1)} \times ( \mathbb A^1_0 )^{(\beta_2)} \Bigr) \setminus \Delta^{(\beta_1,\beta_2)},\label{covA}
\end{equation}
that is a local covering of $\mathbb A_0^{(\beta)}$, where $\Delta^{(\beta_1,\beta_2)}$ denotes the divisor such that the first group of configuration of points and the second group of configuration of points have a common point of $\mathbb A^1_0$. The analytification of $\mathfrak f^{(\beta_1,\beta_2)}$ is the base change of $\mathfrak f^{\beta}$ from $\mathbb A_0^{(\beta)}$ to (\ref{covA}). We have
$$\mathfrak f^{\beta_1} ( x ) \times ( \mathbb A^1_0 \setminus S )^{(\beta_2)} \subset \Bigl( ( \mathbb A^1_0 )^{(\beta_1)} \times ( \mathbb A^1_0 )^{(\beta_2)} \Bigr) \setminus \Delta^{(\beta_1,\beta_2)}.$$
It follows that the natural map
$$\widetilde{\wp}_x : ( \mathfrak f^{(\beta_1,\beta_2)} )^{-1} ( \Bigl( ( \mathbb A^1_0 )^{(\beta_1)} \times ( \mathbb A^1_0 )^{(\beta_2)} \Bigr) \setminus \Delta^{(\beta_1,\beta_2)} ) \rightarrow \sZ ( \beta )$$
factors through the complex analytic map $\wp_x$ when we restrict the domain to $( \mathfrak f^{\beta} )^{-1} \bigl( \mathfrak f^{\beta_1} ( x ) \times ( \mathbb A^1_0 \setminus S )^{(\beta_2)} \bigr)$. Hence, $\wp_x$ is the analytification of an algebraic map amplified by a trivial fiber space structure offered by $\Spec \, \mathcal O_x^{\wedge}$, that is possible since the formal completion with respect to a maximal ideal is in common between an algebraic variety and its analytification.

Therefore, we can regard (\ref{loc-sch}) as schemes and morphisms between them. Now the base space $\mathfrak f^{\beta_1} ( x ) \times ( \A_0^1 \setminus S )^{(\beta_2)}$ is a subvariety of $( \A_0^1)^{(\beta)}$. This exhibits the reason why we need formal completions in the proof of Theorem \ref{Q-rat-sing}, and why Proposition \ref{ltr} can be useful in its analysis.
\end{rem}

\begin{prop}\label{ltr}
Let $\beta , \gamma \in Q^{\vee}_+$. For each $y \in \mathring{\sQ} ( \beta )$, we have a morphism
$$\eta : \mathcal S_y  \times U \hookrightarrow \sQ ( \beta + \gamma ),$$
where $\mathbf 0 \in \mathcal S_y \subset \sZ ( \gamma )$ and $y \in U \subset \mathring{\sQ} ( \beta )$ are $($complex analytic$)$ open subsets and $\imath_{\gamma} ( y ) = \eta ( \mathbf 0, y )$.
\end{prop}

We call $\mathcal S_y$ in Proposition \ref{ltr} a local transversal slice of $\mathring{\sQ} ( \beta ) \subset \sQ ( \beta + \gamma )$ along $y \in \mathring{\sQ} ( \beta )$.

\begin{proof}[Proof of Proposition \ref{ltr}]
We make a swap $z \mapsto z^{-1}$ that acts on the coordinate of $\mathbb P^1$ (and hence the origin of a zastava space have defect only at $0$ instead of $\infty$). By convention, $\mathcal Z ( \beta + \gamma )$ now consists of quasi-maps $(f, D)$ of degree $( \beta + \gamma )$ such that $\infty \not\in [D]$ and $f ( \infty ) \in B / B$. Since the former is an open condition and $N^- \times B/B \subset \sB$ is open dense, we deduce $N^- \times \sZ ( \beta + \gamma ) \subset \sQ ( \beta + \gamma )$ is also a dense open subset. Since $y \in \mathring{\sQ} ( \beta )$, we find that the quasi-map $(f_y, D_y)$ corresponding to $\imath_{\gamma} ( y ) \in \sQ ( \beta + \gamma )$ (that we might also denote by $y$ in the following) satisfies $|D_y|_0 = \gamma$ and $|D_y|_p = 0$ for some $p \in \A^1_{\infty}$. Let us apply the action of some $\psi\in\mathop{PSL} ( 2, \C )$ on $\P^1$ that fixes $0$ and send $p$ to $\infty$. In addition, we apply some $g \in G$ such that $g f_y (p) \in B/B$. The actions of $\psi$ and $G$ preserve $\mathring{\sQ} ( \beta )$ (as $0 \in \P^1$ is fixed and the space is $G$-stable). Thus, we have $\imath_\gamma ( y ) \in ( \psi^{-1} \times g^{-1} )( N^- \times \sZ ( \beta + \gamma ) ) \cap \imath_{\gamma} ( \mathring{\sQ} ( \beta ) ) \subset \sQ ( \beta + \gamma )$. In particular, it suffices to choose $\imath_\gamma ( y ) \in \sZ ( \beta + \gamma ) \cap \imath_\gamma ( \mathring{\sQ} ( \beta ) )$ to construct a local transversal slice.

Now we have $|D_y|_0 = \gamma$ and $|D_y|_{\infty} = 0$. Since the order of common zero at $0$ of the vector valued function $u_i ( f, D )$ is exactly $\left< \gamma, \varpi_i \right>$, we have some $h \in N$ such that $h u_i ( f, D )$, when paired with the lowest weight vector of $L(\varpi_i)$, yields zero of order exactly $\left< \gamma, \varpi_i \right>$ at $0$ (since the $N$-action on $L ( \varpi_i )^*$ is cocyclic to the highest weight vector, we can throw in the lowest $z$-degree part into the coefficient of the highest weight vector by the $N$-action). The twist by $h$ preserves $\sZ ( \beta + \gamma )$, and changes the factorization morphism only. We employ this modified factorization morphism (that we denote by $\mathfrak f$) and define an enhanced version of the factorization morphism
$$\mathfrak f^{\mathsf{enh}} : N^- \times \sZ ( \beta + \gamma ) \ni ( h, (f, D)) \mapsto \mathfrak f ( f, D ) \in ( \mathbb A_{\infty} )^{(\beta + \gamma)}.$$
Let us find disjoint open subsets $\mathcal U_1, \mathcal U_2 \subset \mathbb A^1_{\infty}$ such that $0 \in \mathcal U_1$ and $\mathcal U_2$ contains the support of the configuration of points $\mathfrak f^{\mathsf{enh}} ( y )$ except for $0$. By identifying $\mathfrak f$ with the original factorization morphism, Theorem \ref{fact} yields an isomorphism
$$ (\mathfrak f^{\mathsf{enh}})^{-1} ( \mathcal U_1^{(\gamma)} \times \mathcal U_2^{(\beta)}) \cong ( \mathfrak f^{\gamma} )^{-1} ( \mathcal U_1^{(\gamma)} ) \times \left( N^- \times ( \mathfrak f^{\beta} )^{-1} ( \mathcal U_2^{(\beta)} ) \right)$$
that defines an open subset of $\sQ ( \beta + \gamma )$ such that
$$( \mathfrak f^{\gamma} )^{-1} ( \mathcal U_1^{(\gamma)} ) \times \left( N^- \times ( \mathfrak f^{\beta} )^{-1} ( \mathcal U_2^{(\beta)} ) \right) \cap \imath_{\gamma} ( \mathring{\sQ} ( \beta ) ) = \{ \mathbf 0 \} \times \left( N^- \times ( \mathfrak f^{\beta} )^{-1} ( \mathcal U_2^{(\beta)} ) \right)$$
and $y \in ( \mathfrak f^{\beta} )^{-1} ( \mathcal U_2^{(\beta)} ) \subset N^- \times ( \mathfrak f^{\beta} )^{-1} ( \mathcal U_2^{(\beta)} )$. Now we set
$$\mathcal S_y := ( \mathfrak f^{\gamma} )^{-1} ( \mathcal U_1^{(\gamma)} ) \hskip 5mm \text{and} \hskip 5mm U :=  \left( N^- \times ( \mathfrak f^{\beta} )^{-1} ( \mathcal U_2^{(\beta)} ) \right)$$
to conclude the assertion.
\end{proof}

\begin{cor}\label{ltr-comp}
Keep the setting of Proposition \ref{ltr} $($with possible rearrangement of $\mathcal S_y$ and $U)$. Let $p \in \P^1 (\C)$. If $y = (f, D)$ satisfies $0 \neq p \not\in [D]$, then we have the following commutative diagram for an arbitrary $\delta \in Q^{\vee}_+$:
\begin{equation*}
	\xymatrix{
	& \mathcal S_y \times U \ar@{^{(}->}[r] \ar@{^{(}->}[d]_{\mathrm{id} \times \imath}& \sQ ( \beta + \gamma )\ar@{^{(}->}[d]_{\imath'} & \mathring{\sQ} ( \beta )\ar@{^{(}->}[d]_{\imath''}\ar@{_{(}->}[l]^{\hskip 4mm \imath_\gamma}\\
	\mathcal S_y \times U \times U' ( \delta ) \ar[r]^{\hskip 4mm \cong}& \mathcal S_y \times U ( \delta ) \ar@{^{(}->}[r] & \sQ ( \beta + \gamma + \delta ) & \mathring{\sQ} ( \beta + \delta )\ar@{_{(}->}[l]^{\hskip 4mm \imath_\gamma}
	}
\end{equation*}
such that
$$\mathcal S_y \times U \times \{ \mathbf 0_\delta \} \stackrel{\cong}{\longrightarrow} \left ( ( \mathcal S_y \times U ( \delta ) ) \cap \sQ ( \beta + \gamma ) \right) \subset \sQ ( \beta + \gamma + \delta ),$$
where the map $\imath' : \sQ ( \beta + \gamma ) \hookrightarrow \sQ ( \beta + \gamma + \delta )$ is obtained by adding the defect $\delta [p]$ to each point, $U ( \delta ) \subset \mathring{\sQ} ( \beta + \delta )$ is an neighborhood of the image of $y$ in $\mathring{\sQ} ( \beta + \delta )$, $U' ( \delta ) \subset \sZ (\delta)$ is an open neighborhood of the origin $\mathbf 0_\delta \in \sZ (\delta)$, and $\imath$ and $\imath''$ are the induced maps.
\end{cor}

\begin{proof}
In the proof of Proposition \ref{ltr}, we can modify the factorization morphism $\mathfrak f^{(\beta)}$ and $\mathcal U_2$ if necessary to assume that $\mathcal U_2 = \mathcal U_2 ^{(1)} \sqcup \mathcal U_2 ^{(2)}$, $p \in \mathcal U_2^{(2)}$, and $\mathfrak f^{\beta} ( y )$ does not contain a point in $\mathcal U_2^{(2)}$. Then, Theorem \ref{fact} separates out the effect of $\delta [p]$ as a product factor $U' ( \delta )$ isomorphic to an open neighborhood of $( c, \delta [p] ) \in \sZ ( \delta )$ (where $c$ is the constant map to $B/B \subset \sB$) as required.
\end{proof}

In view of Proposition \ref{ltr} and Corollary \ref{ltr-comp}, the structure of local transversal slices of the open subset $\mathring{\sQ} ( \beta )$ with respect to $\imath_{\gamma}$ only depends on $\gamma$, and not on the choice of $\beta$. We need an analogous local transversal slices inside $\sQ ( \beta + \gamma, w )$ in the course of our proof of Theorem \ref{Q-rat-sing}. The main obstacle there is that it is not clear whether a local transversal slice exists on the neighborhood of every point of $\mathring{\sQ} ( \beta )$ in a uniform fashion, that is guaranteed by Theorem \ref{fact} when $w = e$. This uniformity is resurrected by identifying the situation with the (formal completions of the) transversal slices between $\mathbf I$-orbits of $\bQ_G^\ra$ by the fact that all the points of $\mathring{\sQ} ( \beta )$ lie on the same $G[\![z]\!]$-orbit (note that two points in $\mathring{\sQ} ( \beta )$ are not transferred to each other by the action of the smaller group $G[z]$ in general as it preserves the defect at $\P^1 \setminus \{0,\infty\}$).

\subsection{$K$-theoretic $J$-functions and generating functions}\label{subsec:QJ}

In this subsection, we reformulate results provided in Givental-Lee \cite{GL03} and Braverman-Finkelberg \cite{BF14a}. Hence, both ``theorems" in this subsection are understood as blends of their results, and their ``proofs" are just explanations on how they work.

\begin{thm}\label{zas}
There exists an element $J ' ( Q, q ) \in ( \C P [\![q]\!] ) [\![Q^{\vee}_+]\!] \cap \C ( P, q ) [\![Q^{\vee}_+]\!]$ with the following properties:
\begin{enumerate}
\item the composition of maps
$$( \C P [\![q]\!] )[\![Q^{\vee}_+]\!] \cong ( K_G ( \sB ) [\![q]\!] ) [\![Q^{\vee}_+]\!] \subset ( K_H ( \sB ) [\![q]\!] ) [\![Q^{\vee}_+]\!]$$
sends $J ' ( Q, q )$ to $J ( Q, q )$;
\item for each $\la \in P$, we have an identity in $( \C [q^{\pm 1}] P ) [\![Q^{\vee}_+]\!]$:
$$D_{w_0} ( J ' ( Qq^{\la}, q ) e^{w_0 \la} J' ( Q, q^{-1} ) ) = \sum_{\beta \in Q^{\vee}_+} \chi_{q} ( \sQ ( \beta ), \cO_{\sQ} ( \la ) ) Q^{\beta},$$
where we understand that $Qq^{\la}$ sends $Q^{\beta}$ to $Q^{\beta} q^{- \left< \beta, \la \right>}$ for each $\la \in P$.
\end{enumerate}
\end{thm}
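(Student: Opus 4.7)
The plan is to interpret both parts via the quasi-map-space computation of the $J$-function from \cite{GL03,BF14a}.

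For Part (1), I would first recall that the equivariant $K$-theoretic $J$-function $J(Q,q) = T([\cO_\sB])$ is computed by push-forward of universal structure sheaves from graph spaces, and that Braverman-Finkelberg rewrite this push-forward in terms of Euler characteristics on quasi-map spaces $\sQ(\beta)$ with values in the line bundles $\cO_{\sQ(\beta)}(\la)$. Since $\sQ(\beta)$ and these line bundles are $G$-equivariant (Theorem \ref{Dr} together with the $G$-equivariant embedding (\ref{Pemb})), each coefficient of $Q^\beta$ in the resulting power series lies in $R(G) \subset \C P$. Hence $J(Q,q)$ belongs to the image of $K_G(\sB)[\![q]\!][\![Q^\vee_+]\!] \hookrightarrow K_H(\sB)[\![q]\!][\![Q^\vee_+]\!]$, and we set $J'(Q,q) \in \C P[\![q]\!][\![Q^\vee_+]\!]$ to be its preimage under the standard identification with $K_G(\sB)[\![q]\!][\![Q^\vee_+]\!]$.

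For Part (2), I would read the identity as a two-point correlator identity of the type in \cite{GL03}: the RHS is the generating function of Euler characteristics on $\sQ(\beta)$ twisted by $\cO(\la)$, and applying $D_{w_0}$ -- which by (\ref{WCF}) implements Weyl-averaging after dividing out the denominator $\prod_{\al \in \Delta^+}(e^{-\al/2} - e^{\al/2})$ -- matches the LHS via equivariant localization for the $H$-action on $\sQ(\beta)$. Concretely, the $W$-orbit of $H$-fixed points in $\sB \subset \sQ(\beta)$ produces the Weyl symmetrization, the factor $e^{w_0 \la}$ records the $H$-character of $\cO(\la)$ at the distinguished $H$-fixed point around which the localization is organized, and the two factors $J'(Qq^\la, q)$ and $J'(Q, q^{-1})$ correspond to the virtual tangent contributions of the two ends $0, \infty \in \P^1$. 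The shift $Q \mapsto Qq^\la$ encodes the twist by $\cO(\la)$ through the pairing $\left\langle \beta, \la \right\rangle$, while the inversion $q \mapsto q^{-1}$ reflects the opposite orientation of the loop rotation at $\infty$ relative to $0$.

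The main obstacle is the precise identification of local contributions at each $H$-fixed locus, since the quasi-map spaces are only mildly singular (to be controlled later via the normality of $\sQ(\beta)$ used in Theorem \ref{b-trans}) and the comparison map from graph-space push-forwards to quasi-map Euler characteristics is non-trivial. In particular, aligning the sign, degree, and $q$-polarization conventions between \cite{GL03} (phrased via stable-map moduli) and \cite{BF14a} (phrased via Zastava-type compactifications) is the technical core of the argument -- precisely the ``blending" alluded to in the preamble of this subsection.
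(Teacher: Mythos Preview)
Your proposal is essentially correct and follows the same approach as the paper, which simply points to \cite{BF14a,BF14c} for part (1) (where $J'$ is computed as the graded character of regular functions on Zastava spaces) and to \cite[\S 2.2]{GL03} for part (2); your write-up supplies more of the underlying mechanism (localization at the two ends $0,\infty\in\P^1$, the role of $D_{w_0}$ as Weyl averaging, the origin of the shift $Q\mapsto Qq^\la$) than the paper's terse citation. One small point of order: rather than defining $J'$ as the preimage of $J$, the paper takes $J'$ to be the Zastava-space generating function \emph{a priori} and then asserts it maps to $J$ under the inclusion---this is the direction in which \cite{BF14a} actually proceeds, so your part (1) would read more cleanly if you invoked that identification directly.
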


\begin{proof}
Theorem \ref{zas} is proved in \cite[\S 2.2]{GL03} for the case $G = \mathop{SL} (n, \C )$ and is extended to the case of general $G$ in \cite[\S 1.3]{BF14a}. The discussion in \cite[\S 1.3]{BF14a} seems to aim to convince readers who are acquainted with \cite[\S 2.2]{GL03} in the presence of their main result recorded here as Theorem \ref{rat-res}. To that end, here we spell out some portion of the discussions in \cite[\S 2.2]{GL03} in the setting of the space of stable maps with the aid of \cite{IMT15}, so that it might clarify why the authors of \cite{BF14a,BF14b} do not include a detailed discussion about the reasoning of Theorem \ref{zas} for general $G$.

By the inclusion $\sQ ( \beta ) \subset \bQ_G$, the set of $(\Gm \times H)$-fixed points of $\sQ ( \beta )$ is in bijection with
$$\{ p_{u t_{\gamma}} \mid u \in W, 0 \le \gamma \le \beta \} \subset W_\af.$$
We have $p_{t_{\gamma}} \in \imath_{\gamma} ( \mathring{\sQ} ( \beta - \gamma ) )$. In view of Proposition \ref{ltr}, we have a $( S^1 \times H )$-stable analytic open neighborhood $\mathfrak U ( \gamma ) \subset \sQ ( \beta )$ of $p_{t_{\gamma}}$ that is isomorphic to an open subset of
\begin{equation}
N p_{t_{\gamma}} \times \sZ ( \beta - \gamma ) \times \sZ ( \gamma ),\label{Lnbd}
\end{equation}
where the first factor $N p_{t_{\gamma}}$ is $S^1$-invariant, the $(S^1 \times H)$-action on the second factor $\sZ ( \beta - \gamma )$ is twisted by $\dot{w}_0$ on $H$, and the $(S^1 \times H)$-action on the third factor $\sZ ( \gamma )$ is twisted by $(\mathrm{inv},\dot{w}_0)$ on $(S^1 \times H )$. The factor $N p_{t_{\gamma}}$ is a Zariski open dense subset of a connected component of the $S^1$-fixed part of $\sQ ( \beta )$, that is isomorphic to $\sB$. (Here the reduction of $S^1 \subset \Gm$ enabled us to separate out the open subset of the image of the factorization map with respect to the absolute values of the coordinate on $\P^1$.) In particular,  we can take
$$\mathfrak U ( 0 ) = \sQ ( \beta ) \setminus \bigcup_{w > e} \sQ ( \beta, w )$$
and it acquires the structure of an algebraic variety with a unique attracting $( \Gm \times H )$-fixed point $p_{t_0}$ in the sense its analytification recovers $\mathfrak U ( 0 )$ as a complex analytic space. (The limit of the $\Gm$-action here is toward $0$ inside $\R_{>0} \subset \Gm ( \R )$.) Henceforth, we regard $\mathfrak U ( 0 )$ as an algebraic variety in the following.

We have a rational resolution of $\sQ ( \beta )$ offered by the space of stable maps (see Theorem \ref{X-can}), that we denote as
$$\pi_{\beta} : \sX ( \beta ) \rightarrow \sQ ( \beta ).$$

We set $\widetilde{\mathfrak U} := \pi_{\beta}^{-1} ( \mathfrak U ( 0 ) )$ and $\mathfrak C (\beta) := \pi_{\beta}^{-1} ( p_{t_0} )$. Since $\pi_\beta$ is the rational resolution of singularities in our sense (Remark \ref{Rem:Sing}), we have
\begin{equation}
\left( \R^{\bullet} ( \pi_{\beta} )_* \cO_{\widetilde{\mathfrak U}} \right) \biggl\lvert _{\mathfrak U (0)}\biggr. \cong \left( ( \pi_{\beta} )_* \cO_{\widetilde{\mathfrak U}} \right) \biggl\lvert _{\mathfrak U (0)}\biggr. \cong \cO_{\mathfrak U (0)} \cong \cO_{N} \boxtimes \cO_{\sZ ( \beta )}.\label{Zpush}
\end{equation}
Here $\pi_{\beta}$ is proper by the definition of the resolution of singularities, and thus $\mathfrak C (\beta)$ is a smooth proper orbifold. Note that a smooth proper orbifold that is locally an algebraic variety defines a normal algebraic variety that admits at worst quotient singularities. In particular, we have
\begin{equation}
\gch \, \Gamma ( \widetilde{\mathfrak U}, \cO_{\widetilde{\mathfrak U}} ) = \gch \, \Gamma ( \mathfrak U ( 0 ),\left( \R^{\bullet} ( \pi_{\beta} )_* \cO_{\widetilde{\mathfrak U}} \right) \biggl\lvert _{\mathfrak U (0)}\biggr. ) = \gch \, \C [N] \cdot \gch \, \C [\sZ (\beta )].\label{Zpush-num}
\end{equation}

Since the $\Gm$-action attracts $\mathfrak U ( 0 )$ to $N p_{t_0}$, it attracts $\widetilde{\mathfrak U}$ to $N \times \mathfrak C (\beta)$. By the Bia{\l}ynicki-Birula theorem (see \cite[Theorem 2.5]{BB73} and \cite[Lemma 7]{KP01}), the orbifold $\widetilde{\mathfrak U}$ defines a vector bundle on $N \times \mathfrak C ( \beta )$ up to finite group actions that commute with the $(\Gm \times H)$-action, and $\mathfrak C (\beta)$ is a $\Gm$-invariant smooth proper orbifold. Let
$$\mathsf{c} : \widetilde{\mathfrak U} \longrightarrow ( N \times \mathfrak C ( \beta ) ) \longrightarrow \mathfrak C ( \beta )$$
be the composition of the projection map to the zero section and the second projection. Since the fibers of $\mathsf{c}$ are affine spaces modulo the action of finite groups, we have $\mathbb R^{\bullet} \mathsf c _* = \mathsf{c}_*$. We have $\mathtt a \circ \pi_{\beta} = \pi_{\beta} \circ \mathsf{c}$, where $\mathtt a$ is the projection to $\mathrm{pt} = \Spec \, \C$ (the structure map). Therefore, we obtain a spectral sequence
$$H^{j} ( N \times \sZ ( \beta ), \left( \mathbb R^{i} ( \pi_{\beta} )_* \cO_{\widetilde{\mathfrak U}} \right) ) \Rightarrow \mathbb R^{i+j} ( \pi_{\beta} )_* \left( \mathsf c_* \cO_{\widetilde{\mathfrak U}} \right),$$
that degenerates at the $E_1$-stage and whose output is concentrated in degree $0$. Thus, we decompose $\mathsf c_* \cO_{\widetilde{\mathfrak U}}$ with respect to the $\Gm$-degree to obtain a family of coherent sheaves
$$\mathsf c_* \cO_{\widetilde{\mathfrak U}} = \C [N] \otimes \bigoplus _{m \ge 0} \mathcal F_m$$
on $\mathfrak C ( \beta )$ such that
$$\bigoplus_{m \ge 0} \Gamma ( \mathfrak C ( \beta ), \mathcal F_m ) = \C [\sZ ( \beta )].$$
Thus, we can calculate $\C [N \times \sZ ( \beta )]$ as the space of sections of $\cO_{\mathfrak U}$ in two ways, one is over $\widetilde{\mathfrak U}$, and the other is over $\mathfrak C ( \beta )$.

Now we apply the Kawasaki localization theorem \cite{Kaw78} for differential orbifold to the LHS of (\ref{Zpush-num}) by reinterpreting the localization factors on $\widetilde{\mathfrak U}$ as the weighted sum of these of $\mathfrak C ( \beta )$, whose twists are coming from the characters of the fibers of vector bundles obtained from local resolutions of $\{\mathcal F_m\}_m$ and $\C [N]$, to obtain:
\begin{equation}
\sum_{r} \frac{1}{\Delta ( p'_r )} = \sum_{r} \frac{\gch \, \C [N]}{\Delta' ( p'_r )} = \gch \, \C [N] \cdot \gch \, \C [\sZ ( \beta )],\label{sumlocQ}
\end{equation}
where $\{p'_r\}_{r} \subset \mathfrak C (\beta)$ is the set of complete representatives of the connected components of their $(\Gm \times H)$-fixed locus, and $\Delta ( p )$ and $\Delta' ( p )$ denotes the (orbifold) localization factors of $\sX ( \beta )$ and $( N \backslash \widetilde{\mathfrak U} )$ along $p$, respectively.

We compare this calculation with the sum of the collection of terms in the localization calculation of the shift operators $A_i (q)$ for line bundles in \cite[Proposition 2.13 and its proof]{IMT15}. We have
$$\sX ( \beta ) \cong \widetilde{\mathtt{ev}}_1^{-1} (0 \times \sB) \cap \widetilde{\mathtt{ev}}_2^{-1} (\infty \times \sB) \subset \sGB_{2,\beta}.$$
Thus, the localization computations on $\sX ( \beta )$ can be performed on $\sGB_{2,\beta}$ with an extra factor $(1-q)(1-q^{-1})$. This is included in the numerator of the first display formula of \cite[Proof of Proposition 2.13]{IMT15}, and cancels out with the corresponding factor in the denominator. By setting $t=0$ and taking the extra localization with respect to the additional $H$-action, we identify the most LHS of (\ref{sumlocQ}) with the portion of the first display formula \cite[Proof of Proposition 2.10]{IMT15} summing up the localization factors of the ($B^-$-fixed) $\Gm$-attracting fixed points. This yields
\begin{equation}
\sum_{\beta \in Q^{\vee}_+} Q^{\beta} \gch \, \C [\sZ ( \beta )] = \sum_{\beta \in Q^{\vee}_+}Q^{\beta} \sum_{r} \frac{1}{\Delta' ( p'_r )} = \overline{T} ( [\cO_{\sB}] ) = J' ( Q, q^{-1} ),\label{JZ}
\end{equation}
that belongs to $\C ( P, q )[\![Q^{\vee}_+]\!]$ by the expression in the second term, and belongs to $\C P [\![q^{-1}]\!][\![Q^{\vee}_+]\!]$ by the expression in the first term. This yields the first assertion.

We have
\begin{align}\label{BFcalc}
\sum_{\beta \in Q^{\vee}}Q^\beta \chi_q ( \sQ ( \beta ), \cO_{\sQ ( \beta )} ( \la )) & = \sum_{\beta \in Q^{\vee}}Q^\beta \chi_q ( \sX ( \beta ), \pi_\beta^* \cO_{\sQ ( \beta )} ( \la )) \\\nonumber
&= \chi ( \sB, J' ( Qq^\la, q ) \cdot e^{w_0 \la}\cdot J' ( Q, q^{-1} ) )\\\nonumber
& = D_{w_0} ( J' ( Qq^\la, q ) \cdot e^{w_0 \la}\cdot J' ( Q, q^{-1} ) ),
\end{align}
where the first equality is the property of rational resolutions, and the second equality is obtained by the substitution of (\ref{JZ}) into the second display formula \cite[Proposition 2.13]{IMT15}, presented there for the case $\la = - \varpi_i$ and using the original definition (\ref{eqn:defT}) of $T([\cO_{\sB}])$. This implies the second assertion (see also \cite[Lemma 5]{BF14b}).
\end{proof}

For $\vec{n} = (n_1,\ldots,n_r) \in \Z^r_{\ge 0}$, we set $x^{\vec{n}} := x_1^{n_1} \cdots x_r^{n_r}$. For $\la \in P$, we set $\la[\vec{n}] := \la + \sum_{i = 1}^r n_i \varpi_i$.

\begin{thm}\label{loc}
For each $\sum_{\beta \in Q^{\vee}_+, \vec{n} \in \Z^r_{\ge 0}} f_{\beta, \vec{n}} ( q ) x^{\vec{n}} Q^{\beta} \in ( \C P [q^{\pm 1}, x_1,\ldots,x_r] ) [\![Q^{\vee}_+]\!]$ such that
\begin{equation}
\sum_{\beta \in Q^{\vee}_+, \vec{n} \in \Z^r_{\ge 0}} f_{\beta, \vec{n}} ( q ) \otimes_{R(G)}\left( \prod_{j=1}^r ( p_i^{-1} q^{Q_i \partial_{Q_i}})^{- n_i} \right) Q^{\beta} J ( Q, q ) = 0,\label{W-eq-J}
\end{equation}
we have the following equalities:
$$\sum_{\beta \in Q^{\vee}_+, \vec{n} \in \Z^r_{\ge 0}} f_{\beta, \vec{n}} ( q ) q^{- \left< \beta, \la[\vec{n}] \right>} \chi_{q} ( \sQ ( \gamma - \beta ), \cO_{\sQ} ( \la[\vec{n}] ) ) = 0 \hskip 3mm \la \in P_+, \gamma \in Q^{\vee}_+.$$
\end{thm}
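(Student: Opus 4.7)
The plan is to apply $D_{w_0}(\,\cdot\,e^{w_0 \la}\,J(Q, q^{-1}))$ to a suitably substituted form of \eqref{W-eq-J} and invoke Theorem \ref{zas}(2) to recognize the result as the desired combination of $\chi(\sQ, \cO_\sQ)$ values. Write $\mu_{\vec{n}} := \sum_{i \in \tI} n_i \varpi_i$, so $\la[\vec{n}] = \la - \mu_{\vec{n}}$, and set $\mathcal{D}^{\vec{n}} := \prod_i (p_i^{-1} q^{Q_i \partial_{Q_i}})^{n_i}$.

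First I would unwind $\mathcal{D}^{\vec{n}} Q^\beta J(Q, q)$. Using the commutation $q^{Q_i \partial_{Q_i}} Q^\beta = q^{\langle \beta, \varpi_i\rangle} Q^\beta q^{Q_i \partial_{Q_i}}$ together with $q^{\sum_i n_i Q_i \partial_{Q_i}} J(Q, q) = J(Q q^{-\mu_{\vec{n}}}, q)$, one gets
$$\mathcal{D}^{\vec{n}} Q^\beta J(Q, q) = q^{\langle \beta, \mu_{\vec{n}}\rangle} Q^\beta p^{-\vec{n}} J(Q q^{-\mu_{\vec{n}}}, q).$$
Since \eqref{W-eq-J} is a formal identity in $q K_{H \times \Gm}(\sB)$, substituting $Q \mapsto Q q^{\la}$ (an automorphism of the coefficient ring) preserves the vanishing; the new summand becomes $q^{-\langle \beta, \la[\vec{n}]\rangle} Q^\beta p^{-\vec{n}} J(Q q^{\la[\vec{n}]}, q)$, using $\langle \beta, \mu_{\vec{n}} - \la\rangle = -\langle \beta, \la[\vec{n}]\rangle$. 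Multiplying by $e^{w_0 \la}\, J(Q, q^{-1})$ on the right, applying $D_{w_0}$, and pulling the $Q$-scalars (which commute with $D_{w_0}$) outside yields
$$0 = \sum_{\beta, \vec{n}} f_{\beta, \vec{n}}(q) \, q^{-\langle \beta, \la[\vec{n}]\rangle} Q^\beta D_{w_0}\!\bigl( p^{-\vec{n}} J(Q q^{\la[\vec{n}]}, q) \cdot e^{w_0 \la} \cdot J(Q, q^{-1}) \bigr).$$

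The crucial remaining ingredient is the identity
$$D_{w_0}\!\bigl( p^{-\vec{n}}\xi \cdot e^{w_0\la} \cdot \eta \bigr) = D_{w_0}\!\bigl(\xi \cdot e^{w_0 \la[\vec{n}]} \cdot \eta\bigr),$$
expressing that multiplication by $p^{-\vec{n}} = [\cO_\sB(-\mu_{\vec{n}})]$ inside the $D_{w_0}$-pairing is adjoint to the weight shift $\la \mapsto \la[\vec{n}]$. This lifts the classical identity $\cO_\sB(-\mu_{\vec{n}}) \otimes \cO_\sB(\la) = \cO_\sB(\la[\vec{n}])$ to the level of the quasi-map Euler characteristic appearing in Theorem \ref{zas}(2). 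Applying it and then Theorem \ref{zas}(2) identifies the bracket as $\sum_{\beta'} \chi(\sQ(\beta'), \cO_\sQ(\la[\vec{n}]))\,Q^{\beta'}$; extracting the coefficient of $Q^\gamma$ (with $\beta' = \gamma - \beta$) gives exactly the desired identity.

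The main obstacle is the key identity above: one must show that the endomorphism $p_i^{-1}$ of $q K_{H \times \Gm}(\sB)$ is adjoint to the $-\varpi_i$ shift under the character-side pairing encoded by the formula \eqref{WCF} for $D_{w_0}$, with $J = J'$ viewed as a $K_G(\sB)$-valued series via Theorem \ref{zas}(1). This should follow from a divisor-equation style argument compatible with Theorem \ref{sline}, but it requires careful bookkeeping of how line-bundle twists act on the coefficients of $J$ and interact with the Euler characteristic on quasi-map spaces.
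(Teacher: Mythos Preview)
Your approach matches the paper's alternative proof (the one given after the citation to \cite{GL03}): substitute $Q \mapsto Qq^{\la}$, unwind the $q^{Q_i\partial_{Q_i}}$ commutations, and then invoke Theorem~\ref{zas}(2). The difference lies entirely in how you treat the $p^{-\vec{n}}$ factor.

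You flag your ``crucial identity'' $D_{w_0}(p^{-\vec{n}}\xi\cdot e^{w_0\la}\cdot\eta)=D_{w_0}(\xi\cdot e^{w_0\la[\vec{n}]}\cdot\eta)$ as the main obstacle and propose a divisor-equation style argument invoking Theorem~\ref{sline}. This is an overcomplication, and as written the expression does not quite typecheck: the operator $D_{w_0}$ appearing in Theorem~\ref{zas}(2) is the Weyl-denominator operator (\ref{WCF}) on $\C P$, not an operator on $K_H(\sB)$, so you cannot feed it $p^{-\vec{n}}\xi$ with $\xi\in qK_{H\times\Gm}(\sB)$ directly. The paper resolves this in one stroke via Theorem~\ref{zas}(1): the $J$-function actually lies in $K_G(\sB)[\![q]\!][\![Q^{\vee}_+]\!]$, and under the identification $K_G(\sB)\cong\C P$ it is $J'$. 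Since $p_i^{-1}=[\cO_\sB(-\varpi_i)]$ is a $G$-equivariant class, under this same identification $p^{-\vec{n}}$ becomes multiplication by the character $e^{w_0(-\mu_{\vec{n}})}$. Hence $p^{-\vec{n}}J(Qq^{\la[\vec{n}]},q)\,e^{w_0\la}$ corresponds to $J'(Qq^{\la[\vec{n}]},q)\,e^{w_0\la[\vec{n}]}$ in $\C P$ (this is the $\otimes_{R(G)[q^{\pm1}]}$ in the paper's displayed formula), and Theorem~\ref{zas}(2) applies immediately. No divisor equation and no appeal to Theorem~\ref{sline} is needed; Theorem~\ref{zas}(1) is the missing ingredient in your argument.
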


\begin{proof}
The assertion is \cite[\S 4.2]{GL03} (see also \cite[Lemma 5]{BF14b} and \cite[\S 5]{BF14c}), that employs the localization theorem applied to a resolution of $\sQ (\beta)$, such as the Laumon spaces (when $G = \mathop{SL} ( n, \C )$) or $\sGB_{0,\beta}$ (cf. \S \ref{subsec:GQL2}).

Here we give an alternative proof (it depends on the argument in the previous paragraph through Theorem \ref{zas}, though). We can substitute $Q$ with $Q q ^{\la}$ in (\ref{W-eq-J}) multiplied with $\prod_{i \in \tI} p_i ^{-m_i}$ for $\la = \sum_{i \in \tI} m_i \varpi_i$, that is identified with $e^{w_0 \la}$ through the isomorphism $K_G ( \sB ) \cong K_B ( \mathrm{pt} ) \cong \C P$. By factoring out the effect of additional powers of $q$ coming from $q^{Q_i \partial_{Q_i}}$'s, we derive a formula
$$\sum_{\beta \in Q^{\vee}_+, \vec{n} \in \Z^r_{\ge 0}} f_{\beta, \vec{n}} ( q )  \otimes _{R ( G )} q^{- \left< \beta, \la[\vec{n}] \right>} Q^{\beta}  J ' ( Qq^{\la[\vec{n}]}, q ) e ^{w_0 ( \la[\vec{n}] )} = 0.$$
Applying Theorem \ref{zas} 2), we conclude the desired equation.
\end{proof}

\subsection{Identification of defining equations}\label{subsec:ide}

The following identity reflects of the fact that $\bigcup_{\beta \in Q^{\vee}_+} \sQ ( \beta )$ is a Zariski dense subset of $\bQ_G$:

\begin{prop}\label{char-limit}
For each $\la \in P$, we have
\begin{equation}
\lim_{\beta \to \infty} \chi_q ( \sQ ( \beta ), \cO_{\sQ ( \beta )} ( \la ) ) = \gch \, H^0 ( \bQ_G, \cO_{\bQ_G} ( \la ) ) \in \Z [\![q^{-1}]\!] P,\label{chi-limit}
\end{equation}
where the limit in the LHS means the coefficient of each $q^{\bullet} e^{\bullet}$ stabilizes. In addition, each coefficient of the LHS is valued in $\Z_{\ge 0}$, and increases monotonically as $\beta$ grows with respect to $\le$ when $\la \in P_+$.
\end{prop}

\begin{proof}
The limit in (\ref{chi-limit}) exists, and it gives the character of the (dual of the) global Weyl module by \cite[\S 4.2]{BF14b} and \cite{BF14c} (here we work with quasi-map spaces, and particularly use Theorem \ref{zas} 2)). This is the same as the character of the RHS of (\ref{H0-BWB}), computed within the theory of semi-infinite flags (\cite{Kat18d}), and hence we conclude the equality. The last part of the assertion follows from \cite[Corollary C]{Kat18d}, that is recorded as Theorem \ref{Qnorm} 3).
\end{proof}

\begin{thm}\label{wJcomp}
We have a unique $\C P$-linear isomorphism
$$\Psi : qK_H ( \sB )_{\lo} \longrightarrow K_H ( \bQ_G^{\mathrm{rat}} )$$
such that:
\begin{itemize}
\item[$(a)$] We have $\Psi ( [\cO_{\sB}] ) = [\cO_{\bQ_G}]$;
\item[$(b)$] For each $i \in \tI$, we have $\Psi \circ a_i = \Xi ( - \varpi_i ) \circ \Psi$;
\item[$(c)$] For each $\beta \in Q^{\vee}$ and $\kappa \in qK_H ( \sB )_{\lo}$, we have $\Psi ( Q^{\beta} \kappa ) = t_{\beta} \Psi ( \kappa )$.
\end{itemize}
In addition, we have $\Psi ( qK_H ( \sB ) ) \subset K_H ( \bQ_G )$.
\end{thm}

\begin{rem}\label{qwJ}
Our proof of Theorem \ref{wJcomp} says that $\Psi$ is actually the $q = 1$ specialization of a dense embedding
$$\Psi_q : \C [q^{\pm 1}] \otimes_\C qK_H ( \sB )_{\lo} \longrightarrow \C Q^{\vee} \otimes _{\C Q^{\vee}_+} \widetilde{K} ( \bQ_G )$$
such that $\Psi_q \circ A_i ( q ) = \Xi_q ( - \varpi_i ) \circ \Psi_q$ ($i \in \tI$), where $\Xi_q ( - \varpi_i )$ denotes the tensor product of $\cO_{\bQ_G^\ra} ( - \varpi_i )$ in the $\Gm$-equivariant setting (see Theorem \ref{denseK}).
\end{rem}

\begin{proof}[Proof of Theorem \ref{wJcomp}]
For each $i \in \tI$, we have
\begin{equation}
A_i ( q )^{-1} \in p_i + \sum_{0 \neq \beta \in Q^{\vee}_+} b_{i,\beta} ( q ) Q^{\beta} \hskip 5mm b_{i,\beta} ( q ) \in \C [q^{-1}] \otimes \mathrm{End}_{\C P} K_H ( \sB )\label{Ainv}
\end{equation}
as an operator acting on $\C[q^{\pm 1}] \otimes K_{H} (\sB)$ (Theorem \ref{i-mod}) extended to the whole $qK_{\Gm\times H} (\sB)$ by (\ref{eqn:commrel}). The linear space $qK_{\Gm \times H} ( \sB )$ contains a subspace
$$\mathfrak K:= ( \C P [q^{-1}, A_1(q)^{-1},\ldots,A_r(q)^{-1}] ) [\![Q^{\vee}_+]\!] \cdot [\cO_{\sB}] \subset qK_{\Gm \times H} ( \sB ).$$
We know that $\C P$ and $\{p_i\}_{i \in \tI}$ generates $K_H ( \sB )$ via the classical product. Hence, a $\C P [q^{-1}]$-linear combination of arbitrary monomials of (\ref{Ainv}) yields
$$[\cO_{\sB ( w )}]Q^{\beta} \in \mathfrak K \hskip 5mm w \in W, \beta \in Q^{\vee}_+,$$
by first making the constant part with respect to $\{Q_i\}_{i =1}^r$, and then removing unnecessary higher degree terms with respect to $\{ Q_i \}_i$ inductively. From this, we find
\begin{equation}
\mathfrak K \cong (\C[q^{- 1}]\otimes K_H(\sB))[\![Q^{\vee}_+]\!]\label{eqn:KpolQ}
\end{equation}
since $A_i(q)^{-1}$ ($i\in \tI$) defines an endomorphism of the RHS by (\ref{Ainv}). We consider a $\C P [q^{- 1}]$-linear functional
$$F^\la_{\beta} : \mathfrak K \longrightarrow \C[q^{\pm 1}]P$$
depending on $\la \in P$ and $\beta \in Q^{\vee}_+$ defined as
\begin{align}\nonumber
\sum_{\beta \in Q^{\vee}_+} F^\la_{\beta} & \, \Bigl( \bigl( \prod_{i=1}^r A_i(q)^{-m_i}\bigr)([\cO_{\sB}]) \Bigr) Q^{\beta} = \sum_{\beta \in Q^{\vee}_+} \chi_q ( \sQ ( \beta ), \cO_{\sQ(\beta)} ( \la + \sum_{i\in\tI}m_i \varpi_i )) Q^{\beta}\\
&= D_{w_0} ( J' ( Qq^{\la + \sum_{i\in\tI}m_i \varpi_i}, q ) \cdot e^{w_0 (\la + \sum_{i\in\tI}m_i \varpi_i)}\cdot J' ( Q, q^{-1} ) ) \label{eqn:functF}
\end{align}
for each $\{m_i\}_{i =1}^r \in \Z_{\ge 0}^{r}$, where we used Theorem \ref{zas} in the second equality. By (\ref{eqn:KpolQ}), the equation (\ref{eqn:functF}) determines $F^\la_{\beta}$ uniquely if we additionally require the skew $\C Q^{\vee}_+$-linearity condition
\begin{equation}
F^\la_{\beta} (Q^{\gamma}  \kappa) = q^{- \left< \gamma,\la \right>}  F^\la_{\beta-\gamma} ( \kappa ) Q^{\gamma} \hskip 5mm \la \in P, \beta,\gamma\in Q^{\vee}_+, \kappa \in \mathfrak K,\label{eqn:QFcomm}
\end{equation}
that arises from (\ref{eqn:commrel}) and the compatibility with the shift operators:
$$F^\la_{\beta} ( A_i^{-1}(q) (\kappa)) = F^{\la+\varpi_i}_{\beta} ( \kappa ) \hskip 5mm \la \in P, \beta \in Q^{\vee}, i \in \tI, \kappa \in \mathfrak K.$$
\begin{claim}\label{claim:Fdef}
For each $\kappa \in \mathfrak K$, we have
$$F^\la ( \kappa ) := \lim_{\beta \to \infty} F^\la_{\beta} ( \kappa ) \in \C P [\![q^{- 1}]\!] \hskip 5mm \la \in P_{++}.$$
\end{claim}
\begin{proof}
We first prove the assertion for
$$( \prod_{i=1}^r A_i(q)^{-m_i} )([\cO_{\sB}]Q^{\gamma}) \in \mathfrak K \hskip 10mm \{m_i\}_{i=1}^r \in \Z^r_{\ge 0} ,\gamma \in Q^{\vee}_+.$$
Since the effect of the shift operators can be absorbed by the choice of $\la \in P_{++}$, it suffices to prove
\begin{equation}
F^\la ( [\cO_{\sB}]Q^{\gamma} ) = \lim_{\beta \to \infty} F^\la_{\beta} ( [\cO_{\sB}] Q^{\gamma} ) \in q^{-\left<\gamma, \la \right>} \C [\![q^{- 1}]\!] P\label{eqn:Fest}
\end{equation}
for each $\la \in P_{++}$ and $\gamma \in Q^{\vee}_+$. By (\ref{eqn:functF}) and (\ref{eqn:QFcomm}), this is equivalent to see
\begin{equation}
\lim_{\beta \to \infty} \chi_q ( \sQ ( \beta ), \cO_{\sQ(\beta)} ( \la )) \in \C [\![q^{-1}]\!] P
\end{equation}
for each $\la \in P_{++}$, that follows from Proposition \ref{char-limit}.

Now we prove the assertion for a general element
$$\kappa = \sum_{\beta \in Q^{\vee}} f _{\beta}([\cO_{\sB}]Q^{\beta}) \in \mathfrak K \hskip 10mm f_{\beta} \in \C P [q^{-1}, A_1(q)^{-1},\ldots,A_r(q)^{-1}],$$
where the sum is understood to be formal. By (\ref{eqn:Fest}), we have
$$F^\la( f_{\beta}([\cO_{\sB}]Q^{\beta}) ) \in q^{-\left<\rho,\beta\right>} \C P [\![q^{-1}]\!] \hskip 10mm \la \in P_{++}.$$
Here the convergence of each coefficient is absolute in Proposition \ref{char-limit} for $\la \in P_{++}$. Hence, we conclude $F^\la(a) \in \C P [\![q^{-1}]\!]$ from the fact that we have finitely many contributions to the coefficient of each power of $q$.
\end{proof}
We return to the proof of Theorem \ref{wJcomp}. Thanks to Claim \ref{claim:Fdef}, we have the following diagram:
\begin{equation}
\xymatrix{
\mathfrak K \ar[d]_{F^{\bullet}} \ar@{-->}[r]&\widetilde{K}'(\bQ_G)\ar[dl]_{\widetilde{\Theta}}\ar[d]^{\Theta}\\
\mathrm{Fun}_P \ar[r]&\mathrm{Fun}_P/\mathrm{Fun}_P^{neg}
},\label{eqn:commFun}
\end{equation}
where we set $F^\bullet ( a ) := [P_{++} \ni \la \mapsto F^\la ( a ) ]$. Assume that we have a $\C P[q^{-1}]$-linear map $\Psi_q'$ that realizes the dashed arrow in (\ref{eqn:commFun}) and makes the diagram (\ref{eqn:commFun}) commutative. Let $\Xi_q ( \la )$ ($\la \in P$) denote the tensor product operation in $\widetilde{K}'(\bQ_G)$ in Theorem \ref{denseK}. By Proposition \ref{char-limit} and (\ref{eqn:functF}) (for the first two equalities), and the comparison of (\ref{eqn:QFcomm}) and Theorem \ref{BWB-KNS} (for the third equality), we have necessarily
\begin{align}\nonumber
\Psi_q'([\cO_{\sB}]) = [\cO_{\bQ_G}], & \hskip 5mm \Psi'_q \circ A_i(q) = \Xi_q ( - \varpi_i ) \circ \Psi'_q \hskip 5mm (i \in \tI), \hskip 5mm\text{and}\\
\Psi_q'( Q^\beta \kappa ) & = t_{\beta} \Psi_q'(\kappa) \hskip 5mm (\kappa \in \mathfrak K,\beta\in Q^{\vee}_+),\label{eqn:propPsiq}
\end{align}
respectively. These properties yield an assignment rule that defines $\Psi_q'$ uniquely by the $\C P [q^{-1}]$-linearity and the skew $\C[\![Q^{\vee}]\!]$-linearity if it is well-defined.

In order to guarantee that $\Psi_q'$ is well-defined, it suffices to write down all relations in $\mathfrak K$ and see that the map $(\widetilde{\Theta} \circ \Psi_q')$ yields relations of $\widetilde{K}' ( \bQ_{G} )$ (i.e. elements of $\mathrm{Fun}_P^{neg}$).

By (\ref{eqn:KpolQ}), we have
$$A_i ( q )^{-1} ( [\cO_{\sB} ( \la )] ) = \sum_{w \in W, \beta \in Q^{\vee}_+} c^{w,\beta}_i ( \la) [\cO_{\sB ( w )}]Q^\beta \hskip 5mm c^{w,\beta}_i ( \la ) \in \C [q^{-1}] P,$$
where we understand the RHS to be a formal sum. (It follows that all the relations of $qK_H( \sB )$ are obtained as the $q=1$ specialization of an expression of $0 \in \mathfrak K$.) We present a general element of $\mathfrak K$ as:
$$f ([\cO_{\sB}]), \hskip 5mm \text{where} \hskip 2mm f = \sum_{\beta \in Q^{\vee}_+, \vec{n} \in \Z^r_{\ge 0}} f_{\beta, \vec{n}} ( q ) x^{\vec{n}} Q^{\beta} \in ( \C P [q^{-1}, x_1,\ldots,x_r] ) [\![Q^{\vee}_+]\!]$$
(where $x_i$ acts by $A_i(q)^{-1}$ for each $i \in \tI$). The equations of the form
$$f ( q, q^{-Q_1 \partial_{Q_1}} p_1, \ldots, q^{-Q_r \partial_{Q_r}}p_r , Q ) J ( Q, q ) = 0,$$
yield all representatives of $0 \in \mathfrak K$ by Theorem \ref{reconst}. By Theorem \ref{loc}, we conclude
\begin{equation}
\sum_{\beta \in Q^{\vee}_+, \vec{n} \in \Z^r_{\ge 0}} f_{\beta, \vec{n}} ( q ) q^{- \left< \beta, \la [\vec{n}] \right>} \chi_q ( \sQ ( \gamma - \beta ), \cO_{\sQ ( \gamma - \beta )} ( \la[\vec{n}] ) ) = 0 \hskip 3mm \la \in P, \gamma \in Q^{\vee}_+.\label{Q-zero}
\end{equation}
The equation (\ref{Q-zero}) is equivalent to
$$F^\la_{\gamma} ( f(q,A_1(q)^{-1},\ldots,A_r(q)^{-1},Q)([\cO_{\sB}]) ) = 0$$
by (\ref{eqn:functF}) and (\ref{eqn:QFcomm}). We have
$$\Psi_q' ( f ([\cO_{\sB}])) = \sum_{\beta \in Q^{\vee}_+, \vec{n} \in \Z^r_{\ge 0}} f_{\beta, \vec{n}} ( q ) [ \cO_{\bQ ( t_{\beta} )} ( \sum_{i = 1}^r n_i \varpi_i ) ]$$
by the required properties (\ref{eqn:propPsiq}) of $\Psi'_q$. Since the convergence in Proposition \ref{char-limit} is coefficientwise and absolute for $\la \in P_{++}$ (and the same reasoning as in the last step of the proof of Claim \ref{claim:Fdef}), the equality (\ref{Q-zero}) implies the numerical identity
$$\sum_{\beta \in Q^{\vee}_+, \vec{n} \in \Z^r_{\ge 0}} f_{\beta, \vec{n}} ( q ) \gch \, H^0 ( \bQ ( t_{\beta} ), \cO_{\bQ ( t_{\beta} )} ( \la[\vec{n}] ) ) = 0 \hskip 3mm \la \in P_{++}.$$
In particular, we derive
$$( \widetilde{\Theta} \circ \Psi'_q ) ( f([\cO_{\sB}])) = \widetilde{\Theta} ( \sum_{\beta \in Q^{\vee}_+, \vec{n} \in \Z^r_{\ge 0}} f_{\beta, \vec{n}} ( q ) [ \cO_{\bQ ( t_{\beta} )} ( \sum_{i = 1}^r n_i \varpi_i ) ] ) \in \mathrm{Fun}_P^{\mathrm{neg}}.$$
Thus, we find that the map $\Psi_q'$ is well-defined.

The specialization $q=1$ is possible on $\C P [q^{-1}, A_1(q)^{-1},\ldots,A_r(q)^{-1}]$ (as in \S \ref{subsec:eqK}). Since the $Q^{\beta}$-coefficient ($\beta \in Q^{\vee}_+$) of an element of $( \C P [q^{-1}, A_1(q)^{-1},\ldots,A_r(q)^{-1}] ) [\![Q^{\vee}_+]\!]$ is a polynomial of the $Q^{\gamma}$-coefficients ($0 \le \gamma \le \beta$) of $A_i (q)^{-1}$ ($i \in \tI$) with $\C P$-coefficients, we deduce that the $q=1$ specialization of $\mathfrak K$ is possible.

In particular, the $q = 1$ specialization of $\Psi_q'$ yields
$$\sum_{\beta \in Q^{\vee}_+, \vec{n} \in \Z^r_{\ge 0}} f_{\beta, \vec{n}} ( 1 ) [ \cO_{\bQ ( t_{\beta} )} ( - \sum_{i = 1}^r n_i \varpi_i ) ] = 0 \in K_H ( \bQ_G ).$$
This induces a unique $\C P$-linear morphism $\Psi' : qK_H ( \sB ) \longrightarrow K_H ( \bQ_G )$ which satisfies the required properties from (\ref{eqn:propPsiq}). Using the right actions of $Q^{\vee}$ on the both sides, we extend $\Psi'$ to
$$\Psi : qK_H ( \sB )_\lo \equiv qK_H ( \sB ) \otimes_{\C Q^{\vee}_+} \C {Q^{\vee}} \longrightarrow K _H ( \bQ_G ) \otimes_{\C Q^{\vee}_+} \C {Q^{\vee}} \rightarrow K_H ( \bQ_G^\ra )$$
by scalar extension with keeping the required properties.

By Proposition \ref{K-gen}, the map $\Psi$ is surjective. It must be injective as the both sides are free modules of rank $|W|$ over the Noetherian rings
$$\C P \otimes ( \C Q^{\vee} \otimes_{\C Q^{\vee}_+} \C [\![Q^{\vee}_+]\!] ) \stackrel{\cong}{\longrightarrow} \C Q^{\vee} \otimes_{\C Q^{\vee}_+} \mathcal R$$
identified through $\Psi'$ (see Lemma \ref{KQ-free}). We deduce $\Psi ( qK_H ( \sB ) ) \subset K_H ( \bQ_G )$ by the property of $\Psi'$.
\end{proof}

\begin{cor}\label{Jcomp}
We have a $\C P$-module isomorphism
$$\Psi : qK _H ( \sB )_\lo \stackrel{\cong}{\longrightarrow} K_{H} ( \bQ_G^{\ra} ),$$
that sends $[\cO_{\sB}]$ to $[\cO_{\bQ_G}]$, quantum product of a line bundle $\cO_{\sB} ( - \varpi_i)$ $(i \in \tI)$ to the tensor product of $\cO_{\bQ_G^{\ra}} ( - \varpi_i)$, and the multiplication by $Q^{\beta}$ to the right $Q^{\vee}$-action of $\beta$ for each $\beta \in Q^{\vee}$.
\end{cor}

\begin{proof}
Combine Theorem \ref{wJcomp} and Theorem \ref{sline} (cf. Theorem \ref{smult}).
\end{proof}

\section{Schubert classes under $\Psi$ and consequences}

Keep the setting of the previous sections. Here we show that the map $\Psi$ introduced in Theorem \ref{wJcomp} respects the classes of Schubert varieties (Theorem \ref{b-trans}). This, together with Theorem \ref{main} and Theorem \ref{wJcomp}, yields Corollary \ref{triangle}, a conjecture by Lam-Li-Mihalcea-Shimozono (Corollary \ref{LLMSc}), and consequently the finiteness of the quantum $K$-group of $\sB$ (Corollary \ref{LLMSfin}). We also offer the finiteness of the shift operators (Corollary \ref{qfin}), that is used in \cite{ACT18}. As we reduce the computations of $K$-theoretic Gromov-Witten correlation functions to the Euler-Poincar\'e characteristic of certain coherent sheaves on quasi-map spaces, we need to guarantee that the higher direct images involved are identically zero. This follows once the singularities appearing in the construction are all rational (Theorem \ref{Q-rat-sing}). However, even with the normality of $\sQ( \beta, w )$ (see \S \ref{subsec:QM} and \S \ref{subsec:GQL2}) established in \cite{Kat18d}, proving rationality of singularities of $\sQ( \beta, w )$ are not straight-forward as the candidates of their resolutions, certain subvarieties of $\sGB_{2,\beta}$ in \S \ref{subsec:GQL} (defined in \S \ref{subsec:GQL2}), are far from well-understood. To overcome this difficulty, we establish a ``decomposition theorem" of the singularities involved (Corollary \ref{tsg}), that makes it possible to explore the structure of singularities by induction (Theorem \ref{tr-rat} and its proof). It is interesting to see that the core of the proof of Corollary \ref{tsg} lies on the factorization property (\S \ref{subsec:fact}), that is tightly connected to a realization of quantum groups (\cite{FFKM}).

\subsection{Graph spaces and quasi-map spaces}\label{subsec:GQL2}

A variety $\mathfrak Y$ is said to have rational singularities if there exists a resolution of singularities $g : \mathfrak Z \rightarrow \mathfrak Y$ that satisfies $g_* \cO_{\mathfrak Z} \cong \cO _{\mathfrak Y}$ and $\R^{>0} g_* \cO_{\mathfrak Z} \cong \{0\}$ (\cite[Theorem 5.10]{KM98}). This is equivalent to assume that all the resolutions of singularities of $\mathfrak Y$ have the same property ([{\it loc. cit.}]).

By the theorem on formal functions (\cite[I\!I \S 11]{Har77}), the rationality of singularities on a normal variety (i.e. the vanishing of the higher direct images of the structure sheaf from a resolution) is detected by its completion. Therefore, in case $\mathfrak Y$ is a normal variety with its (not necessarily closed) point $y$, we say the completion of $\mathfrak Y$ along $y$ has a rational singularity\footnote{As far as the author understands, the study of local rings with rational singularities (in characteristic $0$) have been recognized as a fruitful research topic at least after \cite{Ha98,MS97}. A reader who is unfamiliar to this topic is recommended to consult \cite[Introduction]{MS97}.} if $\mathfrak Y$ has a rational singularity along $y$. Thanks to Boutot's theorem \cite{Bou87}, this is equivalent to requiring that the formal completion of the local ring $\cO_{\mathfrak Y,y}$ has only rational singularity as the formal completion of a local noetherian ring is faithfully flat (\cite[\href{https://stacks.math.columbia.edu/tag/00MC}{Lemma 00MC}]{stacks-project}), and hence is universally injective (\cite[\href{https://stacks.math.columbia.edu/tag/05CK}{Lemma 05CK}]{stacks-project}), that implies purity in the sense of \cite[Definition 1]{Bou87}.

\begin{rem}\label{Rem:Sing}
Theorem \ref{rat-res} and other assertions about the rational resolutions of singularities $f : \mathfrak X \rightarrow \mathfrak Y$ are used only to ensure $f_* \cO_{\mathfrak X} \cong \cO _{\mathfrak Y}$ and $\R^{>0} f_* \cO_{\mathfrak X} \cong \{0\}$ below. To this end, it suffices to assume that $f$ is a birational projective morphism and $\mathfrak X$ has only rational singularities (or quotient singularities by \cite[Proposition 5.15]{KM98}) as we can replace $\mathfrak X$ with its resolution of singularities.
\end{rem}

We have a morphism $\pi_{n, \beta} : \sGB_{n, \beta} \rightarrow \sQ ( \beta )$ ($n \in \Z_{\ge 0}, \beta \in Q^{\vee}_+$) that factors through $\sGB_{0, \beta}$ (Givental's main lemma \cite{Giv96}; see \cite[\S 8]{FFKM} and \cite[\S 1.3]{FP95}). We define
\begin{equation}
\cO_{\sGB_{n, \beta}} ( \la ):= \pi_{n, \beta}^* \cO_{\sQ ( \beta )} ( \la ) \hskip 5mm \la \in P.\label{defOGB}
\end{equation}

\begin{thm}[Braverman-Finkelberg \cite{BF14a,BF14b,BF14c}]\label{rat-res}
The morphism $\pi_{0,\beta}$ is a rational resolution of singularities. \hfill $\Box$
\end{thm}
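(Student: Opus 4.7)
The plan is to identify $\Psi([\cO_{\sB(w)}])$ with $[\cO_{\bQ(w)}]$ by comparing Euler characteristic pairings of both sides against line bundles, and then conclude via a character-uniqueness argument. The first key step is to establish a Schubert refinement of Theorem~\ref{zas}(2): express the generating function
$$\sum_{\beta \in Q^{\vee}_+} Q^{\beta}\, \chi\bigl(\sQ(\beta,w),\,\cO_{\sQ(\beta,w)}(\la)\bigr)$$
as a pairing of the Schubert class $[\cO_{\sB(w)}]$ with the quantum $J$-function on $qK_{H \times \Gm}(\sB)$. Here I would work on the graph space $\sGB_{0,\beta}$, which is a rational resolution of $\sQ(\beta)$ (Theorem~\ref{rat-res}), apply Atiyah--Bott--Lefschetz localization at its isolated $(H \times \Gm)$-fixed points, and impose the boundary condition ``$f(0) \in \sB(w)$'' through the evaluation map $\mathtt{ev}_1$ and the Schubert class, following \cite{GL03,BF14a,BF14b}.

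The next ingredient is a Schubert analogue of Proposition~\ref{char-limit}, namely
$$\lim_{\beta \to \infty} \chi\bigl(\sQ(\beta,w),\,\cO_{\sQ(\beta,w)}(\la)\bigr)
\;=\; \gch\, H^{0}\bigl(\bQ_G(w),\,\cO_{\bQ_G(w)}(\la)\bigr),$$
together with vanishing of higher cohomology along a cofinal sequence of $\beta$. I would obtain this by comparing $\sQ(\beta,w)$ with the semi-infinite Schubert variety $\overline{\bO(w t_{-\beta})}$, using the codimension-preservation of the intersections of $\sQ(\beta)$ with $\bI$-orbits in $\bQ_G^{\mathrm{rat}}$ (noted in \S\ref{sec:QM}) and the rational singularity of the Zastava space closures (Corollary~\ref{Q-rat-sing}, proved later in this section using \cite{Kat18d}). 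The right-hand side is already computed in \cite{KNS17}.

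Combining these two steps with the reconstruction theorem (Theorem~\ref{reconst}) and repeating the argument of Theorem~\ref{wJcomp} with $[\cO_{\sB(w)}]$ replacing $[\cO_{\sB}]$, one obtains an explicit expression for $\Psi([\cO_{\sB(w)}])$ whose pairings with $[\cO_{\bQ_G(e)}(\la)]$ and its right $Q^{\vee}$-translates agree with those of $[\cO_{\bQ(w)}]$. Since the classes $\{[\cO_{\bQ(u t_{\beta})}]\}_{u \in W,\,\beta \in Q^{\vee}}$ form a $\C P$-basis of $K_H(\bQ_G^{\mathrm{rat}})$ that is separated by pairings against a cofinal family of line bundles (\cite[Cor.~5.9, Prop.~D.1]{KNS17}), one concludes $\Psi([\cO_{\sB(w)}]) = [\cO_{\bQ(w)}]$. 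The restriction $\Psi(qK_H(\sB)) \cong K_H(\bQ_G(e))$ then follows because effectivity forces $\sQ(\beta) = \emptyset$ for $\beta \not\in Q^{\vee}_+$, so $Q^{\vee}_+$-positivity on the quantum side matches the $\beta \ge 0$ condition in the definition of $K_H(\bQ_G(e))$.

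The main obstacle is the Schubert-twisted limit formula in the second paragraph: one genuinely needs control on the cohomology of $\cO_{\sQ(\beta,w)}(\la)$ for $\beta \gg 0$, not just on $\sQ(\beta)$. This is where the rational singularity and Cohen--Macaulay property of the Zastava space closures (Corollary~\ref{Q-rat-sing}) become indispensable, together with the fact (recorded in \S\ref{sec:QM}) that each $\sQ(\beta,w)$ has the expected dimension $2\langle\beta,\rho\rangle + \dim \sB(w)$ and that $\sQ(\beta) \cap \bO(w)$ is dense in $\sQ(\beta,w)$. All remaining steps are adaptations of the arguments of \cite{GL03,BF14a,BF14b,BF14c,KNS17} already invoked in the proof of Theorem~\ref{wJcomp}.
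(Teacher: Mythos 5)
Your proposal does not address the statement it was asked to prove. The statement is Theorem~\ref{rat-res}, which asserts that the morphism $\pi_{0,\beta} : \sGB_{0,\beta} \to \sQ(\beta)$ is a rational resolution of singularities. What you have written instead is a proof sketch of Theorem~\ref{b-trans}: your opening sentence declares the goal of showing $\Psi([\cO_{\sB(w)}]) = [\cO_{\bQ(w)}]$, which is precisely the conclusion of Theorem~\ref{b-trans}, and in the first paragraph you explicitly \emph{cite} Theorem~\ref{rat-res} as an input (``which is a rational resolution of $\sQ(\beta)$ (Theorem~\ref{rat-res})'') rather than proving it. A proof cannot invoke its own statement as a given.

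To actually establish Theorem~\ref{rat-res} one would have to show, following Braverman--Finkelberg, that (i) $\pi_{0,\beta}$ is birational (both $\sGB_{0,\beta}$ and $\sQ(\beta)$ are irreducible of the same dimension, and $\pi_{0,\beta}$ is an isomorphism over the open locus of genuine maps), (ii) $\sQ(\beta)$ is normal, and (iii) $(\pi_{0,\beta})_*\cO_{\sGB_{0,\beta}} \cong \cO_{\sQ(\beta)}$ together with $\mathbb{R}^{>0}(\pi_{0,\beta})_*\cO_{\sGB_{0,\beta}} = 0$, for instance via a discrepancy computation of the type carried out in Theorem~\ref{GR} combined with a Grauert--Riemenschneider/Elkik-type criterion. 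None of these points appear in your write-up. Note also that in this paper Theorem~\ref{rat-res} is stated with a box and no proof precisely because it is an external result quoted from \cite{BF14a,BF14b,BF14c}; the honest ``proof'' here is a pointer to those references, not the argument for Theorem~\ref{b-trans}.
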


We note that $\sGB_{n,\beta}$ is irreducible (\cite{Tho98,KP01}).

Let $\sX ( \beta )$ denote the subvariety of $\sGB_{2,\beta}$ consisting of the stable maps whose first marked point projects to $0 \in \P^1$, and whose second marked point projects to $\infty \in \P^1$ through the projection of a genus zero domain curve $C$ to the main component $C_0 \cong \P^1$. Let us denote the restriction of $\mathtt{ev}_i$ $(i=1,2)$ to $\sX ( \beta )$ by the same letter. By Theorem \ref{rat-res}, $\sX ( \beta )$ also gives a resolution of singularities $\pi_{\beta} : \sX ( \beta ) \rightarrow \sQ ( \beta )$. The following is a result of Buch-Chaput-Mihalcea-Perrin \cite{BCMP}:

\begin{thm}[\cite{BCMP} Corollary 3.8, cf. \cite{Kat18d} Theorem 5.1]\label{X-can}
The variety
$$\mathtt{ev}_1^{-1} ( \sB ( w ) ) \cap \mathtt{ev}_2^{-1} ( \sB^\op ( v ) ) \subset \sX ( \beta )$$
is irreducible, normal, and has rational singularities $($that we denote by $\sX (\beta, w, v))$ for each $w, v \in W$. \hfill $\Box$
\end{thm}

We remark that $\sX ( \beta ) = \sX (\beta, e, w_0 )$. We set $\sX ( \beta,w ) := \sX ( \beta,w,w_0 )$ and $\sQ ( \beta, w, v ) := \pi_{\beta} ( \sX (\beta, w, v) ) \subset \sQ ( \beta, w )$. Then, the map $\pi_{\beta}$ restricts to a $( \Gm \times H)$-equivariant birational proper map
$$\pi_{\beta, w, v} : \sX (\beta, w, v) \to \sQ ( \beta, w, v )$$
by \cite[\S 5.2]{Kat18d}. We denote $\pi_{\beta, w,w_0}$ by $\pi_{\beta, w}$ for simplicity. Let $\cO _{\sX ( \beta, w, v )} ( \la )$ denote the restriction of $\cO _{\sGB_{2,\beta}} ( \la )$ to $\sX ( \beta, w, v )$ for each $\la \in P$.

The space $\sQ ( \beta, w, v )$ is called the Richardson variety of $\bQ_G^\ra$ in \cite{Kat18d} (see also Remark \ref{oppSchubert}). In particular, we have
\begin{equation}
\sQ ( \beta, w, v ) = \overline{\bI p_w \cap \bI^- p_{v t_{\beta}}} = \bQ_G ( w ) \cap \bQ_G^- (v t_{\beta}) \subset \prod_{i \in \tI} \P ( L ( \varpi _i )^* \otimes \C (\!(z)\!) )\label{si-Ric}
\end{equation}
by \cite[\S 4.1]{Kat18d} (that defines the third term) and \cite[Proposition 5.3]{Kat18d} (that identifies the third term with $\sQ ( \beta, w, v )$). We have
\begin{equation}
\dim \, \sQ ( \beta, w, v ) = 2 \left< \beta, \rho \right> - \ell ( w ) + \ell ( v )\label{dimQ}
\end{equation}
and $\sQ ( \beta, w, v ) \neq \emptyset$ if and only if $w \ge_\si v t_{\beta}$ by \cite[Corollary 5.4]{Kat18d}.

Let us gather several results on $\sQ ( \beta, w, v )$ from various places in \cite{Kat18d}:

\begin{thm}[\cite{Kat18d}]\label{Qnorm}
For each $w, v \in W$ and $\beta \in Q^{\vee}_+$, we have:
\begin{enumerate}
\item {\rm (Theorem 5.20)} The variety $\sQ (\beta, w, v)$ is normal;
\item {\rm (Theorem 4.33)} For each $\la \in P_+$, we have
$$H^{>0} ( \sQ ( \beta,w,v ), \cO_{\sQ ( \beta,w,v )} ( \la ) ) = \{0\};$$
\item {\rm (Theorem 4.33)} For $\beta' \in Q^{\vee}_+$ such that $\beta < \beta'$ and $\la \in P_{+}$, the natural restriction map
$$H^{0} ( \sQ ( \beta',w ), \cO_{\sQ ( \beta',w )} ( \la ) ) \longrightarrow H^{0} ( \sQ ( \beta,w ), \cO_{\sQ ( \beta,w )} ( \la ) )$$
is surjective;
\item {\rm (Proposition 4.39 and \S 5.2)} Let $i \in \tI$ be such that $s_i w < w$ and $s_i v < v$. Then, the variety $\sQ ( \beta, w, v )$ is $B_i$-stable, and we have an inflation map $\pi_i : \mathop{SL} ( 2, i ) \times^{B_i} \sQ ( \beta, w, v ) \to \sQ ( \beta, s_i w, v )$. We have
$$\mathbb R^{\bullet} ( \pi_i )_* \cO_{\mathop{SL} ( 2, i ) \times^{B_i} \sQ ( \beta, w, v )} \cong \cO_{\sQ ( \beta, s_i w, v )};$$
\item {\rm (Proposition 4.39 and Lemma 4.6)} Assume that $s_{\vartheta} w > w$ and $s_{\vartheta} v > v$. Then, the variety $\sQ ( \beta, w,v )$ admits a $B_0$-action and we have an inflation map
$$\pi_0 : \mathop{SL} ( 2, 0 ) \times^{B_0} \sQ ( \beta, w, v ) \to \sQ ( \beta - w^{-1} \vartheta^{\vee}, s_\vartheta w, v ).$$
We have $\mathbb R^{\bullet} ( \pi_0 )_* \cO_{\mathop{SL} ( 2, 0 ) \times^{B_0} \sQ ( \beta, w, v )} \cong \cO_{\sQ ( \beta - w^{-1} \vartheta^{\vee}, s_\vartheta w, v )}$.
\end{enumerate}
\end{thm}

For each $\beta \in Q^{\vee}_+, w, v \in W$, we consider the open subset $\mathring{\sQ} ( \beta, w, v ) \subset \sQ ( \beta, w, v )$ consisting of quasi-maps defined at $0 \in \P^1$ (i.e. have no defect at $0$) and their values at $0$ belong to $\bO_\sB ( w )$. The variety $\sQ ( \beta, w, v )$ is decomposed as
$$\sQ ( \beta, w, v )= \bigsqcup_{0 \le \beta' \le \beta} \bigsqcup_{\scriptsize \begin{matrix}w' \in W \\ w \ge_\si w't_{\beta'} \end{matrix}} \mathring{\sQ} ( \beta - \beta', w', v ),$$
where the inclusion map is given by the restriction of the map $\imath_{\beta'}$ to $\sQ ( \beta -\beta', w', v ) \subset \sQ ( \beta - \beta' )$, that lands on $\sQ (\beta, w, v )$ by (\ref{si-Ric}). We have $\mathring{\sQ} ( \beta, w )$ = $\mathring{\sQ} ( \beta, w, w_0 )$, and $\mathring{\sQ} ( \beta ) = G \mathring{\sQ} ( \beta, w )$ for every $w \in W$.

\subsection{Formal neighborhoods of Bruhat cells}\label{subsec:fnb}

For two (affine) schemes $\mathfrak X$ and $\mathfrak Y$ such that $\mathfrak Y$ is the spectrum of a complete local ring (and hence has a unique closed point $0 \in \mathfrak Y$), we denote by $\mathfrak X \, \widehat{\times} \, \mathfrak Y$ the formal completion of $\mathfrak X \times \mathfrak Y$ along the point $\mathfrak X \times 0$.

This is a particular instance of the spectrums of (admissible) formal completions for general commutative rings explained in \cite[Chap 0 \S 7]{EGAI} (see also \cite[\S 10]{EGAI}). Here localization commutes with taking the formal completions (\cite[Chap 0 Corollaire 7.6.3]{EGAI}). It is clear from the definition that the ring quotients commute with the formal completions. By \cite[Chap 0 (7.6.11)]{EGAI}, we can glue formal completions of an affine open covering $\{\mathfrak U_i\}_i$ of a scheme $\mathfrak X$ with respect to their closed subschemes $\{\mathfrak Z_i\}_i$ such that $\mathfrak Z_i = ( \mathfrak Z \cap \mathfrak U_i)$ for a closed subscheme $\mathfrak Z \subset \mathfrak X$ to obtain the formal completion of $\mathfrak X$ along $\mathfrak Z$. It follows that if $\mathfrak Z$ is irreducible and each $\mathfrak U_i$ defines a trivial algebraic fiber bundle over $\mathfrak Z_i$, then the resulting formal completion admits the structure of a locally trivial algebraic fiber bundle. In particular, we can glue a collection of schemes $\{\mathfrak X_i \widehat{\times} \mathfrak Y\}_i$ along $\{\mathfrak X_i\}_i$ to obtain $\mathfrak X \widehat{\times} \mathfrak Y$ for $\mathfrak X = \bigcup_i \mathfrak X_i$.

\begin{lem}\label{alg-fact}
Let $\beta , \gamma \in Q^{\vee}_+$. For each $y = ( f, D ) \in \mathring{\sQ} ( \beta )$, the complex analytic map in Proposition \ref{ltr} induces a morphism
$$\eta^{\mathrm{alg}} : \mathcal S_y^{\wedge} \times U \longrightarrow \sQ ( \beta + \gamma ),$$
where $\mathcal S_y^{\wedge}$ is the formal completion of the complex analytic open subset $\mathbf 0 \in \mathcal S_y \subset \sZ ( \gamma )$, $y \in U \subset \mathring{\sQ} ( \beta )$ is an algebraic open subset, and $\imath_{\gamma} ( y ) = \eta ( \mathbf 0, y )$. In addition, the map $\eta^{\mathrm{alg}}$ satisfies the following diagram:
\begin{equation*}
	\xymatrix{
\mathcal S_y^{\wedge} \times U \ar[r] \ar@{^{(}->}[d]_{\mathrm{id} \times \imath}& \sQ ( \beta + \gamma )\ar@{^{(}->}[d]_{\imath}\\
\mathcal S_y^{\wedge} \times U ( \delta ) \ar[r] & \sQ ( \beta + \gamma + \delta )
	},
	\end{equation*}
where $\imath$ is the map adding the defect $\delta [p]$ such that $0 \neq p \not\in [D]$, and $U (\delta) \subset \mathring{\sQ} ( \beta + \delta )$ is an open subset such that $U ( \delta ) \cap \mathring{\sQ} ( \beta ) = U$ for each $\delta \in Q^{\vee}_+$.
\end{lem}

\begin{proof}
The passage from Theorem \ref{fact} to Remark \ref{rem:loc} (\ref{loc-sch}) applies to (the proofs of) Proposition \ref{ltr} and Corollary \ref{ltr-comp}.
\end{proof}

\begin{lem}\label{assgr}
Let $\beta \in Q^{\vee}_+$. Let $\C [\sZ (\beta)]_{\le -m}$ denote the $\Gm$-degree $(\le -m)$-parts of $\C [\sZ (\beta)]$ for each $m \in \Z_{\ge 0}$. We have an isomorphism of rings
$$\C [\mathscr Z ( \beta )]_{\mathbf 0}^{\wedge} \cong \prod_{m \ge 0}\left( \frac{\C [\sZ (\beta)]_{\le -m}}{\C [\sZ (\beta)]_{\le -(m+1)}} \right),$$
where the multiplication of the LHS is that of the completion, and the multiplication of the RHS is that of the associated graded of a decreasing filtration $\{ \C [\sZ (\beta)]_{\le - m} \}_m$. This isomorphism is also $B$-equivariant.
\end{lem}

\begin{proof}
The ring $\C [\sZ (\beta)]$ is $\Gm$-stable, its grading is concentrated in $\le 0$, and the degree $0$-part is $\C$. We consider the ($\Gm \times H$-stable) ideals $\C [\sZ (\beta)]_{\le -m} \subset \C [\sZ (\beta)]$ consisting of functions of degree $\le -m$ for each $m \in \Z_{\ge 0}$. Let $\mathfrak m_\mathbf 0 \subset \C [\sZ (\beta)]$ be the maximal ideal corresponding to $\mathbf 0$. Since $\C [\sZ ( \beta )]$ is of finite type, we have
$$\mathfrak m_{\mathbf 0} = \C [\sZ (\beta)]_{\le -1}, \hskip 5mm \text{and} \hskip 5mm \C [\sZ (\beta)]_{\le -Nj} \subset ( \mathfrak m_{\mathbf 0} )^j \subset \C [\sZ (\beta)]_{\le -j} \hskip 5mm j \in \Z_{\ge 0},$$
where $N$ is the largest $\Gm$-degree among the homogeneous generators of $\C [\sZ ( \beta )]$. Thus, two sequences of ideals
$$\mathfrak m_{\mathbf 0}^j \hskip 5mm \text{and} \hskip 5mm \C [\sZ (\beta)]_{\le -j} \hskip 5mm j \in \Z_{\ge 0}$$
induce equivalent linear topologies on $\C [\sZ (\beta)]$. In particular, we have an isomorphism of rings (\cite[Chap. 0, Proposition 7.1.4]{EGAI})
$$\C [\sZ (\beta)]^{\wedge}_{\mathbf 0} \cong \prod_{m \ge 0} \left( \frac{\C [\sZ (\beta)]_{\le -m}}{\C [\sZ (\beta)]_{\le -(m+1)}} \right),$$
where the multiplication of the LHS is that of the completion, and the multiplication of the RHS is that of the associated graded of a decreasing filtration $\{ \C [\sZ (\beta)]_{\le -m} \}_m$. This identification is also $B$-equivariant since the $\Gm$-action commutes with the $B$-action.
\end{proof}

\begin{prop}\label{f-nbd}
Let $\beta \in Q^{\vee}_+$. The formal completion $\mathfrak N$ of $\bQ _G$ along $\bO ( t_{\beta} )$ is isomorphic to
\begin{equation}
\mathfrak N \cong \bO ( t_{\beta} )\,\widehat{\times} \, \Spec \,  \C [\sZ (\beta)]^{\wedge}_{\mathbf 0}.\label{tube-prod}
\end{equation}
\end{prop}

\begin{proof}
We have an embedding $\bQ _G ( t_{\beta} ) \subset \bQ_G$ as schemes of infinite type, equipped with the $\mathbf I$-action. In particular, we have a sheaf $\mathcal I$ of ideals of $\bQ_G ( t_{\beta} )$ in $\cO_{\bQ _G}$. The completion of $\cO_{\bQ _G}$ with respect to $\mathcal I$ yields the infinitesimal neighborhood $\mathfrak N^+$ of $\bQ _G ( t_{\beta} )$ in $\bQ_G$, that restricts to the infinitesimal neighborhood $\mathfrak N$ of $\bO ( t_{\beta } )$. Since $\bO(t_{\beta})$ is an affine scheme, $\mathfrak N$ is also affine. Let us employ sections $\{ \phi_{u,i} \}_{u,i}$ from \S \ref{subsec:sif} and impose additional relations $\phi_{u,i} / \phi_{t_{\beta},i}= 0$ for each $(u,i) \in S^-(w_0 t_{\beta})$ on
$$\C [\mathfrak N] \supset \sum_{i \in \tI} \Gamma ( \bQ_G, \cO ( \varpi_i ) ) \phi_{t_{\beta},i}^{-1}  \longrightarrow \sum_{i \in \tI} \Gamma ( \bQ_G(t_{\beta}), \cO ( \varpi_i ) ) \phi_{t_{\beta},i}^{-1} \subset \C [\bO (t_{\beta})],$$
and consider their reduced quotients. By Lemma \ref{phi-def}, these equations cut out $( \sQ ( \beta ) \cap \mathfrak N )$ set-theoretically. This construction factors through the formal completion of $\sQ (\beta + \gamma)$ along $\imath_{\beta} ( \mathring{\sQ} ( \gamma ) )$ for an arbitrary $\gamma \in Q^{\vee}_+$ in view of (\ref{si-Ric}). By Lemma \ref{alg-fact}, the completion of $\sQ (\beta + \gamma)$ along $\imath_{\beta} ( \mathring{\sQ} ( \gamma ) )$ yields the product of the spectrum of the formal completion of $\C [\sZ (\beta) ]$ along the origin $\mathbf 0$ and an open neighborhood of the $(\Gm \times H)$-fixed point $p_{t_\beta}\in \bQ_G$ in $\imath_{\beta} (\mathring{\sQ} ( \gamma )) \subset \bQ_G$. Thus, we conclude an isomorphism
$$\C [\mathfrak N] \otimes_{\C [\bO ( t_{\beta})]} \frac{\C [\bO ( t_{\beta})]}{\mathfrak m_{t_{\beta}}} = \C [\mathfrak N] \otimes_{\C [\bO ( t_{\beta})]} \C_{p_{t_{\beta}}} \cong \C [\sZ (\beta) ]_{\mathbf 0}^{\wedge},$$
where $\mathfrak m_{t_{\beta}} \subset \C [\bO ( t_{\beta})]$ denote the maximal ideal corresponding to $p_{t_{\beta}}$.

By Lemma \ref{assgr}, we have
$$\C [\sZ (\beta)]^{\wedge}_{\mathbf 0} \cong \prod_{m \ge 0} \left( \frac{\C [\sZ (\beta)]_{\ge m}}{\C [\sZ (\beta)]_{\ge (m+1)}} \right),$$
where $\C [\sZ (\beta)]_{\ge m}$ is $\C [\sZ (\beta)]_{\le -m}$ in Lemma \ref{assgr} since the $\Gm$-grading is opposite by convention. This identification is also $B^-$-equivariant (instead of $B$, due to the choice of $p_{t_{\beta}}$ as the origin).

Here, $\bO ( t_{\beta} )$ admits a free homogeneous action by a pro-unipotent subgroup of $\mathbf I$ (namely $H[\![z]\!]_1N[\![z]\!]$, where $H[\![z]\!]_1 = \ker \, (H[\![z]\!] \to H )$, see e.g. \cite[\S 4.2]{Kat18d}). The scheme $\mathfrak N$ also admits a $\Gm \ltimes \bI$-action. Thus, we use the $B[\![z]\!]$-action to construct a map
\begin{equation}
(\Gm \ltimes B[\![z]\!] ) \times^{( \Gm \times H )} \Spec \, \C [\sZ (\beta)]^{\wedge}_{\mathbf 0} \longrightarrow \mathfrak N\label{inf-action}
\end{equation}
such that the zero section maps isomorphically to $\bO (t_{\beta})$. In the LHS of (\ref{inf-action}), the formal completion and the (suitable) associated graded are still isomorphic as the effect of $(\Gm \ltimes B[\![z]\!] ) \times^{( \Gm \times H )} \bullet$ to the coordinate ring is just to take the (completed) tensor product with $\C [H[\![z]\!]_1 N[\![z]\!]]$.

We take a $(\Gm \times H)$-stable ambient space
$$\mathbf 0 \in \sZ (\beta ) \subset \mathscr S := \prod_{i \in \tI} \bigl( \phi_{t_\beta,i} + L ( \varpi_i )^* \otimes \C [z]_{< \left< \beta, \varpi_i \right>}\bigr) \subset \prod_{i \in \tI} \P (L(\varpi_i)^* \otimes \C [\![z]\!])$$
of $\sZ (\beta )$. The map (\ref{inf-action}) can be also obtained as the formal completion of the map
\begin{equation}
(\Gm \ltimes B[\![z]\!] ) \times^{( \Gm \times H )} \mathscr S \longrightarrow \prod_{i \in \tI} \P (L(\varpi_i)^* \otimes \C [\![z]\!])\label{real-action}
\end{equation}
along $\bO (t_{\beta})$ (and its preimage) and then restrict to $\sZ (\beta ) \subset \mathscr S$ in the fiber direction. Here we warn that (\ref{real-action}) {\it cannot} be injective. Nevertheless, Theorem \ref{si-Bruhat} asserts that the image $(\Gm \ltimes B[\![z]\!] ) \times^{(\Gm \times H)} \sZ(\beta)$ under (\ref{real-action}) contains a dense open subset of $\bO (e) \subset \bQ_G$ since
$$\mathscr S \cap \mathbb O ( e ) \subset \sZ ( \beta )$$
is Zariski open dense (and we apply the $B[\![z]\!]$-action). Thus, we conclude that the induced map
\begin{equation}
\imath : \C [\mathfrak N] \longrightarrow \C [\bO (t_{\beta})] \,  \widehat{\otimes} \, \C [\sZ (\beta)]^{\wedge}_{\mathbf 0}\label{i-incl-N}
\end{equation}
is injective. The common quotient maps
$$\C [\bO (t_{\beta})] \,  \widehat{\otimes} \, \C [\sZ (\beta)]^{\wedge}_{\mathbf 0} \longrightarrow \C [\bO (t_{\beta})] \longleftarrow \C [\mathfrak N]$$
are $(\Gm \ltimes B[\![z]\!])$-equivariant, and hence $\imath$ induces a map
$$I_{\texttt{ev}} := \ker ( \C [\mathfrak N] \to \C [\bO (t_{\beta})] ) \longrightarrow \C [\bO (t_{\beta})] \,  \widehat{\otimes} \prod_{m \ge 1} \left( \frac{\C [\sZ (\beta)]_{\ge m}}{\C [\sZ (\beta)]_{\ge (m+1)}} \right)$$
as $\C [\bO(t_{\beta})]$-modules. Since $I_{\texttt{ev}}$ is the ideal of definition of the topology of $\C [\mathfrak N]$ (in the sense of \cite[Chap. 0 \S 7]{EGAI}), we find that the map $\imath$ is continuous if $\imath$ is surjective, and hence is an isomorphism. The scheme $\mathfrak N$ admits $(\Gm \ltimes B[\![z]\!])$-action and its restriction to $\mathscr S$ admits the $(\Gm \times H)$-action. Hence, we obtain a $(\Gm \ltimes B[\![z]\!])$-equivariant $\C [\bO(t_{\beta})]$-algebra map
\begin{equation}
\mathrm{gr} \, \C [\mathfrak N]:= \prod_{m \ge 0} \frac{I_{\texttt{ev}}^m}{I_{\texttt{ev}}^{m+1}} \longrightarrow \C [\bO (t_{\beta})] \,  \widehat{\otimes} \prod_{m \ge 0} \left( \frac{\C [\sZ (\beta)]_{\ge m}}{\C [\sZ (\beta)]_{\ge (m+1)}} \right).\label{eqn:grNS}
\end{equation}
The map $\imath$ is a surjection if and only if (\ref{eqn:grNS}) is a surjection since the latter implies $\imath$ is continuous and $\mathrm{Im} \, \imath$ is dense (note that $\C [\mathfrak N]$ is complete by construction). If we specialize (\ref{eqn:grNS}) to $p_{t_\beta} \in \bO ( t_{\beta} )$, then the discussion in the first paragraph turns (\ref{eqn:grNS}) into a surjection. The same is true for every $B[\![z]\!]$-translation of $p_{t_{\beta}}$. It follows that (\ref{i-incl-N}) becomes surjective if we specialize to a (closed) point of $\bO(t_{\beta})$.

To prove (\ref{tube-prod}), we need to globalize the isomorphism (\ref{i-incl-N}) obtained only after specializing to a (closed) point of $\bO(t_{\beta})$ to an isomorphism over the whole of $\bO(t_{\beta})$.

For each $\gamma \in Q^{\vee}_+$, the formal completion of $\sQ (\beta + \gamma)$ along $\imath_{\beta} ( \mathring{\sQ} ( \gamma ) )$ yields a closed subscheme $\mathfrak Q _{\gamma, \beta} \subset \mathfrak N$. We have the commutative diagram
$$\xymatrix{
\C [\mathfrak N] \ar@{^{(}->}[r] \ar@{->>}[d] & \C [\bO (t_{\beta})] \,  \widehat{\otimes} \, \C [\sZ (\beta)]^{\wedge}_{\mathbf 0} \ar@{->>}[d]\\
\C [\mathfrak Q_{\beta,\gamma}] \ar[r]^{h_{\gamma} \hskip 10mm}& \C [\mathring{\sQ} ( \gamma )]\widehat{\otimes} \, \C [\sZ (\beta)]^{\wedge}_{\mathbf 0}
},$$
where $h_{\gamma}$ is the map obtained by restricting the pullback under (\ref{real-action}). In view of the previous paragraph, the map $h_{\gamma}$ is an isomorphism when we specialize to each closed point of $\mathring{\sQ} ( \gamma )$ (note that $\mathfrak Q_{\beta,\gamma}$ is a locally trivial algebraic fiber bundle over $\mathring{\sQ} ( \gamma )$ whose fiber is $\Spec \,  \C [\sZ (\beta)]^{\wedge}_{\mathbf 0}$). If we consider a finite-dimensional quotient of $\C [\sZ (\beta)]^{\wedge}_{\mathbf 0}$, then it defines a morphism of finite-rank locally free sheaves on $\mathring{\sQ} ( \gamma )$ which is an isomorphism of fibers over each closed point of $\mathring{\sQ} ( \gamma )$. In this setting, $h_{\gamma}$ induces an isomorphism of locally free sheaves by Nakayama's lemma. Therefore, the morphism $h_{\gamma}$ is itself an isomorphism.

In particular, we find a commutative diagram
$$\xymatrix{
\C [\mathfrak N] \ar@{^{(}->}[rr] \ar@{->>}[rd]_{q_\gamma}&& \C [\bO (t_{\beta})] \,  \widehat{\otimes} \, \C [\sZ (\beta)]^{\wedge}_{\mathbf 0} \ar@{->>}[ld]^{r_\gamma}\\
& \C [\mathfrak Q_{\beta,\gamma}] &
}.$$
By taking the projective limit (cf. Lemma \ref{alg-fact}), we deduce that both of
\begin{equation}
\C [\mathfrak N] \subset \varprojlim_{\gamma} \C [\mathfrak Q_{\beta,\gamma}] \supset \C [\bO (t_{\beta})] \,  \widehat{\otimes} \, \C [\sZ (\beta)]^{\wedge}_{\mathbf 0}\label{eqn:densecomp2}
\end{equation}
are dense subsets with respect to the topology arising from the inverse limit, in which we endow $\C [\mathfrak Q_{\beta,0}]$ ($\cong \C [\mathscr Z(\beta)]_{\mathbf 0}^{\wedge}$) with the discrete topology. This induces a short exact sequence of projective systems
$$0 \rightarrow \{ \mathrm{Im} \, q_{\gamma} \}_{\gamma} \rightarrow \{ \mathrm{Im} \, r_{\gamma} \}_{\gamma} \rightarrow \{ \mathrm{Im} \, r_{\gamma} / \mathrm{Im} \, q_\gamma \}_{\gamma} \rightarrow 0$$
satisfying \cite[Chap. 0 \S 13.1 (ML)]{EGAIII-1}. Here \cite[Chap. 0 Proposition 13.2.2]{EGAIII-1} asserts
$$\varprojlim_{\gamma} \frac{\mathrm{Im} \, r_{\gamma}}{\mathrm{Im} \, q_\gamma} \cong \frac{\varprojlim_{\gamma} \mathrm{Im} \, r_{\gamma}}{\varprojlim_{\gamma} \mathrm{Im} \, q_{\gamma}} = \frac{\varprojlim_{\gamma} \C [\mathfrak Q_{\beta,\gamma}]}{\varprojlim_{\gamma} \C [\mathfrak Q_{\beta,\gamma}]} = 0.$$
From this, we find that
$$\frac{\C [\bO (t_{\beta})] \,  \widehat{\otimes} \, \C [\sZ (\beta)]^{\wedge}_{\mathbf 0}}{\C [\mathfrak N]} \hookrightarrow \varprojlim_{\gamma} \frac{\mathrm{Im} \, r_{\gamma}}{\mathrm{Im} \, q_\gamma} = 0.$$
Therefore, we conclude that $\imath$ is an isomorphism as required.
\end{proof}

\begin{cor}\label{tsg}
Let $w,w',v \in W$ and $\beta,\beta' \in Q^{\vee}_+$, such that
$$w \ge_\si w' t_{\beta'} \ge _\si v t_{\beta}.$$
Then, the formal completion of $\bQ_G (w )$ along $\bO ( w' t_{\beta'} )$ defines a locally trivial fiber bundle $\mathfrak N_{\beta', w,w'}$ over $\bO (w't_{\beta'})$ whose fiber $\mathfrak X_{\beta', w,w'}$ is the spectrum of a normal Noetherian ring completed at the maximal ideal. In addition, the restriction $\mathfrak Q_{\beta',w,w'} ( \beta, v )$ of the fiber bundle $\mathfrak N_{\beta',w,w'}$ to
$$\mathring{\sQ} ( \beta - \beta', w', v ) \stackrel{\imath_{\beta'}}{\longrightarrow} ( \sQ ( \beta ) \cap \bO ( w' t_{\beta'}  ) ) \subset \bO ( w't_{\beta'})$$
induces the following diagram, in which both squares are Cartesian:
\begin{equation}
\xymatrix{
\bQ_G ( w ) & \ar[l] \mathfrak N_{\beta',w,w'} \ar[r] & \bO ( w' t_{\beta'} )\\
\sQ (\beta, w, v ) \ar[u] & \ar[l] \mathfrak Q_{\beta',w,w'} ( \beta, v ) \ar[r]  \ar[u] & \mathring{\sQ} ( \beta - \beta', w', v ) \ar[u] . 
}.\label{NQX}
\end{equation}
In particular, the formal completion of $\sQ ( \beta, w, v )$ along $\mathring{\sQ} ( \beta - \beta', w', v )$ defines a locally trivial fiber bundle over $\mathring{\sQ} ( \beta - \beta', w', v )$, and the structure of its fiber $\mathfrak X_{\beta', w,w'}$ does not depend on the choice of $v$ and $\beta$.
\end{cor}

\begin{proof}
Consider the formal completion $\mathfrak Q$ of $\sQ ( \beta, e, v )$ along $\mathring{\sQ} ( \beta - \beta', e, v )$. We have a map $\eta : \mathfrak Q \hookrightarrow \mathfrak N$ by the natural inclusions $\mathring{\sQ} ( \beta - \beta', e, v ) \subset \bO (t_{\beta'})$ and $\sQ ( \beta, e, v ) \subset \bQ_G$ induced from (\ref{si-Ric}). We have set-theoretic defining equations of $\sQ ( \beta, e, v ) \subset \bQ_G$ near $\bO ( t_{\beta'} )$ consisting of $\phi_{u,i} / \phi_{t_{\beta'},i}$ for each $(u,i) \in S^-(vt_{\beta})$ by Lemma \ref{phi-def} and (\ref{si-Ric}). Since they are obtained as the (uniquely determined) preimages of the middle map
\[
\C [\mathfrak N] \supset \Gamma (\bQ_G, \cO_{\bQ_G} (\varpi_i)) \longrightarrow \!\!\!\!\! \rightarrow \Gamma (\bQ_G (t_{\beta'}), \cO_{\bQ_G (t_{\beta'})} (\varpi_i)) \subset \C [\bO (t_{\beta'})],
\]
we can regard them as functions on $\C [\bO (t_{\beta'})] \subset \C [\mathfrak N]$. Thus, the locally trivial fiber bundle structure of $\mathfrak N$ in Proposition \ref{f-nbd} restricts to a locally trivial algebraic fiber bundle structure on $\mathfrak Q$ along $\mathring{\sQ} ( \beta - \beta', e, v )$ whose fiber is isomorphic to $\Spec \, \C [\sZ (\beta')]^{\wedge}_{\mathbf 0}$ (as $\mathfrak Q$ is reduced, each fiber must be reduced). In other words, we can understand this fiber bundle as the restriction of (\ref{tube-prod}) from $\bO (t_{\beta'})$ to $\mathring{\sQ} ( \beta - \beta', e, v )$ by intersecting the base with the opposite Schubert variety corresponding to $vt_{\beta}$.

For each $u \in W$, the orbit $\bO ( u )$ is the preimage of $\bO _{\sB} ( u )$ under the (uncountable dimensional) affine fibration $G \bO ( e ) \rightarrow \sB$ obtained by setting $z = 0$. As $\bO _{\sB} ( u ) \subset \dot{u} \bO _{\sB} ( e )$, we deduce that $\bO ( u ) \subset \dot{u} \bO ( e )$.  In addition, the normal direction of $\bO ( u ) \subset \dot{u} \bO ( e )$ is given by the free action of $( N^- \cap \dot{u} N \dot{u}^{-1} )$. The same is true if we multiply $u, e$ in $\bO (\bullet)$ by $t_{\beta'}$ (cf. Theorem \ref{si-Bruhat}). Let $\mathfrak N_{w'}$ denote the formal completion of $\bQ_G$ along $\bO (w' t_{\beta'})$. In view of Proposition \ref{f-nbd}, we deduce
\begin{equation}
\mathfrak N_{w'} \cong \bO ( w' t_{\beta'} )\, \widehat{\times}\, \mathfrak D',\label{f-nbd-w}
\end{equation}
where $\mathfrak D'$ is the completion of $\Spec \, \C [\sZ (\beta')]^{\wedge}_{\mathbf 0} \times ( N^- \cap \dot{u} N \dot{u}^{-1} )$ at $(\mathbf 0, 1 )$. If we further replace $\bQ_G = \bQ_G ( e )$ with $\bQ_G ( w )$ for $w \in W$, then we take the closed subscheme of $\mathfrak N_{w'}$ by imposing the defining equations of $\bQ_G ( w ) \subset \bQ_G$ near $\mathbb O (w't_{\beta'})$. We set $E:= \Gm \ltimes ( ( \dot{w}'B (\dot{w}')^{-1} ) [\![z]\!] \cap \mathbf I ) \subset \Gm \ltimes \mathbf I$. The $E$-stabilizer of $p_{w't_{\beta'}}$ is $(\Gm \times H)$. Since $\bO (w't_{\beta'})$ is homogeneous under the $E$-action, we obtain a scheme $\mathfrak D''$ such that
\begin{equation}
( \mathfrak N_{w'} \cap \bQ_G ( w ) ) \cong \bO ( w' t_{\beta'} )\, \widehat{\times}\, \mathfrak D''\label{f-nbd-w''}
\end{equation}
from (\ref{f-nbd-w}), where the LHS is the fiber product that is given as the pullback of $\cO_{\bQ_G(w)}$ to $\mathfrak N_{w'}$. This is our fiber bundle $\mathfrak N_{\beta',w,w'}$, whose restriction to $\mathring{\sQ} ( \beta - \beta', w',v )$ makes the right square of (\ref{NQX}) into a Cartesian square. The scheme $\mathfrak D''$ is identified with the fiber of the fiber bundle structure on the formal completion of $\sQ ( \beta, w, v )$ along $\mathring{\sQ} ( \beta - \beta', w', v )$ as we can interpret the construction of $\mathfrak D''$ to be imposing the local defining equations of $\sQ ( \beta, w, v ) \subset \sQ ( \beta, e, v )$ near $\imath_{\beta'} ( \mathring{\sQ} ( \beta - \beta', w', v ) )$ to $\mathfrak D'$ by (\ref{si-Ric}). This construction must yield a locally trivial family as being the restriction of an $E$-equivariant closed subfamily of the case $w=e$. In particular, the left square of (\ref{NQX}) is also a Cartesian square (see Remark \ref{rcart} for more detail). The scheme $\mathfrak D''$ is the completion of a Noetherian ring since $\Spec \, \C [\sZ (\beta')]^{\wedge}_{\mathbf 0}$, and hence $\mathfrak D'$ is so. Thus, it is normal as $\sQ ( \beta, w )$, and hence the neighborhood of $\imath_{\beta-\beta'} ( \mathring{\sQ} (\beta', w' ) ) \subset \sQ ( \beta, w )$ is normal by \cite[Theorem 32.2 and Theorem 32.4]{Mat86}.

These complete the construction of (the family of) the required locally trivial fibrations.
\end{proof}

\begin{rem}\label{rcart}
We spell out the Cartesian structure of the square in the LHS of (\ref{NQX}) in Corollary \ref{tsg} more explicitly. Let $\bQ_G ( w )'$ and $\sQ (\beta, w, v )'$ denote the schemes obtained as the localizations of their structure sheaves along $\bO ( w't_{\beta'} ) \subset \bQ_G ( w )$ and $\mathring{\sQ} (\beta', w', v ) \subset \sQ (\beta, w, v )$, respectively\footnote{For a scheme $X$ and its locally closed affine subscheme $Z$, we refer the spectrum of
$$\Gamma ( Z, \cO_{X} ) := \varinjlim_{Z \subset U} \Gamma ( U, \cO_{X} ), \hskip 5mm \text{where $U$ runs over an open neighbourhood of $Z$},$$
as the scheme obtained as the localization of $\cO_X$ along $Z$.}. Since localizations commute with formal completions, the left square of (\ref{NQX}) yields a commutative diagram of schemes
\begin{equation}
\xymatrix{
\bQ_G ( w ) & \ar[l] \bQ_G ( w )' & \ar[l] \mathfrak N_{\beta',w,w'}\\
\sQ (\beta, w, v ) \ar[u] & \ar[l]\ar[u]\sQ (\beta, w, v )' & \ar[l] \mathfrak Q_{\beta',w,w'} ( \beta, v ) \ar[u]
}.\label{NQX2}
\end{equation}

 The left square of (\ref{NQX2}) is a Cartesian square arising from localizations (and hence their horizontal arrows are dominant morphisms). Thus, the whole diagram of (\ref{NQX2}) is a Cartesian diagram if its right square is.

We have a set
$$\Lambda := \{\phi_{v,i}\mid (v,i) \in S^-(vt_{\beta})\}$$
of set-theoretic defining equations of both of
$$\imath_{\beta'} ( \sQ ( \beta - \beta', w', v ) )\subset \bQ ( w' t_{\beta'} ) \hskip 5mm\text{and} \hskip 5mm\sQ ( \beta, w, v )\subset \bQ ( w )$$
offered by (\ref{si-Ric}) and Lemma \ref{phi-def}. In particular, we have a variant of the diagram (\ref{NQX2}) such that the vertical arrows are imposing $\Lambda$. This variant is a Cartesian diagram by construction.

By \cite[Theorem 32.2 and Theorem 32.4]{Mat86}, the scheme $\mathfrak Q_{\beta',w,w'} ( \beta, v )$ is reduced. Since the localizations of integral rings respect reducedness and taking the formal completions commute with quotient rings, taking the reduced induced structure of each item in the variant diagram yields (\ref{NQX2}), that is Cartesian.
\end{rem}

In view of Corollary \ref{tsg}, we refer $\mathfrak X_{\beta', w,w'}$ as the (infinitesimal) transversal slice of $\bO ( w' t_{\beta'})$ along $\bQ_G ( w )$, or that of $\mathring{\sQ} (\beta-\beta', w', v )$ along $\sQ (\beta, w, v )$.

\begin{cor}\label{cor:inf-real}
Let $\beta' \in Q^{\vee}_+$, and let $w,w' \in W$ such that $\bO ( w' t_{\beta'}) \subset \bQ_G ( w )$.
\begin{enumerate}
\item Assume that $\mathfrak X_{\beta',w',v}$ has a singularity worse than rational singularities. We have
$$\sQ ( \beta - \beta', w', v ) \subset \mathrm{Supp} \, \R^{> 0} ( \pi_{\beta, w, v} )_* \cO_{\sX ( \beta, w, v)} \hskip 5mm \text{if} \hskip 5mm w' \ge_\si v t_{\beta-\beta'};$$
\item Assume that $\mathfrak X_{\beta',w',v}$ has only rational singularities. If $\sQ ( \beta, w, v )$ has a singularity worse than rational singularities at a point in $\mathring{\sQ} ( \beta - \beta', w', v ) \subset \sQ ( \beta, w, v )$, then $\sQ ( \beta - \beta', w', v )$ itself has a singularity worse than rational singularities at the same point.
\end{enumerate}
\end{cor}

\begin{proof}
We set $\mathfrak X := \mathfrak X_{\beta', w,w'}$. Note that a product of spectrums of local rings have rational singularities if and only if each of them admits at worst rational singularities. Let $\mathfrak Q$ denote the formal completion of $\sQ ( \beta - \beta', w', v ) \subset \sQ ( \beta, w, v )$. In view of Lemma \ref{alg-fact}, the restriction of $\mathfrak Q$ to (the neighborhood of) $\mathring{\sQ} ( \beta - \beta', w', v )$ defines a locally trivial fibration whose base is $\mathring{\sQ} ( \beta - \beta', w', v )$ and whose fiber is $\mathfrak X$.

We consider the first assertion, and hence we assume that $\mathfrak X$ has a singularity worse than rational singularities. Then, $\mathfrak Q$ has singularity worse than rational singularities on $\mathring{\sQ} ( \beta - \beta', w', v )$. It follows that 
$$\sQ ( \beta - \beta', w', v ) = \mathrm{Supp} \, \bigl( \R^{> 0} ( \pi_{\beta, w, v} )_* \cO_{\sX ( \beta, w, v)} \bigr) \otimes_{\cO_{\sQ ( \beta, w, v)}} \cO_{\mathfrak Q}$$
as a subset of $\mathfrak Q$. Since the formal completion is flat (for Noetherian schemes), we conclude
$$
\sQ ( \beta - \beta', w', v ) \subset \mathrm{Supp} \, \R^{> 0} ( \pi_{\beta, w, v} )_* \cO_{\sX ( \beta, w, v)}.
$$
The last condition ($w' \ge_\si v t_{\beta-\beta'}$) is equivalent to $\sQ ( \beta - \beta', w', v ) \neq \emptyset$, and hence the first assertion holds.

We consider the second assertion, and hence we assume that $\mathfrak X$ has at worst rational singularities. Then, $\mathfrak Q$ has a singularity worse than rational singularities at a point of $\mathring{\sQ} ( \beta - \beta', w', v )$ if and only if $\mathring{\sQ} ( \beta - \beta', w', v )$ has so at the same point. Since $\mathfrak Q$ has a singularity worse than rational singularities at a point of $\mathring{\sQ} ( \beta - \beta', w', v )$ if and only if $\sQ ( \beta, w, v )$ has so at the same point (by Boutot's theorem), we conclude the second assertion.

These complete the proof.
\end{proof}

\subsection{Cohomology calculation for $\sX ( \beta, w )$}\label{subsec:ccalc}

\begin{lem}
For each $\beta \in Q^{\vee}_+, w, v \in W$, we have
\[
( \pi_{\beta, w, v} )_* \cO_{\sX ( \beta, w, v )} \cong \cO_{\sQ ( \beta, w, v)}.
\]
\end{lem}
\begin{proof}
This follows from the normality of $\sQ ( \beta, w, v )$ and the fact that all the fibers of $\pi_{\beta, w, v}$ are connected (\cite[Corollary 5.19]{Kat18d}).
\end{proof}

\begin{thm}\label{tr-rat}
Let $\beta' \in Q^{\vee}_+$, and let $w,w' \in W$ such that $\bO ( w' t_{\beta'}) \subset \bQ_G ( w )$. The scheme $\mathfrak X_{\beta', w,w'}$ in Corollary \ref{tsg} has at worst rational singularities.
\end{thm}

\begin{proof}
We set $\mathfrak X := \mathfrak X_{\beta', w,w'}$. We assume to the contrary to deduce contradiction. By Corollary \ref{cor:inf-real} 1), we have
\begin{equation}
\sQ ( \beta - \beta', w', v ) \subset \mathrm{Supp} \, \R^{> 0} ( \pi_{\beta, w, v} )_* \cO_{\sX ( \beta, w, v)}.\label{Q-inc}
\end{equation}
This containment is independent of the choice of $v$ and $\beta$ since the infinitesimal transversal slice $\mathfrak X$ is independent of the choice of $v \in W$ and $\beta \in Q^{\vee}_+$ whenever the LHS is nonempty by Corollary \ref{tsg}. We enlarge $\beta$ if necessary to guarantee the following two equivalent inequalities
$$w' >_{\si} t_{\beta-\beta'} \hskip 3mm \stackrel{(\ref{si-ord})}{\Leftrightarrow}\hskip 3mm w't_{\beta'} >_{\si} t_{\beta},$$
that yields $\sQ ( \beta - \beta', w', v ) \neq \emptyset$ for every $v \in W$, and some point (and hence general points) of $\sQ ( \beta - \beta', w', v )$ has no defect at $\infty$ and its value belongs to $\bO_\sB^\op ( v )$ by \cite[Lemma 8.5.1]{FM99}. Let $Z ( v )$ be an irreducible component of $\mathrm{Supp} \, \R^{> 0} ( \pi_{\beta, w, v} )_* \cO_{\sX ( \beta, w, v)}$ that contains $\sQ ( \beta - \beta', w', v )$. General points of $Z ( v )$ have no defect at $\infty$, and their values at $\infty$ belong to $\bO_\sB^\op ( v )$. We set $U := N \cap \dot{v} N \dot{v}^{-1}$. Then, the multiplication map $U \times \bO_\sB^\op ( v ) \subset \sB$ defines an embedding of an open dense subset. Thus, $\overline{U \times Z ( v )}$ is an irreducible component of the support of (\ref{Q-inc}) for $v = w_0$ that contains $\sQ ( \beta - \beta', w', w_0 )$. Therefore, we obtain a family $\{Z ( v )\}_{v \in W}$ of irreducible components of $\mathrm{Supp} \, \R^{> 0} ( \pi_{\beta, w, v} )_* \cO_{\sX ( \beta, w, v)}$ such that $\sQ ( \beta - \beta', w', v ) \subset Z ( v )$, $Z ( v ) \subset Z ( u )$ if $v \le u$, and $\overline{B Z ( v )}$ is independent of $v$.

Consider the smallest $\sQ ( \theta, t, v )$ ($\theta \in Q^{\vee}_+$ and $t \in W$), embedded into $\sQ ( \beta,w,v )$ through $\imath_{\beta-\theta}$, that contains $Z ( v )$. Here $\mathring{\sQ} ( \theta, t, v )$ contains a point of $Z ( v )$ as $Z ( v )$ is irreducible and the inclusion relations among $\sQ ( \beta - \bullet, \bullet, v )$ obey the closure relation of $\bI$-orbits of $\bQ_G^{\mathrm{rat}}$ described in Theorem \ref{si-Bruhat} by their irreducibility and (\ref{si-Ric}). Hence, the condition $B_i Z (v) \subset Z(v)$ and $\mathop{SL} ( 2, i ) Z ( v ) \not\subset Z ( v )$ $(i \in \tI_\af)$ is achieved if it holds for $\sQ ( \theta, t, v ) \subset \bQ_G^\ra$. Since $\overline{B Z ( v )}$ is common for every $v \in W$, we deduce that $\theta$ and $u$ are independent of $v$. Thus, we can rearrange $v$ if necessary to find $i \in \tI_\af$ such that $\mathop{SL} ( 2, i ) Z ( v ) \not\subset Z ( v )$ for $i \neq 0$ and $s_i v < v$, or $i = 0$ and $s_{\vartheta} v > v$, inside $\sQ ( \beta )$ ($i \neq 0$) or $\sQ ( \beta - w^{-1} \vartheta )$ ($i = 0$, see Theorem \ref{Qnorm} 5)). Since $B_i$ acts on $\sQ ( \beta, w, v )$ (by Theorem \ref{Qnorm}), it follows that $Z ( v )$ is $B_i$-stable. We have $s_i w < w$ ($i \neq 0$) or $s_\vartheta w > w$ ($i = 0$) as otherwise $\mathop{SL} ( 2, i )$ acts on $\sQ ( \beta, w, v )$ and hence on $Z ( v )$, that is a contradiction.

The map $\pi_i$ restricted to $\mathop{SL} ( 2, i ) \times^{B_i} Z ( v )$ is birational onto its image. In particular, there exists a Zariski open subset $V \subset \mathop{SL} ( 2, i ) Z ( v )$ such that $\pi_i^{-1} ( V ) \cap \mathop{SL} ( 2, i ) \times^{B_i} Z ( v )$ forms an irreducible component of
$$\pi_i^{-1} ( V ) \cap \left( \mathop{SL} ( 2, i ) \times^{B_i} \mathrm{Supp}\, \R^{> 0} ( \pi_{\beta, w, v} )_* \cO_{\sX ( \beta, w, v)} \right) \subset \mathop{SL} ( 2, i ) \times^{B_i} \sQ ( \beta, w, v ).$$
Hence, $( \pi_i )_*$ sends the inflation of $\R^{> 0} ( \pi_{\beta, w, v} )_* \cO_{\sX ( \beta, w, v)}$ to a non-zero sheaf whose support contains $\mathop{SL} ( 2, i ) Z ( v )$.

If the variety $\sQ ( \beta, w, v )$ is $B_i$-stable for $i \in \tI$, then the $G$-action on $\sX ( \beta )$ restricts to the $B_i$-action on $\sX ( \beta, w, v )$. If the variety $\sQ ( \beta, w, v )$ is $B_0$-stable, then there exists a smooth projective variety $\sX' ( \beta, w, v )$ with the $B_0$-action that yields a $B_0$-equivariant resolution of singularities of $\sQ ( \beta, w, v )$ (see e.g. \cite[Corollary 7.6.3]{Vil92}). We can replace $\pi_{\beta, w, v} : \sX ( \beta, w, v ) \rightarrow \sQ ( \beta, w, v )$ with $\sX' ( \beta, w, v ) \to \sQ ( \beta, w, v )$ in this case since both of $\sX ( \beta, w, v )$ and $\sX' ( \beta, w, v )$ have rational singularities and there exists yet another resolution of singularities of $\sQ ( \beta, w, v )$ that dominates both. Let us consider the map
\begin{equation}
\mathop{SL} ( 2, i ) \times ^{B_i} \sX ( \beta, w, v ) \rightarrow \mathop{SL} ( 2, i ) \times ^{B_i} \sQ ( \beta, w, v ) \stackrel{\pi_i}{\longrightarrow} \sQ ( \gamma, u, v ),\label{sX-inf}
\end{equation}
where the first map is the inflation of $\pi_{\beta, w, v}$, $\gamma = \beta$ and $u = s_i w$ ($i \neq 0$), or $\gamma = \beta - w^{-1} \vartheta^{\vee}$ and $u = s_{\vartheta} w$ ($i = 0$). Since $\sX ( \beta, w, v )$ has rational singularities, so is $\mathop{SL} ( 2, i ) \times ^{B_i} \sX ( \beta, w, v )$. In addition, the composition map (\ref{sX-inf}) is birational and projective. Thus, it is another resolution of $\sQ ( \gamma, u, v )$ by a variety that has rational singularities. Therefore, we can replace $\sX ( \gamma, u, v )$ with $\mathop{SL} ( 2, i ) \times ^{B_i} \sX ( \beta, w, v )$ to compute $\R^{\bullet} ( \pi_{\gamma, u, v} )_* \cO_{\sX ( \gamma, u, v )}$. Applying the Leray spectral sequence to (\ref{sX-inf}) using Theorem \ref{Qnorm} 4), 5), we have
\begin{align}\nonumber
\R^{0} ( \pi_{\gamma, u, v} )_* \cO_{\sX ( \gamma, u, v )} & \cong \cO_{\sQ ( \gamma, u, v )} \hskip 5mm \text{and}\\
\R^{>0} ( \pi_{\gamma, u, v} )_* \cO_{\sX ( \gamma, u, v )} & \neq \{0\}.\label{i0-push}
\end{align}
Moreover, the support of (\ref{i0-push}) contains $\mathop{SL} ( 2, i ) Z ( v )$. By construction, general points of $\mathop{SL} ( 2, i ) Z ( v )$ have no defect at $\infty$, and their values belong to $\bO_{\sB}^{\op} ( v )$. Therefore, an irreducible component $Z' ( v )$ of the support of (\ref{i0-push}) that contains $\mathop{SL} ( 2, i ) Z ( v )$ again comes as a family $\{Z' ( v ) \}_{v \in W}$ such that $Z' ( v ) \subset Z' ( u )$ if $v \le u$, and $\overline{B Z' ( v )}$ is independent of $v$ (in particular, we have $Z' ( v )$ even if $i \neq 0$ and $s_i v > v$, or $i = 0$ and $s_{\vartheta} v < v$). Thus, we can repeat the above procedure by replacing $\sQ ( \beta, w, v )$ with $\sQ (\gamma, u, v )$ and $\{Z ( v )\}_{v \in W}$ with $\{ Z' ( v ) \}_{v \in W}$. Note that we eventually attain $Z' ( v ) = \mathop{SL} ( 2, i ) Z ( v )$ for any application of the above procedures as the strict inclusion forces
$$(0 < ) \hskip 3mm \mathrm{codim}_{\sQ ( \gamma, u, v )} \, Z' ( v ) < \mathrm{codim}_{\sQ ( \gamma, u, v )} \, \mathop{SL} ( 2, i )Z ( v ) = \mathrm{codim}_{\sQ ( \beta, w, v )} \, Z ( v ),$$
that cannot be repeated infinitely many times.

Consider the smallest $\sQ ( \theta, t, v )$ ($\theta \in Q^{\vee}_+$ and $t \in W$) that contains $Z ( v )$ again. As discussed above, $\theta$ and $t$ are independent of the choice of $v \in W$. We choose $i \in \tI_\af$ (and $v \in W$) such that $B_i$ preserves $\sQ ( \theta, t, v )$ and $\mathop{SL} ( 2, i ) Z ( v ) \not\subset Z ( v )$. Then, $B_i$ preserves $\sQ ( \theta, t, v )$ and $\mathop{SL} ( 2, i ) \sQ ( \theta, t, v ) \not\subset \sQ ( \theta, t, v )$ by the above discussion. It follows that $\sQ ( \theta, t, v )$ is transformed to $\sQ ( \theta, s_i t, v )$ $(i \neq 0)$ or $\sQ ( \theta - t^{-1} \vartheta, s_\vartheta t, v )$ $(i = 0)$ by an application of the above procedure.

In view of \cite[Theorem 4.6]{Kat18} (cf. arguments around there), we repeat these procedures if necessary to assume $t = w_0$. The condition $\mathop{SL} ( 2, i ) Z ( v ) \not\subset Z ( v )$ implies $\mathop{SL} ( 2, i ) \sQ ( \beta, w, v ) \not\subset \sQ ( \beta, w, v )$ (otherwise $Z ( v )$ is $\mathop{SL} ( 2, i )$-stable) asserts that $s_i t < t$ implies $s_i w < w$ ($i \in \tI$). In case $t = w_0$, this implies $w = w_0$. Again by repeating the above procedures, we can rearrange the situation to assume $w = t = e$ and $v = w_0$. In this case, we have $\sQ (\beta) = \sQ ( \beta, e, w_0 )$, that has rational singularities by Theorem \ref{rat-res}. From this, we find a contradiction on the existence of $Z ( v )$. This in turn implies a contradiction to the existence of $(\beta ', w')$ such that $\mathfrak X$ has worse than rational singularities. Therefore, we conclude that our $\mathfrak X$ have at worst rational singularities.
\end{proof}

The following is our main geometric result in this paper, that is an extension of Theorem \ref{rat-res} due to Braverman-Finkelberg \cite{BF14a,BF14b,BF14c} from $\sQ ( \beta )$ to an arbitrary Richardson variety of $\bQ_G^\ra$:

\begin{thm}\label{Q-rat-sing}
For each $w, v \in W$ and $\beta \in Q^{\vee}_+$, the variety $\sQ ( \beta,w, v )$ has rational singularities. In particular, $\sQ ( \beta,w, v )$ is Cohen-Macaulay.
\end{thm}

\begin{rem}
We use only a special case of Theorem \ref{Q-rat-sing} ($v=w_0$) in the rest of this paper.
\end{rem}

For each $\beta \in Q^{\vee}_+, w, v \in W$, we set
$$\ringring{\sQ} ( \beta, w, v ) := \left\{ f \in  \sQ ( \beta ) \middle| 0, \infty \not\in [D], f (0) \in \bO_\sB ( w ), f(\infty) \in \bO_\sB^{\op} ( v )\right\}.$$
In view of \cite[Corollary 5.4]{Kat18d}, the inclusion $\ringring{\sQ} ( \beta, w, v ) \subset \sQ ( \beta, w, v )$ is open dense.

\begin{proof}[Proof of Theorem \ref{Q-rat-sing}]
We assume to the contrary to deduce contradiction. Namely, we assume that $\sQ ( \beta, w, v )$ has singularities worse than rational singularities. By Theorem \ref{tr-rat} and Corollary \ref{cor:inf-real} 2), if the worse than rational singularities locus of $\sQ ( \beta, w, v )$ is contained in some $\sQ ( \gamma, u, v )$, then the variety $\sQ ( \gamma, u, v )$ itself must have singularities worse than rational singularities. Therefore, by rearranging $( \beta, w )$ if necessary, we can assume
\begin{equation}
\mathring{\sQ} ( \beta, w, v ) \cap \mathrm{Supp} \, \R^{> 0} ( \pi_{\beta, w, v} )_* \cO_{\sX ( \beta, w, v)} \neq \emptyset.\label{genSing}
\end{equation}
In the above discussion on (\ref{genSing}), we can swap $z$ with $z^{-1}$, and $B$ with $B^-$, that makes us to rearrange $\beta$ and $v$ instead of $\beta$ and $w$. Since the (infinitesimal) transversal slices of $\mathring{\sQ}$ and $\ringring{\sQ}$ are in common, we can further rearrange $\beta$ and $v$ to conclude
\begin{equation}
\ringring{\sQ} ( \beta, w, v ) \cap \mathrm{Supp} \, \R^{> 0} ( \pi_{\beta, w, v} )_* \cO_{\sX ( \beta, w, v)} \neq \emptyset\label{genSing2}
\end{equation}
by
$$\mathring{\sQ} ( \beta, w, v ) = \bigsqcup_{0 \le \beta' \le \beta} \bigsqcup_{\scriptsize{\begin{matrix}u \in W\\ u t_{\beta'} \ge_\si vt_{\beta}\end{matrix}}}\ringring{\sQ} ( \beta - \beta', w, u).$$
The variety $\ringring{\sQ} ( \beta, w, v )$ admits only $H$-action, but its ambient space $\sQ ( \beta )$ admits a $G$-action. The action of $( N^- \cap \dot{w} N \dot{w}^{-1} )$ applied to the subspace $\bO_\sB ( w ) \subset \sB$, as well as the action of $( N \cap \dot{v} N \dot{v}^{-1} )$ applied to the subspace $\bO_\sB^{\op} ( v ) \subset \sB$ have trivial stabilizers. As a consequence, the action of $( N^- \cap \dot{w} N \dot{w}^{-1} )$ applied to the subspace $\ringring{\sQ} ( \beta, w, u ) \subset \sQ ( \beta )$, and the action of $( N \cap \dot{v} N \dot{v}^{-1})$ applied to the subspace $\ringring{\sQ} ( \beta, u, v ) \subset \sQ ( \beta )$ also have trivial stabilizers for each $u \in W$. In view of the fact that $\bO_\sB ( w )$ is $N$-stable, the action of $(N \cap \dot{v} N \dot{v}^{-1})$ on $\sQ ( \beta )$ preserves $\sqcup_{u \in W} \ringring{\sQ} ( \beta, w, u )$. Therefore, we deduce an embedding
$$( N^- \cap \dot{w} N \dot{w}^{-1} ) \times( N \cap \dot{v} N \dot{v}^{-1} ) \times \ringring{\sQ} ( \beta, w, v ) \ni (n_1,n_2,x) \mapsto n_1 n_2 x \in \sQ ( \beta ).$$
By the dimension comparison using (\ref{dimQ}), we deduce that this embedding must be open dense in $\sQ ( \beta ) = \sQ (\beta, e,w_0)$. The locus $Y$ on which the singularity of $\ringring{\sQ} ( \beta, w, v )$ is worse than rational singularities gives rise to the locus
$$( N^- \cap \dot{w} N \dot{w}^{-1} ) \times( N \cap \dot{v} N \dot{v}^{-1} ) \times Y \subset \sQ ( \beta )$$
on which the singularity of $\sQ ( \beta )$ is worse than rational singularities.

However, the variety $\sQ ( \beta )$ has only rational singularities (Theorem \ref{rat-res}). Thus, the locus of $Y \subset \sQ ( \beta,w,v )$ on which $\sQ ( \beta, w,v )$ has worse than rational singularity must be empty. Hence $\sQ ( \beta, w, v )$ must have rational singularities. The latter assertion follows from \cite[Theorem 5.10]{KM98}.
\end{proof}

\begin{cor}\label{vanish}
Let $\la \in P_{+}$. For each $w \in W$ and $\beta \in Q^{\vee}$, we have
$$H^{>0} ( \sX ( \beta,w ), \cO_{\sX ( \beta,w )} ( \la ) ) = \{0\}.$$
\end{cor}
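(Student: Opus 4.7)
The plan is to reduce the vanishing on $\sX(\beta,w)$ to the vanishing on $\sQ(\beta,w)$ already established as Theorem \ref{Qnorm}(2), using the birational proper map $\pi_{\beta,w}:\sX(\beta,w)\to\sQ(\beta,w)$ introduced just before Theorem \ref{GR}. There is essentially no new geometric input required beyond what was gathered in the proof of Corollary \ref{Q-rat-sing}, so I expect this to be a short derivation rather than a substantive argument.

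First I would unwind the definitions of the line bundles: the bundle $\cO_{\sGB_{2,\beta}}(\la)$ is by construction $\pi_{2,\beta}^*\cO_{\sQ(\beta)}(\la)$, and $\cO_{\sX(\beta,w)}(\la)$ is obtained by restriction to the locally closed subscheme $\sX(\beta,w)\subset\sGB_{2,\beta}^\flat$. Since $\pi_{\beta,w}$ is the restriction of $\pi_{2,\beta}$ and $\cO_{\sQ(\beta,w)}(\la)$ is the restriction of $\cO_{\sQ(\beta)}(\la)$, the identification
\[
\cO_{\sX(\beta,w)}(\la) \;\cong\; \pi_{\beta,w}^{*}\cO_{\sQ(\beta,w)}(\la)
\]
holds tautologically.

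Next I would invoke the content of the proof of Corollary \ref{Q-rat-sing}. That proof establishes the two identities
\[
(\pi_{\beta,w})_*\cO_{\sX(\beta,w)}\;\cong\;\cO_{\sQ(\beta,w)}, \qquad \mathbb R^{>0}(\pi_{\beta,w})_*\cO_{\sX(\beta,w)}\;=\;\{0\},
\]
obtained by factoring through the iterated $P_{i_j}$-bundle $\sX'$ and combining Theorem \ref{Qnorm}(4) with the rational resolution statement for $\sQ(\beta,w_0)$. Applying the projection formula to $\pi_{\beta,w}$ together with the first step then yields
\[
\mathbb R^{i}(\pi_{\beta,w})_{*}\cO_{\sX(\beta,w)}(\la)\;\cong\;\cO_{\sQ(\beta,w)}(\la)\otimes_{\cO_{\sQ(\beta,w)}}\mathbb R^{i}(\pi_{\beta,w})_{*}\cO_{\sX(\beta,w)},
\]
which is $\cO_{\sQ(\beta,w)}(\la)$ for $i=0$ and vanishes for $i>0$.

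Finally, the Leray spectral sequence for $\pi_{\beta,w}$ collapses at the $E_2$-page and gives a canonical isomorphism
\[
H^{i}\bigl(\sX(\beta,w),\cO_{\sX(\beta,w)}(\la)\bigr)\;\cong\;H^{i}\bigl(\sQ(\beta,w),\cO_{\sQ(\beta,w)}(\la)\bigr)
\]
for every $i\ge 0$. Applying Theorem \ref{Qnorm}(2) to the right-hand side (which requires $\la\in P_+$, as assumed) yields the vanishing for $i>0$ and completes the proof. The only step that could pose any difficulty is confirming that $(\pi_{\beta,w})_*\cO=\cO$ together with higher direct image vanishing is genuinely proved in Corollary \ref{Q-rat-sing}, but since that is precisely the content of the commutative diagram argument there, no further work is needed.
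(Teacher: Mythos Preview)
Your proposal is correct and is essentially the same argument as the paper's, only spelled out in full: the paper compresses the projection formula, the Leray spectral sequence, and the higher direct image vanishing for $\pi_{\beta,w}$ (established in Corollary~\ref{Q-rat-sing}) into the single phrase ``Apply \cite[Theorem~5.10]{KM98} to Theorem~\ref{Qnorm}~2)''.
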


\begin{proof}
In view of Theorem \ref{Q-rat-sing}, we apply \cite[Theorem 5.10]{KM98} and the Leray spectral sequence to reduce the assertion to $H^{>0} ( \sQ ( \beta,w ), \cO_{\sQ ( \beta,w )} ( \la ) ) = \{0\}$. This is Theorem \ref{Qnorm} 2).
\end{proof}

\begin{prop}\label{w-calc}
Let $w \in W$ and $\la \in P_{+}$. We have
$$\lim _{\beta \to \infty} \chi_q (\sX ( \beta,w ), \cO_{\sX ( \beta,w )} ( \la ) ) = \gch \, H ^0 ( \bQ_G ( w ), \cO_{\bQ_G ( w )} ( \la ) ).$$
\end{prop}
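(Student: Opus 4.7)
The plan is to factor the proof into three steps: (i) pass from $\sX(\beta,w)$ to $\sQ(\beta,w)$ via the derived pushforward of $\pi_{\beta,w}$; (ii) identify $\chi$ with $\gch H^0$ using the established cohomology vanishing; and (iii) pass to the limit $\beta\to\infty$ using the ind-scheme filtration of $\bQ_G(w)$.

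First, I would show $\chi(\sX(\beta,w),\cO_{\sX(\beta,w)}(\la))=\chi(\sQ(\beta,w),\cO_{\sQ(\beta,w)}(\la))$ via the proper birational map $\pi_{\beta,w}$. Since $\sX(\beta,w)$ has rational singularities by construction in \S\ref{GQL} and $\sQ(\beta,w)$ has rational singularities by Corollary \ref{Q-rat-sing}, picking any resolution $Z\to\sX(\beta,w)$ produces a resolution $Z\to\sQ(\beta,w)$; comparing the two rational-singularity identities via the composition of derived pushforwards gives $\mathbb R(\pi_{\beta,w})_*\cO_{\sX(\beta,w)}=\cO_{\sQ(\beta,w)}$. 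Because $\cO_{\sX(\beta,w)}(\la)=\pi_{\beta,w}^*\cO_{\sQ(\beta,w)}(\la)$ by the definition of the line bundle on $\sGB_{2,\beta}^{\flat}$, the projection formula then yields $\mathbb R(\pi_{\beta,w})_*\cO_{\sX(\beta,w)}(\la)\cong\cO_{\sQ(\beta,w)}(\la)$, and the Euler--Poincar\'e characteristics agree.

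Second, combining Corollary \ref{vanish} and Theorem \ref{Qnorm}(2) shows both sides are concentrated in degree zero, so
$$\chi(\sX(\beta,w),\cO_{\sX(\beta,w)}(\la))=\gch H^0(\sQ(\beta,w),\cO_{\sQ(\beta,w)}(\la)).$$
Third, the ind-scheme $\bQ_G(w)$ is exhausted by the closed subvarieties $\sQ(\beta,w)$ as $\beta$ ranges over $Q^{\vee}_+$, so global sections form an inverse system
$$H^0(\bQ_G(w),\cO_{\bQ_G(w)}(\la))=\varprojlim_{\beta}H^0(\sQ(\beta,w),\cO_{\sQ(\beta,w)}(\la)),$$
whose transition maps are surjective by Theorem \ref{Qnorm}(3). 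The $\Gm$-action respects these restrictions, so for each fixed $q$-weight $n$ the integers $\dim H^0(\sQ(\beta,w),\cO(\la))_n$ are non-decreasing in $\beta$. Since the corresponding graded piece of $H^0(\bQ_G(w),\cO(\la))$ is finite-dimensional by the explicit character computation from \cite{KNS17} (invoked in the same form as in the proof of Theorem \ref{wJcomp}), this sequence stabilizes for each $n$, and the limit equals the $n$-th graded piece of $H^0(\bQ_G(w),\cO(\la))$. Summing over $n$ gives the asserted equality of $\gch$.

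The main obstacle is the third step: verifying the graded-piece finiteness and the inverse-limit identification in a way compatible with the $q$-grading. For $w=e$ this was handled by the global Weyl module identification of Braverman--Finkelberg via Zastava spaces (Proposition \ref{char-limit}); the present general-$w$ case relies crucially on the normality and cohomological control of $\sQ(\beta,w)$ from Theorem \ref{Qnorm}, without which neither the projection-formula step nor the stabilization of each graded piece could be justified.
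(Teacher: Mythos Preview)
Your proof is correct and follows the same three-step architecture as the paper's own argument: pass from $\sX(\beta,w)$ to $\sQ(\beta,w)$ via the rational-singularity statement (the paper simply cites Corollary \ref{Q-rat-sing}, whose proof already establishes $\mathbb R(\pi_{\beta,w})_*\cO_{\sX(\beta,w)}=\cO_{\sQ(\beta,w)}$; your composition-of-resolutions argument is an equivalent repackaging), invoke the higher-cohomology vanishing, and then take the limit in $\beta$.

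The only substantive difference is how step (iii) is executed. The paper dispatches the limit identification in one line by citing \cite[Theorem 4.12]{Kat18} together with Theorem \ref{Qnorm} 3). You instead unfold the mechanism: write $H^0(\bQ_G(w),\cO(\la))$ as the inverse limit over the exhaustion by $\sQ(\beta,w)$, use the surjectivity of restriction maps to get monotonicity of each $q$-graded piece, and use the finite-dimensionality of the limit (from \cite{KNS17}) to force stabilization. Your version is more transparent about what is actually happening, but note that the claim ``$\bQ_G(w)$ is exhausted by the $\sQ(\beta,w)$'' (and the compatibility of $H^0$ with this ind-structure) is exactly what \cite[Theorem 4.12]{Kat18} supplies, so both proofs ultimately rest on the same external input.
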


\begin{proof}
By Corollary \ref{vanish}, we have
$$\chi_q (\sX ( \beta,w ), \cO_{\sX ( \beta,w )} ( \la ) ) = \gch \, H ^0 ( \sX ( \beta,w ), \cO_{\sX ( \beta,w )} ( \la ) )$$
for every $\beta \in Q^{\vee}_+$.

By Theorem \ref{Q-rat-sing}, we deduce
$$H ^0 ( \sX ( \beta,w ), \cO_{\sX ( \beta,w )} ( \la ) ) = H ^0 ( \sQ ( \beta,w ), \cO_{\sQ ( \beta,w )} ( \la ) )$$
for every $\la \in P_+$ and $\beta \in Q^{\vee}_+$.

By Theorem \ref{Qnorm} 2), we have
$$\lim _{\beta \to \infty} \chi_q (\sX ( \beta,w ), \cO_{\sX ( \beta,w )} ( \la ) ) = \lim _{\beta \to \infty} \chi_q (\sQ ( \beta,w ), \cO_{\sQ ( \beta,w )} ( \la ) ) \hskip 5mm \la \in P_{+}$$
and it is uniquely determined by Theorem \ref{Qnorm} 3). In addition, the comparison of Theorem \ref{Qnorm} 3) with \cite[Theorem 4.12]{Kat18} implies
$$\lim _{\beta \to \infty} \chi_q (\sQ ( \beta,w ), \cO_{\sQ ( \beta,w )} ( \la ) ) = \gch \, H ^0 ( \bQ_G ( w ), \cO_{\bQ_G ( w )} ( \la ) ) \hskip 5mm \la \in P_{+}.$$

Combining these implies the desired equality.
\end{proof}

\subsection{The image of Schubert classes under $\Psi$}\label{subsec:pbt}

\begin{thm}\label{b-trans}
The map $\Psi$ constructed in Theorem \ref{wJcomp} satisfies
$$\Psi ( [\cO_{\sB ( w )}] ) = [\cO_{\bQ ( w )}] \hskip 5mm w \in W.$$
\end{thm}

The rest of this subsection is devoted to the proof of Theorem \ref{b-trans}. In this subsection, $\otimes$ is understood to be $\otimes_{\cO_Z}$, where $Z$ is the variety we are considering.

We consider the $\C [q^{\pm 1}] P$-valued functional $F_{\beta}^{\la} ( \bullet )$ on $( \C [q^{\pm 1}] \otimes_{\C} K_H ( \sB ) ) [\![Q^{\vee}_+]\!]$ with parameters $\beta \in Q^{\vee}_+$ and $\la \in P_{+}$:
\begin{align*}
\sum_{\beta \in Q^{\vee}_+} F_{\beta}^{\la} ( \bullet ) Q^{\beta} & : = \sum_{\gamma \in Q^{\vee}_+} \chi_q ( \sX (\gamma), \cO _{\sX (\gamma)} ( \la ) \otimes \mathtt{ev}_1^* ( \bullet ) \otimes \mathtt{ev}_2^* ( \cO _{\sB} ) ) Q^{\gamma}\\
% & = \sum_{\beta_1,\beta_2 \in Q^{\vee}_+} Q^{\beta_1+\beta_2} \left< \frac{\phi_{\epsilon}}{1 - q L}, \prod_{i \in \tI} A_i ^{- \left< \al_i^{\vee}, \la \right>} ( \bullet ) \right>_{0,2,\beta_1}^{\sB} g^{\epsilon,\rho} \left< \frac{\phi_{\rho}}{1 - q^{-1} L}, [\cO_{\sB}]\right>_{0,2,\beta_2}^{\sB}\\
 & = \chi ( \sB, T ( \prod_{i \in \tI} A_i ( q )^{- \left< \al_i^{\vee}, \la \right>} ( [ \cO_{\sB} ] ) ) \cdot \overline{T} ( \bullet )),
\end{align*}
where the second equality is a reformulation of \cite[Proposition 2.13]{IMT15} and the last term is connected to the quantum $K$-theoretic product by Theorem \ref{GLK} and Theorem \ref{graph-to-map}. Note that this collection of functionals $\{F_{\beta}^{\la} ( \bullet )\}_{\beta,\la}$ is uniquely determined by the calculations from \S \ref{subsec:ccalc} and Theorem \ref{zas}
\begin{align*}
\sum_{\beta \in Q^{\vee}_+} F _{\beta}^{\la} ( [\cO_{\sB}] ) Q^{\beta} & = \sum_{\beta \in Q^{\vee}_+} \chi_q ( \sX (\beta), \cO _{\sX (\beta)} ( \la ) \otimes \mathtt{ev}_1^* ( \cO_{\sB} ) \otimes \mathtt{ev}_2^* ( \cO _{\sB} ) ) Q^{\beta}\\
= & \sum_{\beta \in Q^{\vee}_+} \chi_q ( \sQ ( \beta ), \cO_{\sQ( \beta)} ( \la ) ) Q^{\beta} = D_{w_0} ( J ' ( Qq^{\la}, q ) e^{w_0 \la} J' ( Q, q^{-1} ) ),
\end{align*}
as $\sum _{\beta} F_{\beta}^{\la} ( \bullet ) Q^{\beta}$ commutes with the $\C [q^{\pm 1}] P$-action and the right $\C Q^{\vee}$-action, and intertwines the shift operator $A_i ( q )$ with the line bundle twist by $\cO_{\sX (\beta)} ( - \varpi_i )$ for each $i \in \tI$. The last two expressions assert that our functional $F^\la_{\beta}$ is the same one as that employed in the proof of Theorem \ref{wJcomp}.

For each $a \in ( \C [q^{\pm 1}] \otimes_{\C} K_H ( \sB ) ) [\![Q^{\vee}_+]\!]$, we have the class $\Psi ( a\MID _{q = 1} ) \in K_{H} ( \bQ_G )$ written as
$$\Psi ( a\MID _{q = 1} ) = \sum_{w \in W_\af} c^w (a) [\cO_{\bQ_G ( w )}] \hskip 5mm w \in W_\af, c^w ( a ) \in \C P.$$
In view of the proof of Theorem \ref{wJcomp}, the coefficients $c^w ( a ) \in \C P$ are characterized as
$$c^w ( a ) = c^w_q ( a ) \MID _{q = 1}$$
if we have elements $c_q^w ( a ) \in \C [q^{\pm 1}] P$ ($e \ge_\si w \in W_\af$) determined by
$$\lim_{\beta \to \infty} F_{\beta}^{\la} ( a ) = \sum_{w} c^w_q ( a ) \, \gch \, \Gamma ( \bQ_G ( w ), \cO_{\bQ_G ( w )} ( \la ) ) \hskip 5mm \la \in P_+.$$

Thus, we have
\begin{align}\nonumber
\lim_{\beta \to \infty} F_{\beta}^{\la} ( [\cO _{\sB(w)}] ) & = \lim_{\beta \to \infty} \chi_q ( \sX (\beta), \cO _{\sX (\beta)} ( \la ) \otimes \mathtt{ev}_1^* ( \cO _{\sB(w)} ) \otimes \mathtt{ev}_2^* ( \cO _{\sB} ) ) \\ \label{rewrite}
& = \lim_{\beta \to \infty} \chi_q ( \sX (\beta), \cO _{\sX (\beta)} ( \la ) \otimes \mathtt{ev}_1^* ( \cO _{\sB(w)} ) )\\\nonumber
&  = \lim_{\beta \to \infty} \chi_q ( \sX ( \beta, w ), \cO _{\sX ( \beta, w )} ( \la )) = \gch \, H^0 ( \bQ_G ( w ), \cO_{\bQ_G ( w )} ( \la ) )
\end{align}
for each $\la \in P_{+}$ and $w \in W$, where the last equality is Proposition \ref{w-calc}.

Therefore, we conclude
$$\Psi ( [\cO_{\sB ( w )}]) = [\cO_{\bQ_G ( w )}] \hskip 5mm w \in W$$
as required.

\subsection{Consequences}\label{subsec:conseq}

Since Theorem \ref{sline} is used only when we reformulate Theorem \ref{wJcomp} into Corollary \ref{Jcomp} (the last result in \S \ref{subsec:ide}), we obtain an alternative proof of the following:

\begin{thm}[$=$ Theorem \ref{sline} due to Anderson-Chen-Tseng]
For each $i \in \tI$, we have $A_i ( q ) ( [\cO_{\sB}] ) = [\cO_\sB ( - \varpi_i )]$.
\end{thm}

\begin{proof}
By (the proof of) Theorem \ref{wJcomp} and Remark \ref{qwJ}, we know that $\Psi_q ( A_i ( q ) ( [\cO_{\sB}] ) ) = [\cO_{\bQ_G (e)} ( - \varpi_i )]$. Now we argue as:
\begin{align*}
A_i ( q ) ( [\cO_{\sB}] ) & =  \Psi^{-1}_q ( [\cO_{\bQ_G ( e )} ( - \varpi_i )] ) & & \\
& =  e^{-\varpi_i} \Psi^{-1}_q ( [\cO_{\bQ_G ( e )}] - [\cO_{\bQ_G ( s_i )}] ) & & \text{by Lemma \ref{divc}}&\\
& =  e^{-\varpi_i} ( [\cO_{\sB} ( e )] - [\cO_{\sB( s_i )} ] ) & & \text{by Theorem \ref{b-trans}}\\
& =  [\cO_{\sB} (- \varpi_i)] & & \text{by (\ref{LSrem})}.
\end{align*}
These imply the result.
\end{proof}

\begin{cor}[Finiteness of the shift operators]\label{qfin}
For each $i \in \tI$ and $w \in W$, the element $A_i ( q ) ( [\cO_{\sB ( w )}] )$ is a finite $\C [q^{\pm 1}]P$-linear combination of $\{[\cO_{\sB ( w )}]Q^{\beta}\}_{w \in W, \beta \in Q^{\vee}_+}$.
\end{cor}

\begin{proof}
By Remark \ref{qwJ} and Theorem \ref{b-trans}, the problem reduces to the corresponding problem in $K_{\Gm \ltimes \bI} ( \bQ_G^{\ra})$. The latter is explained in either \cite[Theorem 3.7]{Kat20} (as in Corollary \ref{LLMSfin}) or \cite[Theorem 1]{NOS18} (as an explicit formula), though author's original reasoning is by the finiteness of the (global version of the) decomposition procedure in \cite{FMO18} (as their global generalized Weyl modules are exactly $\Gamma ( \bQ_G ( w ), \cO_{\bQ_G (w)} ( \la ) )^*$; see e.g. \cite[\S 5]{Kat18}).
\end{proof}

\begin{cor}[Conjectured by Lam-Li-Mihalcea-Shimozono \cite{LLMS17}]\label{LLMSc}
We have a natural $\C P$-algebra dense embedding
$$\Psi^{-1} \circ \Phi : K_H ( \Gr )_\lo \hookrightarrow qK _H ( \mathscr B )_\lo,$$
such that
\begin{equation}
\Psi^{-1} \circ \Phi ( [\cO_{\Gr_{wt_{\beta}}}] \odot [\cO_{\Gr_{\gamma}}]^{-1} ) = [\cO_{\sB ( w )}]Q^{\beta - \gamma} \hskip 5mm w \in W\label{image}
\end{equation}
holds for every $\beta, \gamma \in Q^{\vee}_<$.
\end{cor}

\begin{proof}
For the first assertion, combine Theorem \ref{main} and Corollary \ref{Jcomp} to obtain the map $\Psi^{-1} \circ \Phi$, that have dense image. Note that the both sides are rings and the identity $[\cO_{\Gr_0}]$ goes to the identity $[\cO_{\sB}]$. The map $\Psi^{-1} \circ \Phi$ commutes with the natural $Q^{\vee}$-actions given by $\mathtt t_{\gamma}$ and $Q^{\gamma}$ for each $\gamma \in Q^{\vee}$ by Theorem \ref{comm} and Corollary \ref{Jcomp}. Moreover, the action of $\Theta_i$ (see \S \ref{pmain}) and the quantum multiplication by $[\cO _{\sB} ( -\varpi_i )]$ corresponds for each $i \in \tI$ (by Theorem \ref{main} and Corollary \ref{Jcomp}). Therefore, the $\odot$-multiplication by the element $\bh_i$ and $\star$-multiplication by $[\cO_{\sB ( s_i )}] = ( [\cO_{\sB}] - e^{\varpi_i} [\cO _{\sB} ( -\varpi_i )] )$ coincide for each $i \in \tI$. Since the ring $K_H ( \Gr )_\lo$ is generated by $\{\bh_i\}_{i \in \tI}$ up to the $\C P$-action and $\{ \mathtt t _{\gamma} \}_{\gamma}$-action (Remark \ref{gen}), we conclude that $\Psi^{-1} \circ \Phi$ is a ring embedding.

For the second assertion, note that Theorem \ref{comm} asserts that
$$\Phi ( [\cO_{\Gr_{wt_{\beta}}}] \odot [\cO_{\Gr_{\gamma}}]^{\pm 1} ) = [\cO_{\bQ_G ( w t_{\beta \pm \gamma} )}] \hskip 5mm w \in W$$
for each $\beta, \gamma \in Q^{\vee}_<$ (cf. Lemma \ref{mt-op}). From this, we derive
\begin{align*}
\Psi^{-1} \circ \Phi ( [\cO_{\Gr_{wt_{\beta}}}] \odot [\cO_{\Gr_{\gamma}}]^{\pm 1} ) & = \Psi^{-1} ( [ \cO_{\bQ_G ( w )} ]) Q^{\beta \pm \gamma} & \text{by Corollary \ref{Jcomp}}\\
& = [\cO_{\sB ( w )}] Q^{\beta \pm \gamma}& \text{by Theorem \ref{b-trans}}.
\end{align*}
This yields the desired equality.
\end{proof}

\begin{cor}\label{triangle}
We have a commutative diagram, whose bottom arrow is an embedding of rings:
$$
\xymatrix{
& K_{H} ( \bQ_G^{\ra} ) & \\
K _H ( \Gr ) _\lo \ar@{^{(}->}[rr] \ar@{^{(}->}[ru]^{\Phi} & & qK_H ( \sB )_\lo \ar[lu]_{\Psi}^{\cong}
}.$$
This induces an isomorphism
$$K _H ( \Gr ) _\lo \stackrel{\cong}{\longrightarrow} \bigoplus _{w \in W, \beta \in Q^{\vee}} \C P [\cO_{\sB (w)}] Q^{\beta} \subset qK_H ( \sB )_\lo.$$
In addition, the map $\Phi$ is an injective $K_H ( \mathrm{pt} ) \otimes \C Q^{\vee}$-module homomorphism, and $K_{H} ( \bQ_G^{\ra} )$ acquires the structure of a ring from $K_H ( \Gr )$ or $qK_H ( \sB )$ $($cf. Remark \ref{Kra}$)$.
\end{cor}

\begin{proof}
The first assertion combines Corollary \ref{LLMSc} and its proof. From (\ref{image}), we conclude the second assertion. The last assertion follows as both $qK_H ( \sB )$ and $K_H ( \Gr )$ are rings.
\end{proof}

In view of \cite{LLMS17}, we obtain another proof of the finiteness of quantum $K$-theory of $\sB$ originally proved in Anderson-Chen-Tseng \cite{ACT17,ACT18}. We reproduce the reasoning here for the sake of reference:

\begin{cor}[Anderson-Chen-Tseng \cite{ACT17,ACT18}]\label{LLMSfin}
For each $w,v \in W$, we have
$$[\cO_{\sB ( w )}] \star [\cO_{\sB ( v )}] \in \bigoplus_{\beta \in Q^{\vee}_+, u \in W} \C P [\cO_{\sB ( u )}] Q^{\beta}.$$
In other words, the multiplication rule of $qK_H ( \sB )$ is finite.
\end{cor}

%\begin{rem}
%Our proof of Corollary \ref{LLMSfin} itself depends on Theorem \ref{sline} due to Anderson-Chen-Tseng \cite{ACT18}. However, this last part of the derivation has different flavor from their strategy.
%\end{rem}

\begin{proof}[Proof of Corollary \ref{LLMSfin} due to Lam-Li-Mihalcea-Shimozono $\cite{LLMS17}$]
By Corollary \ref{LLMSc} (cf. Theorem \ref{LSS-formula}), the assertion follows from
\begin{equation}
[\cO_{\Gr _{\beta}}] \odot [\cO_{\Gr _{\gamma}}] \in \bigoplus_{\kappa \in Q^{\vee}} \C P [\cO_{\Gr _{\kappa}}] \hskip 5mm \forall \beta, \gamma \in Q^{\vee}.\label{ACTodotGr}
\end{equation}
By definition, the LHS of (\ref{ACTodotGr}) is a product inside the ring $\sC$ that has $\{[\cO_{\Gr _{\kappa}}]\}_{\kappa}$ as its $\C P$-basis (Theorem \ref{LSS10}). Hence, the assertion follows.
\end{proof}

\medskip

{\small
\hskip -5.25mm {\bf Acknowledgement:} The author would like to thank Michael Finkelberg and Tatsuyuki Hikita for discussions, David Anderson and Hiroshi Iritani for helpful correspondence, and Thomas Lam for pointing out inaccuracies in a previous version of this paper. He is also grateful to the reviewers for their thoughtful comments and suggestions. The author also thanks Daisuke Sagaki and Daniel Orr for their collaborations. This research was supported in part by JSPS KAKENHI Grant Numbers JP26287004 and JP19H01782, and by a JSPS Grant-in-Aid for Challenging Research (Exploratory) 24K21192.}

%\begin{table}[hbtp]
%  \caption{List of Symbols}
%  \centering
%  \begin{tabular}{lcr|lcr}
%    \hline
%$\gdim$ & graded dimension & \S 1 & $G,B,H$ & algebraic groups & \S 1.1\\
%$E[\![z]\!],E(\!(z)\!)$ & loop groups of $E$ & \S 1.1 & $P,Q,Q^{\vee}$ & lattices & \S 1.1\\
%$\Delta,\Delta_+,\Delta_-$ & set of roots & \S 1.1 & $\Pi = \{\al_i\}_{i \in \tI}$ & simple roots & \S 1.1\\
%$\Delta_\af, \Delta_{\af,+}$ & set of affine roots & \S 1.1 & $\{\varpi_i\}_{i \in \tI}$ & fundamental weights & \S 1.1\\
%$W, W_{\af}$ & Weyl groups & \S 1.1 & $\vartheta$ & the highest root & \S 1.1\\
%$\ell$ & length function & \S 1.1 & $\le_\si,\le$ & (generic) Bruhat order & \S 1.1\\
%$Q^{\vee}_+,Q^{\vee}_<$ & sets of coroots & \S 1.1 & $\le_\si$ & generic Bruhat order & \S 1.1\\
%$\gch,\ch$ & (graded) character & \S 1.1 & $L(\la)$ & rational $G$-modules & \S 1.1\\
%$\sB$ & flag variety & \S 1.1 & $\bO^\op_\sB (w), \bO_\sB (w)$ & (opposite) Schubert cell & \S 1.1\\
%$\sB^\op(w),\sB(w)$ & (opposite) Schubert variety & \S 1.1 & $\cO_\sB ( \la )$ & line bundles on $\sB$ & \S 1.1\\
%$\sH$ & nil-DAHA & \S 1.2 & $D_i, D_w$ & Demazure operators & \S 1.2\\
%$\sS, \sA$ & some algebras & \S 1.2 & $\Gr$ & affine Grassmanian & \S 1.3\\
%$\sS, \sA$ & some algebras & \S 1.2 & $\Fl$ & affine flag manifold & \S 1.3\\
%    \hline
%  \end{tabular}
%\end{table}

\begin{table}[hbtp]
  \caption{List of main global symbols (ordered by their first appearances)}
  \centering
  \begin{tabular}{lc}
    \hline
\S \ref{sec:prelim} & $\gdim, f \mapsto \overline{f}$\\
\S \ref{subsec:prelim} & $G,B,H,N,N^-,B^-,E[z],E[\![z]\!],E(\!(z)\!),\bI,\bI^-,P,\Delta,\Pi,\left< \bullet,\bullet\right>,Q^{\vee},Q^{\vee}_+$\\
& $\le,P_+,\tI,\al_i,W,\rho,\Delta_\af,\varpi_i,\vartheta,\tI_\af,W_{\af},\ell,t_{\beta},\mathop{SL}(2,i),B_i,P_i,W_{\af}^-,Q^{\vee}_<$\\
& $\le,\le_\si,L(\la),\ch,\gch,\sB,\bO_\sB (w),\sB(w),\bO^\op_\sB (w),\sB^\op(w),\cO_\sB ( \la ),K_H ( \sB )$\\
\S \ref{subsec:nDAHA} & $\sH,D_i,\sS,\sA,D_w$\\
\S \ref{subsec:Gr} & $\Gr,\Fl,\bO_w^\Fl,\bO_\beta^\Gr,\Gr_\beta,\Fl_w,\sC,\odot,\Omega K,\mathbf h_i,\mathtt t_\gamma$\\
\S \ref{subsec:sif} & $\bQ_G^\ra,\Upsilon,\bO(w),\bQ_G(w),p_w,\cO_{\bQ_G^\ra} ( \la ),\bQ_G^-(w),\bI^\flat,\g[z],\bI',\bW( \la )$\\
& $P_\la,\phi_{u,i},S(u),S^-(u), \chi_q$\\
\S \ref{subsec:eK} & $K_{\Gm \ltimes \bI}(\bQ_G^\ra), K_{H}(\bQ_G), K_{H}(\bQ_G^\ra),\mathrm{Fun}_P,\mathrm{Fun}_P^{\mathrm{neg}},\Theta,\Xi(\la),H_i$\\
\S \ref{subsec:GQL} & $\sGB_{n,\beta},\sB_{n,\beta},\widetilde{\mathtt{ev}}_{j},\mathtt{ev}_{j},\chi_q, Q^{\beta},Q_i,\left< \bullet \right>_\GW^q,\left< \bullet \right>_\GW$\\
\S \ref{subsec:eqK} & $qK_H(\sB),\star,p_i,q^{Q_i\partial_{Q_i}},T,\mathbb L,A_i (q),J(Q,q),a_i$\\
    \hline
\S \ref{sec:Gr} & $D_i^\sharp$ \hskip 2mm (\S \ref{subsec:trans}), \hskip 5mm $\Phi$ \hskip 2mm (Theorem \ref{pmain})\\
    \hline
\S \ref{subsec:QM} & $|D|,|D|_x,\sQ ( \beta ), \sQ ( \beta, w ),\mathring{\sQ} ( \beta ),\mathring{\sQ} ( \beta,w ),\imath_\gamma,\cO_{\sQ ( \beta,w )} ( \la )$\\
\S \ref{subsec:fact} & $\sZ( \beta ), C^{(\beta)},\mathbb A_x^1,u_i ( f,D ),\phi_i(f,D;z),\mathfrak f^{\beta},\mathbf 0, \mathcal S_y$\\
\S \ref{subsec:ide} & $\Psi, F^\la_\beta$ \hskip 2mm (Theorem \ref{wJcomp} and its proof), \hskip 5mm $\Psi_q$ \hskip 2mm (Remark \ref{qwJ})\\
\hline
\S \ref{subsec:GQL2} & $\pi_{n,\beta},\pi_{\beta},\cO_{\sGB_{n,\beta}}(\la),\sX(\beta),\sQ(\beta,w,v),\sX(\beta,w,v),\pi_{\beta,w,v},\mathring{\sQ} (\beta,w,v)$\\
\S \ref{subsec:ccalc} & $\ringring{\sQ} (\beta,w,v)$\\
  \end{tabular}
\end{table}

{\footnotesize
\bibliography{kmref}
\bibliographystyle{hplain}}
\end{document}